\makeatletter \@addtoreset{equation}{section} \makeatother
\renewcommand\thetable{\thesection.\@arabic\c@table}
\theoremstyle{plain}
\newtheorem{maintheorem}{Theorem}
\newtheorem{maincorollary}{Corollary}
\newtheorem{theorem}{Theorem}[section]
\newtheorem{proposition}{Proposition}[section]
\newtheorem{lemma}{Lemma}[section]
\newtheorem{corollary}{Corollary}[section]
\newtheorem{definition}{Definition}[section]
\newtheorem{remark}{Remark}[section]
\newtheorem{claim}{Claim}[section]
\newcommand{\al} {\alpha}
\newcommand{\vep}{\varepsilon}
\newcommand{\supp}{\operatorname{supp}}
\newcommand{\Leb}{Leb}
\newcommand{\cC}{\mathcal{C}}
\newcommand{\cK}{\mathcal{K}}
\newcommand{\cP}{\mathcal{P}}
\newcommand{\cU}{\mathcal{U}}
\newcounter{main}
\title[SRB measures for open sets of partially hyperbolic local diffeomorphisms]
{SRB measures for partially hyperbolic attractors of local diffeomorphisms}
\author[Anderson Cruz]{Anderson Cruz}
\address{Anderson Cruz, Centro de Ci\^encias Exatas e Tecnol\'ogicas, Universidade Federal do Rec\^oncavo da Bahia\\ Av. Rui Barbosa, s/n, 44380-000 Cruz das Almas, BA, Brazil}
\email{anderson.cruz@ufrb.edu.br}
\author[Paulo Varandas]{Paulo Varandas}
\address{Paulo Varandas, Departamento de Matem\'atica e Estat\'istica, Universidade Federal da Bahia\\
Av. Ademar de Barros s/n, 40170-110 Salvador, Brazil}
\email{paulo.varandas@ufba.br}
\begin{document}

\begin{abstract}
In the present paper we contribute to the thermodynamic formalism of 
partially hyperbolic attractors for local diffeomorphisms admitting an invariant stable bundle 
and a positively invariant cone field with non-uniform cone expansion at a positive Lebesgue measure
set of points. These include the case of attractors for Axiom A endomorphisms and partially hyperbolic
endomorphisms derived from Anosov.  
We prove these attractors have finitely many SRB measures, that these are hyperbolic, and that 
the SRB measure is unique provided the dynamics is transitive. 
Moreover, we show that the SRB measures are statistically stable (in the weak$^*$ topology) 
and that their entropy varies continuously with respect to the local diffeomorphism.
\end{abstract}

\keywords{SRB measures, partially hyperbolicity, cone-hyperbolicity, statistical stability.}
 \footnotetext{2010 {\it Mathematics Subject classification}:
Primary 
37C40, 
37D25; 
Secondary 
37D30, 
37D35. 

} 
\date{\today}
\maketitle

\section{Introduction}

The main goal of the use of ergodic theory in dynamical systems is to describe the  
statistical properties of the dynamics using invariant measures. 
In particular, the thermodynamic formalism aims the construction and description of the 
statistical properties of invariant measures that are physically relevant, meaning 
equilibrium states with respect to some potential.
Sinai, Ruelle and Bowen (\cite{Sin72,Rue76,Bow75}) 
constructed a special class of invariant measures for hyperbolic attractors of diffeomorphisms $f$ acting on a compact Riemannian manifold $M$:
these have absolutely continuous disintegration with respect to the Lebesgue
measure along unstable manifolds and are Gibbs equilibrium states for the geometric potential
$\phi^{u}(x)=-\log\left|\det Df(x)\mid_{E_{x}^{u}}\right|$.
Such measures are known as SRB (Sinai-Ruelle-Bowen) measures.
Furthermore, the latter measures are physical, meaning their ergodic basin of attraction 
$
B(\mu)=\big\{x\in M \colon \frac1n \sum_{j=0}^{n-1} \delta_{f^j(x)} \to \mu \, \text{as }\, n\to \infty\big\}
$
has positive Lebesgue measure in the ambient space. 
Modern methods for the construction of SRB measures for hyperbolic attractors 
and the study of their finer properties
can be found in \cite{BKL02,GL06} and references therein.

Since SRB measures were introduced in the realm of dynamical systems, their construction and 
the study of their statistical properties rapidly became a topic of interest of the mathematics and physics communities. 
Unfortunately, apart from the uniformly hyperbolic setting, where the existence of finite Markov partitions allows to semiconjugate
the dynamics to subshifts of finite type (cf. \cite{Sin72,Rue76,Bow75}) there is no systematic approach
for the construction of SRB measures.
Note that SRB measures often coincide with equilibrium states for a geometric potential (cf. \cite{LY85}). 
As for the construction of SRB measures beyond the setting of uniform hyperbolicity one should mention the 
construction of u-Gibbs measures for partially hyperbolic attractors that admit an unstable bundle by Pesin and Sinai  \cite{PeS},
the existence and uniqueness of the SRB measure for the class of robustly transitive diffeomorphisms derived from Anosov 
constructed by Ma\~n\'e by Carvalho~\cite{Car93}, and the construction of SRB measures for $C^{1+\alpha}$ partially hyperbolic 
diffeomorphisms displaying a non-uniform hyperbolicity condition along the central direction by Alves, Bonatti and Viana ~\cite{ABV00,BV00}. More recently, Mi, Cao and Yang \cite{MCY} constructed SRB measures for attractors of $C^2$ diffeomorphisms that admit H\"older continuous invariant (non-dominated) splittings with some non-uniform expansion.
After Young's~\cite{You98} axiomatic construction for studying decay of correlations, 
the decay of correlations for SRB measures can be studied through the existence of Markov towers and was shown to depend 
on the Lebesgue measure of the tails associated to non-uniform hyperbolicity. Some contributions on the study of 
the decay of correlations for the  SRB measures of related classes of uniformly hyperbolic and partially hyperbolic 
diffeomorphisms include ~\cite{Liv95,Dol00,Cas04}. 
SRB measures can also be constructed as zero noise limits as considered in \cite{CY05}.
Moreover, the statistical stability of SRB measures in dynamical systems and the continuous dependence of the entropy of the SRB measures 
has been considered in \cite{CY05, DL08,ACF10b, VV10,CV13, CV16}, 
among others.

In parallel to the developments of partially hyperbolic diffeomorphisms, there have been important contributions to the
study of (non-singular) endomorphisms, i.e., local diffeomorphisms. In the case that
the endomorphisms admit no stable bundle, the geometrical constructions of Markov structures for endomorphisms with non-uniform
expansion due to \cite{ALP05} provide weak conditions for the existence of SRB measures and 
estimates on their decay of correlations.
The situation is substantially more complicated in the case of endomorphisms displaying contracting behavior, as e.g. 
strongly dissipative endomorphisms that arise in the context of bifurcation of homoclinic tangencies associated 
to periodic points of diffeomorphisms (see \cite{MoraV} and references therein). 
In the mid seventies, Przytycki~\cite{Prz76} extended the notion of uniform hyperbolicity to the context of endomorphisms and studied Anosov endomorphisms. Here, due to the non-invertibility of the dynamics, the existence of an invariant unstable subbundle for
uniformly hyperbolic basic pieces needs to be replaced by the existence of positively invariant cone-fields on which vectors are uniformly expanded 
by all positive iterates (we refer the reader to Subsection~\ref{sec:UH} for more details). 
In \cite{QZ95,QZ02} the authors constructed SRB measures
for Axiom A attractors of endomorphisms,
obtaining these as equilibrium states for the geometric potential 
(defined by means of the natural extension). A characterization of SRB measures for uniformly hyperbolic
endomorphisms can also be given in terms of dimensional characteristics of the stable
manifold (\cite{UW04}). 
The thermodynamic formalism of hyperbolic basic pieces for endomorphisms
had the contribution of Mihailescu and Urbanski~\cite{Mih10a,MU} 
that introduced and constructed inverse SRB measures for hyperbolic attractors of endomorphisms.
Among the difficulties that arise when dealing with non-invertible hyperbolic dynamics one should refer 
the absence of contraction along inverse branches and the fact that unstable manifolds depend on 
entire pre-orbits, 
thus unstable manifolds may have a complicated geometrical structure 
(cf. \cite{QXZ09,UW04}). 

The ergodic theory of partially hyperbolic endomorphisms is more incomplete. 
In the context of surface endomorphisms, a major contribution is due to Tsujii \cite{Tsu05}, which proved
that for $r\ge 19$, $C^r$-generic partially hyperbolic endomorphisms 
with an unstable cone field admit finitely many SRB measures whose ergodic basins of attraction cover Lebesgue 
almost every point in the manifold (the regularity can be dropped to $r\ge 2$ on the space of non-singular 
endomorphisms). While it remains unknown if these SRB measures are (generically) hyperbolic, an extension to higher 
dimension should present significant difficulties as discussed by the author (cf.~\cite[page 43]{Tsu05}).
The strategy developed by Tsujii for surface endomorphisms was more recently adapted to deal with open sets of three-dimensional 
partially hyperbolic and dynamically coherent diffeomorphisms (see \cite{Bor15} for the precise statement).

Our purpose here is to contribute to the ergodic theory of partially hyperbolic attractors for endomorphisms 
and to present open conditions under which SRB measures exist (actually are hyperbolic and physical measures) 
and are statistically stable.   The open class of partially hyperbolic attractors considered here is complementary to 
the one studied in \cite{Tsu05} (see Section~\ref{sec:future} for a discussion on the different assumptions of partial hyperbolicity) . 
We consider attractors for 
$C^{1+\alpha}$ ($\alpha>0$) non-singular endomorphisms on compact Riemannian manifolds that admit uniform
contraction along a well defined invariant stable subbundle and preserve a positively invariant cone-field.
Under a non-uniform cone-expansion property on a positive Lebesgue measure set of points, we prove the endomorphism 
admits an hyperbolic SRB measure. Moreover, 
if Lebesgue almost every point satisfies the latter condition then there are finitely many SRB measures 
(it is unique if the attractor is transitive) whose basins cover Lebesgue almost every point in the topological 
basin of the attractor.
In the trivial case that the subbundle $E^{s}$ is trivial we recover results obtained in \cite{ABV00} for non-uniform 
expanding maps.
In the case the subbundle $E^{s}$ is non-trivial and there exists uniform expansion along the cone direction we recover
the setting of Axiom A attractors for endomorphisms considered in \cite{QXZ09}.
One should remark that, while SRB measures are measures that admit absolutely continuous disintegrations 
along Pesin's unstable foliation, the simple existence of unstable manifolds is not a direct consequence of
our assumptions. For that reason, some of the main difficulties arise from the fact that
the construction of unstable manifolds (obtained by Pesin's unstable manifold theorem, provided a hyperbolic
invariant measure) is parallel to the construction of the SRB measure. 
For that reason the construction of the SRB measures, as accumulation points of Lebesgue measure over
disks tangent to the cone field, must be followed closely by a precise track of the accumulation disks in terms of 
the space of pre-orbits. This is crucial to conclude later the SRB property.

Here we use the geometric ingredients developed in the construction of SRB measures to prove that 
 SRB measures depend continuously (with respect to the weak$^*$ topology) with perturbations of the endomorphism (see Section~\ref{sec:Main} for the precise statement). This proof is inspired by \cite{Vas07}.
Moreover, the continuous dependence of SRB measures on the dynamics together with Pesin's entropy formula and Oseledets theorem allow
us to to show that the entropy of the SRB measure is given by an expression involving the stable Jacobian and varies continuously with the
endomorphism. To the best of our knowledge it seems these results are new even in the case of attractors for Axiom A  endomorphisms.
 Finally, we exhibit open conditions that define partially hyperbolic non-singular endomorphisms for which our results apply. 
These include endomorphisms Derived from Anosov (inspired by the examples of Ma\~n\'e studied
in \cite{Car93,ABV00}) obtained from Anosov endomorphisms by means of local bifurcations 
(e.g. pitchfork or Hopf bifurcations). 

This paper is organized as follows. In Section~\ref{sec:Main} we describe the classes of 
partially hyperbolic endomorphisms considered here, we state our main results and give a glance on
the proofs. In Section~\ref{sec:examples} we give some applications. In Section~\ref{sec:Preliminaries} we discuss some concepts to the full understanding of our arguments, as the notion of hyperbolicity for endomorphism, the concept of SRB measures, natural extensions and the characterization of hyperbolicity via cone fields. In Section~\ref{sec:existence_uniqueness} we give the proof of the finiteness (uniqueness when
the dynamics is transitive) of SRB measures. In Section~\ref{sec:stability} we prove the statistical stability of the SRB measures, while the proof of the continuity of the entropy of SRB measures appears in Section~\ref{subsec:open}.
Finally, in Section~\ref{sec:future} we relate our results with others for partially hyperbolic endomorphisms and discuss some possible future
directions.

\section{Main results}\label{sec:Main}

\subsection{Setting}
Throughout, let $M$ be a compact connected Riemannian $d$-dimensional manifold.
Assume that $f:M\rightarrow M$ is a $C^{1+\alpha}$, $\alpha>0$, local diffeomorphism 
and that  $\Lambda\subset M$ a compact positively $f$-invariant subset.
Let $U\supset\Lambda$ be an open set such that $f(\overline U) \subset U$ and assume
$$
\Lambda= \Lambda_f:=\bigcap_{n\geq0}f^{n}(\overline{U}),
$$
where $\overline{U}$ denotes the closure of the set $U$.
Assume that there are a continuous splitting of the tangent bundle
$T_{U}M=E^{s}\oplus F$ (where $F$ is not necessarily $Df$-invariant)
and constants $a, c>0$ and $0<\lambda<1$ such that:
\begin{enumerate}
\item [(H1)]$Df(x)\cdot E_{x}^{s}=E_{f(x)}^{s}$
for every $x\in U$;\label{enu:invstabledirection}
\item [(H2)]$\|Df^{n}(x)|_{E^{s}_x}\|\leq\lambda^{n}$ for every $x\in U$ and $n\in\mathbb{N}$;\label{enu:contstabledirection}
\item [(H3)] the cone field $U\ni x\mapsto C_{a}(x):=\{ u+v \in E^s_x\oplus F_x : \|u\| \le a\|v\| \}$ satisfies:
	\begin{itemize}
	\item $Df(x) (C_{a}(x))\subseteq C_{a}(f(x))$ for every $x\in U$, and 
	\item there is a positive Lebesgue measure set $H\subset U$ 
	so that: 
	\begin{equation}\label{eq:nuexpansion}
	\limsup_{n\to\infty}\frac{1}{n}\sum_{j=0}^{n-1}\log\|(Df(f^{j}(x))|_{C_{a}(f^{j}(x))})^{-1}\|\leq-2c<0
	\end{equation}
	 for every $x\in H$\emph{\label{enu:conenue}} (the expression $\|(Df(f^{j}(x))|_{C_{a}(f^{j}(x))})^{-1}\|$ is made precise at Subsection~	\ref{sec:conehyperbolicity});
	 \end{itemize}
\item [(H4)]$\left\Vert Df(x)\, v\right\Vert \,\left\Vert Df(x)^{-1} w\right\Vert \leq\lambda\,\left\Vert v\right\Vert \,\left\Vert w\right\Vert $ for every $v\in E_{x}^{s}$, all $w\in Df(x) ( C_{a}(x))$
and $x\in U$.\label{enu:domination}
\end{enumerate}

Given a local diffeomorphism 
as above, we will say that $\Lambda$ is an \emph{attractor},
$E^{s}$ is a \emph{uniformly contracting bundle} and that $C_{a}$ is a
 \emph{non-uniformly expanding cone field} (or that $C_{a}$ exhibits non-uniform expansion). 
 We will denote 
 $d_{s}=\dim(E^{s})$
and $d_{u}:=\dim(F)=\dim(C_{a})$.  

\subsection{Statements}

Our first result concerns the existence and finiteness of SRB measures for partially hyperbolic endomorphisms 
(we refer the reader to Section~\ref{sec:defSRB} for the definition of SRB measures).

\begin{maintheorem}\label{thm:TEO1} 
Let $f$ be a $C^{1+\alpha}$ local diffeomorphism satisfying (H1) - (H4). Then there are finitely 
many SRB measures for $f$ whose basins of attraction cover $H$,
Lebesgue mod zero. Moreover, these measures are hyperbolic and, if $Leb(U\setminus H)=0$ then there are finitely many 
SRB measures for $f$ in $U$. Furthermore,
if $f\mid_\Lambda$ is transitive then there is a unique SRB measure for $f$ in $U$.
\end{maintheorem}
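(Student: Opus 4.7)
The plan is to adapt the Alves--Bonatti--Viana construction of SRB measures for non-uniformly expanding maps (\cite{ABV00}) to the partially hyperbolic endomorphism setting, the essential new difficulty being that the unstable laminations must be constructed simultaneously with the SRB measure and tracked in the natural extension $\hat f\colon \hat M\to\hat M$ of $f$.

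First I would establish the existence of hyperbolic times on $H$: applying a Pliss-type lemma to (H3) yields, for Lebesgue a.e.\ $x\in H$, a set of integers $n$ of positive lower density at which
\[
\prod_{j=n-k}^{n-1}\bigl\|(Df(f^{j}(x))|_{C_{a}(f^{j}(x))})^{-1}\bigr\|\le e^{-ck}\quad\text{for all }1\le k\le n.
\]
For a small disk $D\subset H$ tangent to the cone field, the $C^{1+\alpha}$ regularity gives a Ma\~n\'e-type bounded distortion estimate for $Df^{n}|_{TD}$ on preimages of suitable small balls at hyperbolic times; combined with the domination (H4), this guarantees that at every hyperbolic time $n$ the image $f^{n}(D)$ contains an embedded disk $\Delta_{n}$ tangent to $C_{a}$, of uniform inner radius, with uniformly bounded curvature, and graph-like over the cone direction.

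I would then take a weak$^{*}$ accumulation point $\mu$ of the Cesaro averages
\[
\mu_{N}=\frac{1}{N}\sum_{n=0}^{N-1}f^{n}_{*}\Bigl(\frac{Leb_{D}}{Leb(D)}\Bigr).
\]
The positive lower density of hyperbolic times ensures that $\mu$ receives a definite fraction of its mass from iterates carried by the admissible disks $\Delta_{n}$. Lifting to $\hat M$ and performing a Cantor-diagonal extraction along coherent pre-orbits, the $\Delta_{n}$ converge to pieces of local unstable manifolds for $\hat f$, and the uniform distortion yields absolute continuity of the disintegration of $\mu$ along this lamination after projection to $M$, which is the SRB property in the sense of \cite{QXZ09}. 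Hyperbolicity then follows from (H2) (negative exponents along $E^{s}$) together with (H3) and Oseledets (positive exponents along the cone direction). Finiteness of ergodic SRB measures whose basins cover $H$ mod zero is obtained by a standard absolute-continuity-of-stable-holonomy argument (Pesin theory provided by (H2), (H4) and the $C^{1+\alpha}$ regularity) combined with the compactness of $\Lambda$: each ergodic SRB measure consumes a positive Lebesgue set of points in a neighbourhood of a local unstable disk, and only finitely many such pairwise disjoint basins can fit. The case $Leb(U\setminus H)=0$ extends the conclusion from $H$ to $U$. Finally, transitivity reduces the count to one, since two distinct ergodic SRB measures would have disjoint compact $f$-invariant supports in $\Lambda$, contradicting the existence of a dense forward orbit.

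The main obstacle is that unstable manifolds for endomorphisms are not intrinsically defined on $M$ but depend on entire pre-orbits, so the graph transform producing them cannot be applied directly on $M$; one must lift to $\hat M$ and arrange that the accumulation disks at hyperbolic times correspond to a consistent sequence of pre-orbits. The delicate bookkeeping required to show that the limit on $\hat M$ projects to a \emph{bona fide} SRB measure (with absolutely continuous disintegrations, rather than a merely u-Gibbs-type measure or a measure whose SRB property fails because the pre-orbit branches were mismatched) is the principal technical point distinguishing this proof from the diffeomorphism case.
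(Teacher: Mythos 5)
Your overall strategy coincides with the paper's: positive density of cone-hyperbolic times via a Pliss-type lemma, Ces\`aro averages of Lebesgue measure on a disk tangent to $C_a$, a lift to the natural extension that tracks coherent pre-orbits so that the $n$-hyperbolic disks accumulate on genuine unstable manifolds parametrized by points of $M^f$, bounded distortion giving absolute continuity of the disintegration, Oseledets for hyperbolicity, and a Hopf/holonomy argument for the basin of each SRB measure to contain an open set of definite Lebesgue size, hence finiteness. The delicate bookkeeping you flag (lifting $Leb_D$ to $M^f$ using a fixed measure on the Cantor fiber $\Gamma$, building an auxiliary partition on a space $K^\dagger$ indexed by hyperbolic times, and passing to the limit while controlling the disintegration uniformly) is indeed the core technical work of the paper, and your sketch is consistent with it, though it leaves that entire construction unaddressed.

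However, your transitivity argument is not correct as stated. You assert that two distinct ergodic SRB measures ``would have disjoint compact $f$-invariant supports in $\Lambda$, contradicting the existence of a dense forward orbit.'' Neither half of this withstands scrutiny: distinct ergodic measures are mutually singular, but that does not force their topological supports to be disjoint (for a transitive hyperbolic attractor there are uncountably many ergodic invariant measures, many of full support); and even if the supports were disjoint, a single dense forward orbit is compatible with the coexistence of two disjoint closed invariant sets, since supports of measures need not contain open sets. The correct route — and the one the paper takes — works at the level of \emph{basins}, not supports. One first shows (via absolute continuity of the disintegration along the unstable disks, absolute continuity of the stable holonomy, and uniform lower bounds on the size of stable and unstable leaves) that the ergodic basin $B(\mu_i)$ of each SRB measure contains, up to a Lebesgue-null set, an open set $V_i$ of Lebesgue measure bounded below by a uniform constant. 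This already gives finiteness. If $f|_\Lambda$ is transitive, there is $N_0$ with $f^{N_0}(V_1)\cap V_2\neq\emptyset$; since $f$ is a local diffeomorphism this intersection is open with positive Lebesgue measure, and its $f^{-N_0}$-preimage inside $V_1$ provides a positive Lebesgue set of points lying (mod zero) simultaneously in $B(\mu_1)$ and $f^{-N_0}(B(\mu_2))=B(\mu_2)$, forcing $\mu_1=\mu_2$. You should replace the support argument with this basin-overlap argument.
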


Some comments are in order.
SRB measures are ergodic and 
have absolute continuous disintegration with respect to Lebesgue measure along unstable manifolds 
(cf. Subsection~\ref{sec:defSRB}).
It is not hard to check that the existence of the subbundle $E^{s}$ 
is a $C^1$-open condition, as it is equivalent to the existence of a stable cone field, and that whenever
it is trivial then \eqref{eq:nuexpansion} corresponds to the usual notion of non-uniform expansion
in \cite{ABV00}.
Condition (H3) implies the existence of a disk 
$D\subset U$ which is tangent to the cone field so
that $Leb_{D}(H)>0$ (here $Leb_{D}$ stands for the 
Lebesgue measure on $D$),
which is a sufficient condition for the existence of SRB measures. 
Nevertheless, the full strength of (H3) 
is used to guarantee the finiteness of the SRB measures whose basin have non-trivial intersection 
with the set $H$ in Theorem~\ref{thm:TEO1}. 
In the case of surfaces, an upper bound on the number of SRB measures can be given in terms of the number of 
homoclinic classes (cf. ~\cite{MT16,M17}).
Let $\mathcal{M}(M)$ denote the space of Borelian probability measures on $M$
endowed with the weak$^*$ topology. 

Our second result concerns the continuous dependence of the SRB measures in terms of the dynamics, the so called 
statistical stability of the SRB measures, in the weak$^*$ topology. For that, we assume that a family of local diffeomorphisms satisfy conditions (H1)-(H4) with uniform constants.
Denote by $End^{1+\alpha}(M)$ the set of $C^{1+\alpha}$ non-singular endomorphisms on $M$ (i.e. local diffeomorphisms) 
endowed with the $C^{1+\alpha}$ topology. 
Assume that $\Lambda$ is an attractor for $f$ and that $U\supset\Lambda$
an open neighborhood of $\Lambda$ such that $\cap_{n\geq0}f^{n}(\overline{U})=\Lambda$.
For every $g \in End^{1+\alpha}(M)$ that is $C^1$-close to $f$ consider $\Lambda_{g}:=\bigcap_{n\geq0}g^{n}(\overline{U})$.
Assume that there are an open neighborhood $\mathcal{V}\subset End^{1+\alpha}(M)$ of $f$,
constants $\lambda\in(0,1)$, $c>0$ and a family of cone fields $U\ni x\rightarrow C(x)$ 
of constant dimension $0<d_{u}\leq dim(M)$,
such that (H1)-(H4) holds with uniform constants on $U$: for every $g\in\mathcal{V}$ there exists 
a continuous splitting $T_{U}M=E^{s}(g)\oplus F(g)$ so that
\begin{enumerate}
\item [(H1)]$Dg(x)\cdot E_{x}^{s}(g)=E_{g(x)}^{s}(g)$
for every $x\in U$;
\item [(H2)]$\|Dg^{n}(x)|_{E^{s}_x(g)}\|\leq\lambda^{n}$ for every $x\in U$ and $n\in\mathbb{N}$
\item [(H3)] the cone field $U\ni x\mapsto C(x)$ contains the subspace $F(g)$, and is such that 
$Dg(x)  (C(x))\subseteq C(g(x))$
for every $x\in U$, and 
\begin{equation*}
\limsup_{n\to\infty}\frac{1}{n}\sum_{j=0}^{n-1}\log\|(Dg(g^{j}(x))|_{C(g^{j}(x))})^{-1}\|\leq-2c<0
\end{equation*}
for Lebesgue almost every $x\in U$; and
\item [(H4)]$\left\Vert Dg(x)\cdot v\right\Vert \, \left\Vert Dg(x)^{-1}\cdot w\right\Vert \leq\lambda\, \left\Vert v\right\Vert \cdot\left\Vert w\right\Vert $ for every $v\in E_{x}^{s}(g)$, all $w\in Dg(x)(C(x))$
and $x\in U$.\label{enu:domination}
\end{enumerate}
We will say that $\mathcal{V}$ is an \emph{open set of partially hyperbolic $C^{1+\alpha}$-local diffeomorphisms with
uniform constants}. For each $g\in\mathcal{V}$, we denote by $H_{g}$
the subset of $U$ that consists of points satisfying the non-uniform cone-hyperbolicity condition in 
(H3). 

\begin{maintheorem}\label{thm:TEO2} 
Given $\alpha>0$,  let $\mathcal{U}$
be an open set of $C^{1+\alpha}$ local diffeomorphisms so that every $g\in\mathcal{U}$
satisfies (H1) - (H4) with uniform constants. If $(g_{n})_{n\in\mathbb{N}}$
is a sequence in $\mathcal{U}$ converging to $g\in\mathcal{U}$ and, for every $n\in \mathbb N$, $\mu_n$ is
an SRB measure for $g_n$ on $\Lambda_{g_n}$ then 
every accumulation point $\mu$ of $(\mu_{n})_n$ is a convex combination of the SRB
measures for $g$ on $\Lambda_g$. In particular, if every $g\in\mathcal{U}$ is transitive
and $\mu_g$ denotes the unique SRB measure for $g$ then 
\[
\begin{array}{ccc}
\mathcal{U} & \rightarrow & \mathcal{M}(\Lambda_g) \subset \mathcal{M}(M)\\
g & \mapsto & \mu_{g}
\end{array}
\]
is continuous with respect to the weak$^*$ topology.
\end{maintheorem}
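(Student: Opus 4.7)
The plan is to argue via the construction of SRB measures given in the proof of Theorem~\ref{thm:TEO1}. Extracting a subsequence (which by standard weak$^*$ compactness of $\mathcal{M}(M)$ exists), assume $\mu_n \to \mu$. Since each $\mu_n$ is supported on $\Lambda_{g_n} \subset \overline{U}$ and $g_n \to g$ uniformly, a routine argument yields that $\mu$ is a $g$-invariant probability measure supported on $\Lambda_g$. The substantive claim is that $\mu$ is a finite convex combination of the (finitely many) SRB measures for $g$; for this we must show that $\mu$ has absolutely continuous conditionals along Pesin unstable manifolds of $g$ whose Lyapunov exponents along the cone direction are bounded below by $c$.

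The key observation is that, because the constants in (H1)-(H4) are uniform across $\mathcal{U}$, every geometric quantity used in the construction of SRB measures is uniform in $n$: the size of local unstable manifolds, the density of hyperbolic times, bounded distortion on cone-tangent disks, and the cone-expansion rates. From the proof of Theorem~\ref{thm:TEO1}, each $\mu_n$ can be written (up to an ergodic decomposition) as a weak$^*$ accumulation point of Cesàro averages of $(g_n^k)_*\bigl(\mathrm{Leb}_{D_n}|_{H_{g_n}}\bigr)$ for a cone-tangent disk $D_n\subset U$ of uniform radius, with the pre-orbit tracking done in the natural extension of $g_n$. I would then apply Ascoli--Arzelà to the disks $D_n$, which are graphs over $F(g_n)$ with $C^{1+\alpha}$-norms controlled uniformly, to extract a limit disk $D$ tangent to the cone field of $g$, together with coherent pre-orbit data for the natural extension $\hat g$.

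Next I would transfer the SRB property to $\mu$. For each $n$ and each hyperbolic time $k$ of a point in $D_n\cap H_{g_n}$, the push-forward $g_n^k$ applied to a uniform-size neighborhood in $D_n$ is an unstable disk for $g_n$ whose density with respect to the induced Lebesgue is bounded above and below by constants independent of $n$ and $k$, thanks to the uniform distortion estimate on cone-tangent disks. Using this and the $C^1$-continuous dependence of these disks and densities on the dynamics, I pass to the limit on $n$ to obtain, for $\mu$-typical points, unstable disks for $g$ carrying bounded densities; these are recognized as pieces of Pesin unstable manifolds for $\mu$, established a posteriori from the hyperbolicity of $\mu$ (the lower bound $c$ on cone expansion passes to the limit by uniformity). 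Rokhlin's theorem then yields that the disintegration of $\mu$ along these unstable manifolds is absolutely continuous, so $\mu$ is SRB, and by Theorem~\ref{thm:TEO1} applied to $g$ it decomposes as a finite convex combination of the ergodic SRB measures of $g$. Transitivity forces a unique SRB measure $\mu_g$, so every subsequential limit of $(\mu_n)$ equals $\mu_g$, giving continuity of $g\mapsto\mu_g$.

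The main obstacle I expect is exactly the transfer of absolute continuity across the limit. Unstable manifolds are not furnished by an invariant subbundle: they depend on full backward orbits in the natural extension, and their very existence for $\mu$-typical points is itself a consequence of the hyperbolicity of $\mu$, which must be established in parallel. One has to run the construction in the natural extensions $\hat g_n$ and verify that $\hat g_n\to\hat g$ strongly enough for the lifted measures $\hat\mu_n$ and the pre-orbit-tracked disks to converge to data compatible with $\hat g$. Uniform control along inverse branches, together with the uniform domination from (H4), is what ultimately makes the limiting unstable disks well defined and the limiting densities bounded away from $0$ and $\infty$, and thus closes the argument.
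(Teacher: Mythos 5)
Your overall scheme matches the paper's: uniform constants in (H1)--(H4) give uniform sizes of unstable disks, uniform bounded distortion and uniform density bounds; Arzel\`a--Ascoli extracts a limiting family of disks tangent to the cone field which are genuine unstable disks for $g$ (by passing the contraction of inverse branches to the limit); and the weak$^*$ limit $\mu$ inherits an absolutely continuous disintegration with uniformly bounded densities along these disks. This is exactly the paper's Lemmas~5.5, 5.7, 5.8 and Proposition~5.6. However, there is a genuine gap in your last step.

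You conclude that the disintegration of $\mu$ along the limiting disks is absolutely continuous, say ``so $\mu$ is SRB'', and deduce from Theorem~A that $\mu$ decomposes into finitely many ergodic SRB measures. This does not follow. In the paper's terminology an SRB measure is by definition \emph{ergodic}, and a weak$^*$ limit of ergodic measures need not be ergodic, so $\mu$ itself is not an SRB measure. More importantly, Theorem~A does not assert that every invariant measure with absolutely continuous conditionals along unstable disks is a convex combination of SRB measures. To obtain that conclusion one must show that $\mu$-almost every \emph{ergodic component} $\mu_x$ again has absolutely continuous conditionals along the limiting unstable disks, and absolute continuity of disintegrations is not a priori inherited by ergodic components. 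The paper addresses this via a Hopf-type argument (Birkhoff averages are constant along the limiting unstable disks because they lie in the set $R(\hat g)$, and the quotient-measure comparison is handled by Lemma~5.9, which is \cite[Lemma~6.2]{ABV00}), culminating in Proposition~5.10. Your proposal needs this step: identify the set of regular (forward-backward generic) lifted points, show the ergodic-components map $\hat x \mapsto \hat\mu_{\hat x}$ is constant on each limiting unstable disk, then invoke the disintegration-transfer lemma to conclude that $\hat\mu_{\hat x,\hat\Delta}=\hat\mu_{\hat\Delta}$ for almost every $\hat x$ and $\hat\Delta$. Only after this can you identify almost every ergodic component of $\mu$ as an SRB measure and conclude that $\mu$ is a convex combination of SRB measures for $g$; finiteness then follows from Theorem~A.
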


Using the statistical stability of SRB measures we also prove the continuous dependence of the entropy function associated to the SRB measure.

\begin{maincorollary}\label{cor:entropy-cont}
Let $\cU$ be a open set of transitive $C^{1+\alpha}$ local diffeomorphisms that satisfy (H1)-(H4)
with uniform constants. If $\mu_f$ denotes the unique SRB measure for $f\in \cU$ then the entropy function
\[
\begin{array}{rccc}
H: & \mathcal{U} & \rightarrow & \mathbb R^+\\
& f & \mapsto & h_{\mu_{f}}(f)
\end{array}
\]
is continuous, where $h_{\mu_{f}}(f)$ denotes the entropy of $\mu_f$ with respect to $f$.
\end{maincorollary}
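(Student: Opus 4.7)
The plan is to express the entropy of the SRB measure as the $\mu_g$-integral of a functional that depends continuously on $(g,x)\in\cU\times\overline U$, and then combine this expression with the weak$^*$ statistical stability supplied by Theorem~\ref{thm:TEO2}. Since $g\in\cU$ is $C^{1+\alpha}$ and $\mu_g$ has absolutely continuous disintegration along the unstable manifolds of the natural extension, Pesin's entropy formula for SRB measures of endomorphisms (in the spirit of \cite{QXZ09}) gives
\begin{equation*}
h_{\mu_g}(g)=\int\sum_{\lambda_i(g,x)>0}\lambda_i(g,x)\,d\mu_g(x).
\end{equation*}
By the hyperbolicity of $\mu_g$ (Theorem~\ref{thm:TEO1}) and the invariance and uniform contraction of $E^s(g)$ coming from (H1)-(H2), the stable Oseledets subspace coincides $\mu_g$-almost everywhere with $E^s(g)$ and carries exactly the $d_s$ negative exponents. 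Applying Oseledets theorem to $Dg$ and to its restriction $Dg|_{E^s(g)}$ then recasts the entropy as
\begin{equation*}
h_{\mu_g}(g)=\int\Phi_g(x)\,d\mu_g(x),\qquad \Phi_g(x):=\log|\det Dg(x)|-\log\big|\det Dg(x)|_{E^s_x(g)}\big|.
\end{equation*}

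The next step is to establish that $g\mapsto E^s(g)$ is continuous in the $C^0$ topology of continuous $d_s$-plane fields over $\overline U$. This is standard in the present uniformly contracting setting: $E^s(g)$ is the unique $Dg$-invariant $d_s$-plane field with contraction rate $\lambda$ and arises as the attracting fixed point of a graph transform in the Grassmannian bundle whose contraction rate and domain are uniform on $\cU$, so this fixed point varies continuously in $g$. Hence $\Phi\colon\cU\times\overline U\to\mathbb R$, $(g,x)\mapsto\Phi_g(x)$, is jointly continuous, and for any sequence $g_n\to g$ in $\cU$ joint continuity together with compactness of $\overline U$ yields uniform convergence $\Phi_{g_n}\to\Phi_g$ on $\overline U$.

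To conclude, Theorem~\ref{thm:TEO2} supplies $\mu_{g_n}\to\mu_g$ weakly$^*$, and the triangle inequality
\begin{equation*}
\Big|\!\int\Phi_{g_n}\,d\mu_{g_n}-\!\int\Phi_g\,d\mu_g\Big|\le\|\Phi_{g_n}-\Phi_g\|_\infty+\Big|\!\int\Phi_g\,d\mu_{g_n}-\!\int\Phi_g\,d\mu_g\Big|
\end{equation*}
forces both summands to tend to zero, yielding $h_{\mu_{g_n}}(g_n)\to h_{\mu_g}(g)$. The main obstacle will be the rigorous justification of Pesin's entropy formula in this non-invertible, partially hyperbolic regime: because unstable manifolds are defined only relative to whole pre-orbits, the formula has to be established on the natural extension and then projected down, using that the metric entropy of an endomorphism agrees with that of its natural extension and that the SRB property lifts accordingly. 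Once this is in place, the rest is a transparent combination of statistical stability with the continuous dependence of $E^s(g)$ on $g$.
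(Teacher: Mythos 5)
Your proposal is correct and follows essentially the same route as the paper: both derive $h_{\mu_g}(g)=\int\log\frac{|\det Dg|}{|\det Dg|_{E^s(g)}|}\,d\mu_g$ from Pesin's formula for endomorphisms and Oseledets' theorem (using that the hyperbolicity of $\mu_g$ together with (H1)--(H2) forces the negative-exponent Oseledets subspace to be $E^s(g)$), and both then combine the $C^0$-continuity of $g\mapsto E^s(g)$ with the weak$^*$ statistical stability of Theorem~\ref{thm:TEO2}. Your extra remarks on the graph-transform argument for the continuity of $E^s(g)$ and on justifying Pesin's formula via the natural extension merely spell out points the paper handles by a direct citation to \cite{QXZ09}.
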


It is clear that an attractor of an Axiom A endomorphism satisfies the assumptions of Theorems~\ref{thm:TEO1}
and ~\ref{thm:TEO2}. In particular we obtain the following direct consequences that we did not find in the literature:

\begin{maincorollary}
Let $f$ be an $C^{1+\alpha}$ local diffeomorphism 
and $U\subset M$ be such that
$\Lambda_f=\bigcap_{n\ge 0} f^n(U)$ is a transitive hyperbolic attractor for $f$. 
Assume that $\mathcal{U}$ is a $C^{1+\alpha}$-open neighborhood of $f$ such that for any 
$\mathcal{U} \ni g \mapsto \Lambda_g$ denotes the analytic continuation of the hyperbolic attractor. 
Then $f\mid_{\Lambda_f}$ is statistically stable: if $(f_k)_k$ is a sequence of local diffeomorphisms in $\cU$
so that $f_k \to f$ as $k\to\infty$ (in the $C^{1+\alpha}$-topology) then $\mu_{f_k} \to \mu_f$ in the weak$^*$ topology.
\end{maincorollary}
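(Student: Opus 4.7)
The plan is to reduce the statement to a direct application of Theorem B by exhibiting the $C^{1+\alpha}$-neighborhood $\cU$ (shrinking if needed) as an open set of partially hyperbolic local diffeomorphisms satisfying (H1)--(H4) with uniform constants, and on which every map is transitive on its attractor.

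First, I would recall the standard facts about a transitive hyperbolic attractor $\La_f$ of an endomorphism: there is an invariant continuous splitting $T_{\La_f}M=E^s\oplus E^u$ with uniform contraction on $E^s$ and uniform expansion on $E^u$, and the existence of $E^s$ amounts to a stable cone field on a neighborhood, which is a $C^1$-open condition. Extending $E^s$ to a continuous $Df$-invariant subbundle on $U$ (this can be done since $\bigcap_{n\ge0} f^n(\overline U)=\La_f$) gives (H1)--(H2) for $f$, and an analogous construction of a continuous positively $Df$-invariant cone field $C$ on $U$ containing the unstable directions on $\La_f$ yields (H3) together with the domination (H4). All of this persists with uniform constants for $g$ in a sufficiently small $C^{1+\alpha}$-neighborhood $\cU$ of $f$, because hyperbolic attractors of local diffeomorphisms are robust and admit an analytic continuation $\La_g=\bigcap_{n\ge 0} g^n(\overline U)$ on which the hyperbolic structure varies continuously in $g$ (see Przytycki's work cited earlier in the paper).

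Second, I would upgrade the non-uniform cone expansion (H3) to hold on all of $U$, with a uniform constant $c>0$. On $\La_g$ the cone expansion is uniform, so $\log\|(Dg(x)|_{C(x)})^{-1}\|\le -2c$ pointwise for some $c>0$ independent of $g\in\cU$. For an arbitrary $x\in U$, since $\La_g$ is an attractor we have $\dist(g^j(x),\La_g)\to 0$ as $j\to\infty$; combined with the uniform continuity of $Dg$ and of the cone field $C$, this shows
\[
\limsup_{n\to\infty}\frac{1}{n}\sum_{j=0}^{n-1}\log\|(Dg(g^j(x))|_{C(g^j(x))})^{-1}\|\le -2c<0
\]
for every $x\in U$, with the same constant $c$ as on $\La_g$. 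In particular $H_g=U$ and (H3) holds Lebesgue a.e.\ on $U$ with uniform constants. Hence $\cU$ is an open set of partially hyperbolic $C^{1+\alpha}$-local diffeomorphisms with uniform constants in the sense of Section~\ref{sec:Main}.

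Third, I would check that transitivity is preserved on $\cU$: the structural stability of transitive hyperbolic basic pieces for $C^1$-endomorphisms provides a semi-conjugacy between $f|_{\La_f}$ and $g|_{\La_g}$ for $g$ sufficiently $C^1$-close to $f$, from which transitivity of $g|_{\La_g}$ follows. Having verified the hypotheses of Theorem~\ref{thm:TEO2} with the uniqueness clause active on all of $\cU$, the continuity of $g\mapsto \mu_g$ in the weak$^*$ topology, and in particular $\mu_{f_k}\to\mu_f$ whenever $f_k\to f$ in $\cU$, follows immediately. The main obstacle I expect is the uniform control of (H3) in the neighborhood: one must ensure that the single cone field $C$ on $U$ works for every $g\in\cU$ and that the expansion constant $c$ does not degenerate, which is handled by taking $\cU$ small enough so that the unstable cones of all $g\in\cU$ on $\La_g$ lie uniformly inside $C$ and the expansion rates stay bounded away from zero.
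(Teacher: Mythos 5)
Your proposal is correct and follows essentially the same route the paper takes: the paper dispatches this corollary with the one-line observation that an Axiom~A attractor satisfies (H1)--(H4) with uniform constants on a $C^{1+\alpha}$-neighborhood and then invokes Theorem~\ref{thm:TEO2}, and your argument is a careful unpacking of exactly that reduction (stable cone field giving $E^s$ and (H1)--(H2), unstable cone field giving (H3)--(H4), uniform constants and transitivity persisting by robustness and structural stability of hyperbolic attractors for endomorphisms, then apply the uniqueness clause of Theorem~\ref{thm:TEO2}). One small remark: in the Axiom~A setting the cone expansion is already \emph{uniform} on $\Lambda_g$, so after shrinking $U$ if necessary one can take the bound in (H3) to hold pointwise on all of $U$ without the Cesàro/attraction argument you give; but your limsup argument is also valid after adjusting the constant $c$ slightly downward.
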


In the remaining of this section we exhibit $C^1$-open sets of $C^{1+\alpha}$ 
partially hyperbolic non-singular endomorphisms for which it is checkable that 
assumptions (H1)-(H4) hold with uniform constants.
Let $\mathcal{U}\subset End^{1+\alpha}(M)$ be an open set of local diffeomorphisms on $M$
for which there are constants $\lambda\in(0,1)$, $c>0$ and a family of cone fields $U\ni x\rightarrow C(x)$ 
of constant dimension $0<d_{u}\leq dim(M)$,
and there is an open region $\mathcal{O}\subset M$ and a partition $\left\{ V_{1},...,V_{p},V_{p+1},...,V_{p+q}\right\}$ of $M$,
and $\sigma>q$, such that for every $g\in\mathcal{U}$ there exists 
a continuous splitting $T_{U}M=E(g)\oplus F(g)$ and $a>0$ so that
\begin{enumerate}
\item[(A1)] the family of cone fields $x\mapsto C_{a}^{-}(x)$ of width $a>0$ centered at $E(g)$ satisfies\label{(A1)}
\begin{enumerate}
\item $Dg(x)^{-1} ( C_{a}^{-}(g(x)))\subsetneq C_{a}^{-}(x)$
for every $x\in M$;
\item there exists $\lambda_{s}>1$ such that $\|Dg(x)^{-1}\cdot v\| > \lambda_{s}\|v\|$
for every $v\in C_{a}^{-}(g(x))$ and $x\in M$;
\end{enumerate}
\item[(A2)] there exists $L:M\rightarrow(0,\infty)$, determined by (A4)(b), in such a way that \label{(A2)}
\begin{enumerate}
\item $Dg(x) ( C(x))\subsetneq C(g(x))$ for every $x\in M;$
\item $\|Dg(x)\cdot v\| > L(x)\|v\|$ for every $v\in C(x)$ and for every $x\in M$;
\end{enumerate}
\item[(A3)] there exists $\sigma>1$ so that $\det\left|Df(x)|_E\right|>\sigma$ for every subspace $ E $ 
of dimension $ \dim(F) $ contained in $ C(x) $, for every $x\in M$; \label{(A3)}
\item[(A4)] \label{(A4)}
the set $\mathcal{O}$ is contained in $\cup_{j=1}^{q}V_{p+j}$  
and 
\begin{enumerate}
\item for every disk $D$ tangent to the cone field $C(\cdot)$, the image $g(D \cap V_i) \cap V_j$ has at most
	one connected component for all $i,j\in \{1, 2,\dots, p+q\}$,
\item $L(x)\geq\lambda_{u}>1$ for every $x\in M\backslash\mathcal{O}$ and $L(x)\geq L$ for every 
		$x\in\mathcal{O}$;
\end{enumerate}
where the constants $L$ and $\gamma$ are determined by \eqref{eqL1domin}, \eqref{eq:determineL} and 	
Lemma~\ref{lem:freq_visit}.
\end{enumerate}
It is clear from the definition that the previous conditions are $C^1$-open and that whenever $L>1$ (or equivalently $\mathcal O =\emptyset$) 
corresponds to the hyperbolic setting. 
Endomorphisms satisfying (A1)-(A4) with uniform constants arise naturally in the context of local bifurcations of hyperbolic endomorphisms
(cf Section~\ref{sec:examples}).
In Section~\ref{subsec:open} we prove that the previous class of endomorphisms satisfies the assumptions of Theorems~\ref{thm:TEO1} 
and ~\ref{thm:TEO2} and, consequently:

\begin{maincorollary}\label{thm:open}
Every $C^{1+\alpha}$ local diffeomorphism $f\in \mathcal U$ has a finite number of hyperbolic SRB measures, whose basins of attraction cover Lebesgue almost every points in $U$. Moreover, every $f\in \cU$ is statistically stable.
\end{maincorollary}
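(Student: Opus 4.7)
The plan is to show that the conditions (A1)--(A4) on $\mathcal U$ imply that each $g\in\mathcal U$ satisfies (H1)--(H4) with uniform constants, so that Theorems~\ref{thm:TEO1} and~\ref{thm:TEO2} apply directly. This immediately yields the finite number of hyperbolic SRB measures covering Lebesgue-a.e.\ point of $U$ and the statistical stability inside $\mathcal U$.

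For the straightforward implications, fix $g\in\mathcal U$. The strict backward invariance of $C_a^-$ in (A1)(a) allows one to define the maximal $Dg$-invariant subbundle inside the stable cone as
\[
E^s(g)_x \;=\; \bigcap_{n\ge 0}\bigl(Dg^n(x)\bigr)^{-1}\bigl(C_a^-(g^n(x))\bigr),
\]
which is continuous and of constant dimension, giving (H1); the uniform expansion of $Dg^{-1}$ on $C_a^-$ from (A1)(b) then yields $\|Dg(x)|_{E^s(g)_x}\|\le\lambda_s^{-1}$, which is (H2) with $\lambda=\lambda_s^{-1}$, independently of $g\in\mathcal U$. For the domination (H4), given $v\in E^s_x(g)$ and $w\in Dg(x)(C(x))\subset C(g(x))$, use (A1)(b) to estimate $\|Dg(x)v\|\le\lambda_s^{-1}\|v\|$ and (A2)(b) (applied to the $C(x)$-preimage of $w$) to get $\|Dg(x)^{-1}w\|\le L(x)^{-1}\|w\|$; since (A4)(b) guarantees $L(x)\ge L$ globally and the constant $L$ is prescribed by the domination-type relation referenced in (A4) so that $\lambda_s^{-1}L^{-1}<1$, (H4) follows with a uniform rate.

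The main task is the non-uniform cone expansion (H3). Outside the bad region $\cO$, (A4)(b) gives $\|(Dg(x)|_{C(x)})^{-1}\|\le\lambda_u^{-1}<1$, so the Birkhoff average in \eqref{eq:nuexpansion} is controlled by the asymptotic frequency with which orbits visit $\cO$. The key ingredients are (A3), which provides a uniform volume-expansion $\sigma$ along any $d_u$-dimensional subspace contained in $C(\cdot)$, and (A4)(a), which bounds the number of connected components of $g(D\cap V_i)\cap V_j$ for disks $D$ tangent to the cone field; combined they yield a bounded-distortion control along iterates of tangent disks. A Pliss/hyperbolic-times argument driven by this volume growth, together with the combinatorial constraint $\sigma>q$, then feeds the statement encoded in Lemma~\ref{lem:freq_visit}, which produces a uniform lower bound on the density of iterates landing outside $\cO$ at Lebesgue-a.e.\ point of $U$. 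This lower bound is precisely what is needed to force the Birkhoff average in \eqref{eq:nuexpansion} below a uniform threshold $-2c$, establishing (H3).

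I expect the verification of (H3) to be the main obstacle, since the combination of the volume-expansion condition with the partition-based combinatorial control must be transferred from tangent disks to a full-Lebesgue subset of points in $U$ via absolute continuity and distortion estimates, in the spirit of the non-uniformly expanding arguments of \cite{ABV00}. Once (H1)--(H4) are in place with uniform constants on $\mathcal U$, Theorem~\ref{thm:TEO1} directly produces the finitely many hyperbolic SRB measures whose basins cover Lebesgue-a.e.\ $x\in U$, while Theorem~\ref{thm:TEO2} gives their statistical stability throughout $\mathcal U$, concluding the corollary.
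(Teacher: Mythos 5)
Your proposal follows the same strategy as the paper: reduce (A1)--(A4) to (H1)--(H4) with uniform constants and then invoke Theorems~\ref{thm:TEO1} and~\ref{thm:TEO2}. The derivations of (H1), (H2) and (H4) from (A1), (A2) and (A4)(b) via the relation \eqref{eqL1domin} are correct and match the paper's. You also correctly identify that (H3) is the real content and that Lemma~\ref{lem:freq_visit} together with Borel--Cantelli is what produces the limsup bound at Lebesgue--a.e.\ point of a tangent disk, and hence (by Fubini along a foliation by tangent disks) at Lebesgue--a.e.\ point of $U$.

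One point of your sketch is imprecise: you describe ``a Pliss/hyperbolic-times argument driven by volume growth'' as the input to Lemma~\ref{lem:freq_visit}, and you cast (A4)(a) as a bounded-distortion device. In fact Lemma~\ref{lem:freq_visit} is a large-deviations bound proved by the counting argument of \cite[Lemma A.1]{ABV00}: (A4)(a) bounds the number of distinct $n$-itineraries of a tangent disk with respect to the partition $\{V_1,\dots,V_{p+q}\}$, the constraint $\sigma>q$ from (A3) ensures that the leafwise volume expansion along any fixed itinerary beats this combinatorial growth, and comparing the two gives the exponential bound $Leb_D(R(D,n,\gamma_0))\leq Ke^{-\vep n}$. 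No Pliss-type extraction of hyperbolic times is involved at this stage; Pliss is used elsewhere (Lemma~\ref{lem:densposth}) in the proof of Theorem~\ref{thm:TEO1} after (H3) is in hand. Likewise the bounded distortion of Proposition~\ref{prop:distorcionhyptime} comes from the H\"older control of the tangent bundle (Proposition~\ref{prop:iteratesoftgbundleholder}), not from (A4)(a). These misattributions do not derail your argument, since you use the conclusion of Lemma~\ref{lem:freq_visit} correctly, but the mechanism behind it is a direct combinatorics-versus-volume estimate rather than a non-uniform hyperbolicity/Pliss argument.
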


\subsection{Overview of the construction and stability of SRB measures}\label{subsec:overview}

We consider local diffeomorphisms that admit an invariant uniformly contracting direction
and a non-uniformly expanding cone field centered on a complementary
direction. 
Let us describe the main differences between the strategy for the construction of SRB measures, inspired from \cite{ABV00},  
in the current non-invertible setting.
First, the 
partial hyperbolicity assumptions (H1), (H2) and (H4) guarantee that small disks tangent to the cone
field $C_a$ are preserved under iteration of the dynamics. By (H3), there exists a disk $D$ 
that is tangent  to the cone field $C_a$ and so that \eqref{eq:nuexpansion} holds for a positive Lebesgue measure set in $D$. This is possible because $H\subset U$ has Lebesgue positive measure.
In particular, one can expect SRB measures to arise from the ergodic components of the 
accumulation points of the C\`esaro averages
\begin{equation}\label{eq:munD}
\mu_{n}=\frac{1}{n}\sum_{j=0}^{n-1}f_{*}^{j}Leb_{D},
\end{equation}
where $Leb_{D}$ is the Lebesgue measure on the disk $D$, inherited by the Riemannian structure on $D$.

In opposition to ~\cite{ABV00}, our assumptions do not imply the subbundle $F$ to be $Df$-invariant
even if $f$ happens to be invertible. For that reason, we use the non-uniform expansion along the cone
field $C_a$  (given by (H3)) to prove the existence of positive frequency of cone-hyperbolic times for every points in $H\cap D$. 
This condition 
assures that
for every $x\in H\cap D$ 
there are infinitely many values of $n$ (with positive frequency)  so that 
$$
\Vert Df^{n}(x)\cdot v\Vert\geq e^{cn}\|v\|
	\quad \text{for {\bf all} }\, v\in C_a(f^{j}(x)) 
$$
(cf. Lemma~\ref{le:unifexp-cht}). The previous uniformity is crucial because unstable directions (to be defined
 \emph{a posteriori} at almost every point by means of Oseledets theorem) 
will be contained in the cone field $C_a$ but cannot be determined a priori. 
This will be also important to prove that these SRB measures are hyperbolic measures. 
Moreover, it guarantees that if $\mathcal{D}_{n}$ denotes a suitable family of disks in $D$ that are 
expanded by $f^{n}$ and $\nu$ is an accumulation point of the measures 
$
\frac{1}{n}\sum_{j=0}^{n-1}f_{*}^{j}Leb_{\mathcal{D}_{j}}
$
then its support is contained in a union of disks 
obtained as accumulations of disks of uniform size $f^{n}(D_n)$, for $D_n\in \mathcal{D}_{n}$, 
as $n\to\infty$.

A second key difference between our setting and the context of partially hyperbolic diffeomorphisms is that, 
due to the possible non-invertibility of the dynamics, the iterates of a disk tangent to the cone $C_a$ 
may have self intersections and stop being a submanifold. In general the accumulation disks $\Delta$
of the family of disks $f^{n}(\mathcal D_{n})$ tangent to $C_a$  may have self-intersections. 
We overcome this difficulty by a selection procedure on the space of pre-orbits (which also involve 
the lifting of a reference measure to the natural extension) in order to prove that  
almost every orbit has a well defined disk, tangent to the cone field $C_a$, with backward contraction by
concatenation of all corresponding inverse branches. 
This construction involves a careful selection of pre-orbits so that accumulation disks are contained in
unstable manifolds parametrized by elements of the natural extension.
As a consequence, any invariant measure supported on these disks will be
hyperbolic since it will have only positive Lyapunov exponents, uniformly bounded away from zero, in the direction complementary 
to the stable bundle.
Moreover, the previous selection process will allow to define unstable manifolds at almost everywhere point and,
consequently, to obtain an almost everywhere defined invariant splitting (cf. Remark \ref{rmk:saturatedS}).  
In particular, invariant stable manifolds are well defined almost everywhere  (see Corollary on page 37 of \cite{PS}
or \cite[Theorems V.6.4 and V.6.5]{QXZ09}). 
Moreover, hypothesis (H1)-(H2) imply the H\"older continuity of the stable bundle and the absolute continuity of the stable foliation (cf.
\cite[Subsections V.7 and V.8]{QXZ09}).
Altogether, the latter ensures, by a Hopf-like argument,
that there are accumulation points of the sequence of probability measures $\mu_n$ given by ~\eqref{eq:munD} 
(hence $f$-invariant) for which some ergodic component is a physical measure for the local diffeomorphism 
$f$. The proof of the SRB property involves a careful and technical choice of partitions adapted to unstable disks on the natural extension that resemble an  inverse Markov tower. Their finiteness follows from the fact that the basin of every SRB measure 
contains an open set with a definite proportion (with respect to Lebesgue) of the phase space and all
SRB measures can be obtained by the previous procedure. 
This completes the guideline for proof of the existence and finiteness of SRB measures.

The proof of the stability of the SRB measures relies on the fact that, in the case that assumptions (H1)-(H4)
can be taken with uniform constants, one can prove that the size of unstable disks tangent to the cone field 
can be proved to be uniform. This allows us to prove that unstable disks and the (not necessarily invariant) absolutely
continuous invariant measures obtained by the previous limiting procedure vary continuously with the local diffeomorphism.
Finally, the continuity of the entropy of the SRB measures is a byproduct of the statistical stability together with Pesin's entropy
formula for local diffeomorphisms (see Section~\ref{subsec:open}).

\section{Some robust classes of partially hyperbolic local diffeomorphisms\label{sec:examples}}

In this section we discuss some applications of our results in relation with some natural bifurcations of
expanding and hyperbolic endomorphisms.

\subsection{Non-uniformly expansion and mostly expanding partially hyperbolic diffeomorphisms }

In \cite{ABV00}, the authors constructed SRB measures for two robust classes of non-uniformly
hyperbolic maps: 
\smallskip

\noindent (I) $C^{1+\al}$-non-singular endomorphisms $f$ ($\al>0$) on a compact Riemannian $M$
for which
\begin{equation}\label{eq:ABVM}
\limsup_{n\to\infty}\frac{1}{n}\sum_{j=0}^{n-1}\log\| Df(f^{j}(x))^{-1} \| \leq-2c<0
\end{equation}
for a positive Lebesgue measure set of points $x\in M$; and 

\smallskip
\noindent  (II) $C^{1+\al}$-diffeomorphisms ($\al>0$) that exhibit 
a $Df$-invariant dominated splitting $TM=E^s\oplus E^c$ so that 
\begin{equation}\label{eq:ABVEc}
\limsup_{n\to\infty}\frac{1}{n}\sum_{j=0}^{n-1}\log\| (Df(f^{j}(x))\mid_{E^c_{f^j(x)}})^{-1} \| \leq-2c<0
\end{equation}
for a positive Lebesgue measure set of points $x\in M$.
\smallskip

The class of non-singular endomorphisms considered in (I) fits our setting (corresponding to the case where the subbundle $E^s$ is trivial and 
in which case conditions ~\eqref{eq:nuexpansion} and ~\eqref{eq:ABVM} coincide).
 In the case that $f$ is a partially hyperbolic diffeomorphism as in (II), condition ~\eqref{eq:ABVEc}
seems weaker than our non-uniform hyperbolicity assumption in ~\eqref{eq:nuexpansion}.
However, this is not the case. Indeed,  the existence of a dominated splitting implies that
there exists  $a>0$ small so that the cone field $C^+_a$ of width $a$ around $E^c$ is positively $Df$-invariant. 
By compactness of $\Lambda$, continuity and $Df$-invariance of the central subbundle $E^c$, one can reduce $a>0$ if necessary in order to guarantee that
$$
\| (Df(f^{j}(x))\mid_{{C^+_a}{(f^j(x)})})^{-1} \| 
	\le e^c \; \| (Df(f^{j}(x))\mid_{E^c_{f^j(x)}})^{-1} \|
$$
for every $x\in \Lambda$ and, consequently,
$\limsup_{n\to\infty} \frac{1}{n}\sum_{j=0}^{n-1} \log\| (Df(f^{j}(x))\mid_{{C^+_a}{f^j(x)}})^{-1} \| \leq-c $. 
This proves that such a partially hyperbolic diffeomorphism $f$ satisfies  ~\eqref{eq:nuexpansion} if and only if  
~\eqref{eq:ABVM} holds (possibly with 
different constants). In particular, our results apply to the class of non-uniformly hyperbolic maps considered in \cite{ABV00}
and Corollary~\ref{cor:entropy-cont} provides an alternative proof of the continuity of the SRB measures.

\subsection{Endomorphisms derived from Anosov}

Dynamical systems in the isotopy class of uniformly hyperbolic ones have been 
intensively studied in the last decades. First examples of $C^1$-robustly transitive 
non-Anosov diffeomorphisms were considered by  Ma\~n\'e 
and their SRB measures were 
studied in \cite{Car93,Cas04}. 
In this subsection we illustrate how some partial hyperbolic and non-hyperbolic endomorphisms 
can be obtained, by local perturbations,  in the isotopy class of hyperbolic endomorphisms.
In particular, it is enough to show such perturbations can be done in such a way the endomorphism satisfies
the requirements of Subsection~\ref{subsec:open}.
There are non-hyperbolic topologically mixing partially hyperbolic endomorphisms on $\mathbb T^2$ ~\cite{Sumi}.
We make the construction of the examples in dimension $3$ for simplicity although similar
statements hold in higher dimension. 
In what follows we give examples of dynamical systems with hyperbolic periodic points with different index but we
could also consider  the case of existence of periodic points with an indifferent direction. 

\subsubsection{Partially hyperbolic endomorphisms obtained by pitchfork bifurcations}
\label{exa:pitchfork} 
Consider $M=\mathbb{T}^{3}$, let $g:M\rightarrow M$ be
a linear Anosov endomorphism induced by a hyperbolic matrix
$A\in \mathcal M_{3\times 3}(\mathbb Z)$ displaying three real eigenvalues and 
such that $TM=E^{s}\oplus E^{u}$ is the $Dg$-invariant splitting, 
where $\dim E^{u}=2$. For instance, take a matrix of the form
$$A=
\left(
\begin{array}{ccc}
n & 1 & 0 \\
1 & 1 & 0 \\
0 & 0 & 2 
\end{array}
\right)
$$
for an integer $n \ge 2$. The map $g$ is a special Anosov endomorphism, meaning that the unstable space 
does not depend on the pre-orbits and it admits a finest dominated splitting $E^s \oplus E^u \oplus E^{uu}$.
If $p\in M$ is a fixed point for $g$ 
and $\delta>0$ is sufficiently small, 
one can write $g$ on the ball $B(p,\delta)$ (in terms of local coordinates in $E_{p}^{s}\oplus E_{p}^{u}$) by
$
g(x,y,z)=(f(x), h(y,z)), 
$
where $x\in E_{p}^{s}$, $(y,z)\in E_{p}^{u}$, $f$ is a contraction on $E_{p}^{s}$ and
$g$ is an expansion on $E^u_p$.
Let $\lambda_{2},\lambda_{3}\in\mathbb{R}$ be the eigenvalues of
$Dg(p)\mid_{E_{p}^{u}}$. Suppose that $\left|\lambda_{2}\right|\geq\left|\lambda_{3}\right|>1$.
Consider an isotopy $\left[0,1\right]\ni t\mapsto h_{t}$ satisfying:
(i) $h_{0}=h$;
(ii) for every $t\in\left[0,1\right]$ the diffeomorphism $h_{t}$ has a fixed point $p_{t}$ (continuation
of $p$) which, without loss of generality, we assume to coincide with $p$;
(iii) $h_{t}: E_{t,p}^{c}\rightarrow E_{t,p}^{c}$
is a $C^{1+\alpha}$ map, where $E_{t,p}^{c}:=E_{p}^{u}$
for every $t\in\left[0,1\right]$;
(iv) the eigenvalues of $Dh_{1}$ at $p$ are $\lambda_{2}$ and $\rho\in\mathbb{R}$
with $\left|\rho\right|<1$ (determined by ~\eqref{rhoeq1} and ~\eqref{eqdefrho})
(v) if $g_t(x,y,z)=(f(x), h_t(y,z))$ then $g_{t}\mid_{M\backslash B(p,\delta)}=g\mid_{M\backslash B(p,\delta)}$
for every $t\in [0,1]$.
Reducing $\delta>0$ if necessary, we may assume that $g_{t}\mid_{B(p,\delta)}$
is injective for every $t\in\left[0,1\right]$.

\begin{figure}[htb]
\includegraphics[scale=.6]{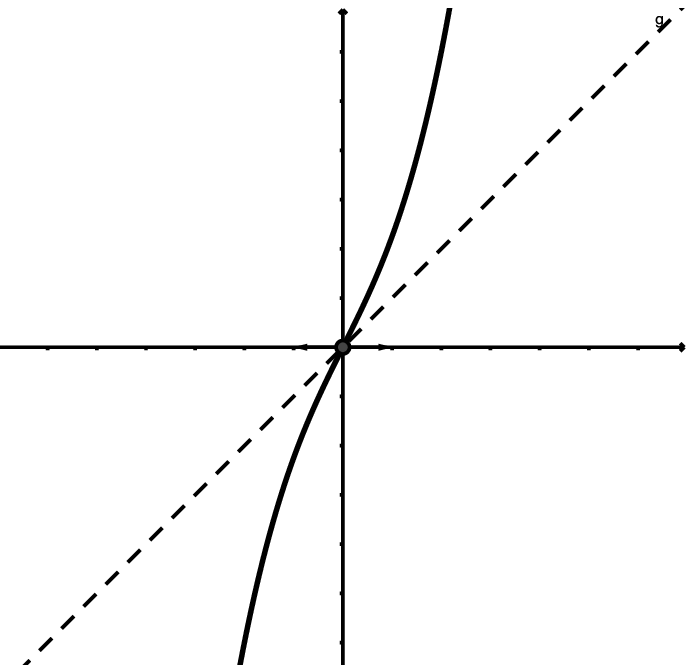} \quad \includegraphics[scale=.745]{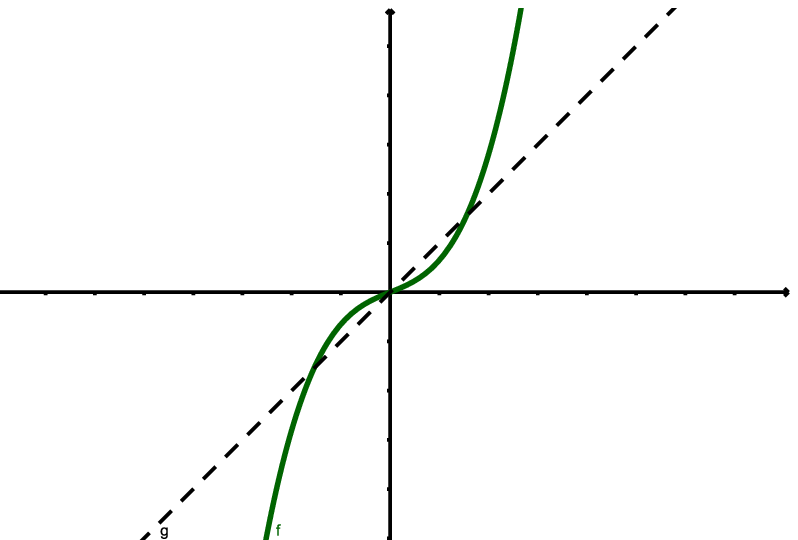}
\caption{Pitchfork bifurcation in dimension one: repelling fixed point (on the left) an attracting fixed point whose 
topological boundary of attraction is formed by two repelling fixed points (on the right)}
\end{figure}

Roughly, the fixed point $p$ goes through a pitchfork bifurcation along a one dimensional 
subspace contained in the unstable subspace associated to the original dynamics 
on the open set $\mathcal{O}=B(p,\delta)$. 
Given $a\in(0,1)$ and the splitting $TM=E_{t}^{s}\oplus E_{t}^{c}$ consider the families of cone fields
\[
\mathcal{C}_{a}^{s}(x):=\left\{ v=v_{s}\oplus v_{c}:\ \left\Vert v_{c}\right\Vert \leq a \left\Vert v_{s}\right\Vert \right\} 
	\quad\text{and}\quad
\mathcal{C}_{a}^{u}(x):=\left\{ v=v_{s}\oplus v_{c}:\ \left\Vert v_{s}\right\Vert \leq  a  \left\Vert v_{c}\right\Vert \right\}
\]
and assume without loss of generality that $\|(u,v,w)\|=|u|+|v|+|w|$ for $(u,v,w) \in E^s\oplus E^u \oplus E^{uu}$ 
(local coordinates for $g$). If $a>0$ is small then the families of 
cone fields are $Dg_1$-invariant. Indeed,
one can write $v=v_{s}\oplus v_{c} \in\mathcal{C}^{s}(g_{1}(p))$, with $v_{*}\in E_{1,g_1(p)}^{*}$,
$*\in\{s,c\}$. 
Assume $\rho$ is such that 
\begin{equation}\label{rhoeq1}
\left|\lambda_{1}\cdot\rho^{-1}\right| < 1.
\end{equation}
Then
$
\left\Vert Dg_{1}(g_1(p))^{-1}\cdot v_{c}\right\Vert  
	  \leq 
	 	\left|\rho\right|^{-1}\left\Vert v_{c}\right\Vert 
	  \leq\left|\rho\right|^{-1} a \left\Vert v_{s}\right\Vert  
	 	\leq\left|\lambda_{1}\right|  \left|\rho\right|^{-1}\cdot a \left\Vert Dg_{1}(p)^{-1}\cdot v_{s}\right\Vert,
$
which proves that $Dg_{1}(p)^{-1}\cdot \mathcal{C}_{a}^{s}(g_1(p)) \subset \mathcal{C}_{|\lambda_1 \rho^{-1}| a}^{s}(p)$.
Analogously, if $v\in\mathcal{C}_{1}^{u}(p)$ then 
$\left\Vert Dg_{1}(p)\cdot v_{s}\right\Vert \leq\left|\lambda_{2}^{-1} \lambda_{1}\right| a \left\Vert Dg_{1}(p)\cdot v_{c}\right\Vert$ and, consequently, 
$Dg_{1}(p)\cdot \mathcal{C}_{a}^{u}(p) \subset \mathcal{C}_{|\lambda_1\lambda_2^{-1}|a}^{u}(g_1(p))$.
By continuity, since $\delta$ is assumed to be small, we get the invariance of the cone fields for all points on the ball $B(p,\delta)$.
Now observe that $g_{1}\mid_{B(p,\delta)}$ expands volume along $E_{1,x}^{c}$ as it coincides with 
$g\mid_{B(p,\delta)}$. Moreover, if $\rho$ is such that 
\begin{equation}\label{eqdefrho}
|\lambda_2\cdot \rho |>1
\end{equation}
then $|\det(Dg_1(p)\mid_{E_{1,p}^{c}})|=|\lambda_2|\, |\rho|>1$
and, using that $a>0$ is small and $x\mapsto \det(Dg_1(x)\mid_{E_{1,x}^{c}})$ is continuous, 
we conclude that  $\inf_{x\in M} |\det(Dg_1(x)\mid_{E_{1,x}^{c}})| >1$ and that the same property holds
for the Jacobian along disks tangent to the cone field.
Simple computations show that $\left\Vert Dg_{1}(p)\cdot v\right\Vert \geq(1- a )\cdot\rho \left\Vert v\right\Vert $
for $v\in\mathcal{C}_{a}^{u}(p)$
and, 
by continuity of the derivative, we get that
$\left\Vert Dg_{1}(x)\cdot v\right\Vert \geq L \left\Vert v\right\Vert $
for every $x\in B(p,\delta)$ and $v\in C_{a}^{u}(x)$ with $L=L(\rho)$ close to $(1-a) \rho$, hence close to 1.
If $x\notin B(p,\delta)$, then $g_{1}\mid_{M\backslash B(p,\delta)}=g\mid_{M\backslash B(p,\delta)}$
and so $\left\Vert Dg_{1}(x)\cdot v\right\Vert \geq(1- a )\cdot\lambda_{3}\left\Vert v\right\Vert$ for every 
 $v\in\mathcal{C}_{a}^{u}(x)$.
Finally, since $\delta$ is assumed to be small, any partition $\left\{ V_{1},V_{2},\dots,V_{k}, V_{k+1}\right\} $ of $\mathbb T^3$ 
such that $V_{k+1}=B(p,\delta)$ and each $V_i$ contains a ball of radius $\delta$ and is contained in a ball of radius $2\delta$ satisfies the requirements of (A4)(a)
injectivity domains for $f$ (here $q=1$ and $\sigma>1$). 
By Corollary~\ref{thm:open}, the partially hyperbolic endomorphism $g_{1}$ has a finite number of SRB measures and it is statistically stable.
Moreover, if $g$ is robustly transitive (as e.g. in \cite{Sumi}) then Corollary~\ref{cor:entropy-cont} assures
that the entropy map $f \mapsto h_{\mu_f}(f)$ is continuous in a $C^{1+\alpha}$-small neighborhood of $g$.

\subsubsection{Partially hyperbolic attractors obtained through Hopf bifurcations}
Consider $M=\mathbb{T}^{3}$ and $g:M\rightarrow M$
a linear Anosov endomorphism of class $C^{1+\alpha}$.
Let $p\in M$ be a fixed point and let $T_{p}M=E_{p}^{s}\oplus E_{p}^{u}$
be the hyperbolic splitting where $\dim E_{p}^{u}=2$.
We can mimic the previous construction, considering a Hopf's bifurcation
through the fixed point $p$ instead of a pitchfork bifurcation. Assume that $Dg$ has two conjugated
eigenvalues whose absolute value is $\rho>1$. Consider an arc $\left[-1,1\right]\ni t\rightarrow g_{t}$
of $C^{1+\alpha}$ endomorphisms so that:
(i) for every $t\in\left[-1,1\right]$, $g_{t}$ is an endomorphism of class
$C^{1+\alpha}$ with a fixed point $p_{t}$ (we will assume that $p_{t}=p$
for every $t\in\left[-1,1\right]$)
(ii) $g_{-1}=g$,
(iii) for every $t<0$, $g_{t}$ is an Anosov endomorphism,
(iv) if $E_{t,p}^{c}$ is the continuation of the unstable space $E_{-1,p}^{u}$
to $t\in\left[-1,1\right]$ then $D g_{0}\mid_{E_{0,p}^{c}}$ has eigenvalues of absolute
value equal to $1$,
(v) the Hopf's bifurcation occurs for $t=0$, meaning that for $t>0$ sufficiently
small we have that the eigenvalues of $Dg_{t}(p)\mid_{E_{t}^{c}}$
direction are complex conjugate with absolute value is less than
$1$, there is an $g_t$-invariant repelling circle $C_{t} \subset B(p,\delta)$ around $p$ and $p$ is an attracting
fixed point, and 
(vi)  $g_{t}\mid_{M\backslash B(p,\delta)}=g\mid_{M\backslash B(p,\delta)}$
for every $t\in\left[-1,1\right]$.
An argument similar as the one used in the previous example guarantees that (A1)-(A4) are satisfied by
$g_1\mid_{\Lambda}$, where $\Lambda= M\setminus B(p)$ and $B(p)$ denotes the topological basin of
attraction of $p$. So, there exists a finite number of SRB measures
for $g_1\mid_{\Lambda}$ and that $g_1\mid_{\Lambda}$ is statistically stable.

\section{Preliminaries}\label{sec:Preliminaries}

The present section is devoted to some preliminary discussion on the notion of uniform and non-uniform
hyperbolicity for local diffeomorphisms. The reader acquainted with these notions may skip the
reading of this section and decide to return to it when necessary.

\subsection{Cone hyperbolicity and cone-hyperbolic times}\label{sec:conehyperbolicity}

Here we define non-uniform expansion along cone fields and define cone-hyperbolic times,
which extend the concept of hyperbolic times from \cite[Definition 2.6]{ABV00}.
Let $V$ and $W$ be vector spaces. Denote by $\mathcal{L}(V,W)$
the vector space of linear maps from $V$ to $W$. If $E\subset V$
is a cone in $V$ we endow the vector space 
$
\mathcal{L}_{E}(V,W):=\left\{ T\mid_{E}:\ T\in\mathcal{L}(V,W)\right\} 
$
with the norm $\|T|_{E}\|=\sup_{0\neq v\in E}\frac{\|T\cdot v\|}{\|v\|}$.
Given a non-singular endomorphism $f: M \to M$, a $Df$-invariant and convex cone field $C$ and $x\in M$, denote by
$(Df(x)|_{C(x)})^{-1}$ the map 
$$
Df(x)^{-1}\mid_{Df(x) (C(x))} : Df(x) (C(x)) \to C(x) \subset T_x M.
$$
Given a normed vector space $V$ such that $V=E\oplus F$ and $a>0$, the \emph{cone of width $a>0$ centered at $E$ }
is defined by $C_{a}:=\left\{  v=v_{1}\oplus v_{2} \in E\oplus F \colon \|v_{2}\|\leq a\|v_{1}\|\right\} $.
The \emph{dimension of the cone field $C_{a}$}, denoted by $\dim(C_{a})$, is the dimension of $E$.
In particular, if $TM=E\oplus F$ is a continuous splitting of the tangent bundle, the \emph{cone field of width $a>0$ centered on $E$}
is the continuous map $x\mapsto C_{a}(x)$ that assigns to each $x\in M$ the cone of width $a>0$ centered at $E_{x}$.
Finally, a submanifold $D\subset M$ is \emph{tangent to
the cone field $x\mapsto C(x)$ }if $\dim(D)=\dim(C)$
and $T_{x}D\subset C(x)$, for every $x\in D$.
As unstable subspaces will be almost everywhere defined  
\emph{a posteriori} and will be contained in the cone field, some hyperbolicity will be required along 
cone fields. Let $\Lambda\subset M$ be a compact positively invariant subset of
$M$ and $\Lambda\ni x\mapsto C(x)$ be a $Df$-invariant cone field on $\Lambda$.

\begin{definition}
Let $M$ be a compact Riemannian manifold $M$. Let $f:M\rightarrow M$
be a local diffeomorphism and $c>0$. We say that $n\in\mathbb{N}$
is a \emph{$c$-cone-hyperbolic time} for $x\in M$ (with respect
to $C$) if $Df(f^{j}(x)) ( C(f^{j}(x)))\subset C(f^{j+1}(x))$
for every $0\leq j\leq n-1$ and 
\begin{equation}
\prod_{j=n-k}^{^{n-1}}\left\Vert (Df(f^{j}(x))|_{C(f^{j}(x))})^{-1}\right\Vert \leq e^{-ck}\label{eq:hyptimes}
	\quad \text{ for every $1\leq k\leq n$.}
\end{equation}
\label{def:hyptime}
\end{definition}

It is easy to see that if $n$ and $m$ are $c$-cone hyperbolic times, say $n<m$ then $n-m$ is a $c$-cone hyperbolic 
time for $f^{n}(x)$. One key property of cone-hyperbolic times is as follows:

\begin{lemma}\label{le:unifexp-cht}
Let $M$ be a compact Riemannian manifold, $f$ be a local diffeomorphism on $M$
 and $c>0$. If $n$ is a $c$-cone-hyperbolic time for $x\in M$ then $\Vert Df^{n-j}(f^{j}(x))\cdot v\Vert\geq e^{c(n-j)}\|v\|$
for every $v\in C(f^{j}(x))$ and $0\le j \le n-1$.
\end{lemma}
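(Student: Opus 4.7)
The plan is to chain together the inverse-norm estimates coming from the definition of a $c$-cone-hyperbolic time, reading the trajectory backwards from step $n$ to step $j$.

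First, I would use cone invariance to ensure all intermediate vectors live in the cone. Set $w_i := Df^{i-j}(f^j(x))\cdot v$ for $j \le i \le n$, so that $w_j = v$ and $w_n = Df^{n-j}(f^j(x))\cdot v$. Since $v \in C(f^j(x))$ and $Df(f^i(x))(C(f^i(x))) \subset C(f^{i+1}(x))$ for every $0 \le i \le n-1$ (by the first clause of Definition~\ref{def:hyptime}), a straightforward induction on $i$ shows that $w_i \in C(f^i(x))$ for all $j \le i \le n$. In particular, $w_{i+1} = Df(f^i(x))\cdot w_i$ lies in $Df(f^i(x))(C(f^i(x)))$, which is exactly the domain on which the linear map $(Df(f^i(x))|_{C(f^i(x))})^{-1}$ is defined; applying it recovers $w_i$.

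Next I would read off the size estimate step by step. From $w_i = (Df(f^i(x))|_{C(f^i(x))})^{-1}\cdot w_{i+1}$ and the definition of the operator norm on $\mathcal{L}_{Df(f^i(x))(C(f^i(x)))}(\cdot,\cdot)$ recalled just before the lemma, we get
\[
\|w_i\| \le \bigl\|(Df(f^i(x))|_{C(f^i(x))})^{-1}\bigr\|\,\|w_{i+1}\|.
\]
Iterating this inequality from $i=j$ up to $i=n-1$ yields
\[
\|v\| = \|w_j\| \le \Biggl(\prod_{i=j}^{n-1} \bigl\|(Df(f^i(x))|_{C(f^i(x))})^{-1}\bigr\|\Biggr)\,\|w_n\|.
\]

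Finally, I would invoke clause (2) of Definition~\ref{def:hyptime} with the choice $k = n-j$, so that the index range $n-k \le i \le n-1$ matches $j \le i \le n-1$; this gives $\prod_{i=j}^{n-1}\|(Df(f^i(x))|_{C(f^i(x))})^{-1}\| \le e^{-c(n-j)}$, and rearranging the previous display produces $\|w_n\| \ge e^{c(n-j)}\|v\|$, which is exactly the claim. There is no real obstacle here; the only thing to be slightly careful about is checking that the inverse branches $(Df(f^i(x))|_{C(f^i(x))})^{-1}$ are applied only to vectors that genuinely lie in $Df(f^i(x))(C(f^i(x)))$, which is why the cone-invariance step has to precede the norm telescoping.
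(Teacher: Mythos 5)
Your proof is correct and follows essentially the same route as the paper's: propagate $v$ through the cone using the invariance clause, chain the inverse-norm estimates along the orbit from index $j$ to $n-1$, and invoke condition \eqref{eq:hyptimes} with $k=n-j$. The only cosmetic difference is that you name the intermediate vectors $w_i$ and make the induction explicit, which the paper leaves implicit.
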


\begin{proof}
Fix $0\le j\le n-1$ and $v\in C(f^{j}(x))$, and set $w=Df^{n-j}(f^{j}(x))\cdot v$. 
By definition of cone-hyperbolic times one has $Df(f^{\ell}(x)) ( C(f^{\ell}(x)))\subset C(f^{\ell+1}(x))$
for every $0\leq \ell\leq n-1$.
In particular $Df^{\ell-j}(f^{j}(x))\cdot v\in Df(f^{\ell-1}(x))( C(f^{\ell-1}(x)))\subset C(f^{\ell}(x))$,
for every $j+1\leq \ell\leq n-1$.
As $n$ is a $c$-cone-hyperbolic time for $x$,
\begin{align*}
\Vert v\Vert & =\Vert Df(f^{j}(x))^{-1}\circ Df(f^{j+1}(x))^{-1}\circ\cdots\circ Df(f^{n-1}(x))^{-1}\cdot w\Vert\\
 & \leq\prod_{k=j}^{n-1}\|(Df(f^{k}(x))|_{C(f^{k}(x))})^{-1}\|\cdot\|w\|
 	\le e^{-c(n-j)} \|w\|,
\end{align*}
which guarantees that $\Vert Df^{n-j}(f^{j}(x))\cdot v\Vert\geq e^{c(n-j)}\|v\|$ and proves the lemma.
\end{proof}

\subsection{Natural extension\label{subsec:naturalextension}}

Let $(M,d)$ be a compact metric space and $f:M\rightarrow M$
be a continuous map. The natural extension of $M$ by $f$
is the set
$
M^{f}:=\left\{ \hat{x}=(x_{-j})_{j\in\mathbb{N}}:x_{-j}\in M\mbox{ and }f(x_{-j})=x_{-j+1},\mbox{ for every }j\in\mathbb{N}\right\} .
$
We can induce a metric on $M^{f}$ from the metric $d$ by setting
$
\hat{d}(\hat{x},\hat{y}):=\sum_{j\in\mathbb{N}}2^{-j}d(x_{-j},y_{-j}).
$
The space $M^{f}$ endowed with the metric $\hat{d}$ is a compact metric space and the topology
induced by $\hat{d}$ is equivalent to the topology induced by $M^{\mathbb{N}}$ 
endowed with Tychonov's product topology. The projection
$\pi:M^{f}\rightarrow M$ given by $\pi(\hat{x}):=x_{0}$ is a
continuous map. The lift of $f$ is the map $\hat{f}:M^{f}\rightarrow M^{f}$
given by
$
\hat{f}(\hat{x}):=(\dots,x_{-n},\dots,x_{-1},x_{0},f(x_{0})),
$
and it is clear that $f\circ\pi=\pi\circ\hat{f}$. 
Finally, for each $\hat{x}\in M^{f}$ we take $T_{\hat{x}}M^{f}:=T_{\pi(\hat{x})}M$
and set 
\[
\begin{array}{rccc}
D\hat{f}(\hat{x}):  & T_{\hat{x}}M^{f} & \rightarrow & T_{\hat{f}(\hat{x})} M^{f} \\
 & v & \mapsto & Df(\pi(\hat{x})).
\end{array}
\]
If $\Lambda\subset M$ is a compact positively invariant, that is,
$f(\Lambda)\subset\Lambda$, the natural extension
of $\Lambda$ by $f$ is the set of pre-orbits that lie on $\Lambda$, that is, 
$
\Lambda^{f}:=\{ \hat{x}=(x_{-j})_{j\in\mathbb{N}}:\ x_{-j}\in\Lambda\mbox{ and }f(x_{-j})=x_{-j+1}\mbox{ for every }j\in\mathbb{N}\} .
$
The projection $\pi$ induces a continuous bijection between $\hat{f}$-invariant probability measures and $f$-invariant probability measures: assigns 
every $\hat{\mu}$ in $M^{f}$ the push forward 
$\pi_{*}\hat{\mu}=\mu$ (see e.g. \cite{QXZ09} for more details).

\subsection{Uniform and non-uniform hyperbolicity for endomorphisms}\label{sec:UH}

\subsubsection{Uniform hyperbolicity for endomorphisms}

The notion of uniform hyperbolicity for endomorphisms was introduced in \cite{Prz76}.
If $f$ is a non-singular endomorphism for each $x\in M$ there is a neighborhood $V_{x}$ of $x$ 
such that $f\mid_{V_{x}}:V_{x}\rightarrow f(V_{x})$ admits a inverse $f^{-1}:f(V_{x})\rightarrow V_{x}$
(whose derivative will be denoted simply by $Df(x)^{-1}$).
We say a positively $f$-invariant set $\Lambda\subset M$ is \emph{uniformly hyperbolic} if 
there are families of cone fields $\Lambda\ni x\mapsto C^{s}(x)$ and $\Lambda\ni x\mapsto C^{u}(x)$
of constant dimension satisfying: (i) $Df(x)^{-1} ( C^{s}(f(x)))\subsetneq C^{s}(x)$
and $Df(x) ( C^{u}(x))\subsetneq C^{u}(f(x))$
for every $x\in\Lambda$; and (ii)
there exists $\sigma>1$ such that $\|Df(x)^{-1}\cdot v\|\geq\sigma\|v\|$
for every $v\in C^{s}(f(x))$ and $\|Df(x)\cdot w\|\geq\sigma\|w\|$
for every $w\in C^{u}(x)$.
The existence of a contracting cone field $C^{s}$ in item (i) assures that the 
stable subbundle $E^s$, defined by 
$E_{x}^{s}:=\bigcap_{n\in\mathbb{N}}Df^{n}(x)^{-1}( C^{s}(f^{n}(x)))$
for every $x\in \Lambda$, is $Df$-invariant and uniformly contracting. 
Uniform hyperbolicity can be used to define invariant splittings by means of the natural extension of $f$, that is
the (compact) set $M^f$ of preimages of $f$. Indeed, given a pre-orbit $\hat x =(x_{-n})_{n\in\mathbb{N}}\in M^f$ of some point $x=x_0$, the expansion along the unstable
cone field implies that the set
$E_{\hat x}^{u}:=\bigcap_{n\in\mathbb{N}}Df^{n}(x_{-n})( C^{u}(x_{-n}))$,
is a subspace (see e.g  Proposition~\ref{prop:hatnuishyp}). Moreover, 
$Df(x)\cdot E_{\hat x}^{u}=E_{\hat y}^{u}$,
whenever $\hat y = (\dots, x_{-2}, x_{-1}, x_0, f(x_0))$.
Clearly, $E_{\hat x}^{u}$ defines a subspace that is uniformly contracting by backward iterates along the pre-orbit 
$\hat x=(x_{-n})_{n\in\mathbb{N}}$ of $x_0$.
In particular, for each $\hat x\in M^f$, $T_{\hat x} M^f=E_{x}^{s}\oplus E_{\hat x }^{u}$ defines a $D\hat f$-invariant 
splitting on $TM^f:=\bigcup_{\hat x\in M^f} T_{\hat x} M^f$.
We remark that, in general, the natural
extension admits no differential structure.

\subsubsection{Lyapunov exponents and non-uniform hyperbolicity}
We need the following:
\begin{proposition}\cite[Proposition I.3.5]{QXZ09}
\label{prop:oseledet_nat_ext} 
Suppose that $M$ is a compact Riemannian
manifold. Let $f:M\rightarrow M$ be a $C^{1}$ map preserving a probability measure $\mu$. There is a Borelian
set $\hat{\Delta}\subset M^{f}$ with $\hat{f}(\hat{\Delta})=\hat{\Delta}$
and $\hat{\mu}(\hat{\Delta})=1$ satisfying that for every
$\hat{x}\in\hat{\Delta}$ there is a splitting 
$T_{\hat{x}}M^{f}=E_{1}(\hat{x})\oplus\dots\oplus E_{r(\hat{x})}(\hat{x})$
and numbers
$+\infty>\lambda_{1}(\hat{x})>\lambda_{2}(\hat{x})>\dots>\lambda_{r(\hat{x})}(\hat{x})>-\infty$ (Lyapunov exponents) and multiplicities $m_{i}(\hat{x})$ for $1\leq i\leq r(\hat{x})$
such that:
\begin{enumerate}
\item $D\hat{f}(\hat{f}^{n}(\hat{x})):T_{\hat{f}^{n}(\hat{x})} M^{f}\rightarrow T_{\hat{f}^{n+1}(\hat{x})} M^{f}$
is a linear isomorphism for every $n\in\mathbb{Z}$;
\item the functions $r:\hat{\Delta}\rightarrow\mathbb{N}$, $m_{i}:\hat{\Delta}\rightarrow\mathbb{N}$ and 
$\lambda_{i}:\hat{\Delta}\rightarrow\mathbb{R}$ are $\hat{f}$-invariant ;
\item $\dim(E_{i}(\hat{x}))=m_{i}(\hat{x})$
for every $1\leq i\leq r(\hat{x})\le \dim M $;
\item the splitting is $D\hat{f}$-invariant, that is, $D\hat{f}(\hat{x})\cdot E_{i}(\hat{x})=E_{i}(\hat{f}(\hat{x}))$,
for every $1\leq i\leq r(\hat{x})$;
\item $\lim_{n\to\pm\infty}\frac{1}{n}\log\|D\hat{f}^{n}(\hat{x})\cdot u\|=\lambda_{i}(\hat{x})$
for every $u\in E_{i}(\hat{x})\backslash\left\{ 0\right\} $
and for every $1\leq i\leq r(\hat{x})$;
\item if
$
\rho_{1}(\hat{x})\geq\rho_{2}(\hat{x})\geq\dots\geq\rho_{d} (\hat{x})
$
 represent the numbers $\lambda_{i}(\hat{x})$ repeated
$m_{i}(\hat{x})$ times for each $1\leq i\leq r(\hat{x})$
and $\left\{ u_{1},u_{2},\dots,u_{d}\right\} $is a basis for $T_{\hat{x}}M^{f}$
satisfying
$
\lim_{n\to\pm\infty}\frac{1}{n}\log\|D\hat{f}^{n}(\hat{x})\cdot u_{i}\|=\rho_{i}(\hat{x})
$
for every $1\leq i\leq d$, then for any subsets $P,Q\subset\left\{ 1,2,\dots,d\right\} $ with
$P\cap Q=\emptyset$ one has
$
\lim_{n\to\infty}\frac{1}{n}\log\angle(D\hat{f}^{n}(\hat{x})\cdot E_{P},D\hat{f}^{n}(\hat{x})\cdot E_{Q})=0
$
where $E_{P}$ and $E_{Q}$ are, respectively, the subspaces generated
by $\left\{ u_{i}\right\} _{i\in P}$ and $\left\{ u_{i}\right\} _{i\in Q}$.
\end{enumerate}
Moreover, if $\hat\mu$ is ergodic then the previous functions are constant $\hat\mu$-almost everywhere.
\end{proposition}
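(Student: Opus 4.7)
The plan is to derive the statement from the classical multiplicative ergodic theorem of Oseledets for invertible cocycles over invertible measure-preserving systems, by trading in $(M,f,\mu)$ for its natural extension $(M^f,\hat f,\hat\mu)$. The point is that, although $f$ may fail to be invertible and $\mu$ need not be a priori well-behaved under pullback, the shift $\hat f:M^f\to M^f$ is a homeomorphism and $\pi_*\hat\mu=\mu$ with $\hat f_*\hat\mu=\hat\mu$. So the setting is exactly the one in which the two-sided Oseledets theorem applies; what remains is to check that the derivative cocycle over $\hat f$ has the structure required by that theorem.

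First I would define the cocycle. Under the identification $T_{\hat x}M^f=T_{\pi(\hat x)}M$ from the previous subsection, set $A(\hat x):=D\hat f(\hat x)=Df(x_0)$. For $n\ge 0$ the iterate $A^{(n)}(\hat x)=Df^n(x_0)$ is obvious, while for $n=-k<0$ one takes
\[
A^{(-k)}(\hat x)=Df(x_{-k})^{-1}\circ Df(x_{-k+1})^{-1}\circ\cdots\circ Df(x_{-1})^{-1},
\]
which is well defined because in the setting of this paper $f$ is a local diffeomorphism, so each $Df(x_{-j})$ is a linear isomorphism. Since $M$ is a compact manifold and $f$ is $C^1$ and non-singular, both $\log\|Df\|$ and $\log\|Df^{-1}\|$ are bounded continuous functions, hence trivially lie in $L^1(\hat\mu)$. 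This secures the integrability hypothesis of the invertible Oseledets theorem, and item (1) of the statement is immediate from the definition.

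Next I would apply the classical Oseledets theorem (see e.g.\ Ma\~n\'e's monograph or Viana--Oliveira) to the cocycle $A$ over the invertible system $(\hat f,\hat\mu)$. This yields an $\hat f$-invariant full-measure Borel set $\hat\Delta\subset M^f$ carrying a measurable splitting $T_{\hat x}M^f=E_1(\hat x)\oplus\cdots\oplus E_{r(\hat x)}(\hat x)$, measurable functions $r,m_i,\lambda_i$ that are $\hat f$-invariant, and satisfying the two-sided exponential growth of $\|D\hat f^n(\hat x)\cdot u\|$ for $u\in E_i(\hat x)\setminus\{0\}$, the $D\hat f$-invariance of the splitting, and the asymptotic flatness of angles. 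These are exactly items (2)--(6). The final sentence, i.e.\ constancy of $r,m_i,\lambda_i$ for ergodic $\hat\mu$, follows from the $\hat f$-invariance of these functions together with the Birkhoff ergodic theorem, as usual.

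The only subtlety worth checking, and the main obstacle, is that $M^f$ is not a manifold but merely a closed subset of the product space $M^{\mathbb N}$: the ``tangent bundle'' $TM^f=\bigsqcup_{\hat x\in M^f}T_{\pi(\hat x)}M$ is a pullback bundle along the continuous map $\pi$. One therefore has to verify that it carries the measurable vector-bundle structure required by Oseledets. This is immediate: $\pi$ is continuous, $\hat x\mapsto D\hat f(\hat x)=Df(\pi(\hat x))$ depends continuously on $\hat x$ in the operator norm, and measurable trivializations of $TM$ over Borel charts in $M$ pull back to Borel trivializations of $TM^f$. With this observation in place, the conclusion is a direct transcription of the classical invertible Oseledets statement to the present setting.
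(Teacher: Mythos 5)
The paper does not actually prove this proposition: it is stated verbatim (with light simplification) and cited from \cite[Proposition~I.3.5]{QXZ09}, so there is no ``paper's own proof'' to compare with. Your sketch therefore supplies an argument where the paper supplies only a reference, and the argument you give -- reducing to the two-sided Oseledets theorem for the derivative cocycle over the invertible system $(M^f,\hat f,\hat\mu)$, and observing that $TM^f=\pi^*TM$ carries the required measurable bundle structure -- is indeed the standard route and is correct in the setting the paper actually works in.

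There is, however, a gap if one reads the proposition at the level of generality in which it is stated. The hypotheses say only that $f$ is a $C^1$ map; they do not say that $f$ is a local diffeomorphism, and you invoke that extra assumption to guarantee that every $Df(x_{-j})$ is a linear isomorphism and hence to define $A^{(-k)}(\hat x)$ and apply the \emph{invertible} Oseledets theorem. For a general $C^1$ endomorphism this fails: the critical set $S=\{x:\det Df(x)=0\}$ can have positive $\mu$-measure, in which case $\hat\mu(\pi^{-1}(S))=\mu(S)>0$ and conclusion (1) is simply false for those $\hat x$. The missing ingredient is an integrability hypothesis such as $\log|\det Df|\in L^1(\mu)$ (this is the assumption carried in QXZ09 but dropped in the paper's restatement); together with $f$-invariance of $\mu$ it forces $\mu(S)=0$, and then $\hat f$-invariance of $\hat\mu$ lets one pass to an invariant full-$\hat\mu$-measure set $\hat\Delta$ along whose entire backward orbits $Df$ is invertible, after which your reduction applies verbatim. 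In the paper's own application $f$ is always a non-singular $C^{1+\alpha}$ endomorphism, so this caveat is harmless there; but as a proof of the proposition as quoted you should either add the $L^1$ hypothesis and the short argument above, or note explicitly that you are specializing to the local-diffeomorphism case.
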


Given an $f$-invariant probability measure $\mu$, we say that $\mu$ is \emph{hyperbolic} if it has no zero Lyapunov exponents. 
Associated to this formulation of the concept of Lyapunov exponents
we have the existence of unstable manifolds for almost every point with respect to some invariant
measure with some positive Lyapunov exponents. Consider the subspaces 
$
E_{\hat{x}}^{u}:=\bigoplus_{\lambda_{i}(\hat{x})>0}E_{i}(\hat{x})\text{ and }E_{\hat{x}}^{cs}:=\bigoplus_{\lambda_{i}(\hat{x})\leq0}E_{i}(\hat{x}).
$

\begin{proposition}\cite[Proposition V.4.4]{QXZ09}
\label{prop:unstable_manifold} There is a countable number of compact
subsets $(\hat{\Delta}_{i})_{i\in\mathbb{N}}$, of $M^{f}$ with $\bigcup_{i\in\mathbb{N}}\hat{\Delta}_{i}\subset\hat{\Delta}$
and $\hat{\mu}(\hat{\Delta}\backslash\bigcup_{i\in\mathbb{N}}\hat{\Delta}_{i})=0$
such that:
\begin{enumerate}
\item for each $\hat{\Delta}_{i}$ there is $k_{i}\in\mathbb{N}$ satisfying
$\dim(E^{u}(\hat{x}))=k_{i}$ for every $\hat{x}\in\hat{\Delta}_{i}$ and the subspaces
$E_{\hat{x}}^{u}$ and $E_{\hat{x}}^{cs}$ depend continuously on
$\hat{x}\in\hat{\Delta}_{i}$;
\item for any $\hat{\Delta}_{i}$ there is a family of $C^{1}$ embedded
$k_{i}$-dimensional disks $\left\{ W_{loc}^{u}(\hat{x})\right\} _{\hat{x}\in\hat{\Delta}_{i}}$
in $M$ and real numbers $\lambda_{i}$, $\vep\ll\lambda_{i}$,
$r_{i}<1$, $\gamma_{i}$, $\alpha_{i}$ and $\beta_{i}$ such that
the following properties hold for each $\hat{x}\in\hat{\Delta}_{i}$:
\begin{enumerate}
\item there is a $C^{1}$ map $h_{\hat{x}}:O_{\hat{x}}\rightarrow E_{\hat{x}}^{cs}$,
where $O_{\hat{x}}$ is an open set of $E_{\hat{x}}^{u}$ which contains
$\left\{ v\in E_{\hat{x}}^{u}:\ \|v\|<\alpha_{i}\right\} $ satisfying: (i)
$h_{\hat{x}}(0)=0$ and $Dh_{\hat{x}}(0)=0$; (ii) Lip$(h_{\hat{x}})\leq\beta_{i}$ and $Lip(Dh_{\hat{x}}(\cdot))\leq\beta_{i}$;
and (iii) $W_{loc}^{u}(\hat{x})=\exp_{x_{0}}(graph(h_{\hat{x}}))$.
\item for any $y_{0}\in W_{loc}^{u}(\hat{x})$ there is a unique
$\hat{y}\in M^{f}$ such that $\pi(\hat{y})=y_{0}$ and
$$
dist(x_{-n},y_{-n})\leq \min\{ r_{i}e^{-\vep_{i}n}, \gamma_{i}e^{-\lambda_{i}n}dist(x_{0},y_{0})\}, \; \text{ for every }n\in\mathbb{N},
$$
\item if
$
\hat{W}_{loc}^{u}(\hat{x}):=\big\{ \hat{y}\in M^{f}:\ y_{-n}\in W_{loc}^{u}(\hat{f}^{-n}(\hat{x})),\text{ for every }n\in\mathbb{N}
\big\}$
then $\pi:\hat{W}_{loc}^{u}(\hat{x})\rightarrow W_{loc}^{u}(\hat{x})$
is bijective and $\hat{f}^{-n}(\hat{W}_{loc}^{u}(\hat{x}))\subset\hat{W}_{loc}^{u}(\hat{f}^{-n}(\hat{x}))$.
\item for any $\hat{y},\hat{z}\in\hat{W}_{loc}^{u}(\hat{x})$ it holds 
$
dist_{\hat{f}^{-n}(\hat{x})}^{u}(y_{-n},z_{-n})\leq\gamma_{i}e^{-\lambda_{i}n}dist_{\hat{x}}^{u}(y_{0},z_{0})
$
for every $n\in\mathbb{N}$, where $dist_{\hat{x}}^{u}$ is the distance
along $W_{loc}^{u}(\hat{x})$.
\end{enumerate}
\end{enumerate}
\end{proposition}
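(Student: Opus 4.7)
The plan is to adapt the classical Pesin unstable manifold theorem to the natural extension, where $\hat f$ is invertible even though $f$ is not. First I would exhaust $\hat\Delta$ modulo $\hat\mu$-null sets by an increasing sequence of compact \emph{Pesin blocks} $\hat\Delta_i$ on which the relevant quantities become uniform: the dimension $k_i = \dim E^u_{\hat x}$, a strict lower bound $\lambda_i > 0$ on the positive Lyapunov exponents and an upper bound on the remaining exponents are constant; the splitting $E^u_{\hat x}\oplus E^{cs}_{\hat x}$ depends continuously on $\hat x$ with angles bounded away from zero; and the Lyapunov (Pesin) norms on $E^u$ and $E^{cs}$ are tempered, i.e.\ vary by at most $e^{\vep n}$ along orbits. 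All of this is produced by a standard Egorov/Luzin argument from the measurable conclusions of Proposition~\ref{prop:oseledet_nat_ext}.

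Fixing a block, I would pull the dynamics back to the tangent bundle via $\exp_{x_0}$ and write, along an orbit of $\hat x$, the map $f$ in Lyapunov coordinates as $F_n = A_n + R_n$, where $A_n = D\hat f(\hat f^n(\hat x))$ is block-diagonal and $\lambda_i$-hyperbolic up to $\vep$-corrections, and $R_n$ is a $C^1$ remainder with small derivative on a ball of radius $\alpha_i(n) = c_i e^{-\vep |n|}$ around the origin; the $C^{1+\alpha}$-regularity of $f$ is crucial to control $R_n$ uniformly in $n$. On this uniformized set-up the classical Hadamard--Perron graph-transform argument applies. For each $\hat y$ along the orbit of $\hat x$ consider the space $\cG_{\hat y}$ of $C^1$ maps $h\colon O_{\hat y}\to E^{cs}_{\hat y}$ with $h(0)=0$, $Dh(0)=0$, and Lipschitz bounds $\beta_i$ on $h$ and $Dh$; pushing forward graphs by $f$ (in the coordinates above) defines a map $\cG_{\hat f^{-1}(\hat y)}\to \cG_{\hat y}$ which, applied along the whole orbit, is a uniform contraction in an appropriate $C^1$ metric on sequences of graphs. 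Its fixed point supplies $h_{\hat x}$, and setting $W^u_{loc}(\hat x):=\exp_{x_0}(\mathrm{graph}(h_{\hat x}))$ yields part (2)(a).

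The remaining items are verified by unwinding the construction. For (c), any $y_0 \in W^u_{loc}(\hat x)$ must by construction lie in $f(W^u_{loc}(\hat f^{-1}(\hat x)))$; iterating produces a unique pre-orbit $\hat y$ with $y_{-n}\in W^u_{loc}(\hat f^{-n}(\hat x))$, which is precisely the bijection $\pi\colon \hat W^u_{loc}(\hat x)\to W^u_{loc}(\hat x)$. Properties (b) and (d) then follow by combining the Lipschitz control on $h_{\hat x}$ with the uniform expansion rate along $E^u$, which through the fixed-point equation translates into the exponential contraction of inverse iterates along $\hat W^u_{loc}(\hat f^{-n}(\hat x))$ at rate $\lambda_i$, with the slower $\vep$-rate estimate coming from the tempered shrinkage of the block. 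The main obstacle is precisely this uniformity of the estimates over a Pesin block in the non-invertible setting: the Lyapunov norms are engineered to turn the $\hat\mu$-a.e.\ asymptotic conclusions of Oseledets into honest quantitative bounds, at the price that admissible domains shrink like $e^{-\vep n}$ along orbits and every constant ($r_i,\gamma_i,\alpha_i,\beta_i$) necessarily depends on the block.
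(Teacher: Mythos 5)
This proposition is quoted verbatim from \cite[Proposition~V.4.4]{QXZ09} and the paper gives no proof of its own, only the citation. Your sketch---Pesin blocks obtained from a Luzin/Egorov refinement of the measurable Oseledets data of Proposition~\ref{prop:oseledet_nat_ext}, Lyapunov charts and tempered norms making $D\hat f$ uniformly hyperbolic up to $\vep$-corrections, a Hadamard--Perron graph transform run along orbits of the (invertible) lift $\hat f$ on $M^f$ using the $C^{1+\alpha}$ bound on the remainder, and the identification of $\hat W^u_{loc}(\hat x)$ via backward contraction to get uniqueness of the pre-orbit in item~(b) and estimates (c),(d)---is precisely the route taken in the cited reference, so it is essentially the same proof.
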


\subsection{SRB measures}

The notion of SRB measure for endomorphisms depends intrinsically on the 
existence of unstable manifolds, whose geometry and construction are more involved than 
for diffeomorphisms as unstable manifolds may have self intersections (hence do not generate an invariant foliation). 

\subsubsection{Rokhlin's disintegration theorem}

Let $(X,\mathcal{A},\mu)$ be a probability space and $\mathcal{P}$
be a partition of $X$. We say that $\mathcal{P}$ is a measurable
partition if there is a 
sequence of countable partitions
of $X$, $(\mathcal{P}_{j})_{j\in\mathbb{N}}$, such that
$\mathcal{P}=\bigvee_{j=0}^{\infty}\mathcal{P}_{j}\ \mod0.$
Consider the projection $p:X\rightarrow\mathcal{P}$ that associates
to each $x\in X$ the atom $\mathcal{P}(x)$ that contains $x$.
We say that a subset $\mathcal{Q}\subset\mathcal{P}$ is measurable
if, and only if $p^{-1}(\mathcal{Q})$ is measurable. It is easy to see that the family of all measurable sets of $\mathcal{P}$
is a $\sigma$-algebra of $\mathcal{P}$. Set $\tilde{\mu}:=p_{*}\mu$.
\begin{definition}
Let $(X,\mathcal{A},\mu)$ be a probability space and let
$\mathcal{P}$ be a partition of $X$. A 
\emph{disintegration of $\mu$ with respect to $\mathcal{P}$} is a family $(\mu_{P})_{P\in\mathcal{P}}$
of probability measures on $X$ so that: 
\begin{enumerate}
\item $\mu_{P}(P)=1$ for $\tilde{\mu}$ a.e. $P\in\mathcal{P}$;
\item for every measurable set $E\subset X$ the map $\mathcal{P}\ni P\mapsto\mu_{P}(E)$ is measurable;
\item $\mu(E)=\int\mu_{P}(E) \, d\tilde{\mu}(P)$ for every measurable set $E\subset X$.
\end{enumerate}
\end{definition}
Rokhlin's theorem \cite{Ro49} guarantees that probability measures can be disintegrated with respect to
measurable partitions, and that if
$(\mu_{P})_{P\in\mathcal{P}}$and $(\mu'_{P})_{ P\in\mathcal{P}} $
are disintegrations of the measure $\mu$ with respect to $\mathcal{P}$
then $\mu_{P}=\mu'_{P}$ for $\tilde{\mu}$ almost every $P\in\mathcal{P}$. 

\subsubsection{SRB property for endomorphisms}\label{sec:defSRB}

In order to define SRB measures we need the notion of partitions (on $M^{f}$)
adapted to unstable manifolds of $(f,\mu)$.

\begin{definition}
\label{def:wupartition}Let $\mu$ be an $f$-invariant probability
measure with at least one positive Lyapunov exponent at $\mu$ almost
every point. A measurable partition $\mathcal{P}$ of $M^{f}$ is
\emph{subordinated to unstable manifolds} if for $\hat{\mu}$
a.e. $\hat{x}\in M^{f}$
(1) $\pi|_{\mathcal{P}(\hat{x})}:\mathcal{P}(\hat{x})\rightarrow\pi(\mathcal{P}(\hat{x}))$
is bijective, and
(2) 
there is a submanifold $W_{\hat{x}}$ with dimension $k(\hat{x})$
in $M$ such that $W_{\hat{x}}\subset W^{u}(\hat{x})$, $\pi(\mathcal{P}(\hat{x}))\subset W_{\hat{x}}$
and $\pi(\mathcal{P}(\hat{x}))$ contains an open neighborhood of $x_{0}$ in $W_{\hat{x}}$,
where $\mathcal{P}(\hat{x})$ denotes the element of the partition $\cP$ that contains $\hat x$ and
$k(\hat x)$ denotes the number of positive Lyapunov exponents at $\hat x$.
\end{definition}

We are now in a position to define the SRB property for invariant measures.
\begin{definition}
\label{def:srbproperty}We say that an $f$-invariant and ergodic probability
measure $\mu$ is an \emph{SRB measure} if  it has at least one positive Lyapunov exponent 
almost everywhere and for every partition $\mathcal{P}$ subordinated
to unstable manifolds one has 
$
\pi_{*}\hat{\mu}_{\mathcal{P}(\hat{x})}\ll Leb_{\,W_{\hat{x}}}
	\;\text{for $\hat{\mu}$ a.e. $\hat{x}\in M^{f}$},
$
where $(\hat{\mu}_{\mathcal{P}(\hat{x})})_{\hat{x}\in M^{f}} $
is a disintegration of $\hat{\mu}$ with respect to $\mathcal{P}$.
\end{definition}

\section{Existence and Uniqueness of SRB measures \label{sec:existence_uniqueness}}

The present section is devoted to the proof of Theorem~\ref{thm:TEO1}. In Subsections \ref{subsec:diskgeometry} and
\ref{sec:expandingstructure} we describe the geometry of the iterates of submanifolds that are tangent to the non-nonuniformly expanding cone field and construct families of disks with non-uniform hyperbolicity.
Afterwards, the core of the proof is, after building a (non-invariant) hyperbolic measure $\nu$ on $M$, to
construct a suitable lift $\hat \nu$ of $\nu$ to the natural extension and prove its absolute continuity along 
unstable disks (see Subsections \ref{sec:hatnuconstruction} and \ref{sec:srbproperty}). Finally, in 
Subsection \ref{sec:ergodicityfinitiness} we use these lifted measures prove the existence and finiteness of SRB measures
on the partially hyperbolic attractor.

\subsection{Geometry of disks which are tangent to $C_{a}$} \label{subsec:diskgeometry}

In this subsection we collect a result on the regularity disks tangent to cone fields from \cite{ABV00}. 
Consider $D$ a disk in $M$ which is tangent to the cone field $C_{a}$ and
assume that $D$ satisfies $Leb_{D}(H)>0$, where $H$
is the subset given by the hypothesis (H3). For $\vep>0$ and
$x\in M$ denote by
$
T_{x}M(\vep):=\left\{ v\in T_{x}M:\ \|v\|\leq\vep\right\} 
$
the ball of radius $\vep$-centered on $0\in T_{x}M$. Fix $\delta_{0}>0$
such that the exponential map restricted to $T_{x}M(\delta_{0})$
is a diffeomorphism onto its image for every $x\in M$, and set $B(x,\vep):=\exp_{x}(T_{x}M(\vep))$
for every $0<\vep\leq\delta_{0}$.
As $D$ is tangent to the cone field there exists $0<\delta\leq\delta_{0}$ such that if $y\in B(x,\delta)\cap D$
then there exists a unique linear map $A_{x}(y):T_{x}D\rightarrow E_{x}^{s}$
whose graph is parallel to $T_{y}D$. Moreover, by compactness of $M$,
$\delta$ can be taken independent of $x$. Thus one can express the H\"older variation
of the tangent bundle in these local coordinates as follows:
given $C>0$ and $0<\alpha<1$ we say that the tangent bundle $TD$
of $D$ is \emph{$(C,\alpha)$-H\"older} if
$
\|A_{x}(y)\|\leq C\cdot dist_{D}(x,y)^{\alpha},
$
for every $y\in B(x,\delta_{0})\cap D$ and $x\in D$. 
Moreover, the $\alpha$-H\"older constant of the
tangent bundle $TD$ is given by the number:
$
\kappa(TD,\alpha):=\inf\left\{ C>0:\ TD\mbox{ is }(C,\alpha)\mbox{-H\"older}\right\}.
$
Since our assumptions imply that disks tangent to the cone field are preserved under positive iteration by the 
dynamics and assumption (H4) guarantees domination, the following control of the distortion along disks tangent to the cone field
is a consequence of the arguments of \cite[Proposition 2.2 and Corollary 2.4]{ABV00}:

\begin{proposition}\label{prop:iteratesoftgbundleholder}
There are $\alpha\in(0,1)$, $\beta\in(0,1)$
and $C_{0}>0$ such that if $D$ is a submanifold of $M$which is
tangent to the cone field $C_{a}$ and if $f(D)$ is also
a submanifold of $M$ then $\kappa(Tf(N),\alpha)\leq\beta\cdot\kappa(TN,\alpha)+C_{0}$.
As a consequence, there is $L_{1}>0$ such that if $D$ is a submanifold of $M$ which
is tangent to $C_{a}$ and if $k\in\mathbb{N}$ is such that $f^{j}(D)$
is a submanifold of $M$ for every $1\leq j\leq k$ then the map
$$
\begin{array}{rccc}
J_k: & f^k(D) & \rightarrow & \mathbb{R}\\
     & x      & \mapsto     & \log\left| \det( Df(x)|_{T_xf^k(D)})\right|,
\end{array}
$$is $(L_{1},\alpha)$-H\"older continuous. Moreover, the constant
$L_{1}$ depends only on $f$.
\end{proposition}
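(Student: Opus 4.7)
The proof follows the strategy of \cite[Proposition 2.2 and Corollary 2.4]{ABV00} essentially verbatim, the point being that conditions (H1)-(H4) supply exactly the same ingredients in our non-invertible setting: $Df$ is $\alpha$-H\"older (since $f\in C^{1+\alpha}$), the cone field $C_a$ is positively $Df$-invariant, disks tangent to $C_a$ can be locally inverted (since $f$ is a local diffeomorphism), and the domination (H4) gives a genuine contraction of angles between $E^s$ and the cone direction. I would split the work into two steps.

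\textbf{First step: the recursive inequality.} Fix $x,y\in f(D)$ with $y\in B(x,\delta_0)$ and let $x',y'\in D$ be the $f$-preimages of $x,y$ obtained through the local inverse branch. Working in exponential charts at $x'$ and $x$, write $D$ locally as the graph of a map $A_{x'}\colon O\subset T_{x'}D\to E^s_{x'}$ and $f(D)$ as the graph of a map $A_x$ near $x$. A direct computation in these coordinates yields
$$
A_x(y)\;=\;\Phi_{x'}\!\circ A_{x'}(y')\;+\;R(x',y'),
$$
where $\Phi_{x'}$ is a linear map built from $Df(x')\mid_{E^s_{x'}}$ and $(Df(x')\mid_{T_{x'}D})^{-1}$, and $R$ collects the higher order terms coming from the nonlinearity of $f$. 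Hypothesis (H4) implies $\|\Phi_{x'}\|\leq \lambda<1$, while the $\alpha$-H\"older regularity of $Df$ together with $T_{x'}D\subset C_a(x')$ gives $\|R(x',y')\|\leq C_1\,\dist_D(x',y')^{\alpha}$. By compactness of $M$ and the fact that $f$ is a local diffeomorphism, one has a uniform bound $\dist_D(x',y')\leq K\,\dist_{f(D)}(x,y)$. Choosing $\alpha>0$ small enough that $\beta:=\lambda K^{\alpha}<1$, one gets
$$
\|A_x(y)\|\;\leq\;\beta\,\kappa(TD,\alpha)\,\dist_{f(D)}(x,y)^{\alpha}\;+\;C_0\,\dist_{f(D)}(x,y)^{\alpha},
$$
which is precisely the claim $\kappa(Tf(D),\alpha)\leq\beta\,\kappa(TD,\alpha)+C_0$.

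\textbf{Second step: H\"older control of $J_k$.} Iterating the recursion yields $\kappa(Tf^k(D),\alpha)\leq\beta^k\kappa(TD,\alpha)+C_0/(1-\beta)$, which is uniformly bounded in $k$ whenever $f^j(D)$ is a submanifold for all $1\leq j\leq k$. For $x,y\in f^k(D)$ nearby, decompose
$$
|J_k(x)-J_k(y)|\;\leq\;\bigl|\log|\det(Df(x)\mid_{T_xf^k(D)})|-\log|\det(Df(x)\mid_{T_yf^k(D)})|\bigr|\;+\;\bigl|\log|\det(Df(x)\mid_{T_yf^k(D)})|-\log|\det(Df(y)\mid_{T_yf^k(D)})|\bigr|.
$$
The second term is bounded by a constant times $\dist(x,y)^{\alpha}$ because $Df$ is $\alpha$-H\"older and uniformly bounded away from being singular on the compact set $\overline U$. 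The first term is controlled by the angular distance between the $d_u$-planes $T_xf^k(D)$ and $T_yf^k(D)$, which by the uniform bound on $\kappa(Tf^k(D),\alpha)$ derived above is at most $K_\infty\,\dist_{f^k(D)}(x,y)^{\alpha}$ for a universal constant $K_\infty$. Summing yields an $(L_1,\alpha)$-H\"older estimate with $L_1$ depending only on $f$.

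\textbf{Main obstacle.} The delicate part is the first step, namely verifying that the angular variation is genuinely contracted with factor $\beta<1$. This requires (H4) so that $\lambda<1$ enters as the relevant contraction rate between $E^s$ and the forward image of the cone, and the cone invariance in (H3) to ensure that $Df(x')\mid_{T_{x'}D}$ is an isomorphism onto $T_xf(D)\subset C_a(x)$ so that the local inverse $(Df(x')\mid_{T_{x'}D})^{-1}$ appearing in $\Phi_{x'}$ makes sense. The tuning of $\alpha$ small enough to absorb the factor $K^{\alpha}$ and keep $\beta<1$ forces $\alpha$ to be at most the H\"older exponent of $Df$, consistent with the $C^{1+\alpha}$ hypothesis. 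Modulo this, the rest is coordinate bookkeeping in exponential charts and transcribes directly from \cite{ABV00}, the non-invertibility of $f$ being harmless because all estimates are performed along a single local inverse branch.
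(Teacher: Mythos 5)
Your proposal is correct and follows essentially the same route as the paper, which itself simply invokes \cite[Proposition 2.2 and Corollary 2.4]{ABV00} after observing that positive cone invariance plus the domination estimate (H4) make the ABV00 argument go through verbatim; your expansion of the graph-transform recursion $\|\Phi_{x'}\|\le\|Df(x')|_{E^s}\|\cdot\|(Df(x')|_{T_{x'}D})^{-1}\|\le\lambda$ is exactly what the paper has in mind. One small imprecision: in the second step you write $\det(Df(x)|_{T_yf^k(D)})$, which is not literally defined since $T_yf^k(D)\subset T_yM$; it should be the parallel transport of $T_yf^k(D)$ to $T_xM$, after which the bound by $\kappa(Tf^k(D),\alpha)\cdot\dist(x,y)^\alpha$ is the one you intend.
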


\subsection{Non-uniform expansion along disks tangent to the cone-field} \label{sec:expandingstructure}

Hypothesis (H3) implies that there exists a disk $D\subset U$ tangent to $C_{a}$
and a subset $H\subset D$ 
with $Leb_{D}(H)>0$ satisfying
that:
\begin{equation}
\limsup_{n\to\infty}\frac{1}{n}\sum_{j=0}^{n-1}\log\|(Df(f^{j}(x))|_{C_{a}(f^{j}(x))})^{-1}\|\leq-2c<0,\label{eq:limsupnegative}
\end{equation}
for every $x\in H$. A direct computation (which we leave as a simple exercise to the reader) 
shows that Pliss' lemma, obtained independently by Liao, (see e.g. \cite[Lema IV.11.8]{Mane87}) together with \eqref{eq:limsupnegative}  
is enough to deduce the following:

\begin{lemma}
For all $x\in H$ and $n$ sufficiently large there is $\theta>0$
(that depends only on $f$ and $c$) and a sequence $1\leq n_{1}(x)<\cdots<n_{l}(x)\leq n$
of $c$-cone-hyperbolic times for $x$ with $l\geq\theta n.$\label{lem:densposth}
\end{lemma}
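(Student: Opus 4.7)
The plan is to apply Pliss' lemma to the sequence
\[
a_{j}(x):=\log\|(Df(f^{j}(x))|_{C_{a}(f^{j}(x))})^{-1}\|, \qquad j\ge 0,
\]
which by (H3) is uniformly bounded above: since $f$ is a $C^{1+\alpha}$ local diffeomorphism on the compact manifold $M$, the quantity $A:=\sup_{y\in U}\log\|(Df(y)|_{C_{a}(y)})^{-1}\|$ is finite. The cone-invariance part of (H3) guarantees that for every $x\in U$ the forward orbit stays in a region where $Df$ maps $C_a$ into $C_a$, so the products defining $c$-cone-hyperbolic times are well-defined.

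First I would fix $x\in H$ and use the hypothesis \eqref{eq:limsupnegative} to pick $n_0=n_0(x)$ so that $\frac{1}{n}\sum_{j=0}^{n-1}a_{j}(x)\le -2c+\tfrac{c}{2}<-c$ for all $n\ge n_0$ (actually, for the purpose of getting a uniform $\theta$, it suffices that $\sum_{j=0}^{n-1}a_{j}(x)\le -cn$ for $n$ large). Then I would invoke Pliss' lemma in the following form: given real numbers $c_1<c_2\le A$, any finite sequence $a_0,\dots,a_{n-1}$ with $a_j\le A$ and $\sum_{j=0}^{n-1}a_j \le c_1 n$ admits a subset of indices $1\le n_1<\cdots<n_l\le n$ with $l\ge \theta n$, where $\theta=(c_2-c_1)/(A-c_1)>0$, such that
\[
\sum_{j=n_i-k}^{n_i-1} a_j \le c_2 k \quad\text{for all } 1\le k\le n_i \text{ and all } 1\le i\le l.
\]
Applying this with $c_1=-c$, $c_2=-c$ (or more carefully $c_1=-c-\eta$, $c_2=-c$ for small $\eta>0$, to have $c_1<c_2$), one obtains $l\ge\theta n$ indices at which the multiplicative estimate \eqref{eq:hyptimes} holds with rate $-c$.

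By construction, these indices $n_i$ satisfy, for every $1\le k\le n_i$,
\[
\prod_{j=n_i-k}^{n_i-1}\|(Df(f^{j}(x))|_{C_{a}(f^{j}(x))})^{-1}\|=\exp\!\Big(\sum_{j=n_i-k}^{n_i-1}a_{j}(x)\Big)\le e^{-ck},
\]
which is exactly the definition of $c$-cone-hyperbolic time in Definition \ref{def:hyptime}; the cone invariance requirement $Df(f^{j}(x))(C_a(f^{j}(x)))\subset C_a(f^{j+1}(x))$ is automatic from (H3). The constant $\theta$ depends only on $c$ and on $A$, hence only on $f$ and $c$, as claimed.

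The only delicate point is to guarantee a \emph{uniform} $\theta$ and a \emph{uniform} threshold for how large $n$ must be. Uniformity of $\theta$ is clear from the Pliss formula, which only uses $c$ and the upper bound $A$. Concerning the threshold, Pliss' lemma itself requires no lower bound on $n$; what is needed is that the averaged sum drops below $-c$, which for each $x\in H$ holds eventually by \eqref{eq:limsupnegative}. Since the statement only claims the existence of such an $n$-threshold for each $x\in H$ (not a uniform one), no further argument is required. Thus the lemma follows directly from Pliss' lemma applied to $(a_j(x))_j$, and I would simply write this as a short application, referring to \cite[Lema IV.11.8]{Mane87} for the exact statement used.
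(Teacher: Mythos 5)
Your proposal is correct and matches the paper's own (sketched) argument: the paper explicitly declares the lemma to be ``a simple exercise'' from Pliss' lemma applied to the sequence $a_j(x)=\log\|(Df(f^j(x))|_{C_a(f^j(x))})^{-1}\|$ together with \eqref{eq:limsupnegative}, which is precisely what you carry out. The only cosmetic note is that the natural split of constants is $c_1=-3c/2$ (guaranteed for large $n$ by the $\limsup\le -2c$ hypothesis) and $c_2=-c$, rather than your first choice $c_1=c_2=-c$, but you flag and fix this yourself, so the argument stands.
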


Since $M$ is compact and $f$ is a non-singular endomorphism, there is
$\delta_{1}>0$ such that all inverse branches are well defined on 
the ball $B(x,\delta_{1})$ for every $x\in M$. More precisely, given
$x\in M$ and $y\in f^{-1}(\left\{ x\right\} )$ there is a well defined 
diffeomorphism $f_{y}^{-1}:B(x,\delta_{1})\rightarrow V_{y}$
onto an open neighborhood $V_{y}$ of $y$ satisfying
$f\circ f_{y}^{-1}=id_{B(x,\delta_{1})}$. Reducing $\delta_{1}$ if necessary, we may assume 
that $V_{z}\cap V_{y}=\emptyset$ for every distinct $y,z\in f^{-1}(\left\{ x\right\} )$.
Actually, we have the following uniform continuity of the inverse branches:

\begin{lemma} \label{lem:continuityinversebranche}
For any $\varepsilon>0$ there exists $0<\delta=\delta(\varepsilon)<\delta_1$
such that $f_{y}^{-1}(B(x,\delta))\subset B(y,\varepsilon)$
for every $x\in M$ and $y\in f^{-1}(\left\{ x\right\} )$.
\end{lemma}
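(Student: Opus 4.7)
The plan is to argue by contradiction using compactness of $M$ together with the uniqueness of local inverse branches of a local diffeomorphism. Suppose the conclusion fails. Then there exist $\varepsilon>0$ and sequences $(x_n)_n$, $(z_n)_n$ in $M$ and $(y_n)_n$ with $f(y_n)=x_n$, such that $d(z_n,x_n)<1/n$ while $w_n:=f_{y_n}^{-1}(z_n)$ satisfies $d(w_n,y_n)\geq\varepsilon$ for every $n\in\mathbb{N}$.

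By compactness of $M$, passing to subsequences we may assume $x_n\to x$, $z_n\to x$ (because $d(z_n,x_n)<1/n$), $y_n\to y$ and $w_n\to w$. Continuity of $f$ gives $f(y)=x$ and $f(w)=x$, while $d(w,y)\geq\varepsilon$ forces $w\neq y$, so $w$ and $y$ are distinct preimages of $x$. Apply the definition of $\delta_1$ to the point $x$ and its preimage $y$: on $B(x,\delta_1)$ there is a well defined inverse branch $f_y^{-1}:B(x,\delta_1)\to V_y$, with $V_y$ an open neighborhood of $y$ disjoint from the analogous neighborhood $V_w$ of $w$.

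Since $y_n\to y$ and $V_y$ is open, $y_n\in V_y$ for $n$ large; since $x_n\to x$, also $x_n\in B(x,\delta_1)$ for such $n$. The identity $f(y_n)=x_n$ and bijectivity of $f|_{V_y}:V_y\to B(x,\delta_1)$ then force $y_n=f_y^{-1}(x_n)$. Because $f$ is a local diffeomorphism and two local inverse branches that coincide at a single point must coincide on the connected component of the intersection of their domains containing that point, the branch $f_{y_n}^{-1}$ agrees with $f_y^{-1}$ on a neighborhood of $x_n$ inside $B(x_n,\delta_1)\cap B(x,\delta_1)$. Using $d(z_n,x_n)<1/n$, for $n$ large we obtain $w_n=f_{y_n}^{-1}(z_n)=f_y^{-1}(z_n)$. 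Continuity of $f_y^{-1}$ then yields $w_n\to f_y^{-1}(x)=y$, contradicting $w_n\to w\neq y$.

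The only genuine point to check is the last identification of the branches $f_{y_n}^{-1}$ and $f_y^{-1}$ near $x_n$; everything else is a routine extraction of limits. This identification is handled by the local uniqueness of inverses for a local diffeomorphism, and once established the argument terminates in one line. Thus the statement reduces to an elementary compactness/contradiction argument, with no use of the cone or hyperbolicity structure.
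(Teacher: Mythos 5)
Your argument is correct, but it takes a genuinely different route from the paper. The paper gives a \emph{direct} construction of $\delta$: it first uses the continuity of each inverse branch to get a radius $\delta_y$ for every pair $(x,y)$, then uses compactness of $M$ to extract a finite subcover $\{B(x_j,\delta'(x_j)/8)\}_j$ and sets $\delta$ equal to half the minimum of the $\delta'(x_j)$, finally verifying the estimate by passing from a given $x$ to a nearby $x_j$ in the subcover and invoking the coincidence of the branches $f_y^{-1}$ and $f_u^{-1}$ (where $u=f_y^{-1}(x_j)$) on the overlap of their domains. You instead argue by contradiction, extracting sequences $x_n,z_n,y_n,w_n$ converging in the compact $M$ and deriving the contradiction from the fact that the branches $f_{y_n}^{-1}$ and $f_y^{-1}$ must coincide near $x_n$ once $y_n$ has entered $V_y$. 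Both proofs hinge on exactly the same two ingredients --- compactness of $M$ and local uniqueness of inverse branches of a non-singular endomorphism --- so the distinction is one of packaging rather than substance; the contradiction/sequences version is slightly shorter and avoids tracking constants through a finite subcover, while the paper's version is constructive and gives an explicit $\delta$. One small point worth making explicit in your write-up: after identifying $w_n=f_y^{-1}(z_n)$ for large $n$, you want $z_n$ to lie in the same connected component of $B(x_n,\delta_1)\cap B(x,\delta_1)$ as $x_n$; this holds for $n$ large because $B(x_n,1/n)$ is eventually contained in that intersection, but it deserves a word since the bare intersection of metric balls in a Riemannian manifold need not be connected.
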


\begin{proof}
Fix $\varepsilon>0$. Given $x\in M$ and $y\in f^{-1}(\left\{ x\right\} )$,
there is $0<\delta_{y}<\delta_{1}$ such that $dist_{M}(f_{y}^{-1}(z),y)<\varepsilon/2$ whenever 
$dist_{M}(z,x)<\delta_{y}$.
As $M$ is a compact connected manifold then 
$\delta'(x):=\min\left\{ \delta_{y}:y\in f^{-1}(\left\{ x\right\} )\right\} >0$ for every $x\in M$.
By compactness, the open covering $( B(x,\delta'(x)/8))_{x\in M}$
of $M$ admits a finite subcover $(B(x_{j},\delta'(x_{j})/8))_{1\leq j\leq m}$.

We claim that $\delta:=\frac12\min\left\{ \delta'(x_{j}):1\leq j\leq m\right\}>0$ satisfies the requirements
of the lemma. Indeed, take $x\in M$, $y\in f^{-1}(\left\{ x\right\} )$
and $z\in B(x,\delta)$, and let $1\leq j\leq m$ be such that $z\in B(x_{j},\delta'(x_{j})/8)$.
Hence $dist_{M}(x,x_{j}) <\delta'(x_{j})$. 
Moreover, taking $u=f_{y}^{-1}(x_{j}) \in f^{-1}(\{x_j\})$ the maps 
$f_{y}^{-1}$ and $f_{u}^{-1}$ coincide at $B(x,\delta_{1})\cap B(x_{j},\delta_{1})$.
Since $z$ and $x$ belong to this intersection then
$
dist_{M}(f_{y}^{-1}(z),y)=dist_{M}(f_{y}^{-1}(z),f_{y}^{-1}(x))=dist_{M}(f_{u}^{-1}(z),f_{u}^{-1}(x))<\varepsilon,
$
which proves the lemma. 
\end{proof}

\begin{lemma}\label{lem:contofderivative}
If $c>0$ is given by \eqref{eq:limsupnegative} there exists $\delta_{2}>0$ (depending on $c$) so that for every $x\in\Lambda$, every $y\in B(x,\delta_{2})\cap\Lambda$ and every $z\in f^{-1}(\left\{ x\right\} )$ it hold that: 
\begin{equation}\label{eq:contcone}
\|Df(f_{z}^{-1}(y))^{-1}v\|\leq e^{\frac{c}{2}}\|(Df(f_{z}^{-1}(x))|_{C_{a}(f_{z}^{-1}(x))})^{-1} \|\|v\|
	\quad\text{for every $v\in C_{a}(y).$}
\end{equation} 
\end{lemma}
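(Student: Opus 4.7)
The plan is to treat the lemma as a quantitative continuity statement obtained by combining three standard facts: (i) since $f\in C^{1+\alpha}$ on the compact manifold $M$, the map $w\mapsto Df(w)^{-1}$ is uniformly continuous; (ii) the uniform continuity of inverse branches from Lemma~\ref{lem:continuityinversebranche}; and (iii) the continuity of the splitting $T_UM=E^s\oplus F$, and hence of the cone field $x\mapsto C_a(x)$. Writing $u:=f_z^{-1}(x)$ and $w:=f_z^{-1}(y)$, I would fix $\eta>0$ (to be calibrated against $c$ at the end). By (i) there exists $\delta_3>0$ such that $d(w,u)<\delta_3$ forces $\|Df(w)^{-1}-Df(u)^{-1}\|<\eta$ in a fixed local chart around $u$ that identifies nearby tangent spaces with $\mathbb R^d$. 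Applying (ii) with $\varepsilon=\delta_3$ then yields the desired $\delta_2>0$, uniform in $x\in\Lambda$ and $z\in f^{-1}(\{x\})$, such that $d(x,y)<\delta_2$ implies $d(w,u)<\delta_3$.

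Given any $v\in C_a(y)$, I would transport it to $\tilde v\in T_xM$ via the chart; by (iii) one may assume $\tilde v\in C_a(x)$ at the cost of a multiplicative error controlled by $\eta$. The $Df$-invariance $Df(u)(C_a(u))\subset C_a(x)$, together with continuity of the image cone in the base point and convexity of $C_a$, permits replacing $\tilde v$ by a vector in $Df(u)(C_a(u))$ with a further error $O(\eta)\|v\|$, and the triangle inequality then gives
\[
\|Df(w)^{-1}v\|\le \|Df(u)^{-1}\tilde v\|+\eta\|v\|\le (1+O(\eta))\,\|(Df(u)|_{C_a(u)})^{-1}\|\,\|v\|.
\]
Choosing $\eta=\eta(c)$ small enough that $1+O(\eta)\le e^{c/2}$ delivers the lemma, with $\delta_2=\delta_2(c)$ arising precisely from this calibration.

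The only nonroutine step is the mismatch between the full cone $C_a(x)$ to which $\tilde v$ belongs and the strictly smaller (by (H4)) image cone $Df(u)(C_a(u))$ in whose terms the right-hand side is phrased. I expect this to be the main obstacle: it is resolved by exploiting that $u\mapsto Df(u)(C_a(u))$ is continuous (uniformly in $u$ by compactness), that $C_a$ is convex, and that the $e^{c/2}$ factor provides slack sufficient to absorb the discrepancy; crucially, the uniformity of all these estimates in $x$ and $z$ is what allows the same $\delta_2$ to work for every $x\in\Lambda$ and every preimage $z\in f^{-1}(\{x\})$ simultaneously.
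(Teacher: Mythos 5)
Your plan has the right skeleton (uniform continuity on the compact $\overline{U}$ together with Lemma~\ref{lem:continuityinversebranche}), but it is noticeably heavier than the paper's argument, and the step you yourself flag as ``the main obstacle'' is a genuine gap that your proposed repair does not close. The paper works entirely with the scalar function $x\mapsto\|(Df(x)|_{C_a(x)})^{-1}\|$: this is continuous, hence uniformly continuous, on $\overline{U}$, and, because $f$ is non-singular, it is bounded below by $D:=\min_{x\in\overline{U}}\|(Df(x)|_{C_a(x)})^{-1}\|>0$. Taking the uniform-continuity modulus for $\eta=(e^{c/2}-1)D$ and feeding it into Lemma~\ref{lem:continuityinversebranche} produces $\delta_2$; then the cone norms at $f_z^{-1}(x)$ and $f_z^{-1}(y)$ differ by less than $(e^{c/2}-1)D\le(e^{c/2}-1)\|(Df(f_z^{-1}(x))|_{C_a(f_z^{-1}(x))})^{-1}\|$, and the positive lower bound $D$ is exactly what turns this additive error into the multiplicative factor $e^{c/2}$. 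There are no charts, no parallel transport of vectors between tangent spaces, and no cone-to-cone comparison anywhere.

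Your version instead estimates $Df(w)^{-1}$ as a full operator in charts and then needs to replace $\tilde v\in C_a(x)$ by an element of the image cone $Df(u)(C_a(u))$ with error $O(\eta)\|v\|$. This step fails: by the strict invariance in (H3) and the domination (H4), $Df(u)(C_a(u))$ sits inside $C_a(x)$ with an angular gap bounded below by a positive constant depending on $a$ and $\lambda$ but not on $\eta$. A vector $\tilde v$ near $\partial C_a(x)$ is therefore at a definite angular distance from $Df(u)(C_a(u))$; the continuity of $u\mapsto Df(u)(C_a(u))$ only compares the image cones at nearby base points $u$ and $w$ with each other, not with the ambient cone $C_a(x)$, and convexity of $C_a$ does nothing against an angular gap. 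The discrepancy is $\Theta(1)\|v\|$, not $O(\eta)\|v\|$, and the multiplicative $e^{c/2}$ slack cannot absorb an error that does not vanish as $\eta\to 0$. The underlying problem is that $\|(Df(u)|_{C_a(u)})^{-1}\|$ controls $Df(u)^{-1}$ only on the image cone, whereas a boundary vector of $C_a(x)$ has an $E^s$-component of size of order $a\|v\|$ which $Df(u)^{-1}$ dilates at rate at least $\lambda^{-1}$, unrelated to the cone norm. If you insist on an operator-level route you would have to restrict $v$ to the image cone $Df(f_z^{-1}(y))(C_a(f_z^{-1}(y)))$ from the outset (which is indeed the situation in which the lemma is invoked, see the proof of Proposition~\ref{prop:hatnuishyp}); the cleaner fix is simply to adopt the paper's scalar route and never decompose $v$ at all.
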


\begin{proof}
The map $\Lambda\ni x\mapsto\|(Df(x)|_{C_{a}(x)})^{-1}\|$
is continuous, hence uniformly continuous. 
So, given $\eta>0$ there is $\xi>0$ such
that if $dist_{M}(x,y)<\xi$ then 
$
|\|(Df(x)|_{C_{a}(x)})^{-1}\|-\|(Df(y)|_{C_{a}(y)})^{-1}\||<\eta.
$
Moreover, as $f$ is a non-singular endomorphism and $\overline{U}$ is compact
then $D:=\min_{x\in\overline{U}}\|(Df(x)|_{C_{a}(x)})^{-1}\|>0$. 
Take $\eta=(e^{\frac{c}{2}}-1)D$, let $\xi=\xi(\eta.c)>0$ be given by the latter uniform continuity
and let $\delta_{2}=\delta(\xi)>0$ be the radius given by Lemma~\ref{lem:continuityinversebranche}.
Thus, if $x,y\in\Lambda$ are such that $dist_{M}(x,y)<\delta_{2}$ and $z\in f^{-1}(\{x\})$ then 
$dist_{M}(x_{1},y_{1})<\xi$, where $x_{1}:=f_{z}^{-1}(x)=z$ and $y_{1}:=f_{z}^{-1}(y)$.
Therefore
$
\left|\|(Df(x_{1})|_{C_{a}(x_{1})})^{-1}\|-\|(Df(y_{1})|_{C_{a}(y_{1})})^{-1}\|\right|
<(e^{\frac{c}{2}}-1)D<(e^{\frac{c}{2}}-1) \|(Df(x_{1})|_{C_{a}(x_{1})})^{-1}\|
$
and so $\|(Df(y_{1})|_{C_{a}(y_{1})})^{-1}\| < e^{\frac{c}{2}}\|(Df(x_{1})|_{C_{a}(x_{1})})^{-1}\|.$
Then, 
$
\|Df(y_{1})^{-1}\cdot v\| 
  \le e^{\frac{c}{2}}\|(Df(x_{1})|_{C_{a}(x_{1})})^{-1}\|\cdot\|v\|
$
or, equivalently, $\|Df(f_{z}^{-1}(y))^{-1}v\|\leq e^{\frac{c}{2}}\|(Df(f_{z}^{-1}(x))|_{C_{a}(f_{z}^{-1}(x))})^{-1} \|\|v\|$, for every 
$v\in C_{a}(f(y_{1}))=C_{a}(y)$.
This proves the lemma.
\end{proof}

For a submanifold $D\subset M$ we will denote by $B_{D}(\cdot,\varepsilon)$
the ball of radius $\varepsilon>0$ in $D$ with the distance
induced by the Riemannian metric on $D$. 
Using the continuity of the parallel transport 
and the H\"older control of
the variation of the tangent bundle of disks tangent to $C_{a}$ (recall Proposition~\ref{prop:iteratesoftgbundleholder}),
if $D$ is a $C^{1}$ disk embedded in $U$ tangent to $C_{a}$ then there exists $\xi>0$ such that $B_{D}(x,\xi)=exp_{x}(graf(\psi_{x}))$
for every $x\in D$, where $\psi_{x}: B(0,\xi)\subset T_{x}D \to E_{x}^{s}$ is a Lipschitz map and $Lip(\psi_{x})\leq\varepsilon_{0}$ for some
is $\varepsilon_{0}>0$ depending only on $a>0$.
For that reason we will say that
$\Delta(q,\delta)\subset D$ is a disk of radius $\delta>0$
tangent to the cone field if $\Delta(q,\delta)=\exp_{q}(graf(\psi))$
where the graph of $\psi: B(0,\delta)\subset T_{q}D \to E_{x}^{s}$ is contained in $C_{a}(q)$.
Note that, there is a constant $R>0$ (depending only on the cone field) such that 
$B_{D}(q,R\delta) \subset \Delta(q,\delta) \subset B_{D}(q,2R\delta)$.
In what follows we show that cone-hyperbolic times imply a growth to a large scale 
along some subdisk of $D$. 
If $D$ is a submanifold of $M$ then we denote by $dist_{D}$ the
Riemannian distance on $D$ 
then $dist_{M}\leq dist_{D}$.

\begin{proposition}\label{prop:contraidist}
Let $D$ be a $C^{1}$ disk embedded in $U$ that is tangent to $C_{a}$. There is $\delta>0$ such that if $n$ is a 
$c$-cone-hyperbolic time for $x\in D$ and
$d(x,\partial D)>8\delta$ 
then there is an open neighborhood $D(x,n,8\delta)$ of $x$ in $D$ diffeomorphic 
to a $C^1$-disk $\Delta(f^{n}(x),8\delta)$ by $f^{n}$. Moreover 
$$
dist_{f^{n-k}(D(x,n,8\delta))}(f^{n-k}(x),f^{n-k}(y))\leq e^{-\frac{c}{2}k}dist_{\Delta(f^{n}(x),8\delta)}(f^{n}(x),f^{n}(y))
$$
for every $y\in D(x,n,8\delta)$ and $1\leq k\leq n.$
\end{proposition}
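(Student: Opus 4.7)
The plan is to build $D(x,n,8\delta)$ by pulling back the reference disk $\Delta(f^n(x),8\delta)$ through the inverse branches of $f$ that follow the backward orbit of $x$. Write $y_k := f^{n-k}(x)$ for $0 \le k \le n$ and let $L_k \subset f^{n-k}(D)$ denote the local sheet (an embedded $C^1$-disk) around $y_k$; each $L_k$ is tangent to $C_a$ by forward invariance of the cone field and the tangency of $D$. Since $f|_{L_{k}} : L_k \to L_{k-1}$ is a local diffeomorphism, it has a local inverse $\phi_k: L_{k-1}\to L_k$ with $\phi_k(y_{k-1}) = y_k$. Set $\Delta_0 := \Delta(f^n(x),8\delta) \subset L_0$, define recursively $\Delta_k := \phi_k(\Delta_{k-1}) \subset L_k$, and finally set $D(x,n,8\delta) := \Delta_n \subset D$; by construction $f^n|_{\Delta_n} : \Delta_n \to \Delta_0$ is a diffeomorphism.

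Fix $\delta > 0$ small so that $16\delta < \min\{\delta_1,\delta_2\}$ (with $\delta_1$ the radius on which inverse branches are defined, and $\delta_2$ from Lemma~\ref{lem:contofderivative}) and so that any piece of iterate $f^j(D)$ of radius at most $16\delta$ is a $C^1$-graph over its tangent space with Lipschitz constant below the cone width $a$. This latter requirement is supplied by the H\"older control on the tangent bundles of iterates of disks tangent to $C_a$ recorded in Proposition~\ref{prop:iteratesoftgbundleholder}. Tangent vectors of each $\Delta_k$ then lie in $C_a$.

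The heart of the argument is the contraction estimate. For $w \in L_{k-1}$ with $w$ close to $y_{k-1}$ and $v \in C_a(w)$, Lemma~\ref{lem:contofderivative} applied with $x = y_{k-1}$ and $z = y_k$ gives
\[
\|D\phi_k(w)\cdot v\| \;=\; \|Df(\phi_k(w))^{-1} v\| \;\le\; e^{c/2}\,\|(Df(y_k)|_{C_a(y_k)})^{-1}\|\;\|v\|.
\]
Pulling back any $C^1$-path $\gamma \subset \Delta_0$ joining $f^n(x)$ to $f^n(y)$ through the composition $\phi_k \circ \cdots \circ \phi_1$ therefore produces a path in $\Delta_k$ joining $y_k = f^{n-k}(x)$ to $f^{n-k}(y)$ whose length is at most
\[
\prod_{j=n-k}^{n-1} e^{c/2}\,\|(Df(f^{j}(x))|_{C_a(f^{j}(x))})^{-1}\| \,\cdot\, \mathrm{length}(\gamma) \;\le\; e^{-ck/2}\,\mathrm{length}(\gamma),
\]
where the last inequality is the $c$-cone-hyperbolic time bound of Definition~\ref{def:hyptime}. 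Taking the infimum over paths $\gamma$ yields the claimed distance estimate between $f^{n-k}(x)$ and $f^{n-k}(y)$. Specializing to $k = n$, the intrinsic radius of $D(x,n,8\delta)$ in $D$ is at most $8\delta\cdot e^{-cn/2} \le 8\delta$, so the hypothesis $d(x,\partial D) > 8\delta$ guarantees $D(x,n,8\delta) \subset D$.

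The main obstacle is the simultaneous bookkeeping needed to keep the induction consistent: at every step, $\Delta_{k-1}$ must lie in the domain of $\phi_k$ (so Lemma~\ref{lem:contofderivative} applies) and must remain a graph over its tangent space with Lipschitz constant inside the cone. The subtle point is that a single pull-back step need not contract; only the geometric mean $\prod_{j=n-k}^{n-1}\|(Df(f^{j}(x))|_{C_a})^{-1}\|$ over $k$ consecutive steps is controlled, by $e^{-ck}$. However, Definition~\ref{def:hyptime} precisely supplies the analogous tail bound for \emph{every} $1 \le k \le n$, so by induction the radius of $\Delta_k$ never exceeds $8\delta$ and in fact shrinks geometrically, ensuring that the next inverse branch is applicable at each step; Proposition~\ref{prop:iteratesoftgbundleholder} then propagates the graph property throughout the pull-back.
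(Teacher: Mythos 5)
The heart of your argument is correct and close in spirit to the paper's, but there is a genuine gap in the very first step: you set $\Delta_0:=\Delta(f^n(x),8\delta)\subset L_0$ without ever verifying that the local sheet $L_0$ of $f^n(D)$ around $f^n(x)$ actually contains a disk of radius $8\delta$. Since $L_n = B_D(x,8\delta)$ and $L_{k-1}=f(L_k)$, the sheet $L_0=f^n\bigl(B_D(x,8\delta)\bigr)$ has a radius determined by how much $f^n$ expands along $C_a$ on all of $B_D(x,8\delta)$ (not merely at $x$), and establishing that this image contains a graph of radius $8\delta$ is precisely the \emph{existence} half of the proposition. Your contraction estimate, applied to $\Delta_0$, would yield radius$(\Delta_n)\le 8\delta e^{-cn/2}$ and hence $\Delta_n\subset D$ — but the estimate is conditional on $\Delta_0$ already living in $L_0$, so the reasoning is circular. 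Note also that without knowing $\Delta_k\subset L_k$ you cannot even conclude $\Delta_k$ is tangent to $C_a$: the cone field is only \emph{forward} invariant, so pulling back a disk tangent to $C_a$ by $Df^{-1}$ need not keep it in $C_a$ unless the pullback is known to lie in a forward iterate of $D$; your assertion that the radius bound alone "ensures the next inverse branch is applicable" addresses only the ambient inverse branch on a $\delta_1$-ball in $M$, not membership in the correct sheet.

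The paper resolves this by going forward rather than backward: it runs a graph-transform claim (via Shub's Theorem I.2) to show that the image of a graph of radius $8\delta$ tangent to $C_a$ under one application of $F_x = \exp_{f(x)}^{-1}\circ f\circ\exp_x$ is again a Lipschitz graph over a ball of definite radius in $T_{f(x)}f(D)$, and propagates this along the orbit; the backward contraction is then a separate estimate, deferred to the argument of \cite[Lemma 2.7]{ABV00}. To repair your pullback approach one needs a bootstrap or continuation step: for instance, defining $\rho_*$ as the supremum of radii $s\le 8\delta$ for which $\Delta(f^n(x),s)\subset L_0$ and the pullbacks land in the right sheets, then using the tail bound $\prod_{j=n-k}^{n-1}\|(Df(f^j(x))|_{C_a})^{-1}\|\le e^{-ck}$ (valid for every $1\le k\le n$) to show that each pullback retreats strictly inside $B_D(x,8\delta)$, hence $\rho_*$ cannot be less than $8\delta$. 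Once that is in place, the remaining portion of your proof — pulling back paths and applying Lemma~\ref{lem:contofderivative} together with the cone-hyperbolic time definition to get the $e^{-ck/2}$ contraction — is correct and mirrors the paper's strategy.
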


\begin{proof}
The first part in the proof is to assure that there exists a uniform radius $\delta$ so that 
the image $f(\tilde D)$ of any disk $\tilde D$ of radius $8\delta$ tangent to the cone field is a 
disk tangent to the cone field with inner diameter at least $8\delta$, and the second part of 
the proof is to assure the backward contraction property.
The first part of the argument adapts the classical ideas that the existence of the invariant cone field and domination assumptions (H3) and (H4) imply 
that the classical graph transform map is well defined(see e.g. \cite[Appendix III]{Sh1987} or \cite[Section~7]{BP07}).
Although the subbundle $F$ is not necessarily $Df$-invariant, 
we can use the tangent spaces to $f^j(D)$ (which are contained in the cone field) as reference space to define the graph transform. 
We will make this precise in what follows.

By compactness of $M$, there exist $\delta_{0},\delta_{1}>0$ such that $exp_{y}:B(0,\delta_{0})\subset T_{y}M\rightarrow exp_{y}(B(0,\delta_{0}))$
and $f|_{B(y,\delta_{1})}:B(y,\delta_{1})\rightarrow f(B(y,\delta_{1}))$
are diffeomorphisms for every $y\in M$. Moreover, by uniform continuity
of $f$, there is $\delta_{3}>0$ such that if $dist_{M}(y,z)<\delta_{3}$
then $dist_{M}(f(y),f(z))<\delta_{1}$.
Fix $\xi>0$ so that if  $\tilde D$ is a disk tangent to the cone field then $B_{\tilde D}(x,\xi)=exp_{x}(graf(\psi_{x}))$, $\psi_{x}: B(0,\xi)\subset T_{x}\tilde D 
\to E_{x}^{s}$ and set $8\delta:=\frac{1}{2}\min\left\{ \xi,\delta_{0},\delta_{1},\delta_{3}\right\}$.
By the choice of $\delta$,  the map
$
F_{x}:=\exp_{f(x)}^{-1}\circ f\circ\exp_{x}:B(0,8\delta)\subset T_{x}M\rightarrow T_{f(x)}M
$
is a diffeomorphism onto its image for every $x\in M$.

Fix a $C^{1}$ disk $D$ tangent to $C_{a}$ and $x\in D$ such that $d(x,\partial D)>8\delta$.
Then we claim that if $B_{D}(x,8\delta):=\exp_{x}(graf(\psi_{x}))$ with
$\psi_{x}:T_{x}D(8\delta)\subset T_{x}D\rightarrow E_{x}^{s}$ Lipschitz then the image $F_{x}(graf(\psi_{x}))$
is the graph of a Lipschitz map from an open disk in $T_{f(x)}f(D)$ to $E_{f(x)}^{s}$ whose domain contains a ball
of radius $8\delta$ around $0 \in T_{f(x)} f(D)$.
The choice of $8\delta<\delta_1$ guarantees that
$f(B_{D}(x,8\delta))$ is a submanifold of $M$ tangent to $C_a$.

\medskip
\noindent {\bf Claim:} \emph{
$F_{x}(graf(\psi_{x}))=graf(\psi_{f(x)})$ for some Lipschitz map $\psi_{f(x)}:T_{f(x)}f(D)(8\delta)\subset T_{f(x)}f(D)\rightarrow E_{f(x)}^{s}$.}

\begin{proof}[Proof of the claim]
Write
$
F_{x}(u+v)=Df(x)(u+v)+r_{x}(u+v)+t_{x}(u+v)
$
where the error terms $r_{x}:T_{x}D(8\delta)\times E_{x}^{s}(8\delta)\rightarrow T_{f(x)}f(D)$
and $t_{x}:T_{x}D(8\delta)\times E_{x}^{s}(8\delta)\rightarrow E_{f(x)}^{s}$
are $C^{1+\alpha}$ maps. 
Hypothesis (H3)-(H4) assure that the angle between $T_{x}D$ and $E^s_x$ is bounded away from zero. Therefore, the norm of the projection 
$p_{x}:T_{f(x)}M \to T_{f(x)}f(D)$ parallel to $E^s_{f(x)}$ is uniformly bounded away from zero.
Reduce $\delta>0$ if necessary so that for every $x\in M$ the error terms $r_{x}$ and $t_{x}$ are uniformly small (see e.g.
\cite[Section 7.1]{BP07}) so that 
$Lip(r_{x})\leq Lip\left[(p_{x}\circ Df(x)\circ(id,\psi_{x}))^{-1}\right]^{-1}$.
Now, using the invariance of $E^{s}$ and that $Df(x)\cdot T_{x}D=T_{f(x)}f(D)$, we  get
$
Lip [ p_{x}\circ F_{x}\circ(id,\psi_{x})-p_{x}\circ Df(x)\circ(id,\psi_{x}) ]\leq Lip(r_{x}) 
	\le Lip\left[(p_{x}\circ Df(x)\circ(id,\psi_{x}))^{-1}\right]^{-1}.
$
Theorem I.2 in \cite{Sh1987} assures that the Lipschitz map $p_{x}\circ F_{x}\circ(id,\psi_{x})$
is a homeomorphism onto its image, which guarantees that 
 $F_x(graf(\psi_{x}))$ is a graph of a Lipschitz map $\psi_{f(x)} : T_{f(x)}f(D)(8\delta)\subset T_{f(x)}f(D) \to E_{f(x)}^{s}$.
\end{proof}

As the previous estimates are uniform (independ of the disk $D$) then $f^j(D(x,n,8\delta))=F^j_x(graf(\psi^j_{x}))$
for every $0\le j \le n$, where $F_{x}^{j}=exp_{f^{j}(x)}^{-1}\circ f^{j}\circ exp_{x}$ and $D(x,n,8\delta)$ denotes the set of
points $y\in D$ so that $d(f^i(x), f^i(y))<8\delta$ for every $0\le i \le n$.
Finally, the proof of the backward contraction property and that $f^n(D(x,n,8\delta))$ is a disk of radius $8\delta$ tangent to $C$
is similar to~\cite[ Lemma 2.7]{ABV00}, the difference being that 
hyperbolic times are considered along the cone-field (these guarantee the necessary 
control of the curvature of the disks) and that backward contraction is given along the suitable
concatenation of inverse branches.
\end{proof}

We observe the constant $\delta>0$ in Proposition~\ref{prop:contraidist} is independent of $x\in M$.
The disks $D(x,n,8\delta)\subset D$ are called \emph{hyperbolic pre-disks} and their 
images $\Delta(f^{n}(x),8\delta)$ are called \emph{$n$-hyperbolic disks}.
We need the following bounded distortion property for the Jacobian of $f$ along disks tangent to
the invariant cone field.

\begin{proposition}\cite[Proposition 2.8]{ABV00}
Suppose $D$ is a $C^{1}$ disk embedded in $U$, $x\in D$ and $n$
a $c$-cone hyperbolic time for $x$.There is $C_{1}>0$ such that
\[
C_{1}^{-1}\leq\dfrac{|det\ Df^{n}(y)|_{T_{y}D(x,n,8\delta)}|}{|det\ Df^{n}(z)|_{T_{z}D(x,n,8\delta)}|}\leq C_{1}
\]
for every $y,z\in D(x,n,8\delta)$, where $D(x,n,8\delta)$ is the hyperbolic
pre-disk associated to $x$.\label{prop:distorcionhyptime}
\end{proposition}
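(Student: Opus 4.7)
The plan is to take logarithms and reduce the statement to a telescoping estimate that combines the H\"older regularity of the tangent bundle along disks tangent to $C_a$ (Proposition~\ref{prop:iteratesoftgbundleholder}) with the backward contraction at cone-hyperbolic times (Proposition~\ref{prop:contraidist}).

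First, I would write the ratio in the logarithmic form. Using the chain rule along the orbits of $y,z\in D(x,n,8\delta)$ one has
\[
\log \frac{|\det Df^n(y)|_{T_y D(x,n,8\delta)}|}{|\det Df^n(z)|_{T_z D(x,n,8\delta)}|}
=\sum_{j=0}^{n-1}\Big[J_j(f^j(y))-J_j(f^j(z))\Big],
\]
where $J_j(\cdot)=\log|\det(Df(\cdot)|_{T_{\cdot} f^j(D(x,n,8\delta))})|$. Since $f^j(D(x,n,8\delta))$ is a disk tangent to $C_a$ for every $0\le j\le n$ (this is the content of Proposition~\ref{prop:contraidist}), Proposition~\ref{prop:iteratesoftgbundleholder} yields a constant $L_1>0$, independent of $j$ and of the disk, such that $J_j$ is $(L_1,\alpha)$-H\"older along $f^j(D(x,n,8\delta))$.

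Next, I would control the distances appearing in each H\"older estimate. Proposition~\ref{prop:contraidist} gives, with $k=n-j$,
\[
\dist_{f^j(D(x,n,8\delta))}(f^j(y),f^j(z))\le e^{-\frac{c}{2}(n-j)}\dist_{\Delta(f^n(x),8\delta)}(f^n(y),f^n(z))\le 16\delta\, e^{-\frac{c}{2}(n-j)},
\]
where we used that $\Delta(f^n(x),8\delta)$ has inner diameter at most $16\delta$. Plugging this in,
\[
\left|\log \frac{|\det Df^n(y)|_{T_y D(x,n,8\delta)}|}{|\det Df^n(z)|_{T_z D(x,n,8\delta)}|}\right|
\le L_1 (16\delta)^\alpha \sum_{j=0}^{n-1} e^{-\frac{c\alpha}{2}(n-j)}
\le \frac{L_1 (16\delta)^\alpha}{1-e^{-c\alpha/2}}.
\]
Setting $C_1:=\exp\!\bigl(L_1(16\delta)^\alpha/(1-e^{-c\alpha/2})\bigr)$ gives a constant that depends only on $f$, $c$, $\alpha$, and $\delta$ (hence is uniform in $n$, $x$, and the disk $D$), which proves the stated two-sided bound.

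The only point that requires some care is verifying that Proposition~\ref{prop:iteratesoftgbundleholder} may be applied uniformly in $j$: this is where the hypothesis that $f^j(D(x,n,8\delta))$ remains a smoothly embedded disk tangent to $C_a$ (ensured by the graph-transform argument inside Proposition~\ref{prop:contraidist}) is essential, since otherwise the H\"older constant $L_1$ could deteriorate with $j$. Once this is in hand, the rest is a clean geometric series estimate.
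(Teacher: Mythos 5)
Your argument is the standard bounded distortion proof from \cite[Proposition~2.8]{ABV00}, which is exactly what the paper invokes (the paper supplies no proof of its own, simply citing that reference), so you have reproduced the intended approach correctly. One small technical remark: Proposition~\ref{prop:contraidist} as stated only bounds $\dist_{f^{n-k}(D(x,n,8\delta))}(f^{n-k}(x),f^{n-k}(y))$, i.e.\ distances to the centre $x$, not between two arbitrary points $y,z$; you should either pass through $x$ via the triangle inequality (picking up a harmless factor of $2$ inside the $(\cdot)^\alpha$) or note that the graph-transform contraction established inside the proof of Proposition~\ref{prop:contraidist} in fact gives the contraction of the intrinsic distance between any two points of $D(x,n,8\delta)$. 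Either way the geometric series estimate and the resulting constant $C_1$ are unaffected.
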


Fix $\delta>0$ given by Proposition \ref{prop:contraidist} and
$H=H(c) \subset U$ be the set of points with infinitely many $c$-cone hyperbolic times
and such that $Leb_{D}(H)>0$, guaranteed by hypothesis (H3). Reducing $\delta$, if necessary,
we can assume that 
$
B:=B(D,c,8\delta):=\left\{ x\in D\cap H:\ dist_{D}(x,\partial D)\geq8\delta\right\} 
$
satisfies $Leb_{D}(B)>0$. For each $n\in\mathbb{N}$ define
$H_{n}:=\left\{ x\in B:\ n\mbox{ is a }c\mbox{-cone hyperbolic time for }x\right\} .$

\begin{proposition} \cite[Proposition 3.3]{ABV00}\label{prop:diskformeasure}
There exists $\tau>0$ and for each $n\in\mathbb{N}$
there is a finite set $H_{n}^{*} \subset H_{n}$ such that the hyperbolic
pre-disks $(D(x,n,\delta))_{x\in H_{n}^{*}}$ are contained in $D$, pairwise disjoint and their union 
$
\mathscr{D}_{n}:=\bigcup_{x\in H_{n}^{*}}D(x,n,\delta)
$
 satisfies $\Leb_{D}(\mathscr{D}_{n}\cap H_{n})\geq\tau Leb_{D}(H_{n})$.
\end{proposition}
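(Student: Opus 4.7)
The plan is to prove the claim by a Vitali-type covering argument applied to the family of pre-disks $(D(x,n,\delta))_{x\in H_n}$ in $D$, combined with the bounded distortion of Proposition~\ref{prop:distorcionhyptime}. The key geometric input is that, at a fixed value of $n$, all these pre-disks have a uniform ``bounded geometry'' inherited from the backward contraction of cone-hyperbolic times.

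The first step would be to use Proposition~\ref{prop:contraidist} to identify each $D(x,n,\delta)$ with the diffeomorphic preimage, under $f^n\!\mid_{D(x,n,8\delta)}$, of the standard $\delta$-disk $\Delta(f^n(x),\delta)\subset f^n(D)$. The uniform backward contraction rate $e^{-c/2}$ per iterate forces each pre-disk to have diameter of order $\delta e^{-cn/2}$ in $D$, with shape comparable to a geodesic ball of the same radius up to the distortion controlled by Proposition~\ref{prop:distorcionhyptime}. This uniformity, which is independent of $x\in H_n$ and $n\in\mathbb{N}$, is exactly what makes a greedy Vitali extraction quantitative with respect to $Leb_D$.

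Next I would run the greedy procedure on the cover $(D(x,n,\delta))_{x\in H_n}\supset H_n$: select pre-disks one by one, each disjoint from those previously chosen, until no further disjoint element can be added; this produces a finite subset $H_n^*\subset H_n$. The classical $3r$-type enlargement argument, applicable here thanks to the bounded geometry of the previous step, yields a universal $C_0>0$ (depending only on the dimension and the cone aperture $a$) such that $H_n\subset\bigcup_{x\in H_n^*}D(x,n,C_0\delta)$. Bounded distortion then produces a uniform $K>0$ with
\[
Leb_D(D(x,n,C_0\delta))\le K\cdot Leb_D(D(x,n,\delta))\qquad\text{for every }x\in H_n^*,\ n\in\mathbb{N},
\]
since both quantities equal $Leb_{f^n(D)}(\Delta(f^n(x),\cdot))$ up to the almost constant Jacobian of $f^n$ along the pre-disk. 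Summing over $H_n^*$ gives $Leb_D(H_n)\le K\cdot Leb_D(\mathscr{D}_n)$.

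The main obstacle is to upgrade this last inequality to the stronger intersection form $Leb_D(\mathscr{D}_n\cap H_n)\ge\tau\cdot Leb_D(H_n)$. I would do this by controlling the ``annular'' contribution $H_n\setminus\mathscr{D}_n\subset\bigcup_{x\in H_n^*}[D(x,n,C_0\delta)\setminus D(x,n,\delta)]$: bounded distortion gives
\[
Leb_D\bigl(D(x,n,C_0\delta)\setminus D(x,n,\delta)\bigr)\le (K-1)\cdot Leb_D(D(x,n,\delta)),
\]
and feeding this bound, together with the decomposition $H_n=(H_n\cap\mathscr{D}_n)\sqcup(H_n\setminus\mathscr{D}_n)$, back into the previous estimate will yield, after a short manipulation, a constant $\tau>0$ independent of $n$ for which the desired lower bound holds.
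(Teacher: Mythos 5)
Your overall plan---cover $H_n$ by hyperbolic pre-disks at scale $n$, extract a disjoint subfamily, and use bounded distortion---is the right skeleton (and indeed the paper does not re-prove this; it is cited directly from \cite[Proposition~3.3]{ABV00}, whose argument is a covering lemma of this type). Two steps in your sketch, however, do not hold up. The less fatal one: the cone-hyperbolic-time condition \eqref{eq:hyptimes} bounds $\prod\|(Df|_{C})^{-1}\|$ only from \emph{above}, so it forces $\operatorname{diam}D(x,n,\delta)\lesssim \delta e^{-cn/2}$ but gives no lower bound. Two points $x,y\in H_n$ far apart in $D$ may therefore have pre-disks whose diameters differ by arbitrarily many orders of magnitude, so there is no ``uniform bounded geometry at scale $n$,'' and the finiteness of a maximal disjoint family and the unordered $3r$-extraction that you deduce from it are not justified. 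What \emph{is} available is a local comparability: if $D(x,n,\delta)\cap D(y,n,\delta)\neq\emptyset$ for $x,y\in H_n$, the two orbits shadow each other up to time $n$ (Proposition~\ref{prop:contraidist}), the inverse branches of $f^n$ agree on the overlap, and Proposition~\ref{prop:distorcionhyptime} gives $D(y,n,\delta)\subset D(x,n,C_0\delta)$ with comparable $Leb_D$-measures---independently of the absolute sizes.

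The essential gap is the last ``annular'' step. From $H_n\setminus\mathscr{D}_n\subset\bigcup_x\bigl(D(x,n,C_0\delta)\setminus D(x,n,\delta)\bigr)$ and bounded distortion you get $Leb_D(H_n\setminus\mathscr{D}_n)\le(K-1)Leb_D(\mathscr{D}_n)$, hence $Leb_D(H_n)\le Leb_D(H_n\cap\mathscr{D}_n)+(K-1)Leb_D(\mathscr{D}_n)$. No ``short manipulation'' turns this into $Leb_D(H_n\cap\mathscr{D}_n)\ge\tau Leb_D(H_n)$: you have no \emph{upper} bound for $Leb_D(\mathscr{D}_n)$ in terms of $Leb_D(H_n)$ (your Vitali-type estimate $Leb_D(H_n)\le K\,Leb_D(\mathscr{D}_n)$ goes the wrong way), and nothing in the scheme excludes that $H_n$ is concentrated in the annuli, i.e.\ that $Leb_D(H_n\cap D(x,n,\delta))$ is negligible compared with $Leb_D(H_n\cap D(x,n,C_0\delta))$ for every $x$. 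The ingredient your proposal is missing is a Besicovitch-type \emph{bounded multiplicity} property: each point of $D$ lies in at most $N=N(d_u,C_0)$ of the enlargements $D(x_i,n,C_0\delta)$ (this is seen by pushing forward by $f^n$, which on every such enlargement containing a fixed $y$ uses the same inverse branch, so the images $\Delta(f^n(x_i),\delta)$ are pairwise disjoint honest $\delta$-disks of uniform geometry packed in a ball of radius $O(\delta)$ around $f^n(y)$), together with the corresponding decomposition of the covering family into at most $N$ pairwise disjoint subfamilies. With that in hand, $\sum_i Leb_D\bigl(H_n\cap D(x_i,n,\cdot)\bigr)\ge Leb_D(H_n)$ forces one of the $N$ subfamilies to carry mass at least $Leb_D(H_n)/N$, and truncating it to finitely many members yields the proposition. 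A $3r$-Vitali extraction plus an annular estimate, which is all your sketch supplies, does not produce this conclusion.
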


\subsection{Construction of (non-invariant) hyperbolic measures\label{sec:nuconstruction}}

Let $\mathcal{B}(\Lambda)$ be the Borel
$\sigma$-algebra in $\Lambda$. We will say that a measure $\nu:(\Lambda,\mathcal{B}(\Lambda))\rightarrow\left[0,+\infty\right]$
is a subprobability if $\nu(\Lambda)\leq1$. We
denote by $\mathcal{S}(\Lambda)$ the set of all Borelian
subprobabilities which, by the Schauder's Theorem, is a compact space (in the weak{*} topology). 
Define, for each $n\in\mathbb{N}$,
\[
\mu_{n}:=\frac{1}{n}\sum_{j=0}^{n-1}f_{*}^{j}Leb_{D},
\]
where 
$Leb_{D}$ is the 
Lebesgue measure in $D$ induced by its Riemannian structure. It is easy to check that
any accumulation point $\mu$ of $(\mu_{n})_{n}$ is an $f$-invariant measure.
Now define for each $n\in\mathbb{N}$
\begin{equation}
\nu_{n}:=\frac{1}{n}\sum_{j=0}^{n-1}f_{*}^{j}Leb_{\mathscr{D}_{j}} \label{eq:hyp_subprobability_n}
\end{equation}
where $Leb_{\mathscr{D}_{j}}:=\sum_{x\in H_{j}^{*}}Leb_{D(x,j,\delta)}$,
and write $\mu_{n}=\nu_{n}+\eta_{n}$.
If $\mu$ is an accumulation point of $(\mu_{n})_{n\in\mathbb{N}}$
in the weak{*} topology then $\mu=\nu+\eta$, where $\nu$ and $\eta$
are subprobabilities obtained as an accumulation points of $(\nu_{n})_{n\in\mathbb{N}}$
and $(\eta_{n})_{n\in\mathbb{N}}$, respectively. The
following result guarantees such a measure $\nu$ has positive mass.

\begin{lemma}
There is $\alpha>0$ such that $\nu_{n}\mbox{\ensuremath{(H)\geq\alpha}}$
for every large $n$. Consequently, if $\nu$ is an accumulation
point of $(\nu_{n})_{n\in\mathbb{N}}$ then $\nu(H)\geq\alpha$.\label{prop:nupositiva}
\end{lemma}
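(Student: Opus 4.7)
The plan is to combine three earlier ingredients: the forward $f$-invariance of $H$, Lemma \ref{lem:densposth} (positive density of cone-hyperbolic times on $B$), and Proposition \ref{prop:diskformeasure} (the estimate $Leb_D(\mathscr{D}_j\cap H_j)\ge \tau\,Leb_D(H_j)$). First I would verify that $H$ is positively invariant, i.e.\ $f(H)\subset H$: shifting the index in the defining limsup from $x$ to $f(x)$ changes the average by a term that vanishes as $n\to\infty$ because $\log\|(Df(\cdot)|_{C_a(\cdot)})^{-1}\|$ is bounded on $\overline U$. Consequently $H\subset f^{-j}(H)$ for every $j\ge 0$.

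Next, by Lemma \ref{lem:densposth}, for each $x\in B$ there are at least $\theta n$ cone-hyperbolic times in $\{1,\dots,n\}$ for $n$ large. Integrating $x\mapsto \frac{1}{n}\sum_{j=1}^n 1_{H_j}(x)$ over $B$ and applying Fatou,
\[
\liminf_{n\to\infty}\frac{1}{n}\sum_{j=1}^n Leb_D(H_j)\;\ge\;\theta\, Leb_D(B).
\]
Since $H_j\subset B\subset H$ and the disks forming $\mathscr{D}_j$ are pairwise disjoint subsets of $D$, Proposition \ref{prop:diskformeasure} gives $Leb_{\mathscr{D}_j}(H)\ge Leb_{\mathscr{D}_j}(H_j)=Leb_D(\mathscr{D}_j\cap H_j)\ge \tau\, Leb_D(H_j)$. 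Combining with the invariance $f^{-j}(H)\supset H$,
\[
(f^j_{*}Leb_{\mathscr{D}_j})(H)=Leb_{\mathscr{D}_j}(f^{-j}(H))\;\ge\; \tau\,Leb_D(H_j).
\]
Averaging over $0\le j\le n-1$ and taking $\liminf$ yields $\liminf_n\nu_n(H)\ge \tau\theta\, Leb_D(B)=:2\alpha>0$, so $\nu_n(H)\ge\alpha$ for all large $n$.

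For the accumulation point statement, the difficulty is that $H$ is neither open nor closed (only Borel, as a $F_{\sigma\delta}$ set), so weak$^*$ semicontinuity does not immediately apply. I would approximate $H$ from inside by the closed sets
\[
K_m\;:=\;\bigcap_{n\ge m}\Big\{x\in M:\ \tfrac{1}{n}\sum_{j=0}^{n-1}\log\|(Df(f^j(x))|_{C_a(f^j(x))})^{-1}\|\le -c\Big\},
\]
which are closed by continuity of $Df$, form an increasing family, and satisfy $H\subset\bigcup_m K_m$ (with the slightly weaker bound $-c$ in place of $-2c$). Running the previous argument with $c$ replaced by $c/2$ in Pliss' lemma shows that $\nu_n(K_m)\ge\alpha$ for $n$ large enough, with the same lower bound, and closedness of $K_m$ gives $\nu(K_m)\ge\limsup_n\nu_n(K_m)\ge\alpha$ by the weak$^*$ portmanteau theorem. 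Taking $m\to\infty$ (and adjusting constants so the target set coincides with $H$ up to the Lebesgue-measure-zero boundary issues irrelevant to the forward orbit) delivers $\nu(H)\ge\alpha$.

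The main obstacle is precisely this last step: translating the pointwise estimate on $\nu_n(H)$ into one on the weak$^*$ limit, because the set $H$ is defined by an asymptotic condition. Steps 1--4 are a clean bookkeeping combination of results already at hand; the care required is in the inner closed approximation by the $K_m$ and in checking that the constant in the lower bound is preserved throughout the approximation.
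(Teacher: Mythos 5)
Your steps 1--4 are correct and reconstruct exactly the proof the paper cites ([ABV00, Proposition~3.5]): forward invariance of $H$, Lemma~\ref{lem:densposth} applied via Fatou, and Proposition~\ref{prop:diskformeasure}, combined as you do, give $\liminf_n \nu_n(H)\ge\tau\theta\,Leb_D(B)>0$. (The cited ABV00 proposition only asserts that $\nu_n(M)$ is bounded below, for which the invariance of $H$ is not even needed; you correctly add that invariance in order to upgrade the conclusion to $\nu_n(H)\ge\alpha$.)

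Your last paragraph has a genuine gap. You are right to flag that $H$ is not closed, so $\nu_n(H)\ge\alpha$ does not pass to a weak$^*$ limit by portmanteau alone; but the proposed fix is not justified. You introduce closed sets $K_m$ with $H\subset\bigcup_m K_m$ and claim that ``running the previous argument with $c$ replaced by $c/2$'' shows $\nu_n(K_m)\ge\alpha$ for a fixed $m$ and all large $n$. What the argument of steps~1--4 actually yields is a lower bound on $\nu_n$ of the union $\bigcup_m K_m$; to descend to a fixed $K_m$ you would need that $f^j(\mathscr{D}_j\cap H_j)\subset K_m$ for a single $m$ uniformly in $j$, and this fails. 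Write $\phi(z):=\log\|(Df(z)|_{C_a(z)})^{-1}\|$ and $S_n\phi$ for its Birkhoff sums; for $x\in\mathscr{D}_j\cap H_j$ and $y=f^j(x)$ one has $\tfrac{1}{n}S_n\phi(y)=\tfrac{1}{n}\big(S_{j+n}\phi(x)-S_j\phi(x)\big)$, so even granting $S_N\phi(x)\le -cN$ for $N\ge N(x)$, the index at which $\tfrac{1}{n}S_n\phi(y)$ drops below $-c$ is of order $j$ (because of the $-S_j\phi(x)/n=O(j/n)$ term). Hence the smallest $m$ with $y\in K_m$ grows with $j$, there is no uniform $m$, and the $K_m$ bootstrap as written does not close.

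That said, the gap is inessential for the paper. The only place Lemma~\ref{prop:nupositiva} is invoked is in Lemma~\ref{lem:hatnuposonsubdisk}, where the estimate used is $\hat\nu_n(M^f)=\nu_n(M)\ge\nu_n(H)\ge\alpha$. Since each $\nu_n$ is supported in the compact set $\overline U$, any weak$^*$ accumulation point satisfies $\nu(\overline U)\ge\alpha$. Replacing the clause ``consequently $\nu(H)\ge\alpha$'' by ``consequently $\nu(\overline U)\ge\alpha$'' is both what the cited reference proves and all that the rest of the argument requires, and it sidesteps the closed-approximation issue entirely.
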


\begin{proof}
The proof is identical to \cite[Proposition 3.5]{ABV00}.
\end{proof}

We now describe the support of the measure $\nu$.
By construction, the measure $\nu_{n}$ is supported on the union of disks 
$\cup_{j=0}^{n-1}\Delta_{j}$, with $\Delta_{j}:=\cup_{x\in H_{j}^{*}} \Delta(f^{j}x,\delta)$, which \emph{may have} self intersections.
In particular any accumulation point  $\nu$ of $(\nu_{n})_{n\in\mathbb{N}}$ is such that 
$supp(\nu)\subseteq\cap_{n\in\mathbb{N}}\overline{\cup_{j=0}^{n-1}\Delta_{j}}$. 
The next result asserts that $supp(\nu)$ is contained in a union of unstable manifolds of uniform size.

\begin{proposition}
\label{lem:suppdisk} Given $y\in\supp(\nu)$ exist $\hat{x}=(x_{-n})_{n\in\mathbb{N}}\in M^{f}$
such that $y$ belongs to a disk $\Delta(\hat{x})$ of
radius $\delta>0$ accumulated by $n_{j}$-hyperbolic disks $\Delta(f^{n_{j}}(x),\delta)$
with $j\to\infty$, and satisfying that
\begin{enumerate}
\item the inverse branch $f_{x_{-n}}^{-n}$ is well defined on $\Delta(\hat{x})$
for every $n\geq0$;
\item if for each $y\in\Delta(\hat{x})$ one takes $y_{-n}:=f_{x_{-n}}^{-n}(y)$ then
$
dist_{M}(y_{-n},x_{-n})\leq e^{-\frac{c}{2}\cdot n}\delta\mbox{ for every }n\geq0.
$
\end{enumerate}
\end{proposition}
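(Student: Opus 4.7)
The plan is to produce $\hat x$ and $\Delta(\hat x)$ by successive compactness arguments and then transfer the backward contraction of Proposition~\ref{prop:contraidist} to the limit. Since $\nu$ is a weak$^*$ accumulation point of the measures $\nu_n$ in~\eqref{eq:hyp_subprobability_n}, whose supports lie in $\bigcup_{j \le n-1} \Delta_j$, one has $\supp(\nu) \subset \bigcap_n \overline{\bigcup_{j \ge n} \Delta_j}$. Consequently, given $y \in \supp(\nu)$ we may choose $x_j \in H_{n_j}^*$ with $n_j \to \infty$ and $y_j \in \Delta(f^{n_j}(x_j), \delta)$ such that $y_j \to y$. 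Compactness of $M$ combined with a diagonal extraction then allows us to further assume that $f^{n_j - n}(x_j) \to x_{-n}$ for every $n \ge 0$; in particular $f^{n_j}(x_j) \to x_0$, and continuity of $f$ yields $f(x_{-n}) = x_{-(n-1)}$, so $\hat x := (x_{-n})_{n \ge 0} \in M^f$.

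Each disk $\Delta_j := \Delta(f^{n_j}(x_j), \delta)$ is tangent to $C_a$, and Proposition~\ref{prop:iteratesoftgbundleholder} gives a uniform $(L_1, \alpha)$-H\"older bound on its tangent bundle. In a common exponential chart around $x_0$, and for $j$ large enough, $\Delta_j$ is the graph of a map from a ball of radius $\delta$ in the cone direction at $x_0$ into $E^s_{x_0}$, with slope bounded by the cone width $a$ and $\alpha$-H\"older derivative with constant at most $L_1$. Arzel\`a--Ascoli then produces, after extracting a further subsequence, a $C^1$ limit $\Delta(\hat x)$ which is itself a $C^{1+\alpha}$ disk of radius $\delta$ tangent to $C_a$, centered at $x_0$ and containing $y$ (since $y_j \in \Delta_j$ and $y_j \to y$).

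Properties (1) and (2) along $\hat x$ are then obtained by passing to the limit in the analogous statements for $\Delta_j$. Fix $k \ge 0$; for $j$ with $n_j \ge k$, Proposition~\ref{prop:contraidist} asserts that the inverse branch of $f^k$ along the pre-orbit $f^{n_j}(x_j), f^{n_j - 1}(x_j), \dots, f^{n_j - k}(x_j)$ is well-defined on $\Delta_j$ and contracts distances by a factor $e^{-ck/2}$. Combining Lemma~\ref{lem:continuityinversebranche} (uniform continuity of inverse branches) with $f^{n_j - i}(x_j) \to x_{-i}$ for $0 \le i \le k$ shows that these branches converge to $f_{x_{-k}}^{-1} \circ \cdots \circ f_{x_{-1}}^{-1}$, which is therefore defined on $\Delta(\hat x)$, and the contraction estimate passes to the $j \to \infty$ limit, giving (2).

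The main obstacle is to guarantee that the inverse branches along $\hat x$ remain defined on the \emph{full} disk $\Delta(\hat x)$ for every $k \ge 0$ simultaneously, and not just on shrinking neighborhoods of $x_0$. This is self-correcting: the contraction bound forces $f_{x_{-k}}^{-1} \circ \cdots \circ f_{x_{-1}}^{-1}(\Delta(\hat x))$ to have diameter at most $2\delta e^{-ck/2}$, which lies inside the uniform injectivity ball $B(x_{-(k+1)}, \delta_1)$ provided by Lemma~\ref{lem:continuityinversebranche} for all but finitely many $k$, while for the small values of $k$ the branches arise directly as $C^0$-limits of the genuine inverse branches on the $\Delta_j$; an induction on $k$ then closes the argument and yields both (1) and (2) along the entire pre-orbit $\hat x$.
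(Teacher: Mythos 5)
Your argument is correct and follows essentially the same route as the paper's proof: extract a $C^1$-convergent subsequence of hyperbolic disks via the H\"older tangent-bundle bound and Arzel\`a--Ascoli, simultaneously fix a compatible pre-orbit $\hat{x}\in M^f$, and then pass the finite-depth backward contraction furnished by Proposition~\ref{prop:contraidist} to the limit, closing the induction on depth with the observation that the contracted images stay well inside the uniform injectivity domains of the inverse branches. The only cosmetic difference is that you realize the branch selection by a single Cantor diagonal on the sequences $j\mapsto f^{n_j-n}(x_j)$, whereas the paper iteratively pigeonholes, at each depth, on which of the finitely many inverse branches contains $f^{j_k-n}(D(x_{j_k},j_k,\delta))$ and refines the subsequence accordingly; both extractions produce the same pre-orbit and the same limiting backward contraction.
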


\begin{proof}
The idea is that any point in $\cap_{n\in\mathbb{N}}\overline{\cup_{j=0}^{n-1}\Delta_{j}}$
is accumulated by $j$-hyperbolic disks with $j$ going to $\infty$ and that, using the contraction of 
$j$-hyperbolic disk along past $j$ iterates, one can choose a full past trajectory (pre-orbit) along which
we observe backward contraction at all iterates.

Given $y\in\supp(\nu)$, there is a sequence $(y_{j})_{j\in\mathbb{N}}$
with $y_{j}\in\Delta(f^{n_{j}}(x_{n_{j}}),\delta)$
and $x_{n_{j}}\in H_{n_{j}}^{*}$ such that $\lim_{j\to\infty}y_{j}=y$.
Choosing a subsequence if necessary, we can assume that $\lim_{j\to\infty}f^{n_{j}}(x_{n_{j}})=x$.
As one can write $\Delta(f^{n_{j}}(x_{n_{j}}),\delta)$
as the image of the exponential map of the graph of a map $\psi_{f^{n_{j}}(x_{n_{j}})}:T_{f^{n_{j}}(x_{n_{j}})}D(\delta)\rightarrow E_{f^{n_{j}}(x_{j})}^{s}$ with $Lip(\psi_{f^{n_{j}}(x_{n_{j}})})<\varepsilon_{0}$.
Moreover, by parallel transport, we can identify the disk $\Delta(f^{n_{j}}(x_{n_{j}}),\delta)$
with the graph of a map $g_{n_{j}}:T_{x}(f^{n_{j}}(D))(\delta)\rightarrow E_{x}^{s}$ in $T_xM$.
Since $Lip$$(\psi_{f^{n_{j}}(x_{n_{j}})})<\varepsilon_{0}$
for any $j$ and the parallel transport is a linear isomorphism 
then $(g_{n_{j}})_{j\in\mathbb{N}}$ is a sequence of uniformly bounded and equicontinuous maps which, 
by Arzel\`a-Ascoli's theorem, admits a convergent subsequence $\lim_{k\to\infty}g_{n_{j_{k}}}=g$.
In particular, $\Delta(x)=exp_{x}(graf(g))$ is a $C^{1}$ disk of radius $\delta>0$ tangent to the cone field and obtained as a limit
of the family of disks $(\Delta(f^{n_{j}}(x_{n_{j}}),\delta)) _{j\in\mathbb{N}}$.
For notational simplicity assume, without loss of generality, that
\[
\Delta(x):=\lim_{j\to\infty}\Delta(f^{j}(x_{j}),\delta)
	\qquad\text{and}\qquad
	\lim_{j\to\infty}f^{j}(x_{j})=x.
\]
Since $\Delta(f^{j}(x_{j}),\delta)\subset B(f^{j}(x_{j}),\delta)$ then there exists $j_{0}\in\mathbb{N}$ such that 
$\Delta(f^{j}(x_{j}),\delta)\subset B(x,\delta_{1})$ for every $j\geq j_{0}$. 
As every point has $d=\text{deg}(f)\ge 1$ preimages, 
there exists $x_{-1}\in f^{-1}(\left\{ x\right\} )$ such that $f_{x_{-1}}^{-1}(B(x,\delta))\supset f^{j_{k}-1}(D(x_{j_{k}},j_{k},\delta))$
for some subsequence $(j_{k})_{k\in\mathbb{N}}$, with
$j_{k}\to\infty$ when $k\to\infty$ (see Figure \ref{fig:choose_inverse_branche}
below).
\begin{figure}[htb]
\centering
\includegraphics[scale=.7]{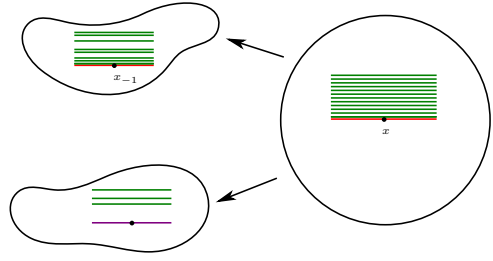}
\caption{Construction of local unstable manifolds through selection of inverse branches\label{fig:choose_inverse_branche}}
\end{figure}
Moreover,
\begin{align*}
dist_{M}\,(f_{x_{-1}}^{-1}(f^{j_{k}}(y_{j_{k}})),f_{x_{-1}}^{-1}(f^{j_{k}}(x_{j_{k}})))
	& \le  dist_{f^{j_{k}-1}(D(x_{j_{k}},j_{k},\delta))}(f^{j_{k}-1}(y_{j_{k}}),f^{j_{k}-1}(x_{j_{k}})) \\
	& \leq e^{-\frac{c}{2}}dist_{\Delta(f^{j_{k}}(x_{j_{k}}),\delta)}(f^{j_{k}}(y_{j_{k}}),f^{j_{k}}(x_{j_{k}})),
\end{align*}
for every $y_{j_{k}}\in D(x_{j_{k}},j_{k},\delta)$ and for
all $k\in\mathbb{N}$ (cf. Proposition \ref{prop:contraidist})
So, if $y\in\Delta(x)$ and $y=\lim_{j\to\infty}y_{j_{k}}$
with $y_{j_{k}}\in\Delta(f^{j_{k}}(x_{j_{k}}),\delta)$
then $y_{-1}:=f_{x_{-1}}^{-1}(y)$ satisfies $y_{-1}=\lim_{k\to\infty}f_{x_{-1}}^{-1}(y_{j_{k}})$
and
$dist_{M}(y_{-1},x_{-1})\leq e^{-\frac{c}{2}}\delta.$
By considering a subsequence of $(x_{j_{k}})_{k\in\mathbb{N}}$
we may assume $\lim_{k\to\infty}f^{k}(x_{k})=x$ and $f^{k-1}(D(x_{k},k,\delta))\subset B(x_{-1},\delta)$
for every large  $k\ge1$. Then, there exists $x_{-2}\in f^{-1}(\left\{ x_{-1}\right\} )\subset f^{-2}(\left\{ x\right\} )$
such that $f_{x_{-2}}^{-1}(B(x_{-1},\delta))\supset f^{k_{s}-2}(D(x_{k_{s}},k_{s},\delta))$
for some subsequence $k_{s}$ of $k$, where $k_{s}\to\infty$ when
$s\to\infty$. The previous argument shows that
$dist_{M}(y_{-2},x_{-2})\leq e^{-\frac{c}{2}\cdot2}\delta<\delta.$
Proceeding recursively, we conclude that there exists $\hat{x}\in\pi^{-1}(\left\{ x\right\} )$
such that the inverse branches $f_{x_{-n}}^{-n}$ are well defined on $\Delta(x)$
and that, for every $y\in\Delta(x)$, 
$
dist_{M}(y_{-n},x_{-n})\leq e^{-\frac{c}{2}\cdot n}\delta
\; \text{for every $n\in\mathbb{N}$},
$
where $y_{-n}=f_{x_{-n}}^{-n}(y)$.
The disk $\Delta(\hat{x}):=\Delta(x)$ is as desired.
\end{proof}

The previous result says that each point of the support of $\nu$
is contained in a disk $\Delta(\hat{x})$, where $\hat{x} \in M^f$
parameterizes the inverse branches along which we observe backward contraction 
at the disk.
Let us denote by $\hat{H}_{\infty}$ the set of all elements of $M^{f}$
obtained in this way. Although $\nu$ is not necessarily invariant it is supported on true
unstable disks. 

\subsection{Lift of non necessarily invariant measures to $M^{f}$\label{sec:hatnuconstruction}}

In this section we will construct a lift
for $\nu$, that is, a measure $\hat{\nu}$ on $M^f$ such that $\pi_{*}\hat{\nu}=\nu$.
Although the uniqueness of the lift holds only 
for invariant measures (cf. \cite[Proposition I.3.1]{QXZ09}), our method
of the construction of this measure will allow us to verify the absolute
continuity of $\hat{\nu}$ along unstable sets.
As observed in Section \ref{subsec:naturalextension}, in general the natural extension 
is not a manifold. Nevertheless, in the case of non-singular endomorphisms it can be locally
identified by suitable charts. 
We will use the following:

\begin{proposition}
\label{prop:uniforminversebranches}Let $M$ be a compact connected
Riemannian manifold and let $f:M\rightarrow M$ be a non-singular endomorphism.
There exists $\rho>0$ such that for every $x\in M$ and $n\in\mathbb{N}$
there is a family of pairwise disjoint open sets $\mathcal{J}_{n}(x):=\left\{ V_{\lambda}:\lambda\in I_{n}(x)\right\} $
satisfying:
\begin{enumerate}
\item $f^{-n}(B(x,\rho)):=\bigcup_{\lambda\in I_{n}(x)}V_{\lambda}$
\item $f^{n}|_{V_{\lambda}}:V_{\lambda}\rightarrow B(x,\rho)$
is a diffeomorphism, for every $\lambda\in I_{n}(x)$.
\end{enumerate}
\end{proposition}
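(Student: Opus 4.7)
\noindent The approach is to reduce the statement to standard covering-space theory, after choosing $\rho$ small enough that each ball $B(x,\rho)$ is simply connected in a uniform way. I would fix $\rho\in(0,\rho_0)$ where $\rho_0>0$ denotes the injectivity radius of $M$, a strictly positive constant by compactness. Then $\exp_x\colon B_{T_xM}(0,\rho)\to B(x,\rho)$ is a diffeomorphism for every $x\in M$, so each $B(x,\rho)$ is diffeomorphic to an open Euclidean ball; in particular it is connected, locally path-connected, and simply connected.

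The next step is to verify that $f^n\colon M\to M$ is a covering map for every $n\in\mathbb{N}$. Being a local diffeomorphism, $f$ is open, so $f(M)$ is both open and (by compactness of $M$) closed; connectedness then forces $f(M)=M$. For each $y\in M$ the fiber $f^{-1}(\{y\})$ is discrete and compact, hence finite, say equal to $\{z_1,\dots,z_k\}$. Choosing pairwise disjoint open neighborhoods $U_{z_i}$ of the $z_i$ on which $f$ is a diffeomorphism and setting $W:=\bigl(\bigcap_i f(U_{z_i})\bigr)\setminus f\bigl(M\setminus\bigcup_i U_{z_i}\bigr)$ produces an open neighborhood of $y$ evenly covered by $f$ (the subtracted set is closed because $f$ is a closed map on the compact $M$). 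Hence $f$ is a covering map, and so is every iterate $f^n$ as a composition of covering maps.

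Finally, I would invoke the standard fact that a connected, locally path-connected, simply connected open subset of the base of a covering map is evenly covered by that map. Applied to $U=B(x,\rho)$ and $p=f^n$, this yields $f^{-n}(B(x,\rho))=\bigsqcup_{\lambda\in I_n(x)}V_\lambda$, where each $V_\lambda$ is a connected component of $f^{-n}(B(x,\rho))$ mapped homeomorphically (hence, since $f^n$ is a local diffeomorphism, diffeomorphically) onto $B(x,\rho)$. Identifying $I_n(x)$ with the finite fiber $f^{-n}(\{x\})$, noting that each component contains exactly one preimage, gives pairwise disjointness for free.

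The key point to keep in mind is the uniformity of $\rho$ in both $x$ and $n$, and this is precisely why the abstract covering-space argument is preferable to a concrete inductive construction that composes inverse branches of $f$. Indeed, the Lipschitz constant of a local inverse $f_y^{-1}$ is controlled by $\sup_{z\in M}\|Df(z)^{-1}\|$, which may well exceed one, and iterating Lemma~\ref{lem:continuityinversebranche} would force the radius to shrink with $n$. The topological argument bypasses this issue entirely.
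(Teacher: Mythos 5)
Your proof is correct in its essentials. The paper does not actually prove this proposition — it simply cites it as Theorem~6.5.1 of Aoki--Hiraide — so the right comparison is with the standard covering-space argument that citation encapsulates, and that is exactly what you wrote: $f$ is a local diffeomorphism, hence open, hence surjective by compactness and connectedness of $M$; fibers are finite; the set $W$ you build is an evenly covered neighborhood, so $f$ is a finite covering; $f^n$ is a covering; and $B(x,\rho)$ with $\rho$ below the (positive, by compactness) injectivity radius is simply connected and therefore evenly covered by $f^n$, each component of its preimage being a sheet. One step to tighten: ``every iterate $f^n$ is a covering map as a composition of covering maps'' is not true for arbitrary topological spaces, though it does hold here because the base is a manifold (so every $z$ has a simply connected neighborhood $W$, each sheet of $q^{-1}(W)$ is again simply connected, and iterating gives even covering for the composite) or because the coverings are finite. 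The cleanest fix is to bypass the composition issue altogether: $f^n$ is itself a surjective local diffeomorphism of the compact connected $M$, so your original evenly-covered-neighborhood construction applies to $f^n$ verbatim. Your closing remark is well taken and identifies why the statement is isolated as a proposition: composing inverse branches via the uniform-continuity Lemma~\ref{lem:continuityinversebranche} would force the radius to decay with $n$ (since $\|Df^{-1}\|$ need not be a contraction), whereas the covering-theoretic route delivers a single $\rho$ uniform in both $x$ and $n$, which is what the coding of pre-orbits in Subsection~\ref{sec:hatnuconstruction} needs.
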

\begin{proof}
\cite[Theorem 6.5.1]{AH94}.
\end{proof}

Recall that $\#f^{-1}(x)=d$ for every $x\in M$ and so
the set $I_{n}(x)$ given by the previous proposition satisfies that $\#I_{n}(x)=\#f^{-n}(\left\{ x\right\} )=d^{n}$
for every $n\geq1$ (hence it is independent of $x$). 
Moreover, given $V_{\lambda}\in\mathcal{J}_{n}(x)$ we have that $f^{j}(V_{\lambda})\in\mathcal{\mathcal{J}}_{n-k}(x)$,
for every $1\leq k\leq n-1$.
This motivates the following coding. 
We fix $I_{n}=I_{n}(x)=\left\{ 1,2,...,d\right\} ^{n}$
for every $n\in\mathbb{N}$ and establish an enumeration for $\mathcal{J}_{n}(x)$
that will allow us to identify the images of the elements of $\mathcal{J}_{n}(x)$
in $\mathcal{J}_{k}(x)$, for $1\leq k\leq n-1$. For $n=1$
write $\mathcal{J}_{1}(x):=\left\{ V_{1},...,V_{d}\right\} $.
For $n=2$ one can write $\mathcal{J}_{2}(x):=\left\{ V_{i_{2}i_{1}}:(i_{2},i_{1})\in I_{2}\right\} $
where $f(V_{i_{2}i_{1}})=V_{i_{1}}\in\mathcal{J}_{1}(x)$.
By induction, having defined
\[
\mathcal{J}_{n-1}(x):=\left\{ V_{i_{n-1}i_{n-2}...i_{1}}:(i_{n-1},...,i_{2},i_{1})\in I_{n-1}\right\} .
\]
one can write $\mathcal{J}_{n}(x):=\left\{ V_{i_{n} i_{n-1}...i_{1}}:(i_{n},...,i_{1})\in I_{n}\right\} $
where each $V_{i_{n}i_{n-1}...i_{1}}$ is determined by $f(V_{i_{n}i_{n-1}...i_{1}})=V_{i_{n-1}...i_{1}}\in\mathcal{J}_{n-1}(x)$.
This establishes an enumeration for the pre-images of a ball of radius $\rho$.
Furthermore, recalling that $\pi:M^{f}\rightarrow M$ is the projection in the first coordinate, 
given 
$x\in M$ one can identify $\pi^{-1}(B(x,\rho))$ with a product of $B(x,\rho)$ by
a fixed Cantor set. Indeed, given $x\in M$ and $\Gamma:=\left\{ 1,...,d\right\} ^{\mathbb{N}}$, the map
\begin{equation}\label{lem:localstructureMf}
\begin{array}{rccc}
{\varphi}_x: & B(x,\rho)\times \Gamma &\rightarrow &\pi^{-1}(B(x,\rho))\subset M^f\\
           & (z,(i_{n})_{ n\in\mathbb{N} }) &\mapsto & ( (f^n|_{V_{i_n...i_1}})^{-1}(z))_{n\in\mathbb{N}}
\end{array}
\end{equation}
is a homeomorphism (see \cite[Theorem 6.5.1]{AH94})
and 
it satisfies $\pi\circ\varphi_{x}(z,c)=z$ for every $(z,c)\in B(x,\rho)\times\Gamma$. 
\medskip

We make use of the previous structure  
to lift the Lebesgue measure restricted to hyperbolic pre-disks $D(x,n,\delta)$ to $M^f$. 
Fix an arbitrary probability measure $\mathds{P}$ on $\Gamma$,
reduce $\delta$ (if necessary) so that $D(x,n,\delta)\subset B(x,\rho)$ for every $x\in H_{n}$ and $n\in\mathbb{N}$
and consider the measure
\begin{equation}
\hat{\nu}_{n}:=\frac{1}{n}\sum_{j=0}^{n-1}\hat{f}_{*}^{j}\hat{m}_{j},\label{eq:lift_of_nun}
\end{equation}
in $\pi^{-1}(D(x,j,\delta))\subset M^{f}$, where $\hat{m}_{j}:=\sum_{x\in H_{j}^{*}}\hat{m}_{j,x}$ and 
for each $j\in\mathbb{N}$ and $x\in H_{j}^{*}$ 
\begin{equation}
\hat{m}_{j,x}:=(\varphi_{x})_{*}\left[Leb_{D(x,j,\delta)}\times\mathds{P}\right].\label{eq:lift_leb_on_prehypdisk}
\end{equation} 
It is not hard to check that $\pi_{*}\hat{\nu}_{n}=\nu_{n}$ 
and that if $\lim_{k\to\infty}\hat{\nu}_{n_{k}}=\hat{\nu}$ then 
$\nu=\pi_{*}\hat{\nu}$ is an accumulation point of $(\nu_{n})_{n\in\mathbb{N}}$.
Moreover, any accumulation point of the sequence $(\nu_n)_n$ can be obtained 
as a limit of the push-forwards by $\pi$ of elements of the sequence $(\hat\nu_n)_n$.
Finally, using that 
$supp(\hat{\nu}_{n})\subset\bigcup_{j=0}^{n-1}\bigcup_{x\in H_{j}^{*}}\hat{f}^{j}(\pi^{-1}(D(x,j,\delta)))$
we conclude that 
$
\supp(\hat{\nu})\subset
	\bigcap_{n\in\mathbb{N}}\overline{\bigcup_{j=0}^{n-1}\bigcup_{x\in H_{j}^{*}}\hat{f}^{j}(\pi^{-1}(D(x,j,\delta)))}
$
for any accumulation point $\hat{\nu}$ of $(\hat\nu_n)_n$.
By Proposition \ref{lem:suppdisk}, given $y\in\supp(\nu)$ there
is a disk $\Delta(\hat{x})$ which is contractive along
the pre-orbit $\hat{x}\in M^{f}$. 
We proceed to prove $\hat{\nu}$ is supported on a union of unstable manifolds. 
For that, if $\Delta(\hat{x})$ ($\hat{x}\in\hat{H}_{\infty}$) is a disk given by Proposition \ref{lem:suppdisk} 
then there exists a sequence of points
$(z_{j_{k}})_{k\in\mathbb{N}}$ satisfying  $z_{j_{k}}\in H_{j_{k}}^{*}$
for every $k\in\mathbb{N}$ 
such that $\lim_{k\to\infty}\Delta(f^{j_{k}}(z_{j_{k}}),\delta)=\Delta(\hat{x})$.
Since $\lim_{k\to\infty}f^{j_{k}-n}(z_{j_{k}})=x_{-n}$ for every $n\in\mathbb{N}$ (where the limit is taken over $j_{k}\geq n$)
then it is natural to define:
\begin{align}
\hat{\Delta}(\hat{x}) & :=\left\{ \hat{y}\in M^{f}:\ \hat{y}=\lim_{s\to\infty}\hat{y}_{j_{k_{s}}},\text{ for some subsequence }(\hat y_{j_{k_{s}}})_{s\in\mathbb{N}}\right. \nonumber \\ 
	& \left.\text{where }\hat{y}_{j_{k_{s}}}\in\hat{f}^{j_{k_{s}}}(\pi^{-1}(D(z_{j_{k_{s}}},j_{k_{s}},\delta)))\text{ and }(z_{j_{k_{s}}})_{\!s\in\mathbb{N}}\!\subset(z_{j_{k}})_{\!k\in\mathbb{N}}\right\} .\label{eq:def_lift_deltax}
\end{align}
Roughly, the set $\hat{\Delta}(\hat{x})$ is formed by all points obtained as accumulation points of a sequence $(\hat y_{j_{k}})_{k\in\mathbb{N}}$ where $\hat y_{j_{k}}\in\hat{f}^{j_{k}}(\pi^{-1}(D(z_{j_{k}},j_{k},\delta)))$ for every $k\ge 1$.
As before the support of $\hat\nu$ is contained in the union of this family of sets. 
Moreover, we have the following:

\begin{lemma}
Given $\hat{y}\in supp(\hat{\nu})$, there exists $\hat{x}\in M^{f}$
and $\hat{\Delta}(\hat{x})\subset M^{f}$ (given by  \eqref{eq:def_lift_deltax}) 
such that $\hat{y}\in\hat{\Delta}(\hat{x})$ and $\pi|_{\hat{\Delta}(\hat{x})}:\hat{\Delta}(\hat{x})\rightarrow\Delta(\hat{x})$
is a continuous bijection. \label{prop:bijectiveprojectionondisks}
\end{lemma}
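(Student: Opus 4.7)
The plan is to extract the disk $\Delta(\hat x)$ directly from the approximating sequences that define $\supp(\hat\nu)$, reusing the inverse-branch selection in the proof of Proposition \ref{lem:suppdisk}, and then handle surjectivity with compactness of $M^f$ and injectivity via the uniqueness of inverse branches on small balls.

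\textbf{Existence of $\hat x$ and a disk containing $\hat y$.} Since $\hat y\in\supp(\hat\nu)$, I would first extract a sequence $j_k\to\infty$ and points $z_{j_k}\in H_{j_k}^*$ together with $\hat y_{j_k}\in\hat f^{j_k}(\pi^{-1}(D(z_{j_k},j_k,\delta)))$ such that $\hat y_{j_k}\to\hat y$ in $M^f$. Passing to a further subsequence I may assume $f^{j_k}(z_{j_k})\to x\in M$. Running the Arzel\`a--Ascoli argument on the graphs parametrizing $\Delta(f^{j_k}(z_{j_k}),\delta)$, together with the diagonal selection of inverse branches carried out in Proposition \ref{lem:suppdisk}, I would extract a further subsequence along which $\Delta(f^{j_k}(z_{j_k}),\delta)\to\Delta(\hat x)$ for some $\hat x=(x_{-n})_n\in\pi^{-1}(x)$, and on $\Delta(\hat x)$ all inverse branches $f^{-n}_{x_{-n}}$ are defined with $\dist(f^{-n}_{x_{-n}}(y),x_{-n})\le e^{-cn/2}\delta$ for every $y\in\Delta(\hat x)$. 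With this specific sequence $(z_{j_k})_k$, the definition \eqref{eq:def_lift_deltax} gives $\hat y\in\hat\Delta(\hat x)$.

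\textbf{Continuity and surjectivity.} Continuity of $\pi|_{\hat\Delta(\hat x)}$ is automatic since $\pi:M^f\to M$ is continuous for the product topology. For surjectivity, fix $y\in\Delta(\hat x)$ and approximate it by $y_{j_k}\in\Delta(f^{j_k}(z_{j_k}),\delta)$ with $y_{j_k}\to y$. Each $y_{j_k}$ has a unique preimage $w_{j_k}\in D(z_{j_k},j_k,\delta)$ under $f^{j_k}$, since $f^{j_k}|_{D(z_{j_k},j_k,\delta)}$ is a diffeomorphism onto $\Delta(f^{j_k}(z_{j_k}),\delta)$ by Proposition \ref{prop:contraidist}. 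Choosing any $\hat w_{j_k}\in\pi^{-1}(w_{j_k})$ and setting $\hat y_{j_k}':=\hat f^{j_k}(\hat w_{j_k})$ gives an element of $\hat f^{j_k}(\pi^{-1}(D(z_{j_k},j_k,\delta)))$ that projects to $y_{j_k}$. By compactness of $M^f$ a subsequence $\hat y_{j_k}'\to\hat y^\star$, and by construction $\hat y^\star\in\hat\Delta(\hat x)$ with $\pi(\hat y^\star)=y$.

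\textbf{Injectivity.} I expect this to be the main difficulty. Take $\hat y,\hat y'\in\hat\Delta(\hat x)$ with $\pi(\hat y)=\pi(\hat y')=y$, and write $\hat y=(y_{-n})_n$, $\hat y'=(y'_{-n})_n$. Each is the limit of a sequence $\hat f^{j_{k_s}}(\hat w_s)$ whose $n$-th backward coordinate, for $j_{k_s}\ge n$, equals $f^{j_{k_s}-n}(w_s)$ with $w_s\in D(z_{j_{k_s}},j_{k_s},\delta)$. By the backward contraction along hyperbolic pre-disks (Proposition \ref{prop:contraidist}) this $n$-th coordinate lies within distance $e^{-cn/2}\delta$ of $f^{j_{k_s}-n}(z_{j_{k_s}})$, and since $f^{j_{k_s}-n}(z_{j_{k_s}})\to x_{-n}$ for every fixed $n$ (by the inductive construction of $\hat x$) the limits satisfy $\dist(y_{-n},x_{-n})\le e^{-cn/2}\delta$ and the same bound for $y'_{-n}$. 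Choosing $\delta$ smaller than the constant $\delta_1$ of Lemma \ref{lem:continuityinversebranche}, the inverse branches of $f$ on $B(x_{-n},\delta_1)$ are pairwise disjoint and continuous, so the relation $f(y_{-n})=y_{-n+1}=y'_{-n+1}=f(y'_{-n})$ combined with $y_{-n},y'_{-n}\in B(x_{-n},\delta)$ forces $y_{-n}=f^{-1}_{x_{-n}}(y_{-n+1})=y'_{-n}$. Induction on $n$ gives $\hat y=\hat y'$. The crux of this step is precisely the uniform exponential backward contraction inherited from cone-hyperbolic times at $z_{j_k}$, which persists through the limit and traps the pre-orbits into the unique inverse-branch tube determined by $\hat x$.
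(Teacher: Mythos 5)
Your proof is correct and follows essentially the same route as the paper: extract a sequence $\hat y_{j_k}\to\hat y$ from the support, run the Arzel\`a--Ascoli and inverse-branch selection of Proposition \ref{lem:suppdisk} along that sequence to get $\hat x$ and $\hat\Delta(\hat x)$, and then prove surjectivity by lifting approximants and using compactness of $M^f$, and injectivity via the exponential backward contraction pinning the pre-orbit into the unique tube determined by $\hat x$. You are in fact a bit more explicit than the paper about why the extracted $\hat x$ yields $\hat y\in\hat\Delta(\hat x)$ (the paper simply invokes Proposition \ref{lem:suppdisk}), and about the inverse-branch rigidity that underlies injectivity, but these are the same ingredients used in the same order.
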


\begin{proof}
Let $\Delta(\hat{x})=\lim_{k\to\infty}\Delta(f^{j_{k}}(z_{j_{k}}),\delta)$ be given by Proposition \ref{lem:suppdisk}
and $\hat{\Delta}(\hat{x})$ be given by \eqref{eq:def_lift_deltax}.
The continuity of $\pi$ implies that $\pi(\hat{\Delta}(\hat{x}))\subset\Delta(\hat{x})$.
In fact, if $\hat{y}\in\hat{\Delta}(\hat{x})$ then $\hat{y}=\lim_{s\to\infty}\hat{y}_{j_{k_{s}}}$
where $\hat{y}_{j_{k_{s}}}\in\hat{f}^{j_{k_{s}}}(\pi^{-1}(D(z_{j_{k_{s}}},j_{k_{s}},\delta)))$.
So, $\pi(\hat{y})=\lim_{s\to\infty}\pi(\hat{y}_{j_{k_{s}}})$.
Moreover, as $\pi(\hat{y}_{j_{k_{s}}})\in\Delta(f^{j_{k_{s}}}(z_{j_{k_{s}}}),\delta)$
and $\Delta(\hat{x})=\lim_{k\to\infty}\Delta(f^{j_{k}}(z_{j_{k}}),\delta)$, we conclude  that
 $\pi(\hat{y})\in\Delta(\hat{x})$. 
The inclusion $\Delta(\hat{x})\subset\pi(\hat{\Delta}(\hat{x}))$ is analogous.
Indeed, if $y\in\Delta(\hat{x})$
satisfies that $y=\lim_{k\to\infty}y_{j_{k}}$ then, choosing $\hat{y}_{j_{k}}$
such that $\pi(\hat{y}_{j_{k}})=y_{j_{k}}$ and $\hat{y}_{j_{k}}\in\hat{f}^{j_{k}}(\pi^{-1}(D(z_{j_{k}},j_{k},\delta)))$, 
the compactness of $M$ assures that there exists some convergent subsequence $(\hat{y}_{j_{k_{s}}})_{s\in\mathbb{N}}$
to some point $\hat y$. 
Moreover $\hat{y}\in\hat{\Delta}(\hat{x})$ and, since $\pi$ is continuous,  $\pi(\hat{y})=y$.
We will show that $\pi\mid_{\hat{\Delta}(\hat{x})}$
is injective. We claim that if $\hat{y}\in\hat{\Delta}(\hat{x})$
then 
$
dist(y_{-n},x_{-n})\leq e^{-\frac{c}{2}n}\delta,\mbox{ for all }n\geq1.
$
In fact, given $n\geq1$, Proposition \ref{lem:suppdisk} implies that 
$f_{x_{-n}}^{-n}(\Delta(f^{j_{k}}(z_{j_{k}}),\delta))=f^{j_{k}-n}(D(z_{j_{k}},j_{k},\delta))$
for every $j_{k}\geq n$. Then $\hat{y}\in\hat{\Delta}(x)$.
Therefore
$y_{-n}=\lim_{s\to\infty}f_{x_{-n}}^{-n}(y_{j_{k_{s}}}),$
because $y_{-n}=\pi\circ\hat{f}^{-n}(\hat{y})$ and $f_{x_{-n}}^{-n}(f^{j_{k_{s}}}(z_{j_{k_{s}}}))=\pi\circ\hat{f}^{-n}(\hat{y}_{j_{k_{s}}})$. Moreover, by Proposition \ref{lem:suppdisk}, 
$
dist(y_{-n},x_{-n})\leq e^{-\frac{c}{2}n}\delta
$
for every $n\geq1$.
This guarantees that if $\pi(\hat{y})=\pi(\hat{u})$
then $\hat{y}=\hat{u}$ (because points in $\hat{\Delta}(\hat{x})$
are determined by their images by the inverse branches $f_{x_{-n}}^{-n}$, for
all $n\in\mathbb{N}$). Therefore, $\pi\mid_{\hat{\Delta}(\hat{x})}$
is injective and, consequently, $\pi\mid_{\hat{\Delta}(\hat{x})}:\hat{\Delta}(\hat{x})\rightarrow\Delta(\hat{x})$
is a bijection.
\end{proof}

Recall $\hat{H}_{\infty}$ is the set of points $\hat{x}\in M^{f}$ given by Lemma~\ref{prop:bijectiveprojectionondisks}
and that $\mbox{supp}(\hat{\nu})\subset\bigcup_{\hat{x}\in\hat{H}_{\infty}}\hat{\Delta}(\hat{x})$. For all points in the
latter set we will construct an invariant unstable subbundle, which will coincide with the unstable Oseledets subspace
associated to the SRB measure supported at these points (cf. Proposition~\ref{prop:existsrb}).

\begin{proposition}\label{prop:hatnuishyp}
Given $\hat{x}\in\hat{H}_{\infty}$ and $\hat{y}\in\hat{\Delta}(\hat{x})$
there is a $d_{u}$-dimensional subspace $\mathcal{E}_{\hat{y}}^{u}$
of $T_{\hat{y}}=T_{y_{0}}M$ so that 
\begin{equation}
\|(Df^{m}(y_{-m}))^{-1}\cdot v\|\leq e^{-\frac{c}{2}m}\|v\|
	\; \text{for every $v\in\mathcal{E}_{\hat{y}}^{u}$ and $m\in\mathbb{N}$.}
	\label{eq:contractivespace}
\end{equation}
\end{proposition}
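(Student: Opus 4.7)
The plan is to construct $\mathcal{E}^u_{\hat y}$ as an accumulation point (in the Grassmannian) of the tangent spaces to the approximating hyperbolic disks, and then transfer the cone-hyperbolic expansion along these disks into a backward contraction along the preorbit $\hat y$.

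First I would unpack the definition of $\hat{\Delta}(\hat{x})$: there is a sequence $(\hat{y}_{j_{k_s}})_{s\in\mathbb{N}}$ with $\hat{y}_{j_{k_s}}\in\hat{f}^{j_{k_s}}(\pi^{-1}(D(z_{j_{k_s}},j_{k_s},\delta)))$ converging to $\hat{y}$ in $M^f$, so that for each fixed $m\in\mathbb{N}$ the coordinates $y^{(s)}_{-m}:=\pi(\hat{f}^{-m}(\hat{y}_{j_{k_s}}))$ converge to $y_{-m}=\pi(\hat{f}^{-m}(\hat{y}))$ as $s\to\infty$. Writing $y_0^{(s)}:=\pi(\hat y_{j_{k_s}})\in\Delta(f^{j_{k_s}}(z_{j_{k_s}}),\delta)$, the tangent space $E_s:=T_{y_0^{(s)}}\Delta(f^{j_{k_s}}(z_{j_{k_s}}),\delta)$ is a $d_u$-dimensional subspace contained in $C_a(y_0^{(s)})$, since hyperbolic disks are by construction tangent to the cone field. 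Using a local trivialization of the tangent bundle near $y_0$ (for instance via $\exp_{y_0}^{-1}$ together with parallel transport) and compactness of the Grassmannian of $d_u$-planes, I would pass to a further subsequence so that $E_s$ converges to a $d_u$-dimensional subspace $\mathcal{E}^u_{\hat y}\subset C_a(y_0)$ of $T_{y_0}M=T_{\hat y}M^f$.

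Next I would transfer the backward contraction. Fix $m\in\mathbb{N}$. For $s$ large enough that $j_{k_s}\ge m$, the integer $j_{k_s}$ is a $c$-cone-hyperbolic time for $z_{j_{k_s}}$, and the point $y_{-m}^{(s)}$ lies in $f^{j_{k_s}-m}(D(z_{j_{k_s}},j_{k_s},\delta))$, a disk tangent to $C_a$ by positive invariance of the cone field. Applying Lemma~\ref{le:unifexp-cht} with $j=j_{k_s}-m$ and $n=j_{k_s}$ gives $\|Df^{m}(y_{-m}^{(s)})\cdot w\|\ge e^{cm}\|w\|$ for every $w\in C_a(y_{-m}^{(s)})$. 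Since the inverse branch of $f^m$ determined by $\hat{f}^{-m}(\hat{y}_{j_{k_s}})$ sends $y_0^{(s)}$ to $y_{-m}^{(s)}$ and is well defined on a fixed neighborhood of $y_0^{(s)}$ by Proposition~\ref{prop:uniforminversebranches}, for any $v\in E_s$ the vector $(Df^{m}(y_{-m}^{(s)}))^{-1}v$ belongs to $C_a(y_{-m}^{(s)})$ and satisfies
\[
\bigl\|(Df^{m}(y_{-m}^{(s)}))^{-1}v\bigr\|\le e^{-cm}\|v\|.
\]

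Finally, I would pass to the limit $s\to\infty$ with $m$ fixed. Choose $v_s\in E_s$ with $v_s\to v\in\mathcal{E}^u_{\hat y}$ (possible by the Grassmannian convergence). Since $\hat{y}_{j_{k_s}}\to\hat{y}$ in $M^f$, one has $y_{-m}^{(s)}\to y_{-m}$, and the inverse branches of $f^m$ selected by these preorbits converge to the inverse branch selected by $\hat y$; continuity of $Df$ and of its local inverse then yields $(Df^{m}(y_{-m}^{(s)}))^{-1}v_s\to (Df^{m}(y_{-m}))^{-1}v$, so the inequality above passes to the limit and gives $\|(Df^{m}(y_{-m}))^{-1}v\|\le e^{-cm}\|v\|\le e^{-\frac{c}{2}m}\|v\|$, which is the desired bound. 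The main technical hurdle is the first step, namely making sense of the limit of the subspaces $E_s\subset T_{y_0^{(s)}}M$ as a subspace of $T_{y_0}M$ and verifying that the limit remains $d_u$-dimensional (and does not degenerate within $C_a$); this is where the H\"older control of the tangent bundles of iterates of disks tangent to $C_a$ given by Proposition~\ref{prop:iteratesoftgbundleholder}, together with the uniform transversality between the cone field and $E^s$ coming from (H4), is essential.
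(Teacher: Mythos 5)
Your construction of $\mathcal{E}^u_{\hat y}$ as a Grassmannian limit of the tangent planes $E_s$ to the approximating hyperbolic disks is a reasonable alternative to the paper's definition $\mathcal{E}^u_{\hat y}=\bigcap_n Df^n(y_{-n})(C_a(y_{-n}))$, and the two can in fact be reconciled. The problem lies in how you transfer the contraction.

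You invoke Lemma~\ref{le:unifexp-cht} with $n=j_{k_s}$ and $j=j_{k_s}-m$ and conclude $\|Df^{m}(y_{-m}^{(s)})\cdot w\|\ge e^{cm}\|w\|$ for $w\in C_a(y_{-m}^{(s)})$. But Lemma~\ref{le:unifexp-cht} gives that estimate at the point $f^{j_{k_s}-m}(z_{j_{k_s}})$, i.e.\ along the \emph{orbit of $z_{j_{k_s}}$}, because the cone-hyperbolic-time property is a property of $z_{j_{k_s}}$. The point $y_{-m}^{(s)}$ lies in $f^{j_{k_s}-m}(D(z_{j_{k_s}},j_{k_s},\delta))$ but is not $f^{j_{k_s}-m}(z_{j_{k_s}})$: it is the image, under $f^{j_{k_s}-m}$, of some $w^{(s)}\in D(z_{j_{k_s}},j_{k_s},\delta)$ with $w^{(s)}\ne z_{j_{k_s}}$, and nothing forces $j_{k_s}$ to be a $c$-cone-hyperbolic time for $w^{(s)}$. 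So the inequality you wrote does not follow from the lemma as you apply it.

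The tell-tale symptom is that you end up with $e^{-cm}$, which is strictly stronger than the $e^{-\frac{c}{2}m}$ in the statement. The missing step is precisely the reason the paper only claims the weaker rate: one must first use Proposition~\ref{prop:contraidist} to know that $d(y_{-j}^{(s)},\,f^{j_{k_s}-j}(z_{j_{k_s}}))\le e^{-\frac{c}{2}j}\delta$ stays uniformly small, and then use the uniform continuity of $x\mapsto\|(Df(x)|_{C_a(x)})^{-1}\|$ (this is the content of Lemma~\ref{lem:contofderivative} and of inequality~\eqref{eq:inequality} in the text) to transfer the bound at $f^{j_{k_s}-j-1}(z_{j_{k_s}})$ to the nearby point $y_{-j-1}^{(s)}$, at the cost of an extra $e^{\frac{c}{2}}$ factor per step. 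Telescoping the $m$ steps produces $e^{\frac{c}{2}m}\cdot e^{-cm}=e^{-\frac{c}{2}m}$, which is the claimed bound. Once this transfer step is inserted, the subsequent passage to the limit in $s$ (with $m$ fixed) is fine, and you would also want to say a word ensuring the Grassmannian limit subspace remains $d_u$-dimensional inside the cone; the paper handles that via a nested-compact-sets argument with the sets $\mathscr{S}_n$, showing the intersection is a single $d_u$-plane by the domination estimate against $E^s$.
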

\begin{proof}
Fix $\hat{y}\in\hat{\Delta}(\hat{x})$ and set
$
\mathcal{\mathcal{E}}_{\hat{y}}^{u}:=\bigcap_{n\in\mathbb{N}}Df^{n}(y_{-n}) ( C_{a}(y_{-n})).
$
We will show that $\mathcal{\mathcal{E}}_{\hat{y}}^{u}$ is a subspace
of $T_{y_{0}}M$ whose dimension is $d_{u}$ and satisfies 
\eqref{eq:contractivespace}. 
By Lemma~\ref{prop:bijectiveprojectionondisks}, we know that
$\hat{y}=\lim_{k\to\infty}\hat{y}_{n_{k}}$, for some subsequence
$(\hat{y}_{n_{k}})_{k\in\mathbb{N}}$ where $\hat{y}_{n_{k}}\in\hat{f}^{j}(\pi^{-1}(D(z_{n_{k}},n_{k},\delta)))$
and $z_{n_{k}}\in H_{n_{k}}^{*}$ for every $k\in\mathbb{N}$. Writing
$\hat{y}_{n_{k}}:=(y_{-n}^{k})_{n\in\mathbb{N}}$ and $\hat{y}=(y_{-n})_{n\in\mathbb{N}}$
we have that $\hat{y}=\lim_{k\to\infty}\hat{y}_{n_{k}}$ if, and only
if $\lim_{k\to\infty}y^k_{-j}=y_{-j}$ for every $j\in\mathbb{N}$.
We need the following: 

\medskip
\noindent {\bf Claim:}
\emph{ If $m\in\mathbb{N}$ and $w\in Df^{m}(y_{-m})( C_{a}(y_{-m}))$
then $\left\Vert (Df^{m}(y_{m}))^{-1}\cdot w\right\Vert \leq e^{-\frac{c}{2}m}\left\Vert w\right\Vert $.}

\begin{proof}[Proof of the claim]
In fact, if $k$ is large so that $n_{k}\geq m$, one can use that
$\hat{y}_{n_{k}}\in\hat{f}^{j}(\pi^{-1}(D(z_{n_{k}},n_{k},\delta)))$
to conclude that $y_{-j}^{k}\in f^{n_{k}-j}(D(z_{n_{k}},n_{k},\delta))$
for every $0\leq j\leq n_{k}$. In particular $y_{-j}^{k}\in f^{n_{k}-j}(D(z_{n_{k}},n_{k},\delta))$
for every $0\leq j\leq m$.
Proposition \ref{prop:contraidist} assures 
$
dist_{M}(y_{-j}^{k},f^{n_{k}-j}(z_{n_{k}}))\leq e^{-\frac{c}{2}j}\delta
$
 for every $0\leq j\leq m$. Using the continuity of the derivative,
as in Lemma \ref{lem:contofderivative}, we obtain that
\begin{equation}
\|Df(y_{-j-1}^{k})^{-1}\cdot u_{j}\|\leq e^{\frac{c}2}\|(Df(f^{n_{k}-j-1}(z_{n_{k}}))|_{C_{a}(f^{n_{k}-j-1}(z_{n_{k}}))})^{-1}\|\cdot\|u_{j}\|,\label{eq:inequality}
\end{equation}
for every $u_{j}\in C_{a}(y_{-j}^{k})$ and for every $0\leq j\leq m-1$.
Thus, if $w_{k}\in Df^{m}(y_{-m}^{k}) ( C_{a}(y_{-m}^{k}))$
then
\begin{align*}
\left\Vert (Df^{m}(y_{-m}^{k}))^{-1}\cdot w_{k}\right\Vert  
 	& \leq e^{\frac{c}2 m} \prod_{j=n_{k}-m}^{n_{k}-1}\left\Vert (Df(f^{j}(z_{n_{k}}))|_{C_{a}(f^{j}(z_{n_{k}}))})^{-1}\right\Vert \cdot\left\Vert w_{k}\right\Vert 
	\le e^{-\frac{c}2 m} \left\Vert w_{k}\right\Vert .
\end{align*}
Here we used that $(Df^{j}(y_{-j}^{k}))^{-1}\cdot w_{k}\in C_{a}(y_{-j})$
for every $0\leq j\leq m-1$, because the cone field is invariant and
$w_{k}\in Df^{m}(y_{-m}^{k}) ( C_{a}(y_{-m}^{k}))$, and that  $z_{n_{k}}$ has $n_{k}$ as $c$-cone-hyperbolic time.
Suppose that $w\in Df^{m}(y_{-m}) ( C_{a}(y_{-m}))$.
The continuity of the cone field allow us that we write $w$ as the
limit of a sequence $(w_{k})_{k\in\mathbb{N}}$ where $w_{k}\in Df^{m}(y_{-m}^{k}) (C_{a}(y_{-m}^{k}))$.
Since $f$ is $C^{2}$ and the cone field is continuous then
$
\|(Df^{m}(y_{-m}^{k}))^{-1}\cdot w_{k}\|\leq e^{-\frac{c}{2}m}\|w_{k}\|
$
for every $k$. So, passing to the limit when $k\to\infty$ we have that
$
\|(Df^{m}(y_{-m}))^{-1}\cdot w\|\leq e^{-\frac{c}{2}m}\|w\|,
$
for every $w\in Df^{m}(y_{-m}) (C_{a}(y_{-m}))$.
This proves the claim.
\end{proof}

Now the remaining proof of the proposition, that $\mathcal{E}_{\hat{y}}^{u}$ is a vector subspace, follows a standard route.
For each $n\geq1$ consider the set $\mathscr{S}_{n}$ formed by subspaces of $T_{y_{0}}M$ 
given by
$$
\mathscr{S}_{n}:=\left\{ E\leqslant T_{y_{0}}M:\ E\subset Df^{n}(y_{-n}) (C_{a}(y_{-n})) \mbox{ and }\dim(E)=d_{u}\right\},
$$
where $E\leqslant T_{y_{0}}M$ means that $E$ is a vector subspace
of $T_{y_{0}}M$. 
It is not hard to check that $\mathscr{S}_{n}$ is a compact set (in the Grassmannian topology).
Moreover, $\mathcal{S}_{n}$ is non-empty subset because $F_{n}:=Df^{n}(y_{-n})\cdot F_{y_{-n}}\in\mathscr{S}_{n}$,
for every $n\in\mathbb{N}$, where $F$ is the subbundle (not necessarily $Df$-invariant) used to define the cone field 
$C_{a}$. Furthermore, $\mathscr{S}_{n+1}\subset\mathscr{S}_{n}$
for every $n\in\mathbb{N}$: if $E:=Df^{n+1}(y_{-n-1})\cdot Z\in\mathscr{S}_{n+1}$
then $E:=Df^{n}(y_{-n})\cdot\left[Df(y_{-n-1})\cdot Z\right]\in\mathscr{S}_{n}$.
Therefore $\cap_{n\in\mathbb{N}}\mathscr{S}_{n}\neq\emptyset$ and
$G\in\cap_{n\in\mathbb{N}}\mathscr{S}_{n}$ if, and only if $G\subset\mathcal{E}_{\hat{y}}^{u}$. 
Hence, in order to conclude the proof of the proposition we are left to show
that $\#\cap_{n\in\mathbb{N}}\mathscr{S}_{n}=1$.
Suppose there exist $G\neq G^{'}\in\cap_{n\in\mathbb{N}}\mathscr{S}_{n}$ and 
take $v\in G\backslash G^{'}$. On the one hand
$
\|\left[Df^{n}(y_{-n})\right]^{-1}\cdot v\|\leq e^{-\frac{c}{2}n}\|v\|,
$
for every $n\in\mathbb{N}$. On the other hand, writting $v=v_{s}+v^{'}$
where $v_{s}\in E_{y_{0}}^{s}\backslash\left\{ 0\right\} $ and $v^{'}\in G^{'}$,
we get that, for every $n\in\mathbb{N}$,
\begin{align*}
\|\left[Df^{n}(y_{-n})\right]^{-1}\cdot v\| & \geq\|\left[Df^{n}(y_{-n})\right]^{-1}\cdot v_{s}\|-\|\left[Df^{n}(y_{-n})\right]^{-1}\cdot v^{'}\| 
\geq\lambda^{-n}\cdot\|v_{s}\|-e^{-\frac{c}{2}n}\|v^{'}\|,
\end{align*}
which can hold if and only if $\|v_{s}\|=0$.
This proves that $v\in G^{'}$, leading to a contradiction.
Thus, 
$\bigcap_{n\in\mathbb{N}}\mathcal{S}_{n}$
contains a unique vector subspace $\mathcal{E}_{\hat{y}}^{u}$. This completes the proof of the proposition.
\end{proof}

\begin{remark}\label{rmk:saturatedS}
Proposition \ref{prop:hatnuishyp} implies that for every $\hat{x}\in\hat{H}_{\infty}$
and $\hat{y}\in\hat{\Delta}(\hat{x})$ there exists 
a unique splitting $T_{\hat{y}}=E_{y_{0}}^{s}\oplus\mathcal{E}_{\hat{y}}^{u}$
with the backward contraction property \eqref{eq:contractivespace}. Given $\hat{y}\in\bigcup_{\hat{x}\in\hat{H}_{\infty}}\hat{\Delta}(\hat{x})$, the vector subpace
$$
\mathcal{E}_{\hat{f}^{-1}(\hat{y})}^{u}:=\bigcap_{n\in\mathbb{N}}Df^{n}(y_{-1-n}) ( C(y_{-1-n}))
$$
is clearly an unstable space in $T_{\hat{f}^{-1}(\hat{y})}=T_{y_{-1}}M$.
Thus, for every $\hat{x}\in\hat{S}_{\infty}:=\bigcup_{n\geq0}\hat{f}^{-n}(\hat{\Delta}_{\infty})$
there is a $Df$-invariant splitting $T_{\hat{x}}=E_{x_{0}}^{s}\oplus\mathcal{E}_{\hat{x}}^{u}$,
such that 
$\|D\hat{f}^{n}(\hat{x})|_{E_{x_{0}}^{s}}\| \leq\lambda^{n}<1$ and
$\|D\hat{f}^{-n}(\hat{x})|_{\mathcal{E}_{\hat{x}}^{u}}\|  \leq e^{-\frac{c}{2}n}<1$
for every $n\in\mathbb{N}$. By \cite{QXZ09} we have the
existence of local stable manifolds 
$\left\{ W_{loc}^{s}(\hat{x})\right\} _{\hat{x}\in\hat{S}_{\infty}}$. 
\end{remark}

The following lemma is useful to distinguish unstable disks and to prove 
that some ergodic component of the accumulation points of the measures defined by ~\eqref{eq:munD}
have the SRB property.

\begin{lemma}
There is $\varepsilon>0$ such that if $\hat{x},\hat{y}\in\hat{H}_{\infty}$
and $dist(x_{0},y_{0})<\varepsilon$ then either ${\Delta}(\hat{x})={\Delta}(\hat{y})$
or ${\Delta}(\hat{x})\cap {\Delta}(\hat{y})=\emptyset$.\label{prop:disjointmanifolds}
\end{lemma}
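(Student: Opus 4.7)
The plan is to show that if two such disks intersect then, for $\varepsilon$ small, they must be forced to coincide via a pre-orbit matching argument. Fix $\varepsilon>0$ to be chosen below, smaller than the uniform injectivity radius $\rho$ of Proposition~\ref{prop:uniforminversebranches}, and assume $\hat x,\hat y\in\hat H_\infty$ with $\dist(x_0,y_0)<\varepsilon$ and $z\in \Delta(\hat x)\cap \Delta(\hat y)$. By Proposition~\ref{lem:suppdisk} applied to each disk, $z$ has two canonical pre-orbits $\hat z^{(x)}=(z^x_{-n})_n$ and $\hat z^{(y)}=(z^y_{-n})_n$, given by $z^x_{-n}=f^{-n}_{x_{-n}}(z)$ and $z^y_{-n}=f^{-n}_{y_{-n}}(z)$, satisfying $\dist(z^x_{-n},x_{-n}),\,\dist(z^y_{-n},y_{-n})\le e^{-cn/2}\delta$ for every $n\ge 0$.

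The first step is to show $\hat z^{(x)}=\hat z^{(y)}$. Using the injectivity radius $\rho$, the pre-images in $B(x_0,\rho)$ decompose into disjoint open sets labelled by $\mathcal J_n(x_0)\simeq\{1,\ldots,d\}^n$; choosing $\varepsilon<\rho-2\delta$ guarantees that $\Delta(\hat x)\cup\Delta(\hat y)\subset B(x_0,\rho)$, so the pre-orbits $\hat x,\hat y,\hat z^{(x)},\hat z^{(y)}$ are identified with $\Gamma$-codes via the homeomorphism~\eqref{lem:localstructureMf}. Since the inverse branches of $\hat x$ are used to define $\hat z^{(x)}$, they share the same $\Gamma$-code (and likewise for $\hat y,\hat z^{(y)}$), so it suffices to prove that the codes of $\hat x$ and $\hat y$ coincide. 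Assume toward a contradiction that the codes disagree first at some index $N\ge 1$. Then $x_{-N+1}=y_{-N+1}$ modulo the common code up to time $N-1$, while $x_{-N}$ and $y_{-N}$ lie in disjoint elements of $\mathcal J_N(x_0)$; consequently $z^x_{-N}\in V^x_N$ and $z^y_{-N}\in V^y_N$ lie in disjoint inverse branches of the common point $z^x_{-N+1}=z^y_{-N+1}$. But the pre-images of a fixed point under $f$ are uniformly separated by some $\eta_0>0$ (by compactness of $M$ and non-singularity of $f$), hence $\dist(z^x_{-N},z^y_{-N})\ge\eta_0$; by suitably shrinking $\delta$ and $\varepsilon$ one extracts a contradiction using the tail estimate $\dist(z^\bullet_{-n},\cdot_{-n})\le e^{-cn/2}\delta$ together with uniform continuity of the inverse branches (Lemma~\ref{lem:continuityinversebranche}).

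Having proved $\hat z^{(x)}=\hat z^{(y)}=:\hat z$, I would conclude the equality of the disks. From $\dist(x_{-n},y_{-n})\le 2e^{-cn/2}\delta$ and Proposition~\ref{prop:hatnuishyp}, the tangent subspace $\mathcal E^u_{\hat z}=\bigcap_{n} Df^n(z_{-n})(C_a(z_{-n}))$ coincides with both $T_z\Delta(\hat x)$ and $T_z\Delta(\hat y)$, since it depends only on the pre-orbit $\hat z$. Expressing each disk in a chart at $z$ as the graph of a $C^{1+\alpha}$ map over $\mathcal E^u_{\hat z}$ with uniformly bounded Lipschitz constant (Proposition~\ref{prop:iteratesoftgbundleholder}), and using that the concatenated inverse branches along $\hat z$ act on both graphs as a uniform contraction, a graph-transform uniqueness argument forces the two graphs to agree on a neighborhood of $z$; connectedness then propagates the equality to the full disks of radius $\delta$.

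The main obstacle is the first step: converting the heuristic ``small $\varepsilon$ forces matching pre-orbit codes'' into a rigorous uniform choice of $\varepsilon$ valid for all $\hat x,\hat y\in\hat H_\infty$. The subtlety is that the separation of $\mathcal J_N$-components of $f^{-N}(B(x_0,\rho))$ decreases with $N$, so one cannot directly compare it to the tail $e^{-cN/2}\delta$ for arbitrary $N$; the argument must be phrased at the first disagreement index $N$ by pulling back to the common point $z^x_{-N+1}=z^y_{-N+1}$ and using the fact that the uniform separation $\eta_0$ of distinct pre-images under $f$ (rather than under $f^N$) is what matters.
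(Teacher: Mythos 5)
Your strategy diverges from the paper's and the difficulty you flag at the end is not a deferred technicality but the core of the lemma. The paper works in the natural extension: using the local homeomorphism $\varphi_z$ of \eqref{lem:localstructureMf} it shows each lifted set $\hat{\Delta}(\hat{x})\cap\pi^{-1}(B(z,\vep))$ lies over a single $\Gamma$-coordinate $\xi$, and then splits into cases. If the $\Gamma$-coordinates $\xi,\zeta$ of the two disks differ the lifted sets are trivially disjoint; if $\xi=\zeta$ the two disks are unstable sets following the same pre-orbit of $z$ and \cite[Proposition~VII.2.1]{QXZ09} delivers ``coincide or disjoint.'' The local product structure on $M^{f}$ together with that citation do all the work; no ``codes are forced to match'' argument appears anywhere.

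You instead try to prove a strictly stronger intermediate claim: that a nonempty intersection point $z$ must have a unique compatible pre-orbit, so $\hat z^{(x)}=\hat z^{(y)}$. The contradiction you want at the first disagreement index $N$ simply is not available from the estimates in hand. They give $\dist(z^x_{-N},z^y_{-N})\ge\eta_0$ together with $\dist(z^x_{-N},x_{-N})\le e^{-cN/2}\delta$ and $\dist(z^y_{-N},y_{-N})\le e^{-cN/2}\delta$, hence only $\dist(x_{-N},y_{-N})\ge\eta_0-2\delta$; but nothing in the hypotheses controls $\dist(x_{-N},y_{-N})$, which can grow without bound under backward iteration even when $\dist(x_0,y_0)<\varepsilon$. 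Thus there is no contradiction, and the step does not close. Note also that the lemma's conclusion is a dichotomy: the mismatched-code configuration is precisely where one expects disjointness (at least of the lifts $\hat\Delta$), and the paper handles it as such via the product structure, rather than ruling it out. Finally, even if the pre-orbits were shown equal, your graph-transform uniqueness in the last step is doing the job of the already-cited \cite[Proposition~VII.2.1]{QXZ09}; replacing it with that reference would be both cleaner and consistent with how the paper treats regularity of the unstable sets.
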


\begin{proof}
Fix $z\in M$ and $\vep>0$ small so that $B(z,\vep) \times \Gamma$
is contained in the domain of the homeomorphism $\varphi_z$ given by equation \eqref{lem:localstructureMf}. Consider the set
$
H_{z}:=\{ \hat{x}\in\hat{H}_{\infty}:\ \Delta(\hat{x})\cap B(z,\vep)\neq\emptyset\} .
$
We claim that for any $\hat{x},\hat{y}\in H_{z}$ the sets
$\hat{\Delta}(\hat{x})\cap\pi^{-1}(B(z,\vep))$ and $\hat{\Delta}(\hat{y})\cap\pi^{-1}(B(z,\vep))$
either coincide or are disjoint.
Indeed, since $B(z,\vep) \times \Gamma$ is contained in the domain of the homeomorphism $\varphi_{z}$,
$
\hat{\Delta}(\hat{x})\cap\pi^{-1}(B(z,\vep))=\varphi_{z}((\Delta(\hat{x})\cap B(z,\vep))\times\left\{ \xi\right\} )
$
and
$
\hat{\Delta}(\hat{y})\cap\pi^{-1}(B(z,\vep))=\varphi_{z}((\Delta(\hat{y})\cap B(z,\vep))\times\left\{ \zeta\right\} )
$
for some $\xi,\zeta \in \Gamma$.
In particular, if $\xi\neq\zeta$ then $(\hat{\Delta}(\hat{x})\cap\pi^{-1}(B(z,\vep)))\bigcap(\hat{\Delta}(\hat{y})\cap\pi^{-1}(B(z,\vep)))=\emptyset$. 
Otherwise, $\hat{x}$ and $\hat{y}$ are pre-orbits that follow the same pre-orbit of $z$ and,
by \cite[Proposition VII.2.1]{QXZ09}, either
$
\hat{\Delta}(\hat{x})\cap\pi^{-1}(B(z,\vep))=\hat{\Delta}(\hat{y})\cap\pi^{-1}(B(z,\vep))
$
or
$
(\hat{\Delta}(\hat{x})\cap\pi^{-1}(B(z,\vep)))\bigcap(\hat{\Delta}(\hat{y})\cap\pi^{-1}(B(z,\vep)))=\emptyset.
$
\end{proof}

\subsection{Absolute continuity\label{sec:srbproperty}}

A main difference between our setting and the diffeomorphism case is the fact that the family of unstable disks $\left\{ \Delta(\hat{x})\right\} _{\hat{x}\in\hat{H}_{\infty}}$
is not pairwise disjoint. Indeed, there may have infinitely many unstable disks
passing through a single point.
 By Lemma \ref{prop:disjointmanifolds},
there is $\varepsilon\in(0,1)$ such that if $dist(x_{0},y_{0})<\varepsilon$
then either $\Delta(\hat{x})\cap\Delta(\hat{y})=\emptyset$
or $\Delta(\hat{x})=\Delta(\hat{y})$. Intuitively, this means that in balls of radius $\vep>0$ 
we only observe disjoint disks (cf. Figure \ref{fig:intersections}).

\begin{figure}[htb]
\centering
\includegraphics[scale=.6]{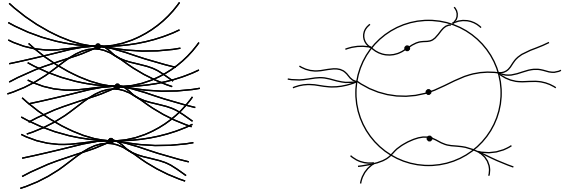}
\caption{Intersection of arbitrary unstable disks (on the left) and intersection of unstable disks 
only for close pre-orbits (on the right).\label{fig:intersections}}
\end{figure}

Recall that the set $\hat{H}_{\infty} \subset M^{f}$ was defined by Proposition \ref{lem:suppdisk}:
every $\hat{x}\in \hat{H}_{\infty}$ is an accumulation point of a sequence $(f^{n_{j}}(z_{n_{j}}))_{j\in\mathbb{N}}$,
where $z_{n_{j}}\in H_{n_{j}}$ and $n_{j}\to\infty$ when $j\to\infty$.
Moreover, $\pi\mid_{\hat{\Delta}(\hat{x})}: \hat{\Delta}(\hat{x}) \to \Delta(\hat{x})$ is a bijection.
Consider the foliated cylinder
\begin{equation}
C_{r}(\hat{x}):=\bigcup_{y\in\Delta(\hat{x})}W_{r}^{s}(y),\label{eq:box_stable_foliated}
\end{equation}
where $W_{r}^{s}(y)$ is the connected component of $W_{loc}^{s}(y)\cap B(y,r)$
that contains $y$. We will construct a covering of the support of
$\hat{\nu}$ by sets of the form $C_r(\hat{x})$.
We will show that in each set of this covering that has $\hat{\nu}$-positive
measure we can find a partition in unstable disks and prove
that $\pi_{*}\hat{\nu}_{\gamma}\ll Leb_{W_{loc}^{u}(\hat{z})}$,
for almost every $\gamma$ with respect to such partition. More details are as follows.

Let $\delta>0$ be the size of the unstable disks obtained in Proposition \ref{lem:suppdisk} and $0<r<\delta$ be a lower bound for the size of local stable
manifolds. Assume without loss of generality
that $C_{r}(\hat{x})\subset B(x_{0},\varepsilon)$. 
Using the homeomorphism given by expression ~\eqref{lem:localstructureMf}
we can identify $\pi^{-1}(C_{r}(\hat{x}))$
with $C_{r}(\hat{x})\times\Gamma$.
Fix $\varepsilon_{1}>0$ and consider $\left\{ \Delta_{\hat{x},l}\right\} _{l=1}^{n}$
a finite open covering of $\overline{\Delta(\hat{x})}$ by balls in such
way that the intersection of the sub-cylinders
\begin{equation}
C_{r}(\hat{x},l):=\bigcup_{y\in\Delta_{\hat{x},l}}W_{r}^{s}(y)\label{eq:cutted_box_stable_foliated}
\end{equation}
with any disk $\Delta$ that is tangent to the cone field $C_{a}$
has diameter smaller than $\varepsilon_{1}$ in $\Delta$. Clearly,
$\pi^{-1}(C_{r}(\hat{x},l))$ is identified
with $C_{r}(\hat{x},l)\times\Gamma$.
Let $H^{s}:C_{r}(\hat{x})\rightarrow\Delta(\hat{x})$
be the projection obtained via holonomy along stable manifolds given by $H^{s}(y):=W_{r}^{s}(y)\pitchfork\Delta(\hat{x})$
for every $y\in C_{r}(\hat{x})$. 

We say that $\Delta(f^{j}(z),\delta)$
\emph{crosses} $C_{r}(\hat{x},l)$ if $H^{s}|_{\Delta(f^{j}z,\delta)\cap C_{r}(\hat{x},l)}$
is a diffeomorphism onto $\Delta_{\hat{x},l}$. Given $(\hat{x},l)$, fixed,
by some abuse of notation, we will denote by $\Delta(f^{j}(z),\delta)$
the intersection of the disk $\Delta(f^{j}(z),\delta)$
with $C_{r}(\hat{x},l)$. The choice of $\varepsilon_{1}$
assures that any disk $\Delta(f^{j}(z),\delta)$
that intersects $C_{r}(\hat{x},l)$ crosses $C_{r}(\hat{x},l)$.
Define for each $j\in\mathbb{N}$ 
\begin{align}
\hat{\mathcal{K}}_{j}(\hat{x},l) & :=\left\{ \hat{f}^{j}(\pi^{-1}(D(z,j,\delta))):\Delta(f^{j}z,\delta)\mbox{ crosses }C_{r}(\hat{x},l)\right\} \label{eq:j_hyp_disk}
\end{align}
and
\begin{equation}
\hat{\mathcal{K}}_{\infty}(\hat{x},l):=\left\{ \hat{\Delta}(\hat{z}):\ \Delta(\hat{z})\mbox{ crosses }C_{r}(\hat{x},l)\right\} .\label{eq:inf_hyp_disk}
\end{equation}
\begin{figure}[htb]
\centering
\includegraphics[scale=0.6]{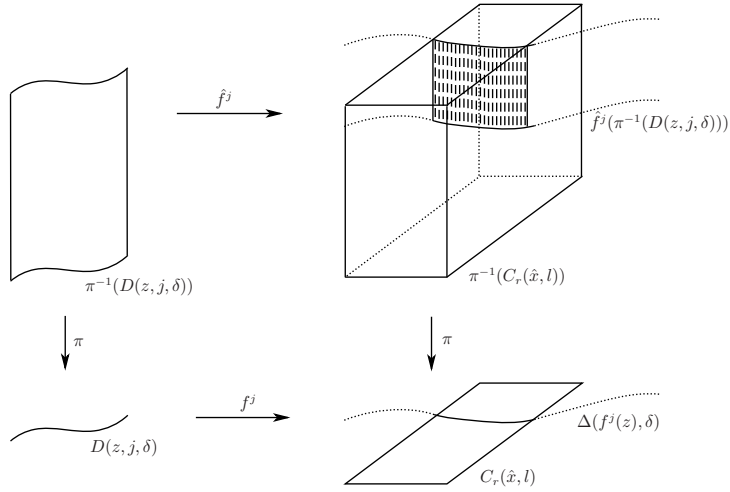}
\caption{Elements in $\mathcal{\hat{K}}_{j}(\hat{x},l)$}
\end{figure}
Observe that, since $\left\{ D(z,j,\delta)\right\} _{z\in H_{j}^{*}}$
is a pairwise disjoint family of disks (cf. Proposition \ref{prop:diskformeasure}),
then the family $\left\{ \pi^{-1}(D(z,j,\delta))\right\} _{z\in H_{j}^{*}}$
is also pairwise disjoint and, consequently, the same holds for the elements of $\hat{\mathcal{K}}_{j}(\hat{x},l)$. 
Denote by $\hat{K}_{j}(\hat{x},l)$ the
union of the elements of $\hat{\mathcal{K}}_{j}(\hat{x},l)$,
for each $0\leq j\leq\infty$.

\begin{remark}
By construction, the support of $\hat{\nu}_{n}|_{\pi^{-1}(C_{r}(\hat{x},l))}$  (recall $\hat{\nu}_{n}$ is the lift of $\nu_{n}$ \eqref{eq:lift_of_nun})
is contained in $\bigcup_{j=0}^{n-1}\hat{K}_{j}(\hat{x},l)$
and the support of any accumulation point $\hat \nu$ of the sequence $(\hat \nu_n)_n$ is contained in
$\hat{\mathcal{K}}_{\infty}(\hat{x},l)$.
\end{remark}

Since we are interested in describing the disks in the support of the measures $\hat\nu_n$ and their accumulation disks 
it will be useful to show that these measures are not concentrated near the boundary of the disks. For that reason,
given $\vep>0$, let $V_{\vep}(\partial\Delta(f^{j}(z),\delta))$
be the $\vep$-neighborhood of the boundary of $\Delta(f^{j}(z),\delta)$,
for $z\in H_{j}^{*}$ and $j\in\mathbb{N}$. Define
$\Delta_{\vep}(f^{j}(z),\delta):=\Delta(f^{j}(z),\delta)\backslash V_{\vep}(\partial\Delta(f^{j}(z),\delta))$
and $D_{\vep}(z,j,\delta):=f_{z}^{-j}(\Delta_{\vep}(f^{j}(z),\delta))$.
Consider the measures
\begin{equation}
\hat{\nu}_{n,\vep}:=\frac{1}{n}\sum_{j=0}^{n-1}\hat{f}_{*}^{j}\hat{m}_{j,\vep}.\label{eq:lift_nunepsilon}
\end{equation}
where $\hat{m}_{j,\vep}:=\sum_{z\in H_{j}^{*}}\hat{m}_{j,z,\vep}$ and
$\hat{m}_{j,z,\vep}:=(\varphi_{z})_{*}(Leb_{D_{\vep}(z,j,\delta)}\times\mathds{P})$.
Then we have the following:

\begin{lemma}
If $\vep>0$ is sufficiently small then $\hat{\nu}_{n,\vep}(M^{f})\geq\frac{\alpha}{2}$
for every $n$ large enough.\label{lem:hatnuposonsubdisk}
\end{lemma}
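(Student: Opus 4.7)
The plan is to show that $\hat{\nu}_{n,\vep}$ loses only a small fraction of the mass of $\hat{\nu}_{n}$ when $\vep$ is small, and then invoke Lemma~\ref{prop:nupositiva}. Since $\mathds{P}$ is a probability measure and $\pi_{*}\hat{\nu}_{n}=\nu_{n}$, one has $\hat{\nu}_{n}(M^{f})=\nu_{n}(M)\geq \nu_{n}(H)\geq \alpha$ for every sufficiently large $n$. It therefore suffices to produce a constant $C>0$, independent of both $n$ and $\vep$, satisfying
\begin{equation*}
\hat{\nu}_{n}(M^{f})-\hat{\nu}_{n,\vep}(M^{f})\leq C\vep\,\hat{\nu}_{n}(M^{f}),
\end{equation*}
for then choosing $\vep_{0}$ with $C\vep_{0}\leq 1/2$ gives $\hat{\nu}_{n,\vep}(M^{f})\geq \hat{\nu}_{n}(M^{f})/2\geq \alpha/2$ for all $0<\vep<\vep_{0}$ and all $n$ large.

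Unwinding definitions and using $\mathds{P}(\Gamma)=1$, this deficit equals
\begin{equation*}
(\hat{\nu}_{n}-\hat{\nu}_{n,\vep})(M^{f})=\frac{1}{n}\sum_{j=0}^{n-1}\sum_{z\in H_{j}^{*}}Leb_{D(z,j,\delta)}\bigl(D(z,j,\delta)\setminus D_{\vep}(z,j,\delta)\bigr).
\end{equation*}
Since $D(z,j,\delta)\setminus D_{\vep}(z,j,\delta)=f_{z}^{-j}\bigl(V_{\vep}(\partial\Delta(f^{j}(z),\delta))\bigr)$ and $f^{j}$ restricts to a diffeomorphism from the hyperbolic pre-disk $D(z,j,\delta)$ onto $\Delta(f^{j}(z),\delta)$, the bounded distortion estimate at the $c$-cone-hyperbolic time $j$ (Proposition~\ref{prop:distorcionhyptime}) yields
\begin{equation*}
\frac{Leb_{D(z,j,\delta)}\bigl(f_{z}^{-j}(V_{\vep}(\partial\Delta(f^{j}(z),\delta)))\bigr)}{Leb_{D(z,j,\delta)}(D(z,j,\delta))}\leq C_{1}^{2}\,\frac{Leb_{\Delta(f^{j}(z),\delta)}\bigl(V_{\vep}(\partial\Delta(f^{j}(z),\delta))\bigr)}{Leb_{\Delta(f^{j}(z),\delta)}(\Delta(f^{j}(z),\delta))}.
\end{equation*}

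Thus everything reduces to a uniform upper bound on the right-hand ratio. Every $\Delta(f^{j}(z),\delta)$ is a disk of radius $\delta$ tangent to $C_{a}$, and Proposition~\ref{prop:iteratesoftgbundleholder} guarantees that the $\alpha$-H\"older constant $\kappa(T\Delta(f^{j}(z),\delta),\alpha)$ is bounded by $C_{0}/(1-\beta)$, uniformly in $z$ and $j$. Hence all these disks admit graph parametrizations over a $d_{u}$-dimensional ball of radius comparable to $\delta$ with uniformly bounded Lipschitz constant, and a standard tubular neighborhood estimate produces a constant $C_{2}>0$ depending only on $\delta, d_{u}, a$ and $f$ such that
\begin{equation*}
Leb_{\Delta(f^{j}(z),\delta)}\bigl(V_{\vep}(\partial\Delta(f^{j}(z),\delta))\bigr)\leq C_{2}\vep\,Leb_{\Delta(f^{j}(z),\delta)}(\Delta(f^{j}(z),\delta))
\end{equation*}
for all $z\in H_{j}^{*}$, $j\in\mathbb{N}$ and $0<\vep<\delta$. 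Combining the last two displays, summing over $z\in H_{j}^{*}$, and averaging over $0\leq j\leq n-1$ produces the sought inequality with $C=C_{1}^{2}C_{2}$. The main obstacle is exactly this uniform geometric control of the annular strips, which would fail without Proposition~\ref{prop:iteratesoftgbundleholder}: the disks $\Delta(f^{j}(z),\delta)$ could otherwise have arbitrarily large curvature as $j$ grows, and the ratio of boundary-strip to total volume might not scale like $\vep$ uniformly.
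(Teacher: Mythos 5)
Your proof is correct and follows essentially the same route as the paper's: reduce to a $\vep$-proportion bound on the boundary strips of the $n$-hyperbolic disks, use bounded distortion (Proposition~\ref{prop:distorcionhyptime}) to transfer this to $f^j_*Leb_{D(z,j,\delta)}$, then invoke Lemma~\ref{prop:nupositiva}. Your explicit use of Proposition~\ref{prop:iteratesoftgbundleholder} to justify that the tubular-neighborhood constant $C_2$ is uniform in $z$ and $j$ is a welcome clarification of a step the paper leaves implicit.
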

\begin{proof}
Note that $\hat{\nu}_{n}(M^{f})=\nu_{n}(M)\geq\nu_{n}(H)\geq\alpha$,
for every $n$ sufficiently large (Lemma \ref{prop:nupositiva}).
Thus:
\begin{align*}
\hat{\nu}_{n}(M^{f})-\hat{\nu}_{n,\vep}(M^{f}) 
 & \leq\frac{1}{n}\sum_{j=0}^{n-1}\sum_{z\in H_{j}^{*}}\hat{f}_{*}^{j}\hat{m}_{j,z}(\pi^{-1}(\Delta(f^{j}(z),\delta)\backslash\Delta_{\vep}(f^{j}(z),\delta)))\\
 & =\frac{1}{n}\sum_{j=0}^{n-1}\sum_{z\in H_{j}^{*}}\pi_{*}\hat{f}_{*}^{j}\hat{m}_{j,z}(\Delta(f^{j}(z),\delta)\backslash\Delta_{\vep}(f^{j}(z),\delta))\\
 & \leq\frac{1}{n}\sum_{j=0}^{n-1}\sum_{z\in H_{j}^{*}}f_{*}^{j}Leb_{D(z,j,\delta)}(\Delta(f^{j}(z),\delta)\backslash\Delta_{\vep}(f^{j}(z),\delta)).
\end{align*}
Taking $\vep>0$ sufficiently small, the Lebesgue
measure of the union of the $\vep$-neighborhoods of the boundary
of the disks $\Delta(f^{j}(z),\delta)$ is a
small fraction of the measure of the union of the disks $\Delta(f^{j}(z),\delta)$.
By bounded distortion (cf. Proposition~\ref{prop:distorcionhyptime}) this is also true for $f_{*}^{j}Leb_{D}$.
Therefore 
we may reduce $\vep$, if necessary, so that 
$
\sum_{z\in H_{j}^{*}}f_{*}^{j}Leb_{D(z,j,\delta)}(\Delta(f^{j}(z),\delta)\backslash\Delta_{\vep}(f^{j}(z),\delta))\leq\frac{\alpha}{2},
$
which guarantees that 
$\hat{\nu}_{n,\vep}(M^{f})\geq\hat{\nu}_{n}(M^{f})-\frac{\alpha}{2}\geq\frac{\alpha}{2}$
and proves the lemma.
\end{proof}

As a consequence of the previous lemma we also get:

\begin{lemma}\label{lem:hatnuposincilinder}
Assume that $(\hat{\nu}_{n_{k}})_k$ converges to $\hat{\nu}$.
There exists $(\hat{x},l)$ and 
$\kappa(\hat{x},l)>0$ such that $\hat{\nu}(\hat{K}_{\infty}(\hat{x},l))>\kappa(\hat{x},l)$
and $\hat{\nu}_{n}(\cup_{j=0}^{n-1}\hat{K}_{j}(\hat{x},l))>\kappa(\hat{x},l)$
for every $n$ in the subsequence of $(n_{k})$.
\end{lemma}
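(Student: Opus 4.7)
The plan is to combine the interior-mass bound of Lemma~\ref{lem:hatnuposonsubdisk} with a finite covering of the support of the limit by foliated sub-cylinders and a pigeonhole argument, using the choice of $\vep_1$ to guarantee that any disk of radius $\delta$ tangent to $C_a$ whose interior meets a chosen cylinder must cross it. First, I would fix $\vep>0$ so small that Lemma~\ref{lem:hatnuposonsubdisk} yields $\hat\nu_{n,\vep}(M^f)\ge\alpha/2$ for every large $n$; by weak-$*$ compactness (and passing to a further subsequence of $(n_k)$ if needed), I may assume $\hat\nu_{n_k,\vep}\to\hat\nu_\vep$ in the weak-$*$ topology, where $\hat\nu_\vep$ is a subprobability with $\hat\nu_\vep(M^f)\ge\alpha/2$. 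Since $\hat\nu_{n_k,\vep}\le\hat\nu_{n_k}$ the limit satisfies $\hat\nu_\vep\le\hat\nu$ (as measures, by testing against non-negative continuous functions).

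Next I cover $\supp(\hat\nu_\vep)$ by finitely many cylinders. For each $\hat y\in\supp(\hat\nu_\vep)\subset\supp(\hat\nu)$, Proposition~\ref{lem:suppdisk} combined with Lemma~\ref{prop:bijectiveprojectionondisks} produces $\hat z\in\hat H_\infty$ with $\hat y\in\hat\Delta(\hat z)$, so $\pi(\hat y)\in\Delta(\hat z)$ sits in a ball $B(z_0,\vep)$ in which the sub-cylinders~\eqref{eq:cutted_box_stable_foliated} are defined; shrinking the ball $\Delta_{\hat z,l}$ if necessary, I pick an open sub-cylinder $C_r(\hat z,l)$ whose pre-image $\pi^{-1}(C_r(\hat z,l))$ is an open neighborhood of $\hat y$ in $M^f$. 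Compactness of $\supp(\hat\nu_\vep)$ then yields a finite subcover $\{\pi^{-1}(C_r(\hat x^{(i)},l^{(i)}))\}_{i=1}^{N}$. Pigeonhole furnishes an index $i_0$ with $\hat\nu_\vep(\pi^{-1}(C_r(\hat x^{(i_0)},l^{(i_0)})))\ge\alpha/(2N)$; set $(\hat x,l):=(\hat x^{(i_0)},l^{(i_0)})$ and $\kappa:=\alpha/(4N)$.

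Finally I conclude via the crossing property. By the defining choice of $\vep_1$ in~\eqref{eq:cutted_box_stable_foliated}, any disk tangent to $C_a$ of radius $\delta$ whose interior meets $C_r(\hat x,l)$ must cross it; this applies both to the finite-time disks $\Delta(f^j(z),\delta)$ appearing in~\eqref{eq:j_hyp_disk} and to the accumulation disks $\Delta(\hat z)$ of Proposition~\ref{lem:suppdisk} appearing in~\eqref{eq:inf_hyp_disk}. Consequently any lift contributing mass of $\hat\nu_{n,\vep}$ to $\pi^{-1}(C_r(\hat x,l))$ lies in some element of $\hat{\mathcal K}_j(\hat x,l)$, so $\hat\nu_{n_k,\vep}(\pi^{-1}(C_r(\hat x,l)))\le\hat\nu_{n_k}(\bigcup_{j=0}^{n_k-1}\hat K_j(\hat x,l))$; openness of the cylinder and Portmanteau's theorem give $\liminf_k\hat\nu_{n_k,\vep}(\pi^{-1}(C_r(\hat x,l)))\ge\hat\nu_\vep(\pi^{-1}(C_r(\hat x,l)))\ge 2\kappa$, which yields the lower bound on $\hat\nu_{n_k}$ for all $n$ in a tail subsequence of $(n_k)$. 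Similarly, $\supp(\hat\nu_\vep)\cap\pi^{-1}(C_r(\hat x,l))\subset\hat K_\infty(\hat x,l)$, hence $\hat\nu(\hat K_\infty(\hat x,l))\ge\hat\nu_\vep(\hat K_\infty(\hat x,l))\ge 2\kappa>\kappa$. The main obstacle is calibrating $\vep_1$ and the covering so that the crossing property applies uniformly to both the finite-time disks $\Delta(f^j(z),\delta)$ and the a priori less explicit accumulation disks $\Delta(\hat z)$, and transferring mass estimates cleanly from the interior-truncated measures $\hat\nu_{n,\vep}$ back to the full measures $\hat\nu_n$ and $\hat\nu$.
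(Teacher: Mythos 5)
Your argument is correct and follows essentially the same route as the paper: interior-mass bound via Lemma~\ref{lem:hatnuposonsubdisk}, weak-$*$ limit $\hat\nu_\vep\le\hat\nu$, a finite cover of $\supp(\hat\nu_\vep)$ by foliated sub-cylinders with the crossing property enforced by $\vep_1$, a pigeonhole selection of $(\hat x,l)$, and transfer of the cylinder mass to $\bigcup_j\hat K_j(\hat x,l)$ and to $\hat K_\infty(\hat x,l)$. The only cosmetic difference is that the paper shrinks $r$ and $\Delta_{\hat x,l}$ so that $\partial\pi^{-1}(C_r(\hat x,l))$ is $\hat\nu$-null and thus obtains equality in the limit, whereas you invoke openness and the Portmanteau inequality to get the one-sided $\liminf$ bound, which suffices.
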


\begin{proof}
Up to considering a subsequence of $(n_{k})_{k\in\mathbb{N}}$,
we may assume that $\hat{\nu}_{n_{k},\vep}$ converges to some measure
$\hat{\nu}_{\vep}$ (in the weak{*} topology) so that $\hat{\nu}_{\vep}(M^{f})\geq\frac{\alpha}{2}$ (cf. Lemma \ref{lem:hatnuposonsubdisk}).
Note that $supp(\hat{\nu}_{\vep})\subset\cap_{n\in\mathbb{N}}\overline{\cup_{j=0}^{n-1}\hat{K}_{j,\vep}}$
and this set is covered by the union of the interiors of the boxes
$\pi^{-1}(C_{r}(\hat{x}))$. By compactness,
there is a finite number of $\hat{x}$ such that $supp(\hat{\nu}_{\vep})\subset\cup_{\hat{x}\in\hat{H}_{\infty}}(\pi^{-1}(C_{r}(\hat{x})))$. In consequence, there exists $\hat{x}$ such that $\hat{\nu}_{\vep}(\pi^{-1}(C_{r}(\hat{x})))>0$
and consequently there is $(\hat{x},l)$ and $\kappa(\hat{x},l)>0$
such that $\hat{\nu}_{\vep}(\pi^{-1}(C_{r}(\hat{x},l)))\geq\kappa(\hat{x},l)>0$.
By definition of $C_{r}(\hat{x},l)$,  any
disk $\Delta(f^{j}(z),\delta)$ that intersects
$C_{r}(\hat{x},l)$ crosses $C_{r}(\hat{x},l)$.
This implies that 
$
\hat{\nu}_{n,\vep}(\pi^{-1}(C_{r}(\hat{x},l)))=\hat{\nu}_{n,\vep}(\cup_{j=0}^{n-1}\hat{K}_{j}(\hat{x},l)).
$
Reducing $r>0$ and $\Delta_{\hat{x},l}$ if necessary, we can assume
that the $\hat{\nu}$-measure of the boundary of $\pi^{-1}(C_{r}(\hat{x},l))$
is zero. Since $\hat{\nu}_{\vep}\leq\hat{\nu}$ then
$
\lim_{k\to\infty}\hat{\nu}_{n_{k},\vep}(\pi^{-1}(C_{r}(\hat{x},l)))=\hat{\nu}_{\vep}(\pi^{-1}(C_{r}(\hat{x},l)))\geq\kappa(\hat{x},l).
$
Thus, 
$
\hat{\nu}_{n}(\cup_{j=0}^{n-1}\hat{K}_{j}(\hat{x},l))\geq\hat{\nu}_{n,\vep}(\cup_{j=0}^{n-1}\hat{K}_{j}(\hat{x},l))\geq\kappa(\hat{x},l)
$
for every $n$ in the subsequence $(n_{k})$.
Finally, since we have that
$
\limsup_{k\to\infty}\hat{\nu}_{n_{k}}(\cup_{j=0}^{n-1}\hat{K}_{j}(\hat{x},l))\leq\hat{\nu}(\cup_{j=0}^{n-1}\hat{K}_{j}(\hat{x},l))
$
and $\cap_{n\in\mathbb{N}}\overline{(\cup_{j=0}^{n-1}\hat{K}_{j}(\hat{x},l))}\subset\hat{K}_{\infty}(\hat{x},l)$, 
it follows that $\hat{\nu}(\hat{K}_{\infty}(\hat{x},l))\geq\kappa(\hat{x},l)>0$.
\end{proof}

We now use the local product structure from expression ~\eqref{lem:localstructureMf}
to describe the sets $\hat{f}^{j}(\pi^{-1}(D(z,j,\delta)))\in\hat{\mathcal{K}}_{j}(\hat{x},l)$.
We begin with the following remark.

\begin{remark}
\label{rem:representationofhypdisk}
Fix $\hat{f}^{j}(\pi^{-1}(D(z,j,\delta)))\in\hat{\mathcal{K}}_{j}(\hat{x},l)$.
The set $\pi^{-1}(D(z,j,\delta))$ is identified with
$D(z,j,\delta)\times\Gamma$ via the homeomorphism $\varphi_{z}$.
Since the set $\pi^{-1}(D(z,j,\delta))\subset M^{f}$
consists of pre-orbits of $D(z,j,\delta)$ then $\hat{f}^{j}(\hat{\pi}^{-1}(D(z,j,\delta)))$
consists of the pre-orbits of $\Delta(f^{j}(z),\delta)=f^{j}(D(z,j,\delta))$
that visit the set $D(z,j,\delta)$. In other words, $\hat{f}^{j}(\hat{\pi}^{-1}(D(z,j,\delta)))$
is the set of pre-orbits of $\Delta(f^{j}(z),\delta)$ where the first $j$ pre-images are fixed. 
Hence the set $\hat{f}^{j}(\hat{\pi}^{-1}(D(z,j,\delta)))$
is homeomorphic to $\Delta(f^{j}(z),\delta)\times[x_{-j},...,x_{-1}]_{x_{0}}$
by $\varphi_{x_{0}}$ , where 
\[
[x_{-j},...,x_{-1}]_{x_{0}}:=\left\{ (i_{n})_{n\in\mathbb{N}}\in\Gamma:\ x_{-k}\in V_{i_{k}...i_{1}}(x_{0}),\mbox{ para cada }1\leq k\leq j\right\} 
\]
is a cylinder of length $j$
in $\Gamma$.
Moreover, given $j\in\mathbb{N}$ and $z\in H_{j}^{*}$, if $\Delta(f^{j}(z),\delta)\subset B(x,\rho)$
consider
\[
\left[z,f(z),...,f^{j}(z)\right]_{x}:=\left\{ (i_{n})_{n\in\mathbb{N}}\in\Gamma:\ f^{j-k}(z)\in V_{i_{n}i_{n-1}...i_{1}}(x),\ 0\leq k\leq j\right\}, 
\]
as the set of pre-orbits of $x$ such that $f^{j-k}(z)=f^{-k}(f^{j}(z))$.
Then $\left[z,f(z),...,f^{j}(z)\right]_{x}$ is a cylinder of length $j$ in $\Gamma$ and
the set $\hat{f}^{j}(\pi^{-1}(D(z,j,\delta)))\in\hat{K}_{j}(\hat{x},l)$
is identified with $\Delta(f^{j}(z),\delta)\times\left[z,f(z),...,f^{j}(z)\right]_{x_{0}}$.
\end{remark}

\begin{figure}[htb]
\centering
\includegraphics[scale=0.75]{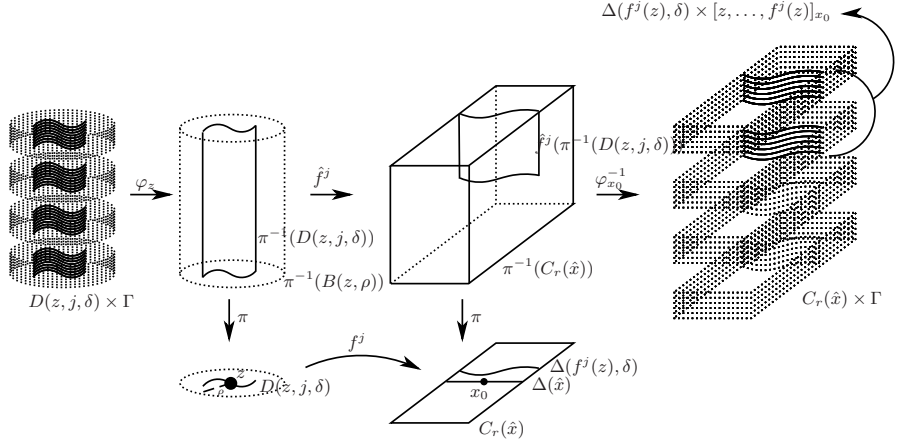}
\caption{Representation of $\hat{f}^{j}(\pi^{-1}(D(z,j,\delta)))$
in $C_{r}(\hat{x})\times\Gamma$}
\end{figure}

In order to prove absolute continuity we construct an auxiliary sequence of partitions
that allow us to study an element $\hat{\nu}_{\hat{\Delta}}$
of the disintegration of $\hat{\nu}$ with respect to $\hat{\mathcal{K}}_{\infty}(\hat{x},l)$
in terms of the sequence of measures $(\hat{\nu}_{n})_{n\in\mathbb{N}}$.
This construction is one of the most technical steps in the proof.
\medskip

Fix a pair $(\hat{x},l)$ such that $\hat{\nu}(\hat{K}_{\infty}(\hat{x},l))>0$
(cf. Lemma \ref{lem:hatnuposincilinder}). 
The elements
of $\hat{\mathcal{K}}_{\infty}(\hat{x},l)$ are obtained
as a limit of $\hat{f}^{j}(\pi^{-1}(D(z,j,\delta)))$,
where $\Delta(f^{j}(z),\delta)=f^{j}(D(z,j,\delta))$
are disks that cross $C_{r}(\hat{x},l)$. 
When no confusion arises, for notational simplicity, we shall omit $(\hat{x},l)$ from the sets 
$\hat{\mathcal{K}}_{j}(\hat{x},l)$, $\hat{K}_{j}(\hat{x},l)$ and $C_{r}(\hat{x},l)$ previously defined.
We denote $\hat{x}=(x_{-n})_{n\in\mathbb{N}}$,
$\hat{\Delta}:=\hat{\Delta}(\hat{x})$ and $\Delta:=\Delta(\hat{x})$. 
Define 
$$
K^{\dagger}:=\bigcup_{j\in{\mathbb{N}\cup\left\{ \infty\right\}} }\hat{K}_{j}\times\left\{ j\right\}. 
$$
Then, an element of $K^{\dagger}$ is a pair $(\hat{a},m)$
where $\hat{a}\in\hat{f}^{m}(\pi^{-1}(D(z,m,\delta)))$
for some $z\in H_{m}^{*}$. Recall that 
$\hat{f}^{m}(\pi^{-1}(D(z,m,\delta)))=\varphi_{x_{0}}(\Delta(f^{m}(z),\delta)\times\left[z,\dots,f^{m}(z)\right]_{x_{0}})$
where $\left[z,\dots,f^{m}(z)\right]_{x_{0}}$ is a cylinder
of length $m$ in $\Gamma=\left\{ 1,\dots,d\right\} ^{\mathbb{N}}$.
Since $\varphi_{x_{0}}$ is a homeomorphism, to each $\hat{a}\in\hat{f}^{m}(\pi^{-1}(D(z,m,\delta)))$
one can associate a unique pair $(a,\lambda)\in\Delta(f^{m}(z),\delta)\times\left[z,\dots,f^{m}(z)\right]_{x_{0}}$
such that $\varphi_{x_{0}}(a,\lambda)=\hat{a}$ and $a=\pi(\hat{a})$.
Having in mind this identification by $\varphi_{x_{0}}$, we will write an element $(\hat{a},m)\in K^{\dagger}$ 
as a triple $(a,\lambda,m)\in\Delta(f^{m}(z),\delta)\times\left[z,\dots,f^{m}(z)\right]_{x_{0}}\times\mathbb{N}$
where $\varphi_{x_{0}}(a,\lambda)=\hat{a}$.

\begin{figure}[htb]
\centering
\includegraphics[scale=0.65]{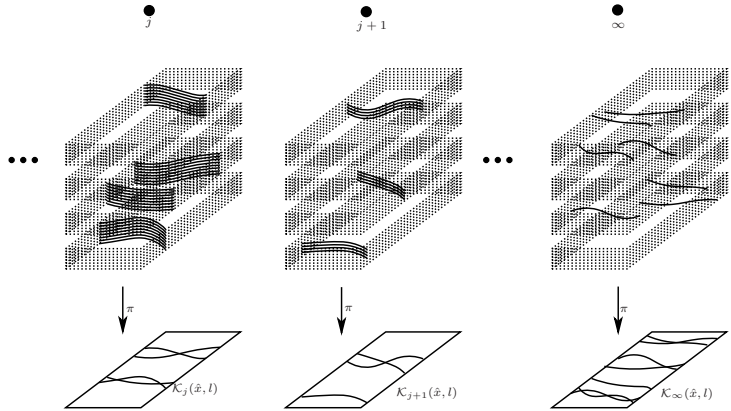}
\caption{The space $K^{\dagger}$}
\end{figure}

In parallel to the construction of $\hat{\nu}_{m}$ 
consider the measure $\xi_{m}^{\dagger}$ in $K^{\dagger}$ given by
\begin{equation}\label{eq:daggerss}
\xi_{m}^{\dagger}(\bigcup_{j\in{\mathbb{N}\cup\left\{ \infty\right\}} }\hat{A}_{j}\times\left\{ j\right\} ):=\frac{1}{m}\sum_{j=0}^{m-1}\hat{f}_{*}^{j}\hat{m}_{j}(\hat{A}_j),
\end{equation}
whenever $\bigcup_{j\in{\mathbb{N}\cup\left\{ \infty\right\}} }\hat{A}_{j}\times\left\{ j\right\} $
is a subset of $K^{\dagger}$ (in particular $\hat{A}_{j}\subset\hat{K}_{j}$ for every $j$).
The measure $\hat{f}_{*}^{j}\hat{m}_{j}$ is supported
in $\hat{K}_{j}$ and
$
\hat{\nu}_{m}(\hat{A})=\hat{\nu}_{m}(\bigcup_{j=0}^{m-1}\hat{A}\cap\hat{K}_{j})=\xi_{m}^{\dagger}(\bigcup_{j=0}^{m-1}(\hat{A}\cap\hat{K}_{j})\times\left\{ j\right\} )
$
for every $m\in\mathbb{N}$ and $\hat{A}\subset\hat{C}_{r}$. This implies that $\xi_{m}^{\dagger}(K^{\dagger})>\kappa>0$
for some subsequence of integers (given by Lemma \ref{lem:hatnuposincilinder})
and that $(\xi_{m}^{\dagger})_{m\in\mathbb{N}}$ admits an accumulation 
that is a positive measure $\xi^{\dagger}$ supported on $\hat{K}_{\infty}\times\left\{ \infty\right\} \subset K^{\dagger}$.
In particular, $\xi^{\dagger}(\hat{B}\times\left\{ \infty\right\} )=\hat{\nu}(\hat{B})$,
for every $\hat{B}\subset\hat{K}_{\infty}$. 

We will show that $\xi^{\dagger}$ has absolutely continuous disintegration
with respect to Lebesgue relatively with the partition
$\{ \hat{\Delta}\times\left\{ \infty\right\} \} _{\hat{\Delta}\in\hat{\mathcal{K}}_{\infty}}$
of $\hat{K}_{\infty}\times\left\{ \infty\right\}$ (meaning that $p_{*}^{\dagger}\xi_{\hat{\Delta}\times\left\{ \infty\right\} }^{\dagger}\ll Leb_{\pi(\hat{\Delta})}$ where $p^{\dagger}:K^{\dagger}\rightarrow\Delta$ is given by 
$p^{\dagger}(a,\lambda,n)=H^{s}(a)$ and $H^{s}$ is the stable holonomy in $C_{r}$)
and use this fact to conclude that the same holds for $\hat{\nu}$. 
For that, we define a sequence of increasing and generating partitions $\left\{ \mathcal{P}_{k}\right\} _{k\in\mathbb{N}}$
for $K^{\dagger}$ that generate
 $\left\{ \hat{\Delta}\times\left\{ \infty\right\} \right\} _{\hat{\Delta}\in\hat{\mathcal{K}}_{\infty}}$.

Recall that we can identify $\pi^{-1}(C_{r})$ with $C_{r}\times\Gamma$
by the homeomorphism $\varphi_{x_{0}}$. Consider $z'\in\Delta$ and
let $W_{r}^{s}(z')$ be the local stable manifold of $z'$.
Take $(\mathcal{W}_{k})_{k\in\mathbb{N}}$ a sequence
of countable partitions of $W_{r}^{s}(z')$ whose diameter
goes to zero when $k$ goes to infinity. Fix the sequence $(\Gamma_{k})_{k\in\mathbb{N}}$
of partitions of $\Gamma$, where each $\Gamma_{k}$ is the partition
by $k$-cylinders in $\Gamma$ (hence 
their diameter goes to zero when $k$ goes to infinity). Thus, $(\mathcal{W}_{k}\times\Gamma_{k})_{k\in\mathbb{N}}$
is an increasing sequence of countable partitions whose diameter goes
to zero when $k$ goes to zero (see Figure \ref{fig:sequence_of_partitions}).
So, $\bigvee_{j=0}^{\infty}\mathcal{W}_{k}\times\Gamma_{k}$ is the
partition of $W_{r}^{s}(z')\times\Gamma$ by singletons.

\begin{figure}[htb]
\includegraphics[scale=0.56]{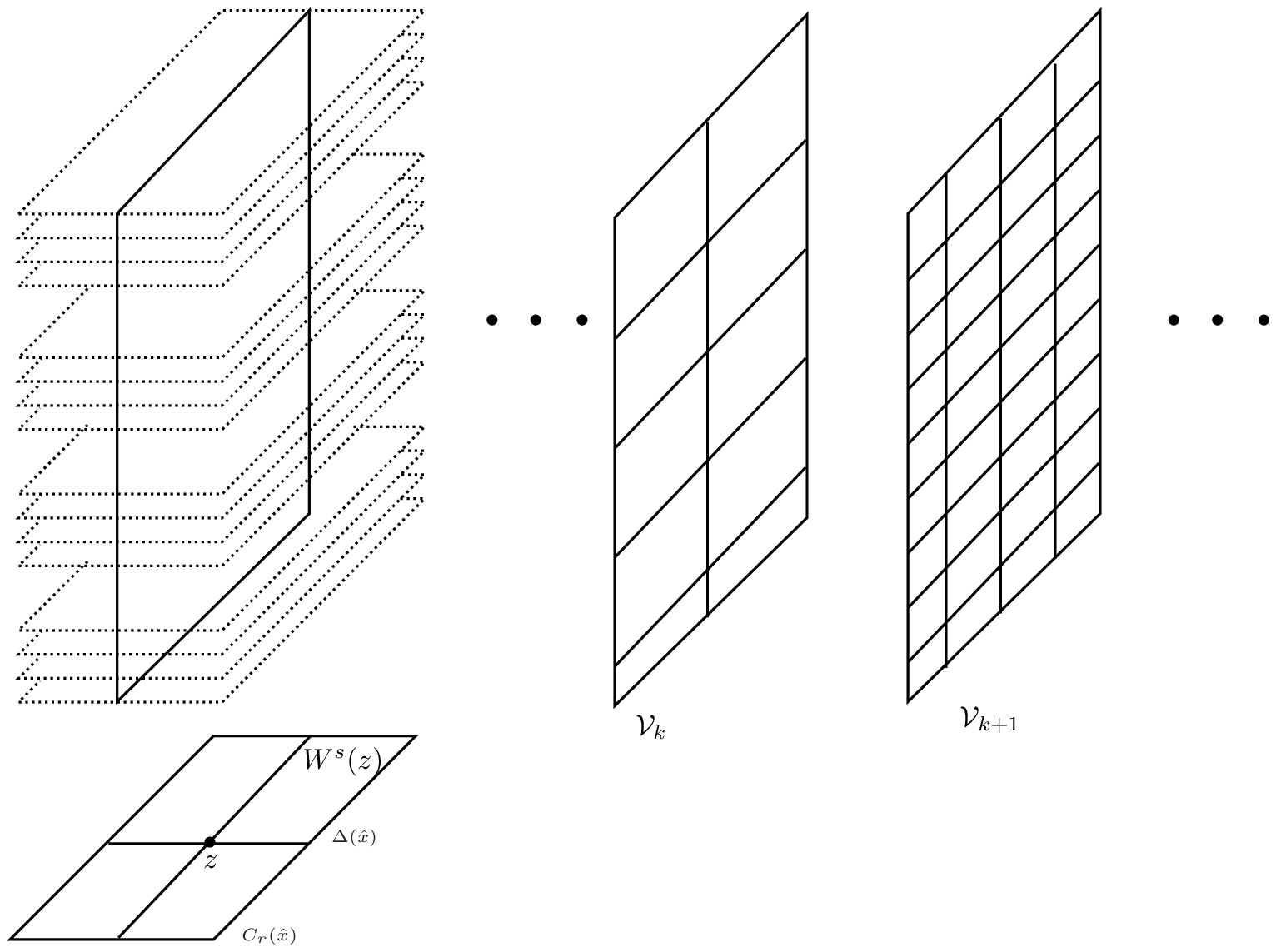} \qquad\qquad 
\includegraphics[scale=0.5]{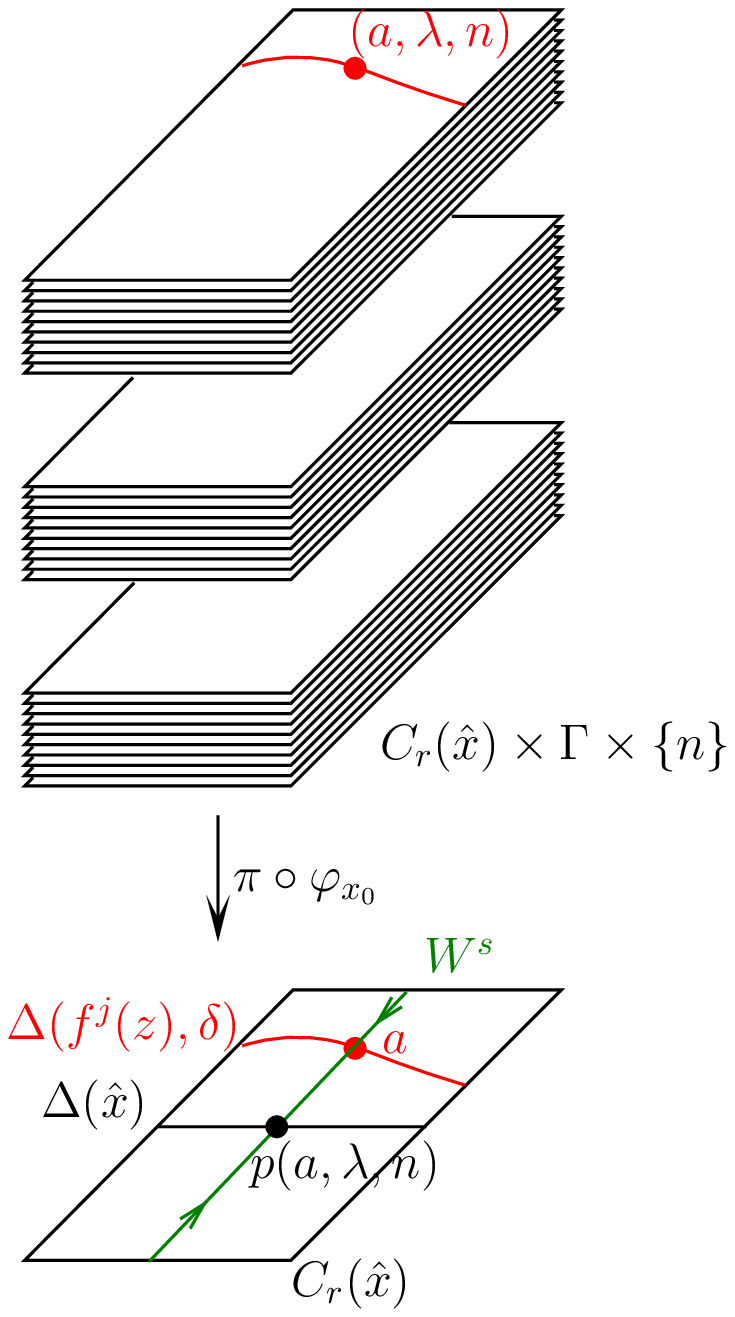}
\caption{\label{fig:sequence_of_partitions} Sequence of partitions $(\mathcal{W}_{k}\times\Gamma_{k})_{k\in\mathbb{N}}$ (on the left)
and projection $p^{\dagger}$ (on the right)}
\end{figure}

For each $k\in\mathbb{N}$, we will identify the elements of $\hat{\mathcal{K}}_{j}$
that intersects the same atom of $\mathcal{W}_{k}\times\Gamma_{k}$
to construct the partition $\mathcal{P}_{k}$. The atoms of $\mathcal{P}_{k}$
will consist of the union of elements in $\hat{\mathcal{K}}_{j}$
that intersect the same atom of $\mathcal{W}_{k}\times\Gamma_{k}$
for $j<k$ fixed or consist of the union of elements in $\hat{\mathcal{K}}_{j}$,
for every $j\geq k$, that intersect the same atom of $\mathcal{W}_{k}\times\Gamma_{k}$.
More precisely, we say that $(a,\lambda,n)\in K^{\dagger}$
and $(b,\sigma,m)\in K^{\dagger}$ belong to the same atom
of $\mathcal{P}_{k}$ if the following conditions are satisfied:
\begin{enumerate}
\item there are $y$ and $z$ such that $a\in\Delta(f^{n}(y),\delta)$
and $b\in\Delta(f^{m}(z),\delta)$ and the disks
$\Delta(f^{m}(y),\delta)$ and $\Delta(f^{m}(z),\delta)$
intersect the same atom of $\mathcal{W}_{k}$; or  \label{enu:partition1}
\item $\lambda$ and $\sigma$ belong to the same atom of $\Gamma_{k}$;\label{enu:partition2}
\item either $\left[m\geq k\text{ and }n\geq k\right]$ or $m=n<k$.\label{enu:partition3}
\end{enumerate}
Observe that, since $\Delta(f^{m}(y),\delta)$
and $\Delta(f^{m}(z),\delta)$ cross $C_{r}$,
 their intersection with $W_{r}^{s}(z')$ is never empty.
Conditions \eqref{enu:partition1},\eqref{enu:partition2} and \eqref{enu:partition3}
define an equivalence relation in $K^{\dagger}$. This implies that
$\mathcal{P}_{k}$ is a partition of $K^{\dagger}$.

We claim that the atoms of $\mathcal{P}_{k}$ consist of unions of sets that are 
obtained as products of hyperbolic disks by cylinders in $\Gamma$ by the corresponding
cone-hyperbolic times (see Figure~\ref{fig:atom_1}).
In fact, fix $(a,\lambda,m)\in K^{\dagger}$. 

Suppose first that $m<k$.
Then $(a,\lambda)\in\Delta(f^{m}(z),\delta)\times\left[z,...,f^{m}(z)\right]_{x_{0}}$
for some $z\in H_{m}^{*}$. Denote by $\mathcal{P}_{k}(a,\lambda,m)$
the atom of $\mathcal{P}_{k}$ that contains $(a,\lambda,m)$.
It is immediate from the definition that 
\[
\Delta(f^{m}(z),\delta)\times(\left[z,...,f^{m}(z)\right]_{x_{0}}\cap\Gamma_{k}(\lambda))\times\left\{ m\right\} \subset\mathcal{P}_{k}^ {}(a,\lambda,m),
\]
where $\Gamma_{k}(\lambda)$ is the atom of the partition
$\Gamma_{k}$ that contains $\lambda$. But $\Gamma_{k}(\lambda)$
is a cylinder of length $k>m$ that has non empty intersection with
the cylinder $\left[z,...,f^{m}(z)\right]_{x_{0}}$ whose
length is $m$. So $\left[z,...,f^{m}(z)\right]_{x_{0}}\supset\Gamma_{k}(\lambda)$
and $\left[z,...,f^{m}(z)\right]_{x_{0}}\cap\Gamma_{k}(\lambda)=\Gamma_{k}(\lambda)$
and, consequently, 
$
\Delta(f^{m}(z),\delta)\times\Gamma_{k}(\lambda)\times\left\{ m\right\} \subset\mathcal{P}_{k}(a,\lambda,m).
$
Now, observe that $(y,\sigma,n)\in\mathcal{P}_{k}(a,\lambda,m)$
if, and only if:
\begin{itemize}
\item $y\in\Delta(f^{n}(w),\delta)$ for some $w\in H_{n}^{*}$
and $\Delta(f^{n}(w),\delta)$ and $\Delta(f^{m}(z),\delta)$
intersect the same atom of $\mathcal{W}_{k}$;
\item $\sigma\in\Gamma_{k}(\lambda)$;
\item $m=n$.
\end{itemize}
Then $\Delta(f^{m}(w),\delta)\times\Gamma_{k}(\lambda)\times\left\{ m\right\} \subset\mathcal{P}_{k}(a,\lambda,m)$.
Therefore, we conclude that, if $m<k$ then 
\[
\mathcal{P}_{k}(a,\lambda,m)=\bigcup_{w}\Delta(f^{m}(w),\delta)\times\Gamma_{k}(\lambda)\times\left\{ m\right\} ,
\]
where the union runs over the set of all $w\in H_{m}^{*}$ such that
$\Delta(f^{m}(w),\delta)$ and $\Delta(f^{m}(z),\delta)$
intersect the same atom of $\mathcal{W}_{k}$ and $\left[w,\dots,f^{m}(w)\right]_{x_{0}}$
has non empty intersection with $\Gamma_{k}(\lambda)$.

Now, suppose that $(a,\lambda,m)\in K^{\dagger}$ with
$m\geq k$. As before,  $(a,\lambda)\in\Delta(f^{m}(z),\delta)\times\left[z,...,f^{m}(z)\right]_{x_{0}}$
for some $z\in H_{m}^{*}$ and it follows from the definition
that 
$
\Delta(f^{m}(z),\delta)\times(\left[z,...,f^{m}(z)\right]_{x_{0}}\cap\Gamma_{k}(\lambda))\times\left\{ m\right\} \subset\mathcal{P}_{k}^ {}(a,\lambda,m).
$
As $m\geq k$ we have that $\left[z,...,f^{m}(z)\right]_{x_{0}}\cap\Gamma_{k}(\lambda)=\left[z,...,f^{m}(z)\right]_{x_{0}}$,
and $\Delta(f^{m}(z),\delta)\times\left[z,...,f^{m}(z)\right]_{x_{0}}\times\left\{ m\right\} \subset\mathcal{P}_{k}(a,\lambda,m)$.
The previous argument shows that 
\[
\mathcal{P}_{k}(a,\lambda,m)=\bigcup_{j\geq k}\bigcup_{w_{j}}\Delta(f^{j}(w_{j}),\delta)\times\left[w_{j},...,f^{j}(w_{j})\right]_{x_{0}}\times\left\{ j\right\} \cup(\bigcup_{\hat{x}}\Delta(\hat{x})\times\left\{ \sigma\right\} \times\left\{ \infty\right\} )
\]
where the union runs over all $w_{j}\in H_{j}^{*}$ such that $\Delta(f^{j}(w_{j}),\delta)$
and $\Delta(f^{m}(z),\delta)$ intersect the
same atom of $\mathcal{W}_{k}$ and $\left[w_{j},...,f^{j}(w_{j})\right]_{x_{0}}$
has non empty intersection with $\Gamma_{k}(\lambda)$, and
over all $\hat{x}\in\hat{H}_{\infty}$ such that $\Delta(\hat{x})$
and $\Delta(f^{m}(z),\delta)$ intersect the
same atom of $\mathcal{W}_{k}$ and $\sigma\in\Gamma_{k}(\lambda)$.
\begin{figure}[htb]
\centering
\includegraphics[scale=0.6]{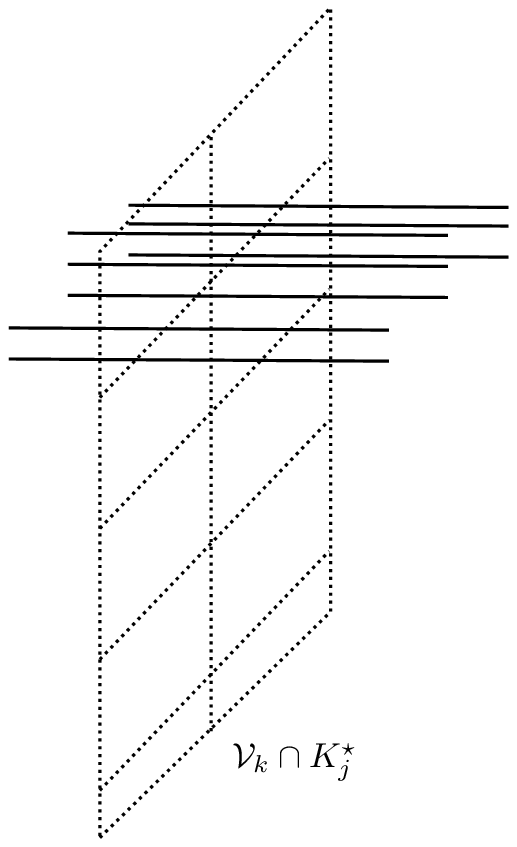}
\hspace{2cm}
\includegraphics[scale=0.5]{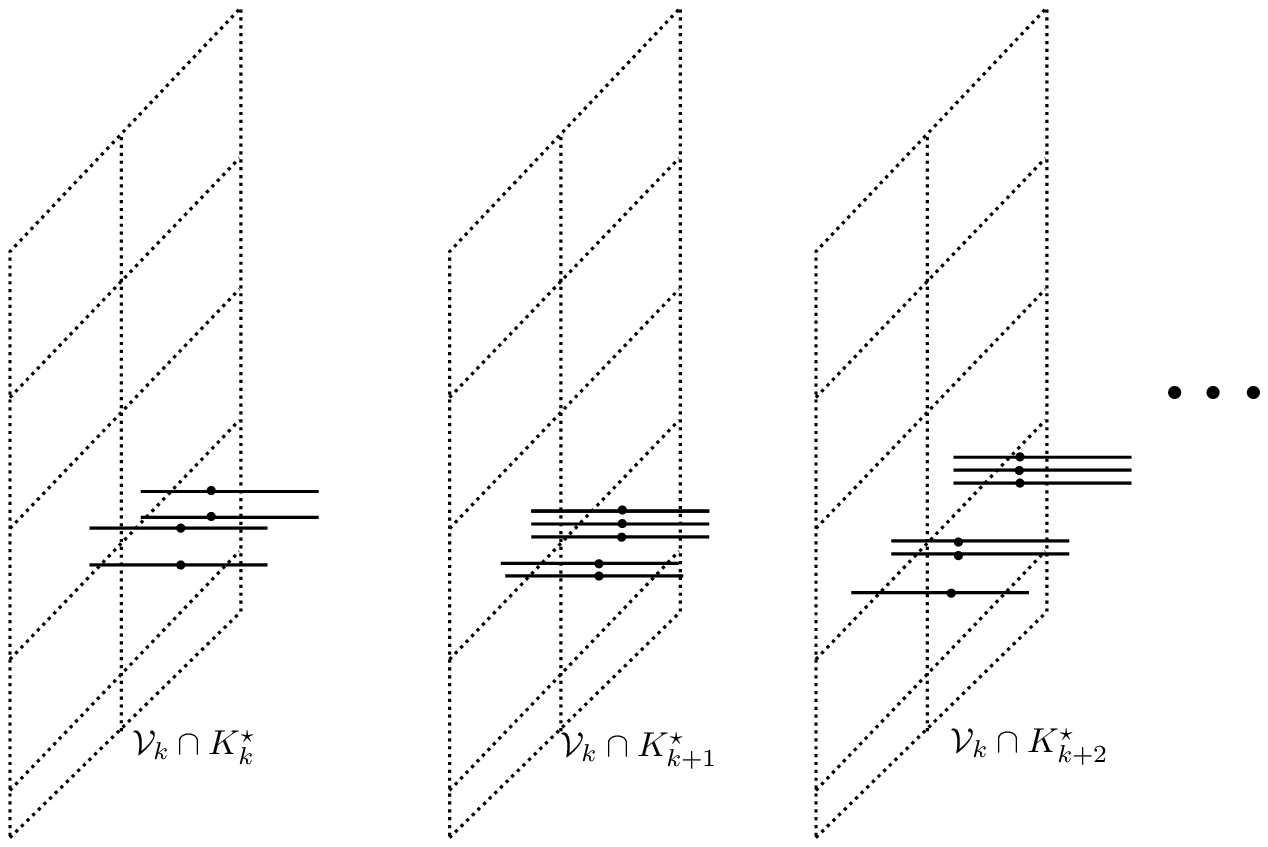}
\caption{Atom of partition $\mathcal{P}_{k}$ that contains $(\hat{u},j)$,
where $j<k$ (on the left) and atom of partition $\mathcal{P}_{k}$ that contains $(\hat{u},j)$,
where$j\geq k$ (on the right)
\label{fig:atom_1}}
\end{figure}
It is clear from the construction that the diameter of $\mathcal{W}_{k}\times\Gamma_{k}$
tends to zero as $k\to\infty$. Moreover
$
\mathcal{P}_{1}(a,\lambda,n)\supset\mathcal{P}_{2}(a,\lambda,n)\supset\dots\supset\mathcal{P}_{k}(a,\lambda,n)\supset\dots
$
and 
\[
\bigcap_{k\in\mathbb{N}}\mathcal{P}_{k}(a,\lambda,n)=\Delta(f^{n}(z),\delta)\times\left\{ \lambda\right\} \times\left\{ n\right\} 
\]
 where $a\in\Delta(f^{n}(z),\delta)$. In particular,
given $(a,\lambda,\infty)$ we have that $\bigcap_{k\in\mathbb{N}}\mathcal{P}_{k}(a,\lambda,\infty)=\Delta(\hat{y})\times\left\{ \lambda\right\} \times\left\{ \infty\right\} $
where $a\in\Delta(\hat{y})$. We are now in a position to state the main result of this section.

\begin{theorem}
There exists $C_{3}>1$ so that the following holds: given $(\hat{x},l)$ given by Lemma~\ref{lem:hatnuposincilinder}
there exists a family of conditional measures $(\hat{\nu}_{\hat{\Delta}})_{\hat{\Delta}\in\hat{\mathcal{K}}_{\infty}(\hat{x},l)}$ of $\hat{\nu}|_{\hat{K}_{\infty}(\hat{x},l)}$ such that $\pi_{*}\hat{\nu}_{\hat{\Delta}}\ll Leb_{\Delta}$, where $\Delta=\pi(\hat{\Delta})$, 
and 
$
\frac{1}{C_{3}}Leb_{\Delta}(B)\leq\pi_{*}\hat{\nu}_{\hat{\Delta}}(B)\leq C_{3}Leb_{\Delta}(B)
$
for every measurable subset $B\subset\Delta$ and for almost every $\hat{\Delta}\in\hat{\mathcal{K}}_{\infty}(\hat{x},l)$.\label{thm:abscontdisint}
\end{theorem}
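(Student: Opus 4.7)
The plan is to push the problem to the reference disk $\Delta := \pi(\hat\Delta) = \Delta(\hat x)$ via the projection $p^\dagger : K^\dagger \to \Delta$ given by $p^\dagger(a, \lambda, n) = H^s(a)$, establish uniform absolute continuity at the finite level $\xi_m^\dagger$, pass to the weak$^*$ limit, and finally descend to conditionals along $\hat{\mathcal K}_\infty(\hat x, l)$ using the generating sequence $(\mathcal P_k)_{k\in\mathbb N}$ already constructed. Throughout, the identification $\xi^\dagger(\hat B \times \{\infty\}) = \hat\nu(\hat B)$ will translate any statement about $\xi^\dagger$-conditionals into the claimed statement for $\hat\nu_{\hat\Delta}$.

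First, I would fix $j \in \mathbb N$ and $z \in H_j^*$ so that $\Delta(f^j(z), \delta)$ crosses $C_r(\hat x, l)$. By Remark~\ref{rem:representationofhypdisk}, the image $\hat f^j(\pi^{-1}(D(z,j,\delta)))$ is identified via $\varphi_{x_0}$ with $\Delta(f^j z, \delta) \times [z,\dots,f^j z]_{x_0}$, and under this identification $\hat f^j_* \hat m_{j,z}$ corresponds to $(f^j_* Leb_{D(z,j,\delta)}) \otimes \mathds{P}|_{[z,\dots,f^j z]_{x_0}}$. Bounded distortion at cone-hyperbolic times (Proposition~\ref{prop:distorcionhyptime}) gives $f^j_* Leb_{D(z,j,\delta)}$ comparable to $Leb_{\Delta(f^j z, \delta)}$ with a universal constant $C_1$, and the absolute continuity of the stable foliation under (H1)-(H2) and (H4) (cf.~\cite[Subsections V.7 and V.8]{QXZ09}) yields a universal $J_0 > 0$ such that the holonomy $H^s$ from any disk tangent to $C_a$ crossing $C_r(\hat x, l)$ onto $\Delta_{\hat x,l}$ has Jacobian between $J_0^{-1}$ and $J_0$. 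Combining these, $(p^\dagger)_*(\hat f^j_* \hat m_{j,z})$ is comparable to $Leb_\Delta|_{\Delta_{\hat x,l}}$ with a universal constant $C_2$ times the total mass $\mathds{P}([z,\dots,f^j z]_{x_0})\,Leb_{\Delta(f^j z, \delta)}(\Delta(f^j z, \delta))$, with $C_2$ independent of $j$ and $z$.

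Second, summing over the pairwise disjoint family $\hat{\mathcal K}_j(\hat x, l)$ and averaging in $j$ preserves the bound, yielding
\[
C_2^{-1}\, \xi_m^\dagger(K^\dagger)\, Leb_\Delta(B) \le (p^\dagger)_* \xi_m^\dagger(B) \le C_2\, \xi_m^\dagger(K^\dagger)\, Leb_\Delta(B)
\]
for every Borel $B \subseteq \Delta_{\hat x,l}$ and every $m$. The weak$^*$ convergence $\xi_{n_k}^\dagger \to \xi^\dagger$ along the subsequence, the lower bound $\xi^\dagger(K^\dagger) \ge \kappa(\hat x, l) > 0$ supplied by Lemma~\ref{lem:hatnuposincilinder}, and inner/outer regularity of Borel measures transfer this estimate to the limit, so that $(p^\dagger)_*(\xi^\dagger|_{\hat K_\infty \times \{\infty\}})$ is comparable to $Leb_\Delta|_{\Delta_{\hat x,l}}$ with constant $C_3 := C_2\, \kappa(\hat x, l)^{-1}$.

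Third, I would disintegrate $\xi^\dagger|_{\hat K_\infty \times \{\infty\}}$ along the partition $\mathcal Q := \{\hat\Delta \times \{\infty\}\}_{\hat\Delta \in \hat{\mathcal K}_\infty(\hat x, l)}$ using the increasing generating sequence $(\mathcal P_k)_k$. The $\mathcal P_k$-atoms are finite unions of ``disk $\times$ cylinder'' pieces crossing $C_r(\hat x, l)$ and meeting a common $\mathcal W_k$-atom, so applying the first two steps atom-by-atom shows that $(p^\dagger)_*(\xi^\dagger)_{\mathcal P_k(\cdot)}$ is comparable to $Leb_\Delta$ restricted to $H^s$ of the corresponding $\mathcal W_k$-atom with the \emph{same} constant $C_3$, uniformly in $k$. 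Martingale convergence of conditional expectations along $\mathcal P_k \nearrow \mathcal Q$ then passes the bound to the $\mathcal Q$-conditionals, and since $p^\dagger|_{\hat\Delta \times \{\infty\}} = H^s \circ \pi|_{\hat\Delta \times \{\infty\}}$ is a bijection onto $\Delta$ with Jacobian between $J_0^{-1}$ and $J_0$, the comparability translates into $\pi_* \hat\nu_{\hat\Delta}$ comparable to $Leb_{\pi(\hat\Delta)}$ with a universal constant, after absorbing $J_0$ into $C_3$. The main obstacle is the uniformity of $C_3$ across the double limit $m \to \infty$ and $k \to \infty$, which is ultimately delivered by the fact that $C_1$, $J_0$, and $\kappa(\hat x, l)$ are independent of the atom, the hyperbolic time $j$, and the approximation index $m$.
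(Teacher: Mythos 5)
Your proposal is correct and follows essentially the same route as the paper: the per--(disk $\times$ cylinder) estimate via bounded distortion at cone-hyperbolic times (Proposition~\ref{prop:distorcionhyptime}) and absolute continuity of the stable holonomy, the comparison at the level of $\mathcal{P}_k$-atoms for $\xi_n^\dagger$, the weak$^*$ passage to $\xi^\dagger$, and the refinement $\mathcal{P}_k \nearrow \{\hat\Delta\times\{\infty\}\}$ to reach the conditionals. The only substantive remark is that the intermediate \emph{global} comparison $(p^\dagger)_*\xi_m^\dagger \asymp \xi_m^\dagger(K^\dagger)\,Leb_\Delta$ in your ``Second'' step is not by itself enough (and needlessly introduces $\kappa(\hat x,l)^{-1}$ into the constant); what actually carries the argument is the atom-relative estimate $\xi_n^\dagger\bigl((p^\dagger)^{-1}(B)\cap\mathcal{P}_k(\zeta^\dagger)\bigr) \asymp \xi_n^\dagger(\mathcal{P}_k(\zeta^\dagger))\,Leb_\Delta(B)$, which you do invoke in the ``Third'' step and which delivers a universal $C_3$ independent of $(\hat x,l)$ and of $\kappa$.
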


\begin{proof}
Fix $(\hat{x},l)$ given by Lemma \ref{lem:hatnuposincilinder},
that is, such that $\hat{K}_{\infty}(\hat{x},l)$ has positive
$\hat{\nu}$-measure. Consider the space $K^{\dagger}$ and the sequence
of measures $(\xi_{m}^{\dagger})_{m\in\mathbb{N}}$ given
by ~\eqref{eq:daggerss}.
Consider the projection $p^{\dagger}:K^{\dagger}\rightarrow\Delta$ 
given by $p^{\dagger}(a,\lambda,n)=H^{s}(a)$.
Now, recall that the measures $\xi_{m}^{\dagger}$ involve the push-forward of 
the lift of Lebesgue measure at
hyperbolic pre-disks as in (\ref{eq:lift_leb_on_prehypdisk}) at
page \pageref{eq:lift_leb_on_prehypdisk}, where $\mathds{P}$
is a fixed probability measure in the Cantor set $\Gamma$.
Hence, in order to prove the theorem it is enough to show that there exists 
$C>1$ such that, for any measurable subset $B\subset\Delta$, any
integer $k\geq1$, any $\zeta^{\dagger}\in K^{\dagger}$ and any $n\geq1$
we have:
\begin{equation}\label{eq:lCdaggerS}
C^{-1}\xi_{n}^{\dagger}(\mathcal{P}_{k}(\zeta^{\dagger}))Leb_{\Delta}(B)\leq\xi_{n}^{\dagger}((p^{\dagger})^{-1}(B)\cap\mathcal{P}_{k}(\zeta^{\dagger}))\leq C\xi_{n}^{\dagger}(\mathcal{P}_{k}(\zeta^{\dagger}))Leb_{\Delta}(B).
\end{equation}
Note that the previous estimates are independent of the
choice of the probability measure $\mathds{P}$. 
Fix a measurable subset $B\subset\Delta$ and write $\mathcal{P}_{k}(\zeta^{\dagger})$
as the union of sets $\Delta(f^{j}(z),\delta)\times\Omega(z,k)\times\left\{ j\right\}$, where, either 
$
\Omega(z,k)=\left[z,\dots,f^{j}(z)\right]_{x_{0}}
$
or
$
\Omega(z,k)=\Gamma_{k}(\lambda)
$
with $\zeta^{\dagger}=(a,\lambda,n)$. In both cases, $\Omega(z,k)$
is a cylinder in $\Gamma$. Then

\begin{equation}
\xi_{n}^{\dagger}(\mathcal{P}_{k}(\zeta^{\dagger}))=\frac{1}{n}\sum_{j=0}^{n-1}\sum_{z}\hat{f}_{*}^{j}\hat{m}_{j,z}(\varphi_{x_{0}}(\Delta(f^{j}(z),\delta)\times\Omega(z,k))).\label{eq:measure1}
\end{equation}
where the sum is over all $z$ such $\Delta(f^{j}(z),\delta)\times\Omega(z,k)\times\left\{ j\right\} \subset\mathcal{P}_{k}(\zeta^{\dagger})$. The arguments used in Remark \ref{rem:representationofhypdisk} guarantee that
$
\varphi_{z}^{-1}\circ\hat{f}^{-j}\circ\varphi_{x_{0}}(\Delta(f^{j}(z),\delta)\times\Omega_{x_{0}}(z,k))=D(z,j,\delta)\times\Omega_{z},
$
 where $\Omega_{z}$ still is a cylinder set in $\Gamma$. 
Thus, the summand in \eqref{eq:measure1} is 
\begin{align*}
(Leb_{D(z,j,\delta)}\times\mathds{P})(D(z,j,\delta)\times\Omega_{z}) & =Leb_{D(z,j,\delta)}(D(z,j,\delta))\cdot\mathds{P}(\Omega_{z})\\
 & =f_{*}^{j}Leb_{D(z,j,\delta)}(\Delta(f^{j}(z),\delta))\cdot\mathds{P}(\Omega_{z}).
\end{align*}
 Then
$
\hat{f}_{*}^{j}\hat{m}_{j,z}(\varphi_{x_{0}}(\Delta(f^{j}(z),\delta)\times\Omega_{x_{0}}(z,k)))=f_{*}^{j}Leb_{D(z,j,\delta)}(\Delta(f^{j}(z),\delta))\cdot\mathds{P}(\Omega_{z}).
$
\begin{figure}[htb]
\centering
\includegraphics[scale=0.6]{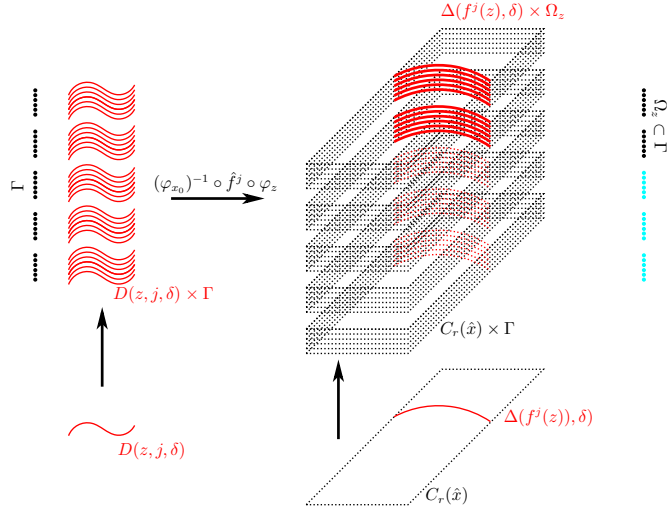}
\caption{Change of cylinders.}
\end{figure}
Denote by $H_{f^{j}(z)}^{s}$ the holonomy map from the hyperbolic disk $\Delta(f^{j}(z),\delta)$ to the disk $\Delta$.
Since $(H_{f^{j}(z)}^{s})^{-1}(B)\subset\Delta(f^{j}(z),\delta)$,
it is not hard to see that the same properties above hold for $(H_{f^{j}(z)}^{s})^{-1}(B)$
instead $\Delta(f^{j}(z),\delta)$ with the
same subsets of $\Gamma$. Then
$
\hat{f}_{*}^{j}\hat{m}_{j,z}(\varphi_{x_{0}}((H_{f^{j}(z)}^{s})^{-1}(B)\times\Omega_{x_{0}}(z,k)))=f_{*}^{j}Leb_{D(z,j,\delta)}((H_{f^{j}(z)}^{s})^{-1}(B))\cdot\mathds{P}(\Omega_{z}).
$
By bounded distortion (recall Proposition \ref{prop:distorcionhyptime})
there exists $C_{1}>1$ such that 
\[
C_{1}^{-1}Leb_{\Delta(f^{j}(z),\delta)}((H^{s})^{-1}(B))\leq f_{*}^{j}Leb_{D(z,j,\delta)}((H^{s})^{-1}(B))\leq C_{1}Leb_{\Delta(f^{j}(z),\delta)}((H^{s})^{-1}(B)).
\]
Since the stable holonomy $H_{f^{j}(z)}^{s}$ is absolutely continuous
with a Jacobian bounded away from zero and infinity (cf. \cite[Theorem V.8.1]{QXZ09}), there exists 
$C_{2}>1$ so that
$
C_{2}^{-1}Leb_{\Delta}(B)\leq Leb_{\Delta(f^{j}(z),\delta)}((H^{s})^{-1}(B))\leq C_{2}Leb_{\Delta}(B)
$
and, consequently, 
$
T_{1}^{-1}T_{2}^{-1}Leb_{\Delta}(B)\leq f_{*}^{j}Leb_{D(z,j,\delta)}((H^{s})^{-1}(B))\leq T_{1}T_{2}Leb_{\Delta}(B).
$
This proves that 
\begin{align*}
C_{*}^{-1}\frac{Leb_{\Delta}(B)}{Leb(\Delta)}  
	& \le \frac{\hat{f}_{*}^{j}\hat{m}_{j,z}(\varphi_{x_{0}}((H_{f^{j}(z)}^{s})^{-1}(B)\times\Omega_{x_{0}}(z,k)))}{\hat{f}_{*}^{j}\hat{m}_{j,z}(\varphi_{x_{0}}(\Delta(f^{j}(z),\delta)\times\Omega_{x_{0}}(z,k)))} \\
	&=\frac{f_{*}^{j}Leb_{D(z,j,\delta)}((H_{f^{j}(z)}^{s})^{-1}(B))}{f_{*}^{j}Leb_{D(z,j,\delta)}(\Delta(f^{j}(z),\delta))} \
	 \leq C_{*}\frac{Leb_{\Delta}(B)}{Leb(\Delta)}
\end{align*}
where $C_{*}=(T_{1}T_{2})^{2}$.
Hence we conclude that 
\begin{alignat*}{1}
\xi_{n}^{\dagger}((p^{\dagger})^{-1}(B)\cap\mathcal{P}_{k}(\zeta^{\dagger})) & =\frac{1}{n}\sum_{j=0}^{n-1}\sum_{z}\hat{f}_{*}^{j}\hat{m}_{j,z}(\varphi_{x_{0}}((H^{s})^{-1}(B)\times\Omega_{x_{0}}(z,k)))\\
 & \leq C_{*}\frac{Leb_{\Delta}(B)}{Leb(\Delta)}\frac{1}{n}\sum_{j=0}^{n-1}\sum_{z}\hat{f}_{*}^{j}\hat{m}_{j,z}(\varphi_{x_{0}}(\Delta(f^{j}(z),\delta)\times\Omega_{x_{0}}(z,k)))\\
 & =C_{*}\frac{Leb_{\Delta}(B)}{Leb(\Delta)}\xi_{n}^{\dagger}(\mathcal{P}_{k}^{\dagger}(\zeta^{\dagger}))
\end{alignat*}
for every $B\subset \Delta$ (and the lower bound is obtained analogously).
Thus, \eqref{eq:lCdaggerS} holds with $C=\frac{C_{*}}{Leb(\Delta)}$.

To conclude the proof of the theorem, note that if $\lim_{k\to\infty}\hat{\nu}_{n_{k}}=\hat{\nu}$
in the weak{*} topology then $\lim_{k\to\infty}\xi_{n_{k}}^{\dagger}=\xi^{\dagger}$ and $supp(\xi^{\dagger})\subset\cup_{\hat{\Delta}\in\hat{K}_{\infty}}\hat{\Delta}\times\left\{ \infty\right\}$.
Moreover,  $\xi^{\dagger}(\Upsilon\times\left\{ \infty\right\} )=\hat{\nu}(\Upsilon)$.
Now, using that $\cap_{k}\mathcal{P}_{k}(\hat{\zeta},\infty)=\hat{\Delta}\times\left\{ \infty\right\} $
where $\hat{\Delta}\times\left\{ \infty\right\} $ is the element containing $\zeta$, and taking $n\to\infty$ at 
\eqref{eq:lCdaggerS} we conclude that 
\[
C^{-1}Leb_{\Delta}(B)\leq\frac{\xi^{\dagger}((p^{\dagger})^{-1}(B)\cap\mathcal{P}_{k}(\hat{\zeta},\infty))}{\xi^{\dagger}(\mathcal{P}_{k}(\hat{\zeta},\infty))}\leq CLeb_{\Delta}(B).
\]
Since $B\subset \Delta$ is arbitrary we conclude the conditional measure of $\xi^{\dagger}$ in $\hat{\Delta}\times\left\{ \infty\right\} $
is absolutely continuous with respect to $Leb_{\Delta}$. Equivalently, 
 $\pi_{*}\hat{\nu}_{\hat{\Delta}}$ is absolutely continuous
with respect to $Leb_{\Delta}$. This finishes the proof of the theorem.
\end{proof}

\subsection{Finiteness and uniqueness of SRB measures\label{sec:ergodicityfinitiness}}

In this subsection we first prove that some ergodic component of the measure 
$
\mu=\lim_{k\to\infty}\frac{1}{n_{k}}\sum_{j=0}^{n_{k}-1}f_{*}^{j}Leb_{D},
$
is an SRB measure. Up to normalization we assume, without loss
of generality, that the limiting measure $\mu$ described above is a probability measure. 
Let $\hat\mu$ denote the unique $\hat f$-invariant lift of $\mu$.
Write  $\mu=\nu+\eta$, where $\nu$ is an accumulation point 
of the sequence of measures $(\nu_n)_n$ defined by
\eqref{eq:hyp_subprobability_n} on page \pageref{eq:hyp_subprobability_n},
and $\eta=\mu-\nu$. By some abuse of notation we assume
$\nu=\lim_{k\to\infty}\frac{1}{n_{k}}\sum_{j=0}^{n_{k}-1}f_{*}^{j}Leb_{\mathcal{D}_{j}}$ and assume
$\hat\nu$ is the lift of $\nu$ built along Subsection~\ref{sec:hatnuconstruction}. 
In particular  $\pi_{*}\hat{\nu}_{\hat{\Delta}}\ll Leb_{\pi(\hat{\Delta})}$
for every $\hat{\Delta}\in\hat{K}_{\infty}$, where $\left\{ \hat{\nu}_{\hat{\Delta}}\right\} _{\hat{\Delta}\in\hat{K}_{\infty}}$
is a disintegration of $\hat \nu$ with respect to $\hat{K}_{\infty}$.
Consider the set $R(\hat{f})$ of regular points, that is, the set of points $\hat{z}\in M^{f}$ such that the limits 
$\lim_{n\to\infty}\frac{1}{n}\sum_{j=0}^{n-1}\hat{\phi}(\hat{f}^{-j}(\hat{z}))$ and $\lim_{n\to\infty}\frac{1}{n}\sum_{j=0}^{n-1}\hat{\phi}(\hat{f}^{j}(\hat{z}))$ exist and coincide for every continuous $\hat\phi : M^f\to \mathbb R$. 
The set $R(\hat{f})$ has $\hat\mu$-full measure as a consequence of the ergodic theorem 
(see e.g. \cite[Corollary II.1.4]{Mane87}).
Given a measurable partition $\mathcal{P}$ on $K_\infty$ and the Borelian measure $\nu$, let $\tilde{\nu}$ denote the quotient measure induced by $\mathcal{P}$.

\begin{lemma}
\label{lem:birkhoffconstantdelta} For $\tilde{\nu}$-almost every
$\Delta\in\mathcal{K}_{\infty}$ and every $\phi\in C(M)$ there exists $L_{\Delta}(\phi)\in\mathbb{R}$ such that 
$
\lim_{n\to\infty}\frac{1}{n}\sum_{j=0}^{n-1}\phi(f^{j}(x))=L_{\Delta}(\phi)
\quad \text{for $Leb_{\Delta}$-almost every $x\in\Delta$}.
$
\end{lemma}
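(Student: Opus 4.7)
The argument splits into an existence step and a constancy step on each unstable disk $\Delta$.

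\emph{Existence of the limit.} Since $\mu$ is $f$-invariant, Birkhoff's ergodic theorem ensures that the limit $\phi^+(x):=\lim_{n\to\infty}\frac{1}{n}\sum_{j=0}^{n-1}\phi(f^{j}(x))$ exists on a set $R(\phi)\subset M$ of full $\mu$-measure. Since $\mu=\nu+\eta$ with $\eta$ a (sub)probability measure, we have $\nu(R(\phi))=\nu(M)$, so $\pi^{-1}(R(\phi))$ has full $\hat\nu$-measure. Disintegrating $\hat\nu$ along $\{\hat\Delta\}_{\hat\Delta\in\hat{\mathcal K}_\infty}$ and using the equivalence $\pi_{*}\hat\nu_{\hat\Delta}\sim Leb_{\Delta}$ from Theorem~\ref{thm:abscontdisint}, we conclude that for $\tilde\nu$-a.e.\ $\Delta\in\mathcal K_\infty$ the limit $\phi^+(x)$ exists for $Leb_\Delta$-a.e.\ $x\in\Delta$.

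\emph{Constancy via the backward Birkhoff average.} Let $\hat\mu$ be the unique $\hat f$-invariant lift of $\mu$ (cf.\ Subsection~\ref{subsec:naturalextension}). Applying Birkhoff's theorem to the invertible transformation $(\hat f,\hat\mu)$ and the observable $\hat\phi:=\phi\circ\pi$, the backward Cesaro limit $\hat\phi^-(\hat y):=\lim_{n\to\infty}\frac{1}{n}\sum_{j=1}^{n}\phi(y_{-j})$ exists and coincides with $\hat\phi^+(\hat y)=\phi^+(y_0)$ for $\hat\mu$-a.e.\ $\hat y\in M^{f}$. The geometric heart of the proof is that $\hat\phi^-$, when defined, is constant on each $\hat\Delta(\hat x)$: indeed, for every $\hat y\in\hat\Delta(\hat x)$, Proposition~\ref{lem:suppdisk} gives $\dist(y_{-n},x_{-n})\le e^{-cn/2}\delta$, so by uniform continuity of $\phi$ one has $|\phi(y_{-n})-\phi(x_{-n})|\le\omega_\phi(e^{-cn/2}\delta)$, a geometrically decaying sequence. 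Hence the Cesaro averages of $\phi(y_{-n})$ and $\phi(x_{-n})$ differ by a quantity tending to zero, so $\hat\phi^-(\hat y)$ exists if and only if $\hat\phi^-(\hat x)$ does, and they coincide.

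\emph{Transfer and conclusion.} The $\hat\mu$-a.e.\ identity $\hat\phi^+=\hat\phi^-$ must now be converted into a $\hat\nu$-relevant statement. Since the absolute continuity estimate of Theorem~\ref{thm:abscontdisint} is independent of the auxiliary probability $\mathds P$ used on $\Gamma$ in the construction of $\hat\nu$ (as explicitly noted in its proof), we may freely replace $\hat\nu$ by a lift of $\nu$ whose fiberwise conditionals on $\pi^{-1}(x_0)$ are taken from those of $\hat\mu$; such a lift still satisfies $\pi_*\hat\nu_{\hat\Delta}\sim Leb_\Delta$, and it is absolutely continuous with respect to $\hat\mu$. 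Birkhoff's identity then transfers: for this lift, $\hat\phi^+(\hat y)=\hat\phi^-(\hat y)$ holds $\hat\nu$-a.e. Combining this with Step~2, for $\tilde\nu$-a.e.\ $\hat\Delta$ and $\hat\nu_{\hat\Delta}$-a.e.\ $\hat y\in\hat\Delta$ the value $\phi^+(y_0)=\hat\phi^+(\hat y)$ equals the constant $L_\Delta(\phi):=\hat\phi^-(\hat x)$. Projecting via $\pi$ and using the equivalence $\pi_*\hat\nu_{\hat\Delta}\sim Leb_\Delta$ yields the $Leb_\Delta$-a.e.\ constancy of $\phi^+$ on $\Delta$.

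\emph{Main obstacle.} The delicate point is the passage from the $\hat\mu$-almost-everywhere identity $\hat\phi^+=\hat\phi^-$ to a $\hat\nu$-almost-everywhere statement, since $\hat\nu$ is not $\hat f$-invariant and two distinct lifts of the non-invariant $\nu$ may be mutually singular with respect to $\hat\mu$. The plan circumvents this by exploiting the freedom to choose the lift (guaranteed by the lift-independence of Theorem~\ref{thm:abscontdisint}); verifying carefully that this replacement is permissible is the main technical issue.
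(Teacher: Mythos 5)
You follow the same route as the paper: lift to the natural extension, use the coincidence of forward and backward Birkhoff averages on the bi-regular set $R(\hat f)$, invoke the backward contraction $\dist(y_{-n},x_{-n})\le e^{-cn/2}\delta$ on each $\hat\Delta(\hat x)$ (Proposition~\ref{lem:suppdisk}) to make the backward averages constant along the lifted disks, and project via the equivalence $\pi_*\hat\nu_{\hat\Delta}\sim Leb_\Delta$ (Theorem~\ref{thm:abscontdisint}). Your Step~1 (Birkhoff on $M$ to establish mere existence $Leb_\Delta$-a.e.) is correct but becomes superfluous once one lands on $R(\hat f)$ with full $\hat\nu$-measure, which is all the paper uses.

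The genuine gap is in your Step~3. You have correctly identified the crux: $\hat\nu$ is not $\hat f$-invariant, so $\hat\mu(R(\hat f))=1$ does not by itself give $\hat\nu(R(\hat f))=1$. However, the fix you propose does not go through as stated. Replacing $\hat\nu$ by a lift of $\nu$ whose fiberwise conditionals on $\pi^{-1}(x_0)$ agree with those of $\hat\mu$ is not covered by the $\mathds P$-independence noted in the proof of Theorem~\ref{thm:abscontdisint}: that remark concerns changing the single fixed product factor $\mathds P$ on $\Gamma$, whereas the conditionals of $\hat\mu$ vary from point to point inside each pre-disk $D(z,j,\delta)$, so $Leb_{D(z,j,\delta)}\times\mathds P$ would be replaced by a measure with no product structure on $D(z,j,\delta)\times\Gamma$. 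For such a measure the cylinder factorization that makes $\mathds P(\Omega_z)$ cancel out of the ratio in \eqref{eq:lCdaggerS} breaks down, and the absolute continuity $\pi_*\hat\nu_{\hat\Delta}\sim Leb_\Delta$ would have to be re-derived by a different argument. The route the paper (tacitly) relies on is simpler and keeps $\hat\nu$ as built: $\hat\nu$ is dominated by the invariant lift $\hat\mu$. Choosing the inverse-branch labellings coherently so that each $\hat m_j$ is bounded above by a fixed lift $\hat L$ of $Leb_D$ built with the same $\mathds P$, one gets $\hat\nu_n\le\hat\mu_n:=\frac1n\sum_{j=0}^{n-1}\hat f_*^j\hat L$; every accumulation point of $(\hat\mu_n)_n$ is $\hat f$-invariant and projects to $\mu$, hence equals $\hat\mu$ by uniqueness of the invariant lift, so $\hat\nu\le\hat\mu$ and $\hat\nu(M^f\setminus R(\hat f))=0$. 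With this observation in place, your Steps~1 and 2 become exactly the paper's proof.
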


\begin{proof}
Since $\hat{\mu}(R(\hat{f}))=1$ we get $\hat{\nu}(M^{f}\backslash R(\hat{f}))=0$.
In consequence, $\hat{\nu}_{\hat{\Delta}}(M^{f}\backslash R(\hat{f}))=0$ for $\tilde \nu$-almost every 
$\hat{\Delta}\in\hat{\mathcal{K}}_{\infty}$.
Fix $\hat{\Delta}\in\hat{\mathcal{K}}_{\infty}$ and consider the sets
$R_{\hat{\Delta}}(\hat{f}):=R(\hat{f}) \cap \hat\Delta$
 and 
\begin{equation}\label{eq:RDelta}
R_{\Delta}:=\pi(R_{\hat{\Delta}}(\hat{f})).
\end{equation}
Then it is clear that 
$\hat{\nu}_{\hat{\Delta}}(M^{f}\backslash R_{\hat{\Delta}}(\hat{f}))=0$
for almost every $\hat{\Delta}\in\hat{\mathcal{K}}_{\infty}$ and, consequently, $R_{\Delta}$
is a $\pi_{*}\hat{\nu}_{\hat{\Delta}}$-full measure subset. In particular
$R_{\Delta}$ is a full $Leb_{\Delta}$-measure subset, because
$\pi_{*}\hat{\nu}_{\hat{\Delta}}$ is absolutely continuous with respect to $Leb_{\Delta}$ with a density
bounded away from zero and infinity (cf. Theorem \ref{thm:abscontdisint}).

Given a continuous observable $\phi : M \to \mathbb R$ we get $\phi\circ\pi\in C(M^{f})$.
Moreover, if $x,y\in R_{\Delta}$ there are $\hat{x},\hat{y}\in R_{\hat{\Delta}}(\hat{f})$ so that
$\pi(\hat{x})=x$ and $\pi(\hat{y})=y$ and the limits
\begin{equation}\label{eq:int134}
\lim_{n\to\infty}\frac{1}{n}\sum_{j=0}^{n-1}\phi(x_{-j})=\lim_{n\to\infty}\frac{1}{n}\sum_{j=0}^{n-1}\phi\circ\pi(\hat{f}^{-j}(\hat{x}))=\lim_{n\to\infty}\frac{1}{n}\sum_{j=0}^{n-1}\phi\circ\pi(\hat{f}^{j}(\hat{x}))
\end{equation}
and
\begin{equation}\label{eq:int1346}
\lim_{n\to\infty}\frac{1}{n}\sum_{j=0}^{n-1}\phi(y_{-j})=\lim_{n\to\infty}\frac{1}{n}\sum_{j=0}^{n-1}\phi\circ\pi(\hat{f}^{-j}(\hat{y}))=\lim_{n\to\infty}\frac{1}{n}\sum_{j=0}^{n-1}\phi\circ\pi(\hat{f}^{j}(\hat{y}))
\end{equation}
do exist. Now, as the points $\hat{x},\hat{y}$ belong to $\hat{\Delta}$ we get that $\lim_{j\to\infty}d(x_{-j},y_{-j})=0$
and the limits in ~\eqref{eq:int134} and ~\eqref{eq:int1346} coincide.
On the other hand, since $\hat{x},\hat{y}\in R(\hat{f})$, we get that
$
\lim_{n\to\infty}\frac{1}{n}\sum_{j=0}^{n-1}\phi\circ\pi(\hat{f}^{j}(\hat{x}))$ and $\lim_{n\to\infty}\frac{1}{n}\sum_{j=0}^{n-1}\phi\circ\pi(\hat{f}^{j}(\hat{y}))$
coincide.
Thus, using that $f^{j}\circ\pi=\pi\circ\hat{f}^{j}$, we also conclude that
$
\lim_{n\to\infty}\frac{1}{n}\sum_{j=0}^{n-1}\phi(f^{j}(x))$ and $\lim_{n\to\infty}\frac{1}{n}\sum_{j=0}^{n-1}\phi(f^{j}(y))
$
are equal.
As $x$ and $y$ were chosen arbitrarily, we have that $\lim_{n\to\infty}\frac{1}{n}\sum_{j=0}^{n-1}\phi(f^{j}(x))$ is constant to some value $L_{\Delta}(\phi)$ for every $x\in R_{\Delta}$. Finally, since $Leb_{\Delta}(\Delta\backslash R_{\Delta})=0$
this completes the proof of the lemma.
\end{proof}

We are now in position to guarantee the existence of SRB measures.

\begin{proposition}\label{prop:existsrb}
The $f$-invariant probability measure $\mu=\nu+\eta$ has some ergodic component
$\mu_{*}$ that is supported on $\Lambda$, is a hyperbolic measure 
and an SRB measure. 
\end{proposition}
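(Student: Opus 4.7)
The idea is to produce an ergodic component $\mu_{*}$ of the $f$-invariant measure $\mu$ whose lift inherits the absolute continuity established in Theorem~\ref{thm:abscontdisint} along the accumulation disks $\hat{\Delta}\in\hat{\mathcal{K}}_{\infty}$. Once such a component is identified, hyperbolicity is essentially given by Proposition~\ref{prop:hatnuishyp} and hypothesis (H2), and the SRB property reduces to comparing two disintegrations on a common atom.

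\textbf{Step 1 (selecting a good disk).} Let $(\hat{x},l)$ be the pair from Lemma~\ref{lem:hatnuposincilinder}, so that $\hat{\nu}(\hat{K}_{\infty}(\hat{x},l))>0$. For $\tilde{\nu}$-a.e. $\hat{\Delta}\in\hat{\mathcal{K}}_{\infty}(\hat{x},l)$, both the conclusion of Theorem~\ref{thm:abscontdisint} (the conditional $\pi_{*}\hat{\nu}_{\hat{\Delta}}$ is equivalent to $Leb_{\Delta}$ with density in $[C_{3}^{-1},C_{3}]$) and the conclusion of Lemma~\ref{lem:birkhoffconstantdelta} (forward Birkhoff averages of every $\phi\in C(M)$ converge $Leb_{\Delta}$-a.e. to a constant $L_{\Delta}(\phi)$) hold; fix such a $\hat{\Delta}$.

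\textbf{Step 2 (building $\mu_{*}$).} Using separability of $C(M)$, enlarge the exceptional set to obtain a single $R_{\Delta}\subset\Delta$ of full $Leb_{\Delta}$-measure on which Birkhoff averages of every continuous $\phi$ converge to $L_{\Delta}(\phi)$. The functional $\phi\mapsto L_{\Delta}(\phi)$ is positive, normalized and linear, so by Riesz it defines a Borel probability $\mu_{*}$ on $M$; it is $f$-invariant and supported in $\Lambda$, and every point of $R_{\Delta}$ lies in its basin $B(\mu_{*})$. Because the ergodic decomposition theorem forces Birkhoff-generic points of an ergodic component to coincide with that component, $\mu_{*}$ is automatically ergodic, and appears with positive weight in the ergodic decomposition of $\mu$ since $\pi_{*}\hat{\nu}_{\hat{\Delta}}(R_{\Delta})>0$.

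\textbf{Step 3 (hyperbolicity).} For $\mu_{*}$-a.e.\ $x=\pi(\hat{y})$ with $\hat{y}\in\hat{\Delta}$, Proposition~\ref{prop:hatnuishyp} supplies a $d_{u}$-dimensional subspace $\mathcal{E}^{u}_{\hat{y}}\subset T_{x}M$ with $\|(Df^{n}(y_{-n}))^{-1}v\|\le e^{-cn/2}\|v\|$, giving Lyapunov exponents at least $c/2>0$ in $d_{u}$ directions. Along $E^{s}_{x}$, hypothesis (H2) yields Lyapunov exponents bounded above by $\log\lambda<0$. Thus $\mu_{*}$ is hyperbolic.

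\textbf{Step 4 (SRB property).} Let $\hat{\mu}_{*}$ be the unique $\hat{f}$-invariant lift of $\mu_{*}$ on $M^{f}$. We must verify $\pi_{*}(\hat{\mu}_{*})_{\mathcal{P}(\hat{y})}\ll Leb_{W_{\hat{y}}}$ for every subordinate partition $\mathcal{P}$. The plan is a Hopf-style argument on the atom $\hat{\Delta}\times\{\infty\}$: the disks $\Delta(\hat{y})$ given by Proposition~\ref{lem:suppdisk} are contained in Pesin unstable manifolds for $\hat{\mu}_{*}$ by Remark~\ref{rmk:saturatedS}, so any measurable partition subordinate to unstable manifolds can be refined, modulo $\hat{\mu}_{*}$-null sets, to one whose atoms are contained in the $\pi^{-1}$-images of subdisks of the $\Delta(\hat{y})$'s. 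On such atoms the conditionals of $\hat{\nu}$ project to measures equivalent to Lebesgue on $\Delta$ by Theorem~\ref{thm:abscontdisint}; since $\mu_{*}$ is the ergodic component of $\mu$ whose basin has full measure in $R_{\Delta}$, and the full measure set is invariant under absolutely continuous stable holonomy, the conditionals of $\hat{\mu}_{*}$ on these atoms must coincide (up to a multiplicative bounded density) with those of the normalized $\hat{\nu}_{\hat{\Delta}}$, giving the required absolute continuity.

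\textbf{Main obstacle.} The genuine technical difficulty is in Step 4: passing absolute continuity from the \emph{non-invariant} lift $\hat{\nu}$, whose conditionals were controlled only along the large accumulation disks, to the \emph{invariant} lift $\hat{\mu}_{*}$ along arbitrary subordinate partitions. The argument will necessarily invoke the absolute continuity of the stable foliation (to identify a positive-Lebesgue-measure subset of $M$ in $B(\mu_{*})$ via Hopf-type saturation), Rokhlin uniqueness of the disintegration, and the bounded-density estimates of Theorem~\ref{thm:abscontdisint}, which together pin down the conditional of $\hat{\mu}_{*}$ as absolutely continuous on each unstable atom.
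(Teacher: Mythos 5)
Your Steps 1--3 follow essentially the paper's route (the paper builds $\mu_*$ as the normalized restriction of $\mu$ to the $f$-saturation $\mathcal R=\bigcup_{n}f^{-n}\big(\bigcup_{\Delta\in\mathcal K_\infty}R_\Delta\big)$ rather than via Riesz on a single disk, but modulo a Hopf argument these agree). However there are two genuine gaps. First, in Step 2 your justification that $\mu_*$ ``appears with positive weight in the ergodic decomposition of $\mu$ since $\pi_*\hat\nu_{\hat\Delta}(R_\Delta)>0$'' is vacuous: the conditional $\pi_*\hat\nu_{\hat\Delta}$ is already a probability on the $d_u$-dimensional disk $\Delta$, and $\mu(R_\Delta)=0$ for any single disk. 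To conclude $\mu(\mathcal R)>0$ (hence $\mu_*$ is an ergodic component of $\mu$ with positive weight) you must (i) integrate over the positive $\tilde\nu$-measure family $\hat{\mathcal K}_\infty$ using $\hat\nu(\hat K_\infty)\ge\alpha>0$, and (ii) invoke the Hopf argument (constancy of $L_\Delta$ across disks linked by the absolutely continuous stable holonomy) so that one gets a single $L(\phi)$ on all of $R=\bigcup_\Delta R_\Delta$; without this last step you cannot rule out that $L_\Delta$ depends on $\Delta$ and every candidate ergodic piece has quotient measure zero. The paper's proof carries out exactly this Hopf argument before defining $\mu_*=\mu|_{\mathcal R}$.

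Second, Step 4 states the desired conclusion (``conditionals of $\hat\mu_*$ must coincide with those of $\hat\nu_{\hat\Delta}$ up to bounded density'') rather than proving it, and in fact overstates it: what is needed, and what is true, is only $\pi_*\hat\mu_{*,\hat\Delta}\ll Leb_\Delta$, since the summand $\hat\eta_{\hat\Delta}$ need not have a bounded density. The actual argument is a saturation/dichotomy that your sketch does not contain. One shows that the conditional $\hat\mu_{*,\hat\Delta}$ is either absolutely continuous or singular with respect to $\hat m_{\hat\Delta}:=(\pi|_{\hat\Delta})^*Leb_\Delta$: if some $\hat A$ with $\hat\mu_*(\hat A)>0$ were $\hat m_{\hat\Delta}$-null on a.e.\ $\hat\Delta$, then by ergodicity of $\hat\mu_*$ the invariant set $\hat B=\bigcup_{j\in\mathbb Z}\hat f^j(\hat A)$ has full $\hat\mu_*$-measure, while the relation $\hat f^j_*\hat m_{\hat\Delta}\ll\hat m_{\hat f^j(\hat\Delta)}$ forces $\hat m_{\hat\Delta}(\hat B\cap\hat\Delta)=0$, so $\hat\mu_{*,\hat\Delta}\perp\hat m_{\hat\Delta}$. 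This contradicts $\hat\mu_{*,\hat\Delta}=\hat\nu_{\hat\Delta}+\hat\eta_{\hat\Delta}$ with $\hat\nu_{\hat\Delta}\ll\hat m_{\hat\Delta}$ nontrivial on a set of positive quotient measure (Theorem~\ref{thm:abscontdisint}). You then pass from the specific partition $\bigvee_j\hat f^{-j}\hat{\mathcal K}_\infty$ to an arbitrary subordinated partition by refinement, which is the step your sketch gestures at but does not isolate. Supplying the ergodicity-based dichotomy closes the argument; the ingredients you list (stable holonomy, Rokhlin uniqueness, bounded densities) are relevant only for Theorem~\ref{thm:abscontdisint}, not for the passage from $\hat\nu$ to the invariant lift $\hat\mu_*$.
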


\begin{proof}
Consider $R=\bigcup_{\Delta\in\mathcal{K}_{\infty}}R_{\Delta}$ (recall ~\eqref{eq:RDelta}) and set
$\mathcal{R}=\bigcup_{n\in\mathbb{N}}f^{-n}(R)$. We have
that $\mathcal{R}$ is an $f$-invariant subset: $\mu(\mathcal{R}\cap f^{-1}(\mathcal{R}))=0$
by Poincar\'e's recurrence theorem.
We claim that for every $\phi\in C(M)$ there is $L(\phi)\in\mathbb{R}$ 
such that
$
\lim_{n\to\infty}\frac{1}{n}\sum_{j=0}^{n-1}\phi(f^{j}(x))=L(\phi)
$
 for every $x\in R$, which means that the constant given by Lemma~\ref{lem:birkhoffconstantdelta}
does not depend on the disk $\Delta$.
This part of the proof follows the ideas from the Hopf's argument for ergodicity.
By Lemma \ref{lem:birkhoffconstantdelta}, there exists
$L_{\Delta}(\phi)\in\mathbb{R}$ such that 
$
\lim_{n\to\infty}\frac{1}{n}\sum_{j=0}^{n-1}\phi(f^{j}(x))=L_{\Delta}(\phi)
\quad \text{for every $x\in R_{\Delta}$.}
$
Given $\Delta,\Delta'\in\mathcal{K}_{\infty}$, consider the
holonomy $H^{s}:\Delta\rightarrow\Delta'$ along the stable foliation. Since $R_{\Delta}$ and
$R_{\Delta'}$ are full measure sets for $Leb_{\Delta}$ and $Leb_{\Delta'}$
, respectively and the stable foliation is absolutely continuous
(\cite[Theorem V.8.1]{QXZ09}) we have that $H^{s}(R_{\Delta})=R_{\Delta'}$
modulo a zero $Leb_{\Delta}$ measure subset. As $\tilde{\phi}(x):=\lim_{n\to\infty}\frac{1}{n}\sum_{j=0}^{n-1}\phi(f^{j}(x))$
(whenever well defined) is constant along stable manifolds, then we
get that $L_{\Delta}(\phi)=L_{\Delta'}(\phi)=L(\phi)$,
for any $\Delta,\Delta'\in\mathcal{K}_{\infty}$, which proves the claim. 
Now, given $x\in\mathcal{R}$, there is $n_{0}(x)\in\mathbb{N}$ such that $f^{n_{0}}(x)\in R$.
In particular, if $\phi\in C(M)$ is arbitrary, we get
\begin{equation}
\lim_{n\to\infty}\frac{1}{n}\sum_{j=0}^{n-1}\phi(f^{j}(x))=\lim_{n\to\infty}\frac{1}{n}\sum_{j=0}^{n-1}\phi(f^{j}(f^{n_{0}}(x)))=L(\phi)
\quad \text{for every $x\in\mathcal{R}$}.
\label{eq:birkhoffconstR}
\end{equation}
Moreover,
$
\mu(\mathcal{R})\geq\mu(R)\geq\nu(R)=\pi_{*}\hat{\nu}(R)=\hat{\nu}(\pi^{-1}(R))\geq\hat{\nu}(R(\hat{f}))=\hat{\nu}(\hat{K}_{\infty})\geq\alpha>0.
$
Thus one can define $\mu_{*}:=\mu|_{\mathcal{R}}$, where $\mu|_{\mathcal{R}}(A)=\frac{\mu(A\cap\mathcal{R})}{\mu(\mathcal{R})}$,
for every measurable set $A\subset M$. It is clear that $\mu_{*}$ is a positive $f$-invariant
probability measure 
and, by \eqref{eq:birkhoffconstR}, it is ergodic.

We now prove that $\mu_*$ is a hyperbolic measure. One can write 
$\mu_{*}=\mu|_{\mathcal{R}}=\nu|_{\mathcal{R}}+\eta|_{\mathcal{R}}$.
As consequence of Proposition \ref{prop:hatnuishyp}, for every
$\Delta\in\mathcal{K}_{\infty}$ and $y\in\Delta$ there is
a pre-orbit $\hat{y}$ of $y$ that admits a $D\hat{f}^{-1}$-invariant
splitting $T_{\hat{y}}=E_{y}^{s}\oplus\mathcal{E}_{\hat{y}}^{u}$
where $E_{y}^{s}$ is uniformly contracting and $\mathcal{E}_{\hat{y}}^{u}$
is uniformly contracting for the past along the pre-orbit $\hat{y}$.
In particular, given $y\in R$ let $\hat{y}\in M^{f}$ be such pre-orbit
and $T_{\hat{y}}=E_{y}^{s}\oplus\mathcal{E}_{\hat{y}}^{u}$ be the corresponding splitting.
Since $\mu_*$ is ergodic and $\mu_{*}(R)>\nu(R)>0$ then, by Proposition~\ref{prop:oseledet_nat_ext},
 the Lyapunov exponents of $\mu_*$ coincide with the Lyapunov exponents at typical points $\hat y$ 
which are all non-zero. Thus $\mu_{*}$ is a hyperbolic measure.

We are left to show that $\mu_{*}$ is an SRB measure. 
It is easy to check that $\hat{\mu}_{*}:=\hat{\mu}|_{\pi^{-1}(\mathcal{R})}$ is the 
(uniquely defined) lift of $\mu_{*}$. Moreover, we have the following:

\begin{claim}
If $\{ \hat{\mu}_{*,\hat{\Delta}}\} _{\hat{\Delta}\in\hat{\mathcal{K}}_{\infty}}$
is a disintegration of $\hat{\mu}_{*}$ with respect to the partition $\hat{\mathcal{K}}_{\infty}$ then 
$\pi_{*}\hat{\mu}_{*,\hat{\Delta}}\ll Leb_{\Delta}$,
for $\tilde{\hat{\mu}}_{*}$-almost every $\hat{\Delta}\in\mathcal{\hat{K}}_{\infty}$.
\end{claim}

\begin{proof}[Proof of the claim] For each $\hat{\Delta}\in\hat{\mathcal{K}}_{\infty}$
we have that $\pi|_{\hat{\Delta}}:\hat{\Delta}\rightarrow\Delta$
is a homeomorphism.  
Taking $\hat{m}_{\hat{\Delta}}:=(\pi|_{\hat{\Delta}})_{*}^{-1}Leb_{\Delta}$ we have that 
$\pi_{*}\hat{\mu}_{*,\hat{\Delta}}\ll Leb_{\Delta}$ if, and
only if $\hat{\mu}_{*,\hat{\Delta}}\ll\hat{m}_{\hat{\Delta}}$. 
Hence we will prove the second.
Assume, by contradiction, that there exists $\hat{A}\subset\hat{K}_{\infty}$ so
that $\hat{\mu}_{*}(\hat{A})>0$ but $\hat{m}_{\hat{\Delta}}(\hat{A}\cap\hat{\Delta})=0$
for $\tilde{\hat{\mu}}_{*}$-almost every $\hat{\Delta}\in\hat{\mathcal{K}}_{\infty}$. 
As $\hat{\mu}_{*}(\hat{A})>0$ we get that $\hat{\mu}_{*,\hat{\Delta}}(\hat{A}\cap\hat{\Delta})>0$
for every $\hat{\Delta}$ in some subset $\hat{\mathcal{K}}_{\infty}^{'}\subset\hat{\mathcal{K}}_{\infty}$
with positive $\tilde{\hat{\mu}}_{*}$-measure. Consider the
saturared set $\hat{B}=\bigcup_{j=-\infty}^{\infty}\hat{f}^{j}(\hat{A})$, which is $\hat{f}$-invariant subset and, by ergodicity, 
has full $\hat{\mu}_{*}$-measure.
Then
$\hat{\mu}_{*,\hat{\Delta}}(\hat{B}\cap\hat{\Delta})=1$, for $\tilde{\hat{\mu}}_{*}$-almost every 
$\hat{\Delta}\in\mathcal{\hat{K}}_{\infty}$.
On the other hand, using that $\hat{m}_{\hat{\Delta}}(\hat{A}\cap\hat{\Delta})=0$
for $\tilde{\hat{\mu}}_{*}$-almost every $\hat{\Delta}\in\hat{\mathcal{K}}_{\infty}$ and that
 $\hat{f}_{*}^{j}\hat{m}_{\hat{\Delta}}\ll\hat{m}_{\hat f^j (\hat{\Delta})}$
for every $j\in\mathbb{Z}$,
we conclude that
$\hat{m}_{\hat{\Delta}}(\hat{B}\cap\hat{\Delta})=0$
for $\tilde{\hat{\mu}}_{*}$-almost every $\hat{\Delta}\in\hat{\mathcal{K}}_{\infty}$.
Hence $\hat{\mu}_{*,\hat{\Delta}}\perp\hat{m}_{\hat{\Delta}}$
for $\tilde{\hat{\mu}}_{*}$-almost every $\hat{\Delta}\in\mathcal{\hat{K}}_{\infty}$.
However, this contradicts the fact that $\hat{\mu}_{*,\hat{\Delta}}=\hat{\nu}_{\hat{\Delta}}+\hat{\eta}_{\hat{\Delta}}$ on $\hat{\mathcal{K}}_{\infty}$, with $\hat{\nu}_{\hat{\Delta}}\ll\hat{m}_{\hat{\Delta}}$ 
(cf. Theorem~\ref{thm:abscontdisint}).
This proves the claim.
\end{proof}

We have that the partition $\mathcal{P}:=\bigvee_{j\in\mathbb{N}}\hat{f}^{-j}\hat{\mathcal{K}}_{\infty}$
of $\mathcal{R}$ is subordinated to unstable manifolds (see e.g. \cite[Proposition VII.2.4 ]{QXZ09}) 
and, the claim assures that $\pi_{*}\hat{\mu}_{*,\hat{P}}\ll Leb_{\pi(\hat{P})}$ for almost all $\hat{P}\in\mathcal{P}$.
Now, if $\mathcal{Q}$ is any other partition subordinated to unstable manifolds
then the same property holds for $\mathcal{Q\vee P}$. Then, since every element of $\mathcal{Q}$
is obtained as sum of elements of $\mathcal{Q\vee P}$ we conclude that $\mu_{*}$ is an SRB measure.
This completes the proof of the proposition.
\end{proof}

In order to conclude the finiteness of SRB measures that cover $H$ we prove that each of their basins of attraction occupies a definite
proportion of the phase space. This is done in two steps.

\begin{lemma}\label{lem:finitenessofsrb}
There exists $b>0$ such that the following holds: if $\mu_{*}$ is an SRB measure given by 
Proposition~\ref{prop:existsrb} then there exists an open set $V\subset M$ contained in $B(\mu_{*})$
(mod zero) so that $Leb(U)>b$.
\end{lemma}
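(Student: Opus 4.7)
The plan is to show that the basin of any SRB measure $\mu_*$ produced by Proposition~\ref{prop:existsrb} contains, modulo a Lebesgue null set, a foliated cylinder of uniformly bounded size around one of the unstable disks in the support of $\hat{\mu}_*$. Since the unstable disks $\Delta(\hat{x})$ obtained in Proposition~\ref{lem:suppdisk} all have the same uniform radius $\delta>0$, and since hypotheses (H1)--(H2) together with the domination (H4) force a uniform lower bound $r>0$ on the size of the local stable manifolds $W^s_{loc}(\hat{y})$ (cf. Remark~\ref{rmk:saturatedS}), the foliated cylinder $C_r(\hat{x})=\bigcup_{y\in\Delta(\hat{x})} W^s_r(y)$ contains a ball in $M$ whose Lebesgue measure is bounded below by a constant $b>0$ depending only on $\delta$, $r$, and the angle between $E^s$ and the cone field $C_a$ (all of which are uniform).

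First I would fix $\hat{x}\in\hat{H}_\infty$ with $\hat{\mu}_*(\hat{\Delta}(\hat{x}))>0$, which exists because the lift $\hat{\mu}_*$ of $\mu_*$ is supported inside $\bigcup_{\hat{x}\in\hat{H}_\infty}\hat{\Delta}(\hat{x})$ and has positive mass there. By Theorem~\ref{thm:abscontdisint} the conditional $\pi_*\hat{\mu}_{*,\hat{\Delta}(\hat{x})}$ is absolutely continuous with respect to $Leb_{\Delta(\hat{x})}$, with density bounded away from $0$ and $\infty$. Since $\mu_*$ is ergodic, the Birkhoff ergodic theorem applied to $\hat{\mu}_*$ together with $f\circ\pi=\pi\circ\hat{f}$ (as used in Lemma~\ref{lem:birkhoffconstantdelta}) gives that $Leb_{\Delta(\hat{x})}$-almost every $y\in\Delta(\hat{x})$ lies in the basin $B(\mu_*)$.

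Next I would propagate this full-measure property along stable manifolds. Recall that $B(\mu_*)$ is saturated by local stable manifolds, i.e., if $y\in B(\mu_*)$ and $z\in W^s_r(y)$ then $z\in B(\mu_*)$, because $\dist(f^n(y),f^n(z))\to 0$ and Birkhoff averages of continuous test functions agree. Now apply the absolute continuity of the stable foliation (\cite[Theorem V.8.1]{QXZ09}): the holonomy between any two transversal disks has bounded Jacobian, so the set
\[
B(\mu_*)\cap C_r(\hat{x}) \;\supseteq\; \bigcup_{y\in B(\mu_*)\cap \Delta(\hat{x})} W^s_r(y)
\]
has full Lebesgue measure inside $C_r(\hat{x})$. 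Taking $V$ to be the interior of $C_r(\hat{x})$ (which contains a ball of radius $\min\{\delta,r\}/2$ up to a constant depending on the angle with $E^s$), one obtains $Leb(V)\geq b$ for a uniform $b>0$, and $V\subset B(\mu_*)$ mod zero.

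The main obstacle is to justify that the constant $b$ can be chosen independently of the particular SRB measure $\mu_*$ and of the disk $\hat{\Delta}(\hat{x})$ used. This reduces to the uniformity of the geometric ingredients: the radius $\delta$ of unstable disks is the one furnished by Proposition~\ref{prop:contraidist} and Proposition~\ref{lem:suppdisk}, which depends only on $f$, $c$ and the cone field; the radius $r$ is a uniform lower bound on stable manifolds, available because $E^s$ is uniformly contracting under (H2) and dominated by the cone field by (H4); and the Jacobian of the stable holonomy is bounded above and below on $C_r(\hat{x})$ by constants depending only on the same data. With these uniform ingredients in hand, the inner volume of $C_r(\hat{x})$ is uniformly bounded below, yielding the desired $b>0$.
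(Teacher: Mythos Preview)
Your proposal is correct and follows essentially the same route as the paper's proof: pick an unstable disk $\Delta(\hat{x})$ of uniform radius $\delta$, use the absolute continuity of the conditional $\pi_*\hat{\mu}_{*,\hat{\Delta}}$ together with ergodicity to get that $Leb_{\Delta}$-a.e.\ point lies in $B(\mu_*)$, saturate by local stable leaves of uniform size $r$, and invoke absolute continuity of the stable holonomy to conclude that the cylinder $V=\bigcup_{y\in\Delta}W^s_r(y)$ lies in $B(\mu_*)$ mod zero with $Leb(V)\ge b$ uniform. One cosmetic point: the condition ``$\hat{\mu}_*(\hat{\Delta}(\hat{x}))>0$'' is typically vacuous (atoms of an uncountable partition generically have zero mass) and is not what you actually use---what you need, and what the paper states, is a disk $\hat{\Delta}$ for which the \emph{conditional} measure $\hat{\mu}_{*,\hat{\Delta}}$ gives full mass to $B(\hat{\mu}_*)$, which holds for $\tilde{\hat{\mu}}_*$-a.e.\ $\hat{\Delta}$ by disintegration.
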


\begin{proof}
Note that $\hat{\mu}_{*}(B(\hat{\mu}_{*}))=1$. 
In particular, there is $\hat{\Delta}\in\mathcal{\hat{K}}_{\infty}$ such that $\hat{\mu}_{*,\hat{\Delta}}$
almost every $\hat{x}$ in $\hat{\Delta}$ belongs to $B(\hat{\mu}_{*})$. 
Consequently, $\hat{m}_{\hat{\Delta}}$-almost every $\hat{x}\in\hat{\Delta}$
belongs to $B(\hat{\mu}_{*})$. 
Moreover, $\pi(B(\hat{\mu}_{*}))\subset B(\mu_{*})$.
Thus $Leb_{\Delta}$-almost every point $x\in\Delta$ belongs to $B(\mu_{*})$.
The union of stable manifolds of points in $B(\mu_{*})\cap\Delta$
is also contained in $B(\mu_{*})$, because the Birkhoff's
average converges to the same limit along stable manifolds. Using
the absolute continuity of the stable foliation and that
stable manifolds have size uniformly bounded away from zero, we conclude
that $V=\bigcup_{y\in\Delta}W_{r}^{s}(y)$ is an open set
contained in $B(\mu_{*})$, except for a zero measure subset. 
Thus $Leb(V)$ is uniformly bounded away from zero, which proves the lemma.
\end{proof}

Actually every SRB measure can be obtained by the procedure described along Section~\ref{sec:existence_uniqueness}. 
Indeed:

\begin{lemma}
If $\mu$ is an SRB measure for $f\mid_{\Lambda}$ then 
there exists a disk $D\subset U$ such that $Leb_{D}(D\cap B(\mu)\cap H)=Leb_{D}(D)$.\label{lem:exist_good_disks}
\end{lemma}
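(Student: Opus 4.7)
The plan is to take $D$ as a sub-disk of the local unstable manifold through a point typical for the natural-extension lift~$\hat\mu$ of~$\mu$. Let $\hat\mu$ be the unique $\hat f$-invariant probability on~$M^f$ with $\pi_{*}\hat\mu=\mu$. By Oseledets' theorem (Proposition~\ref{prop:oseledet_nat_ext}), at $\hat\mu$-a.e.\ $\hat x$ there is an invariant splitting $T_{\hat x}M^f=E^s_{x_0}\oplus E^u_{\hat x}$ whose stable summand agrees with the subbundle from~(H1)--(H2); the domination~(H4), the forward invariance of~$C_a$, and the positivity of the Lyapunov exponents on $E^u_{\hat x}$ force $E^u_{\hat x}\subset C_a(x_0)$ (the same argument that places $\mathcal E^u_{\hat y}$ inside $C_a$ in Proposition~\ref{prop:hatnuishyp}). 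Consequently, by Proposition~\ref{prop:unstable_manifold}, the local unstable manifold $W^u_{loc}(\hat x)$ is a $C^1$-embedded disk tangent to the cone field, from which $D$ will be extracted.

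Next, by the SRB property of~$\mu$, there is a measurable partition $\mathcal P$ of~$M^f$ subordinated to unstable manifolds with $\pi_{*}\hat\mu_{\mathcal P(\hat x)}\ll Leb_{W_{\hat x}}$ for $\hat\mu$-a.e.\ $\hat x$. I would pick a typical~$\hat x$ and shrink to a sub-disk $D\subset\pi(\mathcal P(\hat x))$ on which the Radon--Nikodym density is bounded below by a positive constant (this is standard for SRB measures obtained as accumulation points of Lebesgue push-forwards, in the spirit of Theorem~\ref{thm:abscontdisint}). Ergodicity of~$\mu$ gives $\mu(B(\mu))=1$, hence $\hat\mu_{\mathcal P(\hat x)}(\pi^{-1}(B(\mu)))=1$ at $\hat\mu$-a.e.\ $\hat x$; pushing forward by~$\pi$ and using the equivalence of $\pi_{*}\hat\mu_{\mathcal P(\hat x)}$ with $Leb_D$ on~$D$ yields $Leb_D(D\setminus B(\mu))=0$.

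To upgrade this to $Leb_D(D\setminus(B(\mu)\cap H))=0$, I would apply Birkhoff's ergodic theorem to the continuous observable $\phi(y):=\log\|(Df(y)|_{C_a(y)})^{-1}\|$: for $\mu$-a.e.\ $x\in B(\mu)$,
\[
\lim_{n\to\infty}\frac{1}{n}\sum_{j=0}^{n-1}\log\|(Df(f^{j}(x))|_{C_{a}(f^{j}(x))})^{-1}\|=\int\phi\,d\mu,
\]
so $x\in H$ as soon as $\int\phi\,d\mu\le-2c$. One ingredient here is that the sequence $\psi_n:=-\log\|(Df^n|_{C_a})^{-1}\|$ is super-additive along $f$-orbits (by the chain rule and forward invariance of~$C_a$); Kingman's theorem identifies $\lim_n\psi_n/n$ with the minimum Lyapunov exponent of~$\mu$ along directions in~$C_a$, which is strictly positive by hyperbolicity and in fact at least of the order of the constant~$c/2$ appearing in the cone-contraction estimate of Proposition~\ref{prop:hatnuishyp}.

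The main obstacle will be precisely this last step: reconciling the one-step $\mu$-average $\int\phi\,d\mu$ with the asymptotic cone-Lyapunov exponent, and matching the specific constant~$c$ from~(H3). The Kingman bound alone only yields a lower bound on $\int\phi\,d\mu$ in terms of $-\lim_n\psi_n/n$, which goes the \emph{wrong} way; one must instead extract the one-step bound directly from the cone-expansion structure behind Proposition~\ref{prop:hatnuishyp} (combining the uniform backward contraction on $\mathcal E^u_{\hat y}$ with the absolute continuity of unstable conditionals to transfer the asymptotic estimate to an integrated one-step inequality). In the subsequent application of the lemma in Section~\ref{sec:existence_uniqueness} one may in any case replace~$c$ by any smaller positive constant, which removes the nominal mismatch between the constants.
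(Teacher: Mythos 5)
Your first two steps---extracting $D=\pi(\hat P)$ from a partition subordinated to unstable manifolds and deducing $Leb_{D}(D\setminus B(\mu))=0$ from the SRB property together with ergodicity---follow the same route as the paper. The genuine gap is in your third step, forcing $D\subset H$ mod $Leb_{D}$. You apply Birkhoff to $\phi=\log\|(Df|_{C_a})^{-1}\|$ and want $\int\phi\,d\mu\le -2c$ (or, loosening $c$, simply $<0$). This is not merely a mismatch of constants: the hypotheses do not force $\int\phi\,d\mu$ to be negative. The backward-contraction estimate of Proposition~\ref{prop:hatnuishyp} controls the norm of the \emph{composite} operator $(Df^m|_{C_a})^{-1}$, and submultiplicativity only gives $\frac{1}{m}\log\|(Df^m(x)|_{C_a})^{-1}\|\leq\frac{1}{m}\sum_{j=0}^{m-1}\phi(f^j x)$, a \emph{lower} bound on the Birkhoff average---exactly the wrong direction, as you yourself noticed via Kingman. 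At a single step a generic vector $v\in C_a(f^j x)$ lies outside the shrinking image cones $Df^{j}(x)(C_a(x))$ and may well be contracted by $Df(f^j x)$, so $\phi>0$ $\mu$-a.e.\ is perfectly compatible with exponential decay of the composite norms. Since $H$ is defined through one-step products of $\|(Df|_{C_a})^{-1}\|$ rather than composite norms, the Birkhoff route cannot close.

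The paper avoids $\int\phi\,d\mu$ entirely and finishes with a Hopf-type stable-holonomy argument. Once $D$ is found with $Leb_{D}(D\setminus B(\mu))=0$, it forms the foliated cylinder $\mathcal{C}=\bigcup_{y\in D}W_{loc}^{s}(y)$ and uses two facts: (i) both $B(\mu)$ and $H$ are saturated by local stable leaves---for $H$ because $\phi$ is uniformly continuous and $d(f^j x,f^j y)\to 0$ exponentially along stable leaves, so the limsup of the Ces\`aro averages of $\phi$ is a stable-leaf invariant; and (ii) the stable foliation is absolutely continuous (\cite[Theorem V.8.1]{QXZ09}), so, combined with $Leb(H)>0$, one can pick a disk $\tilde D$ crossing $\mathcal{C}$ on which both $B(\mu)$ and $H$ have full $Leb_{\tilde D}$-measure. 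You should replace your Birkhoff/Kingman step with this holonomy transfer.
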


\begin{proof}
Let $\hat{\mathcal{P}}$ be a partition of $M^{f}$ subordinated to unstable
manifolds of $\mu$. Then each atom of $\hat{\mathcal{P}}$
projects over a disk in some unstable manifold. The lift $\hat{\mu}$
of $\mu$ is an ergodic measure and is such $\hat{\mu}(B(\hat{\mu}))=1$.
In particular, if $\{ \hat{\mu}_{g,\hat{P}}\} _{\hat{P}\in\hat{\mathcal{P}}}$
is a disintegration of $\hat\mu_g$ with respect to $\hat{\mathcal{P}}$
then $\hat{\mu}_{\hat{P}}(B(\hat{\mu}))=\hat{\mu}_{\hat{P}}(\hat{P})$
for $\tilde{\hat \mu}$-almost every $\hat{P}\in\hat{\mathcal{P}}$.
By the latter and the fact that $\mu$ is an SRB measure, there exists $\hat{P}\in\hat{\mathcal{P}}$ such that 
$\hat{\mu}_{\hat{P}}(B(\hat{\mu}))=\hat{\mu}_{\hat{P}}(\hat{P})$ and that $\mu_{D}:=\pi_{*}\hat{\mu}_{\hat{P}}\ll Leb_{D}$, where $D:=\pi(\hat{P})$. 
Notice that $\mu_{D}(B(\mu)\cap D)=\mu_{D}(D)$, because
$
\mu_{D}(B(\mu))  =\pi_{*}\hat{\mu}_{\hat{P}}(B(\mu)) 
 \geq\hat{\mu}_{\hat{P}}(B(\hat{\mu}))=\hat{\mu}_{\hat{P}}(\hat{P})=\mu_{D}(D).
$
This proves that $\mu_{D}$-almost every $x\in D$ belongs to $B(\mu)$ and, since $\mu_D$ and $Leb_{D}$ 
are equivalent, $Leb_{D}$-almost every $x\in D$ belongs to $B(\mu)$.
Consider $\mathcal{C}=\bigcup_{y\in D}W_{loc}^{s}(y)$.
Since $Leb(H)>0$ and the holonomy along the stable foliation is absolutely
continuous we conclude that there exists a disk
$\tilde{D}$ crossing $\mathcal{C}$ such that $Leb_{\tilde{D}}(H)=Leb_{\tilde{D}}(\tilde{D})$,
which proves the lemma.
\end{proof}

\begin{corollary}
\label{prop:SRB_is_accpoint_averageLeb} 
If $\mu$  is an SRB measure for $f\mid_{\Lambda}$ then there is a
disk $D$ tangent to the cone field $\mathcal C$ so that 
$
\omega_{n}=\frac{1}{n}\sum_{j=0}^{n-1}f_{*}^{j}Leb_{D}
$
is convergent to $\mu$ (in the weak{*} topology), where $Leb_{D}$ is the normalized Lebesgue measure in a disk 
$D$ for which there are constants $C>0$, $\xi\in(0,1)$ and $c>0$ such that:
\begin{enumerate}
\item given $k\ge 1$ and an injectivity domain $\mathcal{D}_{k} \subset D$ for
$f^{k}$, the map 
\[
f^{j}(\mathcal{D}_{k})\ni x\mapsto\log\left|\det(Df(x)|_{T_{x}f^{j}\mathcal{D}_{k}})\right|
\]
is $(L,\xi)$-H\"older continuous for every $0\leq j\leq k-1$.
\item $\limsup_{n\to\infty}\frac{1}{n}\sum_{j=0}^{n-1}\log\big\Vert (Df(f^{j}(x))|_{T_{f^{j}(x)}f^{j}D})^{-1}\big\Vert 
	\leq-2c<0$
for $Leb_{D}$-almost every $x\in D$.
\end{enumerate}
Moreover the constants $c$ and $\xi$ depend only of the constants in (H1)-(H4). 
\end{corollary}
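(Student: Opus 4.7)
The plan is to take the disk provided by Lemma \ref{lem:exist_good_disks} as the candidate, verify properties (1) and (2) using already established regularity and cone-invariance results, and then deduce weak$^*$ convergence of the Ces\`aro averages via a bounded-convergence argument.

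First, I would invoke Lemma \ref{lem:exist_good_disks} to obtain a disk $D \subset U$ tangent to the cone field $C_a$ such that $Leb_D(D \cap B(\mu) \cap H) = Leb_D(D)$; that is, $Leb_D$-a.e.\ point of $D$ simultaneously lies in the ergodic basin of $\mu$ and in the non-uniform expansion set $H$. Property (1) is then immediate from Proposition \ref{prop:iteratesoftgbundleholder}: any injectivity domain $\mathcal{D}_k \subset D$ for $f^k$ is a submanifold tangent to $C_a$, so are its iterates $f^j(\mathcal{D}_k)$ for $0 \le j \le k$, and the log-Jacobian on each such iterate is $(L_1, \alpha)$-H\"older continuous with $L_1$ depending only on $f$ and $\alpha$ depending only on (H1)-(H4). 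Setting $L := L_1$ and $\xi := \alpha$ yields (1). For property (2), the $Df$-invariance of $C_a$ from (H3), combined with the fact that $D$ is tangent to $C_a$, gives $T_{f^j(x)} f^j D \subset C_a(f^j(x))$ for every $j \ge 0$, whence
\[
\big\Vert (Df(f^j(x))|_{T_{f^j(x)} f^j D})^{-1} \big\Vert \le \big\Vert (Df(f^j(x))|_{C_a(f^j(x))})^{-1} \big\Vert.
\]
Since $Leb_D$-a.e.\ $x \in D$ belongs to $H$, the $-2c$ upper bound from (H3) transfers to (2) with the same constant $c$.

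Finally, for the weak$^*$ convergence $\omega_n \to \mu$: since $Leb_D$-a.e.\ $x \in D$ lies in $B(\mu)$, for any $\phi \in C(M)$ the Birkhoff averages $\frac{1}{n} \sum_{j=0}^{n-1} \phi(f^j(x))$ converge pointwise $Leb_D$-a.e.\ to $\int \phi \, d\mu$ and are uniformly bounded by $\|\phi\|_\infty$. Integrating over $D$ against the normalized $Leb_D$ and applying the Dominated Convergence Theorem yields
\[
\int \phi \, d\omega_n = \int_D \frac{1}{n} \sum_{j=0}^{n-1} \phi(f^j(x)) \, dLeb_D(x) \;\longrightarrow\; \int \phi \, d\mu,
\]
and the arbitrariness of $\phi \in C(M)$ gives weak$^*$ convergence. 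There is no real obstacle here: every ingredient is in place. The only bookkeeping item is the dependence of the constants, which follows directly from (H3) for $c$ and from Proposition \ref{prop:iteratesoftgbundleholder} for $\xi$, both of which are determined by (H1)-(H4) alone and hence independent of the particular SRB measure $\mu$ chosen.
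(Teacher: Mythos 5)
Your proof is correct and follows exactly the paper's route: invoke Lemma~\ref{lem:exist_good_disks} for the disk $D$, deduce (1) from Proposition~\ref{prop:iteratesoftgbundleholder}, deduce (2) from $Leb_D(H)=Leb_D(D)$ together with cone-invariance of $C_a$, and conclude $\omega_n\to\mu$ via dominated convergence since $Leb_D$-a.e.\ point lies in $B(\mu)$. You have in fact written out the cone-containment inequality behind step (2), which the paper leaves tacit, and you establish convergence directly rather than via accumulation points, but both are minor presentational differences rather than a different argument.
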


\begin{proof}
Let $D\subset M$ be a disk given by Lemma~\ref{lem:exist_good_disks}.
Proposition \ref{prop:iteratesoftgbundleholder} implies that $(1)$ holds.
Property $(2)$ is immediate from the fact that $Leb_{D}(H)=Leb_{D}(D)$
(cf. Lemma \ref{lem:exist_good_disks}). We are left to prove that $\mu$ is the
limit of the C\'esaro averages
$
\omega_{n}=\frac{1}{n}\sum_{j=0}^{n-1}f_{*}^{j}Leb_{D},
$
$n\in\mathbb{N}$. Assume that $\omega=\lim_{k\to\infty}\omega_{n_{k}}$ is an accumulation
point of $(\omega_{n})_n$. 
Given an arbitrary $\phi\in C(M)$, using the dominated convergence theorem and the fact that $Leb_{D}(B(\mu))=Leb_{D}(D)$,
we conclude that
\begin{align*}
\int\phi \,d\omega & 
	=\lim_{k\to\infty}\int\frac{1}{n_{k}}\sum_{j=0}^{n_{k}-1}\phi\circ f^{j} \, dLeb_{D} 
 	 =\int\lim_{k\to\infty}\frac{1}{n_{k}}\sum_{j=0}^{n_{k}-1}\phi\circ f^{j} \, dLeb_{D}=\int\phi \,d\mu.
\end{align*}
As $\phi$ was taken arbitrarily, it follows that $\omega=\mu$.
Hence $(\omega_n)_n$ is convergent to the SRB measure $\mu_g$.
Finally recall that the constants $c\text{ and }\xi$, given by Proposition~\ref{prop:iteratesoftgbundleholder},  
depend only on the cone field $C_a$ 
and (H3).
\end{proof}

Finally, to complete the proof of Theorem~\ref{thm:TEO1} we need only show uniqueness of the
SRB measure in the case the attractor is transitive.
By Lemma \ref{lem:finitenessofsrb}, as basins of attraction of
different SRB measures have empty intersection,  there is a finite number of SRB measures for $f$.
As each one of them contains
a set which has Lebesgue measure bounded away from zero, we must have
a finite number of them.
Moreover, if $f$ is transitive the SRB measure is unique. Indeed, if $\mu_{1}$
and $\mu_{2}$ are SRB measures given by Lemma \ref{lem:finitenessofsrb} 
there are open sets $V_{1}\subset B(\mu_{1})$
and $V_{2}\subset B(\mu_{2}) \, (\!\!\!\!\mod0)$.
The transitivity of $f$ implies that exist $N_{0}\in\mathbb{N}$ such that $f^{N_{0}}(V_{1})\cap V_{2}\neq\emptyset$.
Taking a smaller open set, if necessary, we may assume that $f^{N_{0}}|_{U_{1}}$
is a diffeomorphism onto its image, and so $f^{N_{0}}(V_{1})\cap V_{2} \subset B(\mu_{2})$
is an open set $\, (\!\!\!\!~\mod0)$.
Similarly, $f^{-N_{0}}(f^{N_{0}}(U_{1})\cap U_{2})\cap U_{1}\subset B(\mu_{1})$ $\, (\!\!\!\!\mod0)$.
Thus there exists $x\in M$ so that $x\in B(\mu_{1})$ and $f^{N_{0}}(x)\in B(\mu_{2})$, which implies that  
$\mu_{1}=\mu_{2}$. Thus, if $f$ is transitive the SRB measure is
unique.

\section{Statistical Stability\label{sec:stability}}

The main goal of this section is the proof of Theorem~\ref{thm:TEO2}.
More precisely, in this section we prove that if (H1), (H2), (H3) and (H4) hold robustly with uniform constants on the attractors
of an open set $\mathcal{V}$ of $C^{1+\alpha}$ endomorphisms then the corresponding  SRB measures vary continuously (in the weak$^*$ topology). By continuous variation we mean that if $(f_{n})_{n\in\mathbb{N}}$
is a sequence in $\mathcal{V}$ converging to $f\in\mathcal{V}$ and
if $(\mu_{n})_{n\in\mathbb{N}}$ is a sequence of probability
measures where $\mu_{n}$ is a SRB measure for $f_{n}$, for every $n\in\mathbb{N}$,
then any accumulation point of $(\mu_{n})_{n\in\mathbb{N}}$
in the weak{*} topology is a convex sum of the SRB measures for $f$.

First we establish the context.  
Denote by $End^{1+\alpha}(M)$ the set of non-singular endomorphisms of class $C^{1+\alpha}$ in $M$. 
Assume that $\Lambda$ is an attractor for $f$ and that $U\supset\Lambda$
an open neighborhood of $\Lambda$ such that $\cap_{n\geq0}f^{n}(\overline{U})=\Lambda$.
For every $g \in End^{1+\alpha}(M)$ that is $C^1$-close to $f$ define $\Lambda_{g}:=\bigcap_{n\geq0}g^{n}(\overline{U})$
is an attractor for $g$. 
Assume that there are an open neighborhood $\mathcal{V}\subset End^{1+\alpha}(M)$ of $f$,
constants $\lambda\in(0,1)$, $c>0$ and a family of cone fields $U\ni x\rightarrow C(x)$ 
of constant dimension $0<d_{u}\leq dim(M)$,
such that (H1)-(H4) holds with uniform constants on $U$: for every $g\in\mathcal{V}$ there exists 
a continuous splitting $T_{U}M=E^{s}(g)\oplus F(g)$ so that
\begin{enumerate}
\item [(H1)]$Dg(x)\cdot E_{x}^{s}(g)=E_{g(x)}^{s}(g)$
for every $x\in U$;
\item [(H2)]$\|Dg^{n}(x)|_{E^{s}_x(g)}\|\leq\lambda^{n}$ for every $x\in U$ and $n\in\mathbb{N}$
\item [(H3)] the cone field $U\ni x\mapsto C(x)$ contains the subspace $F(g)$, and is such that 
$Dg(x) (C(x))\subseteq C(g(x))$
for every $x\in U$, and 
\begin{equation*}
\limsup_{n\to\infty}\frac{1}{n}\sum_{j=0}^{n-1}\log\|(Dg(g^{j}(x))|_{C(g^{j}(x))}^{-1})\|\leq-2c<0
\end{equation*}
for Lebesgue almost every $x\in U$; and
\item [(H4)]$\left\Vert Dg(x)\cdot v\right\Vert \, \left\Vert Dg(x)^{-1}\cdot w\right\Vert \leq\lambda\, \left\Vert v\right\Vert \cdot\left\Vert w\right\Vert $ for every $v\in E_{x}^{s}(g)$, all $w\in Dg(x) ( C(x))$
and $x\in U$.\label{enu:domination}
\end{enumerate}

We will say that $\mathcal{V}$ is an open set of partially hyperbolic $C^{1+\alpha}$-endomorphisms with
uniform constants. For each $g\in\mathcal{V}$, we denote by $H_{g}$
the subset of $U$ that consists of points satisfying the non-uniform hyperbolicity condition in 
(H3). As a direct consequence of Theorem \ref{thm:TEO1} and the construction
of SRB measures along Section~\ref{sec:existence_uniqueness} we have the following:

\begin{corollary}
\label{prop:uniform_construction} Every $g\in\mathcal{V}$ has
finitely many SRB measures for $g$ supported on $\Lambda_{g}$ and, if $g\mid_{\Lambda_{g}}$ is transitive then the $SRB$ measure
is unique. Moreover, every SRB measure $\mu_g$ for $g$ is 
the limit of the sequence of measures $(\frac{1}{n}\sum_{j=0}^{n-1}g_{*}^{j}Leb_{D})_n$ for some $C^1$-disk $D$
tangent to the cone field.
\end{corollary}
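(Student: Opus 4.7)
The plan is to observe that this corollary is essentially a direct consequence of Theorem~\ref{thm:TEO1} combined with Corollary~\ref{prop:SRB_is_accpoint_averageLeb}, once one verifies that the hypotheses of those results can be applied to every $g\in\mathcal V$. Since $\mathcal V$ is an open set of partially hyperbolic $C^{1+\alpha}$ local diffeomorphisms with uniform constants, each individual $g\in\mathcal V$ satisfies (H1)--(H4) (with the common constants $\lambda,c$ and the common cone field width). In particular the set $H_g\subset U$ on which the non-uniform cone-expansion condition holds has full Lebesgue measure in $U$ by assumption (H3) for $\mathcal V$, so in particular $\Leb(U\setminus H_g)=0$.

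First, I apply Theorem~\ref{thm:TEO1} to $g$. Because $\Leb(U\setminus H_g)=0$, the theorem yields finitely many hyperbolic SRB measures for $g$ in $U$, whose basins of attraction cover Lebesgue almost every point of $U$, and hence a fortiori of $\Lambda_g=\bigcap_{n\geq 0}g^n(\overline U)$. The uniqueness clause in Theorem~\ref{thm:TEO1} gives immediately that if $g|_{\Lambda_g}$ is transitive, then the SRB measure is unique. This settles the finiteness/uniqueness part of the statement.

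For the ``moreover'' part, I apply Corollary~\ref{prop:SRB_is_accpoint_averageLeb} to each SRB measure $\mu_g$ of $g$. That corollary produces a $C^1$ disk $D$ tangent to the $g$-invariant cone field $C(\cdot)$ with $\Leb_D(H_g\cap D\cap B(\mu_g))=\Leb_D(D)$, bounded distortion along forward iterates, and non-uniform cone-expansion $Leb_D$-almost everywhere; it then shows that
\[
\omega_n:=\frac{1}{n}\sum_{j=0}^{n-1}g_*^j \Leb_D
\]
converges to $\mu_g$ in the weak$^*$ topology. The only point to note is that the relevant constants in Corollary~\ref{prop:SRB_is_accpoint_averageLeb} depend only on the constants appearing in (H1)--(H4), which are uniform over $\mathcal V$; this is why the disks $D$ obtained for different $g\in\mathcal V$ come from a uniformly controlled family, a fact that will be crucial later (in the proof of Theorem~\ref{thm:TEO2}) but already suffices here to produce the desired disk for each fixed $g$.

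No genuine obstacle arises: the work of Section~\ref{sec:existence_uniqueness} was written so that its constants depend only on (H1)--(H4), and the corollary is just the pointwise-in-$g$ statement. The only care needed is bookkeeping: verifying that the hypothesis ``$\Leb(U\setminus H)=0$'' in Theorem~\ref{thm:TEO1}, required to conclude finiteness of SRB measures in all of $U$, is built into the definition of $\mathcal V$ (since (H3) is stated for Lebesgue almost every $x\in U$, not merely for a positive measure set $H$), and that Lemma~\ref{lem:exist_good_disks} together with Corollary~\ref{prop:SRB_is_accpoint_averageLeb} supplies the disk $D$ realising $\mu_g$ as a C\`esaro limit.
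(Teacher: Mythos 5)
Your proposal is correct and matches the paper's (implicit) argument: the corollary is indeed obtained by applying Theorem~\ref{thm:TEO1} to each $g\in\mathcal V$ using that (H3) for $\mathcal V$ is stated for Lebesgue almost every $x\in U$, and then invoking Corollary~\ref{prop:SRB_is_accpoint_averageLeb} (supported by Lemma~\ref{lem:exist_good_disks}) to realize every SRB measure as a C\`esaro limit of pushforwards of Lebesgue on a suitable disk. The paper itself presents this only as a one-line "direct consequence"; your proposal just makes the bookkeeping explicit.
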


Given $g\in\mathcal{V}$, let $\mu_{g}$ be an SRB measure for $g$. 
In particular, for any partition $\hat{\mathcal{P}}$ of $M^{g}$
subordinated to unstable manifolds we have that 
$
\pi_{*}\hat{\mu}_{g,\hat{\mathcal{P}}(\hat{x})}\ll Leb_{\pi(\hat{\mathcal{P}}(\hat{x}))},
$
for $\hat{\mu}_{g}$ almost every $\hat{x}\in M^{g}$, where $\hat{\mu}_{g}$ is the lift of $\mu_{g}$ in $M^{g}$, 
$\hat{\mathcal{P}}(\hat{x})$ is the atom of $\hat{\mathcal{P}}$ that contains $\hat{x}$, $\hat{\mu}_{g,\hat{\mathcal{P}}(\hat{x})}$ is the element of the disintegration of $\hat{\mu}_{g}$ in $\hat{\mathcal{P}}(\hat{x})$
and the map $\pi=\pi^g : M^g \to M$ is the natural projection. 
By the Radon-Nikodym theorem there is a measurable function $\rho_{\hat{x}}:\pi(\hat{\mathcal{P}}(\hat{x}))\rightarrow\mathbb{R}$
that is positive $\pi_{*}\hat{\mu}_{g,\hat{\mathcal{P}}(\hat{x})}$-almost everywhere and so that 
\begin{equation}
\pi_{*}\hat{\mu}_{g,\hat{\mathcal{P}}(\hat{x})}(B)=\int_{B}\rho_{\hat{x}} \, dLeb_{\pi(\hat{\mathcal{P}}(\hat{x}))}\label{eq:densities}
\end{equation}
for every measurable subset $B\subset\pi(\hat{\mathcal{P}}(\hat{x}))$.
Recall $\pi\mid_{\hat{\mathcal{P}}(\hat{x})}:\hat{\mathcal{P}}(\hat{x})\rightarrow\pi(\hat{\mathcal{P}}(\hat{x}))$
is a bijection and that $\pi(\hat{\mathcal{P}}(\hat{x}))$
is contained in $W_{loc}^{u}(\hat{x})$. 
Define $\hat{m}_{\hat{\mathcal{P}}(\hat{x})}:=(\pi\mid_{\hat{\mathcal{P}}(\hat{x})})^{*}Leb_{\pi(\hat{\mathcal{P}}(\hat{x}))}$, where
$
(\pi\mid_{\hat{\mathcal{P}}(\hat{x})})^{*}Leb_{\pi(\hat{\mathcal{P}}(\hat{x}))}(\hat{A})=Leb_{\pi(\hat{\mathcal{P}}(\hat{x}))}(\pi\mid_{\hat{\mathcal{P}}(\hat{x})}(\hat{A}))
$
for every measurable subset $\hat{A}\subset\hat{\mathcal{P}}(\hat{x})$.
Observe that $\pi_{*}\hat{m}_{\hat{\mathcal{P}}(\hat{x})}=Leb_{\pi(\hat{\mathcal{P}}(\hat{x}))}$
and that $\pi_{*}\hat{\mu}_{g, \hat{\mathcal{P}}(\hat{x})}\ll Leb_{\pi(\hat{\mathcal{P}}(\hat{x}))}$
if, and only if $\hat{\mu}_{g,\hat{\mathcal{P}}(\hat{x})}\ll\hat{m}_{\hat{\mathcal{P}}(\hat{x})}$.
Again, by the Radon-Nikodym theorem there is a measurable function
$\hat{\rho}:\hat{\mathcal{P}}(\hat{x})\rightarrow\mathbb{R}$
that is positive in $\hat{\mu}_{g,\hat{\mathcal{P}}(\hat{x})}$
almost every point such that
$
\hat{\mu}_{g,\hat{\mathcal{P}}(\hat{x})}(\hat{B})=\int_{\hat{B}}\hat{\rho} \,d\hat{m}_{\hat{\mathcal{P}}(\hat{x})},
$
for every measurable subset $\hat{B}\subset\hat{\mathcal{P}}(\hat{x})$.

We are now in a position to prove Theorem~\ref{thm:TEO2} on the statistical stability of these SRB measures.
The proof, strongly inspired by \cite{Vas07}, relies on a uniform control of the structures
used in the construction of SRB measures for these partially hyperbolic endomorphisms.
A brief sketch of the proof is as follows.
First we prove that if $\mu_{g}$ is an SRB measure for $g\in\mathcal{V}$
then there exists a family of disjoint unstable disks whose size is
bounded away from zero. This bound is uniform for every $g\in\mathcal{V}$.
So, given a sequence $(g_{n})_{n\in\mathbb{N}}$ in $\mathcal{V}$
converging to $g\in\mathcal{V}$ and a sequence $(\mu_{n})_{n\in\mathbb{N}}$,
where $\mu_{n}$ is a SRB measure for $g_{n}$, 
and an accumulation point $\mu$ of $(\mu_{n})_{n\in\mathbb{N}}$ 
the family of unstable disks $\mathcal{K}_{n}$ associated to $g_{n}$ and $\mu_{n}$
accumulate in a family $\mathcal{K}$ of unstable disks for $g$ with respect to $\mu$.
Using that each $\mu_{n}$ is absolutely continuous with respect
to Lebesgue measure on the disks of $\mathcal{K}_{n}$ with uniform constants)
we will then show that $\mu$ is absolutely continuity of  with respect to Lebesgue 
on the disks of $\mathcal{K}$. 
A further argument will show that almost all ergodic components of $\mu$ are
absolutely continuous on the disks of $\mathcal{K}$ and, consequently, are SRB measures, 
proving that $\mu$ is an convex combination of the latter.

Given a disk $\Delta\subset M$  denote $C_{r}^{g}(\Delta):=\bigcup_{y\in\Delta}W_{r}^{s}(g,y)$,
where $W_{r}^{s}(g,y)$ is the disk of radius $r>0$ centered
at $y$ in the local stable manifold $W_{loc}^{s}(g,y)$. 
We say that a disk $\tilde{\Delta}$ of the same
dimension of $\Delta$ \emph{crosses} $C_{r}^{g}(\Delta)$ if
the stable holonomy $H^{s}:\tilde{\Delta}\rightarrow\Delta$ is a
diffeomorphism. Denote by $\mathcal{K}_{r}^{g}(\Delta)$
the family of local unstable manifolds that cross $C_{r}^{g}(\Delta)$,
and by $K_{r}^{g}(\Delta)$ the union of its elements.

\begin{lemma}\label{lem:eta}
There are $\delta>0$, $r>0$ and $\eta>0$ so that: for every
$g\in\mathcal{V}$ and for every SRB measure $\mu_{g}$ for $g\mid_{\Lambda_g}$, there
exists a disk $\Delta:=\Delta(g,\mu_{g})$ of radius $\delta>0$
such that 
$\mu_{g}(K_{r}^{g}(\Delta))>\eta$. 
\end{lemma}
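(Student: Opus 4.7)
The plan is to extract from the construction of Section~\ref{sec:existence_uniqueness} a uniform lower bound on the $\hat\nu_g$-mass of a suitable foliated cylinder, and then transfer this bound to the SRB measure $\mu_g$. The first observation is that, since (H1)--(H4) hold with uniform constants across $\mathcal V$, every quantitative object produced in Section~\ref{sec:existence_uniqueness} can be chosen independently of $g$: the radius $\delta_0>0$ of hyperbolic pre-disks (Proposition~\ref{prop:contraidist}), the frequency $\theta>0$ of $c$-cone-hyperbolic times (Lemma~\ref{lem:densposth}), the bounded distortion constant (Proposition~\ref{prop:distorcionhyptime}), the density bounds $C_3$ of Theorem~\ref{thm:abscontdisint}, the covering radius $\rho>0$ of Proposition~\ref{prop:uniforminversebranches}, and a uniform lower bound $r>0$ on the size of local stable manifolds $W_r^s(g,\cdot)$, the last one following from (H2) and the standard stable manifold theorem.

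Next I would fix $g\in\mathcal V$ and an SRB measure $\mu_g$, and invoke Lemma~\ref{lem:exist_good_disks} together with Corollary~\ref{prop:SRB_is_accpoint_averageLeb} to select a disk $D_g\subset U$ with $Leb_{D_g}(D_g\cap H_g)=Leb_{D_g}(D_g)$ and $\mu_g=\lim_n\frac{1}{n}\sum_{j=0}^{n-1}g_*^jLeb_{D_g}$. After restricting $D_g$ if necessary, I may take it of a uniform radius, so that $Leb_{D_g}(D_g)$ is bounded away from zero independently of $g$. Running the construction of Subsection~\ref{sec:hatnuconstruction} from $D_g$ then produces a sequence of measures $(\hat\nu_n^g)_n$ on $M^g$ with $\pi^g_*\hat\nu_n^g\leq\frac{1}{n}\sum_{j=0}^{n-1}g_*^jLeb_{D_g}$, so that any weak-$*$ accumulation point $\hat\nu_g$ satisfies $\pi^g_*\hat\nu_g\leq\mu_g$. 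Applying Lemma~\ref{prop:nupositiva} in this uniform setting gives $\hat\nu_g(M^g)\geq\alpha$ for some uniform $\alpha>0$, and by Proposition~\ref{lem:suppdisk} the support of $\hat\nu_g$ sits inside a union of unstable disks of radius $\delta_0$.

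The conclusion would then follow from a pigeonhole argument in the style of Lemma~\ref{lem:hatnuposincilinder}. I would fix $\delta>0$ small enough (depending only on the uniform constants) that any disk of radius $\delta_0$ tangent to the cone field which meets a ball of radius $\delta$ automatically crosses every foliated cylinder $C_r^g(\Delta')$ built over a disk $\Delta'$ of radius $\delta$ contained in that ball; this is the role played in Section~\ref{sec:srbproperty} by the refinement leading to \eqref{eq:cutted_box_stable_foliated}. Covering $M$ by at most $N$ such balls, with $N$ depending only on the uniform constants, a pigeonhole produces a disk $\Delta$ of radius $\delta$ such that $\hat\nu_g(\pi^{-1}(C_r^g(\Delta)))\geq\alpha/N$. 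By the choice of $\delta$, every unstable disk in $\supp(\hat\nu_g)$ meeting $C_r^g(\Delta)$ crosses it, so its projection lies in $K_r^g(\Delta)$; hence
\[
\mu_g(K_r^g(\Delta))\geq\pi^g_*\hat\nu_g(K_r^g(\Delta))\geq\hat\nu_g(\pi^{-1}(C_r^g(\Delta)))\geq\alpha/N=:\eta,
\]
as desired.

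The main obstacle is bookkeeping: one has to verify that each of the many constants produced throughout the rather technical construction of Section~\ref{sec:existence_uniqueness}---and, crucially, that the choice of the starting disk in Lemma~\ref{lem:exist_good_disks}---can be made uniformly in $g\in\mathcal V$. Once this uniformity is established, the finite-cover pigeonhole at the end is routine.
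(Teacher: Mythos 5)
Your argument is correct and follows essentially the same route as the paper's: uniformity of the constants in (H1)--(H4) gives cylinders $C_r^g(\Delta)$ of uniform size, the support of the SRB measure decomposes into unstable disks that cross such cylinders, and a finite cover plus pigeonhole singles out one cylinder of definite mass. The only difference is cosmetic: the paper pigeonholes directly on the probability measure $\mu_g$ (whose support is covered by a uniformly bounded number $b_0$ of cylinders, giving $\eta=1/b_0$), whereas you detour through the auxiliary subprobability $\hat\nu_g$ from Subsection~\ref{sec:hatnuconstruction} and pigeonhole its mass $\alpha$ over a uniform cover of size $N$, giving $\eta=\alpha/N$; both are valid and rest on the same geometric input.
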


\begin{proof}
Fix $g\in\mathcal{V}$ and an SRB measure $\mu_{g}$ for $g\mid_{\Lambda_g}$. By 
Corollary~\ref{prop:SRB_is_accpoint_averageLeb} and Proposition \ref{lem:suppdisk},
$\mu_{g}$ is absolutely continuous with respect to Lebesgue
on unstable disks, and $supp(\mu_{g})$ is contained in the union of cylinders 
$C_{r_{g}}^{g}(\Delta)$, where $\Delta$ is an unstable disk of radius $\delta(g)>0$.
As the constant $\delta>0$ is the size of hyperbolic disks of the non uniformly hyperbolic endomorphism,
this size depends only on the size of invertibility domain of the exponential map, the size of the domain of 
inverse branches and the uniform continuity of the map and these quantities vary continuously
with $g\in\mathcal{V}$ (recall Proposition \ref{prop:contraidist}), $\delta$ can be taken uniformly 
in the neighborhood $\mathcal{V}$.
Moreover, since the stable manifolds vary continuously with the dynamics, we
can take $r_{g}:=r>0$ for every $g\in\mathcal{V}$. In particular there exists $b_0\in \mathbb N$ such that 
$supp(\mu_g)$ is contained in at most $b_0$ cylinders  of the form $C_{r}^{g}(\Delta)$, with $\Delta$ unstable
disks of radius $\delta>0$. 
This assures that there exists a disk $\Delta=\Delta(g,\mu_{g})$
of radius $\delta$ such that $\mu_{g}(C_{r}^{g}(\Delta))\geq \eta$, where $\eta=\frac1{b_0}>0$.
This proves the lemma.
\end{proof}

Given $g\in\mathcal{V}$ and $\mu_{g}$, let $C_{g}:=C_{r}^{g}(\Delta)$ and 
$\mathcal{K}_{g}:=\mathcal{K}_{r}^{g}(\Delta)$ be the family of local unstable manifolds that cross 
$C_{r}^{g}(\Delta)$, where $\Delta$ is given by the previous lemma. 
Let $\hat{\mathcal{K}}_{g}$ be the family of the lifts of the elements of $\mathcal{K}_{g}$. We have the following result:

\begin{proposition} \label{prop:uniform_abs_cont}
There exists $\tau>0$ such that if $g\in\mathcal{V}$,  $\mu_{g}$ is an SRB measure for $g$, 
$\hat{\mu}_{g}$ is the lift of $\mu_{g}$ and $\{ \hat{\mu}_{g,\hat{\Delta}}\} _{\hat{\Delta}\in\hat{\mathcal{K}}_{g}}$ 
is a disintegration of $\hat{\mu}_{g}$ with respect to $\hat{\mathcal{K}}_{g}$ then 
\begin{equation}
\tau^{-1}Leb_{\Delta}(B)\leq\pi_{*}\hat{\mu}_{g,\hat{\Delta}}(B)\leq\tau Leb_{\Delta}(B)\label{eq:absolute_continuity}
\end{equation}
for every measurable subset $B\subset\Delta$ and 
almost every $\Delta\in\mathcal{K}_{g}$. 
\end{proposition}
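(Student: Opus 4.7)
The plan is to rerun the proof of Theorem~\ref{thm:abscontdisint} while verifying that every constant involved is controlled uniformly on $\mathcal{V}$, and then transfer the resulting uniform absolute continuity from the auxiliary subprobability $\hat\nu_g$ to the SRB measure $\hat\mu_g$. The uniform cylinder mass provided by Lemma~\ref{lem:eta} together with the uniform size $\delta$ of unstable disks will convert the conclusion of Theorem~\ref{thm:abscontdisint} into a statement independent of $g\in\mathcal{V}$.

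First I would fix $g\in\mathcal{V}$ and an SRB measure $\mu_g$, and by Corollary~\ref{prop:uniform_construction} choose a $C^1$-disk $D_g$ tangent to the cone field $C$ so that $\mu_g=\lim_n\frac{1}{n}\sum_{j=0}^{n-1}g_*^j\,Leb_{D_g}$. Following Section~\ref{sec:existence_uniqueness} I would write $\mu_g=\nu_g+\eta_g$ with $\nu_g$ an accumulation of the hyperbolic subprobabilities~\eqref{eq:hyp_subprobability_n} and $\hat\nu_g$ the lift defined by~\eqref{eq:lift_of_nun}. Because (H1)--(H4) hold with uniform constants on $\mathcal{V}$, every ingredient of the construction is uniform on $\mathcal{V}$: the density $\theta$ of cone-hyperbolic times (Lemma~\ref{lem:densposth}), the size $\delta$ and backward contraction rate $e^{-c/2}$ on hyperbolic pre-disks (Proposition~\ref{prop:contraidist}), the constant $\tau$ of Proposition~\ref{prop:diskformeasure}, the bounded distortion constant $C_1$ of Proposition~\ref{prop:distorcionhyptime}, and the H\"older control of the tangent bundle of Proposition~\ref{prop:iteratesoftgbundleholder}. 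Combined with the lower bound $\nu_g(M)\ge\alpha$ (Lemma~\ref{prop:nupositiva}), the cylinder mass $\mu_g(C_g)\ge\eta$ and the uniform radius $\delta$ of $\Delta=\Delta(g,\mu_g)$ from Lemma~\ref{lem:eta}, this will guarantee that each constant in the proof of Theorem~\ref{thm:abscontdisint} depends only on $\mathcal{V}$.

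Next I would inspect the key estimate~\eqref{eq:lCdaggerS}: its constant takes the form $C_*/Leb(\Delta)$ with $C_*=(C_1 C_2)^2$, where $C_2$ is the Jacobian bound of the stable holonomy $H^s$. The constant $C_2$ is uniform on $\mathcal{V}$ since H\"older regularity of $E^s(g)$ and absolute continuity of its stable foliation hold with uniform constants under (H1)--(H2) with uniform constants (cf.\ \cite[Subsections V.7--V.8]{QXZ09}), $C_1$ is uniform by the previous step, and $Leb(\Delta)$ is bounded below because $\Delta$ has uniform radius $\delta$. This will give $C_0=C_0(\mathcal{V})>0$ such that
\[
C_0^{-1}Leb_{\Delta}(B)\le\pi_*\hat\nu_{g,\hat\Delta}(B)\le C_0\,Leb_{\Delta}(B)
\]
for a.e.\ $\hat\Delta\in\hat{\mathcal{K}}_g$ and every measurable $B\subset\Delta$.

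Finally, to transfer this estimate to $\hat\mu_g$, I would use the decomposition $\hat\mu_{g,\hat\Delta}=\hat\nu_{g,\hat\Delta}+\hat\eta_{g,\hat\Delta}$ on $\hat{\mathcal{K}}_g$ (as in the claim within Proposition~\ref{prop:existsrb}): the lower bound in~\eqref{eq:absolute_continuity} will then follow from $\pi_*\hat\mu_{g,\hat\Delta}\ge\pi_*\hat\nu_{g,\hat\Delta}$, up to the uniform normalization provided by $\nu_g(M)\ge\alpha$ and $\mu_g(C_g)\ge\eta$. The hard part will be the upper bound: I would argue that the uniform bounded distortion of $g^{-n}$ along inverse branches at cone-hyperbolic times (Proposition~\ref{prop:distorcionhyptime}) forces the density $\rho_g:=d\pi_*\hat\mu_{g,\hat\Delta}/dLeb_\Delta$ to be log-H\"older along $\Delta$ with exponent and constant depending only on $\mathcal{V}$; since $\pi_*\hat\mu_{g,\hat\Delta}(\Delta)=1$ and $\Delta$ has volume bounded below, a pointwise upper bound $\rho_g\le\tau$ with $\tau=\tau(\mathcal{V})$ follows. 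Establishing the uniform log-H\"older regularity of $\rho_g$ is the main obstacle and is the endomorphism analogue of the uniform unstable-density estimate used in~\cite{Vas07}; it will require a careful accounting of the distortion constants along concatenations of inverse branches parametrized by elements of $M^g$, exploiting the uniform validity of (H1)--(H4) on $\mathcal{V}$.
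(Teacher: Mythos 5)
Your proposal is essentially the same route the paper takes: rerun the construction of Theorem~\ref{thm:abscontdisint} for the disk $D_g$ provided by Corollary~\ref{prop:SRB_is_accpoint_averageLeb}, check that every constant ($\delta$, $\theta$, $C_1$, $\tau$, the holonomy Jacobian bound, $Leb(\Delta)$) depends only on the uniform data in (H1)--(H4), and then pass from $\hat\nu_g$ to the SRB lift $\hat\mu_g$. The paper's own proof is extremely terse on the last of these steps -- it simply asserts that ``combining'' Corollary~\ref{prop:SRB_is_accpoint_averageLeb} and Theorem~\ref{thm:abscontdisint} produces a $\tau(g)$ and then argues uniformity -- so your explicit attention to the $\hat\nu_g\to\hat\mu_g$ transfer is welcome and in fact necessary: Theorem~\ref{thm:abscontdisint} bounds the conditionals of the (non-invariant) subprobability $\hat\nu$, while the proof of Proposition~\ref{prop:existsrb} only upgrades this to $\hat\mu_{*,\hat\Delta}\ll\hat m_{\hat\Delta}$ without any density bound.

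However, there is a gap in your handling of the lower bound. The decomposition $\hat\mu_g=\hat\nu_g+\hat\eta_g$ as measures on $M^g$ does \emph{not} yield a pointwise inequality between the normalized conditional probabilities $\hat\mu_{g,\hat\Delta}$ and $\hat\nu_{g,\hat\Delta}$: one only gets $\hat\mu_{g,\hat\Delta}\ge \bigl(d\tilde{\hat\nu}_g/d\tilde{\hat\mu}_g\bigr)(\hat\Delta)\,\hat\nu_{g,\hat\Delta}$, and the Radon--Nikodym derivative of the quotient measures need not be bounded away from zero on a $\tilde{\hat\mu}_g$-positive set of disks. The global bounds $\nu_g(M)\ge\alpha$ and $\mu_g(C_g)\ge\eta$ you invoke do not control this fibre-by-fibre weight, so the ``easy'' lower bound does not follow as claimed. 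In fact, the log-H\"older density estimate you propose for the upper bound is the genuine content here and, once established, gives both inequalities at once: if $\log\rho_g$ has H\"older variation uniformly bounded by a constant of $\mathcal{V}$ on a disk of volume bounded below, and $\int_\Delta\rho_g\,dLeb_\Delta=1$, then $\rho_g$ is pinched between $\tau^{-1}$ and $\tau$. This is the standard argument that the conditional density of an ergodic SRB measure along unstable leaves is (up to normalization) the limiting product of unstable Jacobians, whose regularity is controlled by the bounded-distortion Proposition~\ref{prop:distorcionhyptime} and the H\"older Jacobian of Proposition~\ref{prop:iteratesoftgbundleholder}, together with the regularity of the stable holonomy -- precisely the two ingredients the paper names when asserting $\tau(g)$ is uniform. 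So you should drop the separate lower-bound argument and carry out the density-regularity argument carefully; it will carry the whole proposition.
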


\begin{proof}
Combining Corollary~\ref{prop:SRB_is_accpoint_averageLeb} and Theorem~\ref{thm:abscontdisint}, 
for every $g\in\mathcal{V}$ there exists $\tau(g)>1$ such that (\ref{eq:absolute_continuity})
holds. Observe that the constant $\tau(g)$ depends on 
the constant of bounded distortion along the images of the disk $D_{g}$ (which can be used to recover 
$\mu_{g}$ as in Corollary~\ref{prop:SRB_is_accpoint_averageLeb}) by iterates of $g$, and 
the regularity of the stable holonomy. The first constant 
comes from the regularity of the Jacobian given at Proposition \ref{prop:iteratesoftgbundleholder},
which can be taken uniformly in $\mathcal{V}$. Moreover, the stable holonomy varies continuously in 
$\mathcal{V}$ because the stable manifolds vary continuously with $g\in\mathcal{V}$.
Hence there exists a uniform constant $\tau>1$  so that (\ref{eq:absolute_continuity})
holds for every $g\in\mathcal{V}$. 
\end{proof}

\medskip

Now, fix a sequence $\left\{ g_{n}\right\} _{n\in\mathbb{N}}$ in $\mathcal{V}$ and assume that it converges to 
$g\in\mathcal{V}$ in the $C^{1+\alpha}$-topology.
Denote $\mathcal{K}_{n}:=\mathcal{K}_{g_{n}}$, $\mathcal{\hat{K}}_{n}:=\mathcal{\hat{K}}_{g_{n}}$
and $C_{n}:=C_{g_{n}}=C_{r}^{g_n}(\Delta_n)$ for each $n\in\mathbb{N}$. Assume that for every $n\in \mathbb N$ the probability measure 
$\mu_{n}$ is an SRB measure for $g_{n}$ and that $\mu$ is an accumulation point of $(\mu_{n})_{n\in\mathbb{N}}$. 
It is clear that $\mu$ is a $g$-invariant probability.

Recall that each $C_{n}$ is a cylinder formed by the union of stable manifolds passing
through points of a disk $\Delta_{n}$ of radius $\delta$ tangent to the cone field $\cC$.
By compactness of $M$ and the Arzela-Ascoli theorem we may assume, up to consider 
some subsequence, that $\{\Delta_{n}\}_n$ converges to a disk $\Delta_{\infty}$ of uniform radius $\delta$.
Define $C_{\infty}:=\cup_{y\in\Delta_\infty} W_{r}^{s}(y,g)$,
where $r>0$ is given by the definition of cylinders $C_{n}$, and 
\[
\mathcal{K}_{\infty}:=\big\{ D\subset M:\ D=\lim_{k\to\infty}D_{n_{k}}\mbox{ where }D_{n_{k}}\in\mathcal{K}_{n_{k}},\ D\subset C\big\}.
\]
In rough terms, $\mathcal{K}_{\infty}$ is the family of disks tangent to the cone field $\cC$, 
obtained as the limit of unstable disks for some subsequence $\{g_{n_{k}}\}_{k}$.
As every disk $D_{n}\in\mathcal{K}_{n}$ is tangent to the cone field $\mathcal{C}$,
then the same property holds for the elements of $\mathcal{K}_{\infty}$.
Moreover:

\begin{lemma}
Every $D\in\mathcal{K}_{\infty}$ is an unstable disk (with respect to
some pre-orbit) for $g$. Moreover, if $\eta>0$ is given by Lemma~\ref{lem:eta} then 
$\mu({K}_{\infty})\geq\eta>0$.
\end{lemma}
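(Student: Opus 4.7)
The plan is to prove the two assertions in turn, leveraging in both the uniformity of the constants from (H1)--(H4) across $\mathcal{V}$ together with the Arzel\`a--Ascoli type compactness of the families of disks of uniform size tangent to the common cone field $\mathcal{C}$.

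For the first assertion, fix $D = \lim_k D_{n_k}$ with $D_{n_k} \in \mathcal{K}_{n_k}$. By Proposition \ref{lem:suppdisk} applied to $g_{n_k}$, each $D_{n_k}$ is an unstable disk for $g_{n_k}$ along some pre-orbit $\hat{x}^{n_k} = (x^{n_k}_{-m})_m$, with the backward contraction
\[
\dist_M\big(y^{n_k}_{-m},\, x^{n_k}_{-m}\big) \leq e^{-cm/2}\,\delta \quad \text{for every } y \in D_{n_k},\; m \geq 0,
\]
and constants $c, \delta>0$ uniform over $\mathcal{V}$. Using compactness of $M$ and a diagonal extraction in $m$, I would produce a subsequence along which $x^{n_k}_{-m} \to x_{-m}$ for every fixed $m$; since $g_{n_k}\to g$ uniformly, passing to the limit in $g_{n_k}(x^{n_k}_{-m})=x^{n_k}_{-m+1}$ yields a pre-orbit $\hat{x}\in M^g$. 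A uniform version of Lemma \ref{lem:continuityinversebranche} (valid for every $g'\in\mathcal{V}$ with a uniform $\delta_1$, by compactness of $\overline{\mathcal{V}}$ in the $C^1$ topology) then guarantees that the inverse branches $f^{-m}_{g,x_{-m}}$ are defined on a neighborhood of $D$, and taking limits shows the backward contraction estimate persists for $(g, \hat{x})$ and every $y\in D$. Tangency to $\mathcal{C}$ is a closed condition and the $C^1$ regularity of $D$ follows from the H\"older control of the tangent bundle in Proposition \ref{prop:iteratesoftgbundleholder} (applied with uniform constants), so $D$ is indeed an unstable disk for $g$ along $\hat{x}$.

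For the second assertion, I would start from $\mu_n(K_n) > \eta$ (Lemma \ref{lem:eta}) and apply Portmanteau to the decreasing family of closed sets $F_N := \overline{\bigcup_{n\geq N} K_n}$. For each fixed $N$ one has $\mu_n(F_N)\geq \mu_n(K_n)\geq \eta$ for all $n\geq N$, so $\limsup_n \mu_n(F_N)\geq \eta$ and Portmanteau gives $\mu(F_N)\geq \limsup_n \mu_n(F_N) \geq \eta$; continuity from above of the probability measure $\mu$ yields $\mu\big(\bigcap_N F_N\big)\geq \eta$. It remains to check that $\bigcap_N F_N\subseteq K_\infty$. If $y\in \bigcap_N F_N$, pick $y_{n_k}\to y$ with $y_{n_k}\in K_{n_k}$ and $n_k\uparrow\infty$. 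Each $y_{n_k}$ lies on a disk $D_{n_k}\in\mathcal{K}_{n_k}$ of uniform radius $\delta$ tangent to $\mathcal{C}$; Arzel\`a--Ascoli furnishes a further convergent subsequence $D_{n_k}\to D\ni y$ of the same radius and tangent to $\mathcal{C}$. Using the Hausdorff convergence $C_{n_k}\to C_\infty$ (itself a consequence of $\Delta_{n_k}\to\Delta_\infty$ together with the continuous dependence of local stable manifolds on $g\in\mathcal{V}$, available because (H1)--(H2) hold with uniform constants) one deduces, after a trivial restriction to $C_\infty$ if needed, that $D\subset C_\infty$, so $D\in\mathcal{K}_\infty$ by the first assertion, and hence $y\in K_\infty$.

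The principal obstacle will be the uniform diagonal extraction in the first part: at each step, the inverse branches of $g_{n_k}$ must remain defined on balls of radius bounded away from zero around the prospective limit pre-image $x_{-m}$, which forces the uniform version of Lemma \ref{lem:continuityinversebranche} together with a uniform lower bound for the scale $\rho$ in Proposition \ref{prop:uniforminversebranches} throughout $\mathcal{V}$. Both are consequences of compactness of $\overline{\mathcal{V}}$ in the $C^1$ topology and the uniform bounds in (H1)--(H4). Once this is in place, the remaining pieces (Portmanteau, Arzel\`a--Ascoli and Hausdorff continuity of the stable cylinders) are standard.
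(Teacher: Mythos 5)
Your proof is correct and reaches the same conclusion via a mildly different route; both approaches turn on the same two ingredients (Arzel\`a--Ascoli compactness for disks of uniform radius tangent to $\mathcal{C}$, and weak$^*$ lower semi-continuity of the measure of a closed set). For the first assertion, the paper works one backward step at a time: it picks for each $n$ a single contracting inverse branch $g_{n,i_n}^{-1}$ on $D_n$, extracts along a subsequence to get a disk $D^{-1}$ with $g(D^{-1})=D$, and iterates to build the pre-orbit. You instead invoke the full backward-contraction pre-orbits $\hat{x}^{n_k}$ for $D_{n_k}$ (from Proposition~\ref{lem:suppdisk} applied to $g_{n_k}$, with uniform $c,\delta$) and take a diagonal limit in the pre-orbit coordinate --- this is equivalent and arguably cleaner, since the uniform contraction rate carries over directly in the limit. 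For the second assertion, the paper works with $\varepsilon$-neighborhoods $V_\varepsilon(K_\infty)$ chosen to be $\mu$-continuity sets and uses $K_n\subset V_\varepsilon(K_\infty)$ for large $n$, then lets $\varepsilon\to 0$, while you use the decreasing closed sets $F_N=\overline{\bigcup_{n\ge N}K_n}$, Portmanteau, and continuity from above, then verify $\bigcap_N F_N\subseteq K_\infty$ directly; both require the same Arzel\`a--Ascoli closure argument to pin the accumulation points inside $K_\infty$, so the content is the same.

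One small but repeated misattribution: you justify the uniformity of $\delta_1$ (and of the scale $\rho$ in Proposition~\ref{prop:uniforminversebranches}) by ``compactness of $\overline{\mathcal{V}}$ in the $C^1$ topology,'' but such compactness is neither assumed nor true in general. The correct source of uniformity is twofold: (i) conditions (H1)--(H4) are assumed to hold with uniform constants over $\mathcal{V}$, and (ii) $\mathcal{V}$ can always be shrunk to a sufficiently small $C^1$-neighborhood of $f$, on which the smallest singular value of $Dg$ stays uniformly bounded away from zero and hence the local invertibility radius $\delta_1$ does not degenerate. This is precisely how the paper argues uniformity in Lemma~\ref{lem:eta}. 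The conclusion you need is correct; only the reason offered for it is off.
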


\begin{proof}
Fix $D\in\mathcal{K}_{\infty}$ and, for notational simplicity, assume $D=\lim_{n\to\infty}D_{n}$.
If $x\in D$ is so that $D\subset B(x,\delta)$ then $D_{n}\subset B(x,\delta)$ for $n$ sufficiently large and,
consequently, every $g_{n}$ has well defined inverse branches in $B(x,\delta)$. 
Moreover, by definition, there exists $0<\sigma<1$ so that
for every $n\in\mathbb{N}$ there exists an inverse branch 
$g_{n,i_{n}}^{-1}:B(x,\delta)\rightarrow V_{i_{n}}$ for which
$
dist(g_{n,i_{n}}^{-1}(z),g_{n,i_{n}}^{-1}(y))\leq\sigma dist(z,y)
$
for every $z,y\in D_{n}$. 
By the construction of the disks $D$ we have that the sequence $\{D_n^{-1}\}_n$ of disks defined by
$D_{n}^{-1}:=g_{n,i_{n}}^{-1}(D_{n})$ are tangent to the cone field $\cC$. Thus, extracting a subsequence
if necessary, the Arzela-Ascoli theorem assures that $\{D_n^{-1}\}_n$ is convergent to some disk $D^{-1}$.
Moreover, $g(D^{-1})=D$, the disk $D^{-1}$ has radius smaller or equal to $\sigma\delta$ and 
the corresponding inverse branch of $g$ from $D$ to $D^{-1}$ is a contraction by $\sigma$. 
Applying this argument recursively we obtain contractive inverse branches for $g^{-j}$ on $D$ 
for every $j\in\mathbb{N}$, which implies that each $D\in\mathcal{K}$ is a unstable disk for some 
pre-orbit of $g$.

We are left to prove that $\mu(\mathcal{K}_{\infty})\geq\eta>0$.
Every $D\in\mathcal{K}_{\infty}$ is liftable to a set $\hat{D}\subset M^{g}$ for which 
$\pi=\pi^g\mid_{\hat D} : \hat D\to D$ is a bijection. Moreover, these sets are pairwise disjoint and 
admit contracting and expanding directions (recall Proposition \ref{prop:hatnuishyp}
and Lemma \ref{prop:disjointmanifolds}). By Lemma~\ref{lem:eta},
$\mu_{n}({K}_{n})\geq\eta>0$ for every $n\geq0$.
Moreover, if $V_{\varepsilon}({K}_{\infty})$ denotes
the $\varepsilon$-neighborhood of ${K}_{\infty}$ and $\varepsilon>0$ is chosen such that 
$\mu(\partial V_{\varepsilon}({K}_{\infty}))=0$ then
$
\mu(V_\vep ({K}_{\infty}))=\lim_{n\to\infty} \mu_n (V_\vep ({K}_{\infty}))  
	\ge \liminf_{n\to\infty} \mu_n ({K}_n) \ge \eta>0,
$
because ${K}_{n}\subset V_{\varepsilon}({K}_{\infty})$
for every large $n$.
Therefore, we conclude that 
$
\mu({K}_{\infty})=\liminf_{\varepsilon\to0}\mu(V_{\varepsilon}(\mathcal{K}_{\infty}))\geq\eta>0.
$
This proves the lemma.
\end{proof}

Denote by $\hat{\mathcal{K}}_{\infty}$ the family of lifts of disks
in $\mathcal{K}_{\infty}$ and by $\hat{K}_{\infty}$ the union of
its elements. Since the proof of the following lemma is identical to the one of Theorem~\ref{thm:abscontdisint} for $\mathcal{K}_{\infty}$
we shall omit it.

\begin{lemma}
There exists $\tau_{0}>1$ such that if $\{\hat{\mu}_{\hat{\Delta}}\}_{\hat{\Delta}\in\mathcal{K}_{\infty}}$
is a disintegration of $\hat{\mu}$ with respect to
$\hat{\mathcal{K}}_{\infty}$ then $\pi_{*}\hat{\mu}_{\hat{\Delta}}$
is absolutely continuous with respect to $Leb_{\Delta}$, where $\Delta=\pi(\hat{\Delta})$.
Moreover,
$
\tau_{0}^{-1}Leb_{\Delta}(B)\leq\pi_{*}\hat{\mu}_{\Delta}(B)\leq\tau_{0}Leb_{\Delta}(B)
$
for every measurable set $B\subset\Delta$.\label{lem:abscontdisint_inlimit}
\end{lemma}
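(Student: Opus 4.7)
The plan is to mimic, in the parametric setting, the argument used to prove Theorem~\ref{thm:abscontdisint}, now with the role of the sequence of hyperbolic pre-disks $(\mathcal{K}_j)_{j\in\mathbb{N}}$ replaced by the families of unstable disks $(\mathcal{K}_n)_{n\in\mathbb{N}}$ coming from the SRB measures $\mu_n$ of $g_n$, and using the uniform absolute continuity estimates from Proposition~\ref{prop:uniform_abs_cont} as the analogue of Proposition~\ref{prop:distorcionhyptime}.

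First I would assemble the objects. For each $n$ large, lift $\mu_n$ to the unique $\hat g_n$-invariant $\hat\mu_n$ on $M^{g_n}$ and, using Proposition~\ref{prop:uniform_abs_cont}, fix a disintegration $\{\hat\mu_{n,\hat\Delta}\}_{\hat\Delta\in\hat{\mathcal{K}}_n}$ satisfying $\tau^{-1}\,Leb_\Delta\le \pi_*\hat\mu_{n,\hat\Delta}\le\tau\,Leb_\Delta$ with the uniform constant $\tau>1$. Inside the cylinder $C_n=C_r^{g_n}(\Delta_n)$, introduce the auxiliary space
\[
K^{\dagger}_n:=\bigcup_{\hat\Delta\in\hat{\mathcal{K}}_n}\hat\Delta\times\{n\}\ \cup\ \bigcup_{\hat\Delta\in\hat{\mathcal{K}}_\infty}\hat\Delta\times\{\infty\},
\]
and the measures $\xi_n^{\dagger}$ on $K^{\dagger}_n$ defined by pushing $\hat\mu_n$ on the slice labelled by $n$. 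Each disk in $\hat{\mathcal{K}}_n$ and $\hat{\mathcal{K}}_\infty$ projects bijectively to an unstable disk in $C_n$ (resp. $C_\infty$) and, composing with the stable holonomy $H^s$ to the fixed reference disk $\Delta_\infty$, one obtains a projection $p^{\dagger}_n:K^{\dagger}_n\to\Delta_\infty$ analogous to $p^{\dagger}$ from Theorem~\ref{thm:abscontdisint}.

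Next I would construct an increasing sequence of measurable partitions $\{\mathcal{P}_k\}_{k\in\mathbb{N}}$ on $K^{\dagger}_n$ (uniformly in $n$, as in the proof of Theorem~\ref{thm:abscontdisint}) from partitions of $W_r^s(z')$ and of the Cantor set $\Gamma$, tuned so that $\bigcap_k \mathcal{P}_k(\hat\zeta,\infty) = \hat\Delta\times\{\infty\}$ whenever $\hat\zeta\in\hat\Delta\in\hat{\mathcal{K}}_\infty$. The uniform bound of Proposition~\ref{prop:uniform_abs_cont}, combined with the continuous dependence of stable holonomies on $g\in\mathcal{V}$ (see \cite[Subsection V.8]{QXZ09} in the uniform setting), gives a constant $C>0$ independent of $n$ and $k$ such that, for every measurable $B\subset\Delta_\infty$ and every $\hat\zeta^{\dagger}$,
\[
C^{-1}\xi_n^{\dagger}(\mathcal{P}_k(\hat\zeta^{\dagger}))\,Leb_{\Delta_\infty}(B)\le \xi_n^{\dagger}\bigl((p^{\dagger}_n)^{-1}(B)\cap \mathcal{P}_k(\hat\zeta^{\dagger})\bigr)\le C\,\xi_n^{\dagger}(\mathcal{P}_k(\hat\zeta^{\dagger}))\,Leb_{\Delta_\infty}(B).
\]
Here the constant $C$ absorbs both the uniform absolute continuity $\tau$ of Proposition~\ref{prop:uniform_abs_cont} and the uniform bounds on the Jacobian of the stable holonomies relating each $\Delta\in\mathcal{K}_n$ to $\Delta_\infty$.

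Finally, I would pass to the limit. Taking $n\to\infty$ along the subsequence for which $\mu_n\to\mu$, and using that $\xi_n^{\dagger}$ converges (up to subsequence) to a measure $\xi^{\dagger}$ supported on the slice $\{\infty\}$, with $\xi^{\dagger}(\hat B\times\{\infty\})=\hat\mu(\hat B)$ for every $\hat B\subset \hat K_\infty$, one obtains the same uniform estimate at the limit; then letting $k\to\infty$ the atoms $\mathcal{P}_k(\hat\zeta,\infty)$ shrink to $\hat\Delta\times\{\infty\}$ and the inequality becomes exactly $\tau_0^{-1}\,Leb_{\Delta}\le \pi_*\hat\mu_{\hat\Delta}\le \tau_0\,Leb_{\Delta}$ with $\tau_0:=C\cdot Leb(\Delta)$. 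The main obstacle, as in Theorem~\ref{thm:abscontdisint}, is to guarantee that the limit and the disintegration can be interchanged, i.e. that no mass of $\hat\mu$ concentrates on the boundary of the pieces of $\mathcal{P}_k$ and that the accumulation unstable disks in $\hat{\mathcal{K}}_\infty$ capture all of the limit measure on $\hat K_\infty$; this is handled by shrinking $r$ and $\Delta_{\hat x,l}$ (as at the end of the proof of Lemma~\ref{lem:hatnuposincilinder}) to force $\hat\mu(\partial\pi^{-1}(C_r(\hat x,l)))=0$, and by invoking the uniform lower bound $\eta>0$ from Lemma~\ref{lem:eta} to ensure that no positive fraction of the mass is lost at the limit.
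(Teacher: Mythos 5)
Your proposal is correct and is essentially a careful unpacking of what the paper intends when it writes that the proof ``is identical to the one of Theorem~\ref{thm:abscontdisint} for $\mathcal{K}_{\infty}$.'' You correctly identify the two key substitutions in the analogy: the index $j$ (cone-hyperbolic time) is replaced by the perturbation index $n$, and the role of the bounded distortion estimate (Proposition~\ref{prop:distorcionhyptime}) is now played directly by the uniform absolute continuity estimate of Proposition~\ref{prop:uniform_abs_cont}; the rest (partitions $\mathcal{P}_k$ built from $\mathcal{W}_k\times\Gamma_k$, stable holonomy to a fixed reference disk, and the passage $n\to\infty$ followed by $k\to\infty$) transfers verbatim.

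Two small points you leave implicit, which deserve a word when writing this up in full. First, the slices $\hat{K}_n\times\{n\}$ live in different natural extensions $M^{g_n}$, so to glue them into a single auxiliary space $K^\dagger$ and speak of weak-$*$ convergence $\xi_n^\dagger\to\xi^\dagger$ one has to use the coordinate charts $\varphi_x$ of \eqref{lem:localstructureMf} to identify each $\pi^{-1}(C_n)$ with $C_n\times\Gamma$; since all $g_n$ close to $g$ have the same degree $d$, the Cantor factor $\Gamma=\{1,\dots,d\}^{\mathbb{N}}$ is the same and the charts $\varphi_x^{(n)}$ converge to $\varphi_x^{(\infty)}$ as $g_n\to g$, which is what makes ``slice $n$ converges to slice $\infty$'' meaningful (and, accordingly, it is cleaner to define a single space $K^\dagger$ rather than an $n$-dependent $K^\dagger_n$). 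Second, your key display bounds $\xi_n^\dagger\bigl((p_n^\dagger)^{-1}(B)\cap\mathcal{P}_k(\hat\zeta^\dagger)\bigr)$ against $Leb_{\Delta_\infty}(B)$; to land on the statement with $Leb_\Delta$, $\Delta=\pi(\hat\Delta)$, one post-composes with the stable holonomy $H^s:\Delta\to\Delta_\infty$, whose Jacobian is uniformly bounded, absorbing that bound into $\tau_0$. Also, the normalization factor at the end should read $\tau_0=C/Leb(\Delta)$ rather than $C\cdot Leb(\Delta)$, matching the computation at the end of the proof of Theorem~\ref{thm:abscontdisint}; the uniform lower bound $\delta$ on the radius of disks in all $\mathcal{K}_n$ (Lemma~\ref{lem:eta}) then gives a uniform lower bound on $Leb(\Delta)$, making $\tau_0$ uniform. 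None of this affects the validity of your argument.
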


We need the following instrumental result:

\begin{lemma}\cite[Lemma 6.2]{ABV00}.
Let $\upsilon$ be a finite measure on a measure space $Z$, with
$\upsilon(Z)>0$. Let $\mathscr{K}$ be a measurable partition
of $Z$, and $(\upsilon_{z})_{z\in Z}$ be a family of
finite measures on $Z$ such that:\label{lem:disintegration_abs_cont}
\begin{enumerate}
\item [(a)]the function $z\mapsto\upsilon_{z}(A)$ is measurable
and it is constant on each element of $\mathscr{K}$, for any measurable
subset $A\subset Z$.\label{enu:muz_is_const_on_Delta}
\item [(b)]$\left\{ w:\ \upsilon_{z}=\upsilon_{w}\right\} $ is a measurable
subset with full $\upsilon_{z}$-measure, for every $z\in Z$.\label{enu:muz_eq_muw_fullset}
\end{enumerate}
Assume that $\upsilon(A):=\int l(z)\upsilon_{z}(A)d\upsilon$
for some measurable function $l:Z\rightarrow\mathbb{R}_{+}$ and any
measurable subset $A\subset Z$. Let $\left\{ \upsilon_{\gamma}:\ \gamma\in\mathscr{K}\right\} $
and $\left\{ \upsilon_{z,\gamma}:\ \gamma\in\mathscr{K}\right\} $
be disintegrations of $\upsilon$ and $\upsilon_{z}$, respectively,
into conditional probability measures along the elements of the partition
$\mathscr{K}$. Then $\upsilon_{z,\gamma}=\upsilon_{\gamma}$ for
$\upsilon$-almost every $z\in Z$ and $\tilde{\upsilon}_{z}$-almost
every $\gamma\in\mathscr{K}$, where $\tilde{\upsilon}_{z}$ is the
quotient measure induced by $\upsilon_{z}$ on $\mathscr{K}$.
\end{lemma}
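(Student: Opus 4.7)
The plan is to exploit hypothesis (a) to reduce the family $(\upsilon_z)_{z\in Z}$ to a family indexed by the quotient $\mathscr{K}$, and then to match this reduced family to the Rokhlin disintegration of $\upsilon$ via the integral representation. First, condition (a) asserts that $z\mapsto\upsilon_z(A)$ is constant on each atom; applied to every measurable $A$, this yields $\upsilon_z=\upsilon_w$ whenever $z,w$ lie in the same atom $\gamma$. I would therefore define an ``atom measure'' $\upsilon_{\gamma}^{\ast}:=\upsilon_z$ for any $z\in\gamma$, yielding a well-defined measurable assignment $\gamma\mapsto\upsilon_{\gamma}^{\ast}(A)$ on $\mathscr{K}$.

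Next I would plug this into the integral assumption and disintegrate $\upsilon$ along $\mathscr{K}$ via Rokhlin's theorem:
\[
\upsilon(A)=\int l(z)\,\upsilon_z(A)\,d\upsilon(z)=\int_{\mathscr{K}}\Bigl(\int_\gamma l(z)\,d\upsilon_\gamma(z)\Bigr)\upsilon_{\gamma}^{\ast}(A)\,d\tilde{\upsilon}(\gamma)=\int_{\mathscr{K}} L(\gamma)\,\upsilon_{\gamma}^{\ast}(A)\,d\tilde{\upsilon}(\gamma),
\]
where $L(\gamma):=\int l(z)\,d\upsilon_\gamma(z)$. Comparing with $\upsilon(A)=\int_{\mathscr{K}}\upsilon_\gamma(A)\,d\tilde{\upsilon}(\gamma)$ and running the comparison simultaneously over a countable generating algebra of measurable subsets, the essential uniqueness in Rokhlin's theorem forces $\upsilon_\gamma=L(\gamma)\,\upsilon_{\gamma}^{\ast}$ as measures on $Z$ for $\tilde{\upsilon}$-almost every $\gamma$; the $\tilde{\upsilon}$-null set where $L(\gamma)=0$ does not contribute to $\upsilon$ and may be discarded.

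From this identification I would extract that $\upsilon_{\gamma}^{\ast}$ is supported on $\gamma$ for $\tilde{\upsilon}$-a.e.\ $\gamma$: since $\upsilon_\gamma$ is concentrated on $\gamma$ and equals $L(\gamma)\,\upsilon_{\gamma}^{\ast}$ with $L(\gamma)>0$, testing against $A=Z\setminus\gamma$ gives $\upsilon_{\gamma}^{\ast}(Z\setminus\gamma)=0$. Hypothesis (b) is consistent with and used to underpin this: it ensures a priori that each $\upsilon_z$ is carried by $\{w:\upsilon_w=\upsilon_z\}$, which is a union of atoms by (a), and is now a.e.\ refined to the single atom $\gamma(z)$. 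Consequently, for $\upsilon$-a.e.\ $z$ the measure $\upsilon_z=\upsilon_{\gamma(z)}^{\ast}$ is supported on $\gamma(z)$.

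Finally, since $\upsilon_z$ is supported on a single atom, its disintegration along $\mathscr{K}$ is trivial: the quotient $\tilde{\upsilon}_z$ is a scalar multiple of the point mass at $\gamma(z)$, and the unique conditional probability on that atom is
\[
\upsilon_{z,\gamma(z)}=\frac{\upsilon_z}{\upsilon_z(Z)}=\frac{\upsilon_{\gamma(z)}^{\ast}}{\upsilon_{\gamma(z)}^{\ast}(Z)}=L(\gamma(z))\,\upsilon_{\gamma(z)}^{\ast}=\upsilon_{\gamma(z)},
\]
which is precisely the asserted identity $\upsilon_{z,\gamma}=\upsilon_\gamma$ for $\tilde{\upsilon}_z$-a.e.\ $\gamma$. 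The chief subtlety, in my view, is passing from ``for each fixed $A$, the identity holds $\tilde{\upsilon}$-a.e.\ in $\gamma$'' to ``the identity of measures holds $\tilde{\upsilon}$-a.e.\ in $\gamma$'', which I would handle with a countable generating algebra together with a standard $\pi$-$\lambda$ argument, after first verifying measurability of $\gamma\mapsto\upsilon_{\gamma}^{\ast}$ and of $L(\gamma)$ in an appropriate sense.
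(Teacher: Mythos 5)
Your argument hinges on the claim that each $\upsilon_z$ is supported on a single atom $\gamma(z)\in\mathscr{K}$, and this claim is false; indeed the lemma is only interesting when it fails. Hypotheses (a) and (b) together give that $\upsilon_z$ is carried by $\{w:\upsilon_w=\upsilon_z\}$, which is a \emph{union of atoms} (the equivalence class of $z$), but nothing in the hypotheses reduces this union to a single atom. In the application in this paper the measures $\upsilon_z$ are ergodic components $\hat{\mu}_{\hat{z}}$ of a $\hat f$-invariant measure and $\mathscr{K}$ is a family of unstable disks; an ergodic measure is of course spread over positively many disks. Your purported derivation ``$\upsilon_\gamma$ is concentrated on $\gamma$, $\upsilon_\gamma=L(\gamma)\upsilon^\ast_\gamma$, hence $\upsilon^\ast_\gamma(Z\setminus\gamma)=0$'' is circular, since the middle identity is exactly what needs proving.

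That middle identity is the second gap: it cannot be extracted from essential uniqueness of the Rokhlin disintegration, because that uniqueness theorem compares families of measures each concentrated on its own atom, and $\upsilon^\ast_\gamma$ (the common value of $\upsilon_z$ for $z\in\gamma$) is a priori not concentrated on $\gamma$. The equation $\upsilon(A)=\int_{\mathscr{K}} L(\gamma)\upsilon^\ast_\gamma(A)\,d\tilde{\upsilon}(\gamma)$ is merely an integral identity and does not force the integrand to coincide with $\upsilon_\gamma$ pointwise. The consequence of the false premise is visible in the last display of your proof: $\upsilon_{z,\gamma(z)}=\upsilon_z/\upsilon_z(Z)$ presupposes that $\upsilon_z$ is carried by $\gamma(z)$, and the chain of equalities there is not correct. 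The right route is quite different: (a) and (b) yield, for $\upsilon$-a.e.\ $z$ and $\tilde{\upsilon}_z$-a.e.\ $\gamma$, that the conditional $\upsilon_{z,\gamma}$ in the disintegration of $\upsilon_z$ depends only on $\gamma$ and not on $z$ (since two $z,z'$ whose equivalence classes both contain $\gamma$ satisfy $\upsilon_z=\upsilon_{z'}$). Writing $\sigma_\gamma$ for this common conditional --- which \emph{is} a probability carried by $\gamma$ --- and substituting $\upsilon_z=\int\sigma_\gamma\,d\tilde{\upsilon}_z(\gamma)$ into $\upsilon(A)=\int l(z)\upsilon_z(A)\,d\upsilon(z)$ and applying Fubini yields a genuine disintegration $\upsilon=\int_{\mathscr{K}}\sigma_\gamma\,d\eta(\gamma)$ over the atoms, to which Rokhlin uniqueness can legitimately be applied (it also gives $\eta=\tilde{\upsilon}$ by pushing to the quotient). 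Finally, positivity of $l$ turns the $\tilde{\upsilon}$-null exceptional set into a $\tilde{\upsilon}_z$-null set for $\upsilon$-a.e.\ $z$, which is exactly the stated conclusion.
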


We now prove that almost every ergodic component of $\hat{\mu}$
is an SRB measure. 
The ergodic decomposition theorem (see e.g. \cite[Theorem II.6.4]{Mane87})
implies that there is a measurable subset $\hat{\Sigma}(\hat{g})\subset M^{g}$,
with full $\hat\mu$-measure, such that the limit
$
\hat{\mu}_{\hat{x}}:=\lim_{n\to\infty}\frac{1}{n}\sum_{j=0}^{n-1}\delta_{\hat{g}^{j}(\hat{x})}
$
exists and defines an ergodic measure, for every $\hat{x}\in\hat{\Sigma}(\hat{g})$.
Moreover, for every bounded and measurable observable $\hat{\phi}:M^{g}\rightarrow\mathbb{R}$:
(i) the map $\hat{x}\mapsto\int\hat{\phi}d\hat{\mu}_{\hat{x}}$ is measurable and 
$
\int\hat{\phi}d\hat{\mu}=\int(\int\hat{\phi}d\hat{\mu}_{x})d\hat{\mu}(\hat{x}),
$
and (ii)
$
\int\hat{\phi}d\hat{\mu}_{\hat{x}}=\lim_{n\to\infty}\frac{1}{n}\sum_{j=0}^{n-1}\hat{\phi}(\hat{g}^{j}(\hat{x}))
$
for $\hat{\mu}$ almost every $\hat{x}\in M^{g}$.
As before, if $\left\{ \hat{\mu}_{\hat{\Delta}}\right\} _{\hat{\Delta}\in\mathcal{\hat{K}}_{\infty}}$
is a disintegration of $\hat \mu$ then $\pi_{*}\hat{\mu}_{\hat{\Delta}}\ll Leb_{\Delta}$
if and only if $\hat{\mu}_{\hat{\Delta}}\ll\hat{m}_{\hat{\Delta}}$, where
$
\hat{m}_{\hat{\Delta}}:=(\pi|_{\hat{\Delta}})^{*}Leb_{\Delta}.
$
Fix $\hat{B}\subset M^{g}$ such that $\hat{m}_{\hat{\Delta}}(\hat{B}\cap\hat{\Delta})=0$
for every $\hat{\Delta}\in\mathcal{\hat{K}}_{\infty}$ and $\hat{\mu}(\hat{B})$
is maximal among all sets with this property. By Lemma \ref{lem:abscontdisint_inlimit}
we have that $\hat{\mu}_{\hat{\Delta}}\ll\hat{m}_{\hat{\Delta}}$.
Therefore, if $\tilde{\hat{\mu}}$ is the quotient measure 
given by Rokhlin's disintegration theorem, we conclude that
$\hat{\mu}(\hat{B})=\int\int\hat{\mu}_{\hat{\Delta}}(B)d\tilde{\hat{\mu}}(\hat{\Delta})=0$.

\medskip
Recall that the set of generic points $R(\hat{g})$ is defined by as the set of points $\hat{y}\in M^{g}$ 
such that the limits $\lim_{n\to\infty}\frac{1}{n}\sum_{j=0}^{n-1}\hat{\phi}(\hat{g}^{j}(\hat{y}))$ and 
$\lim_{n\to\infty}\frac{1}{n}\sum_{j=0}^{n-1}\hat{\phi}(\hat{g}^{-j}(\hat{y}))$ exist and coincide for every 
$\hat{\phi}\in C(M^{g})$.
Consider also the set 
$
\hat{Z}:=[\hat K_\infty\cap\Sigma(\hat{g})\cap R(\hat{g})]\backslash\hat{B}.
$
The previous lemma allow us to compare the disintegration of $\hat{\mu}$
with respect to $\mathcal{\hat{K}}_{\infty}$ with the disintegration
of the ergodic components of $\hat{\mu}$ with respect to $\hat{\mathcal{K}}_{\infty}$.
In what follows we prove that almost every ergodic component of $\mu$ is 
an SRB measure.

\begin{proposition}\label{prop:disintegration_of_ergodic_component} Let $\{\hat{\mu}_{\hat{x},\hat{\Delta}} \}_{\hat{\Delta}\in\mathcal{\hat{K}}_\infty}$ be a disintegration of $\hat{\mu}_{\hat{x}}$ along $\hat{\cK}_{\infty}$. Then $\hat{\mu}_{\hat{x},\hat{\Delta}}=\hat{\mu}_{\hat{\Delta}}$,
hence absolutely continuous with respect to Lebesgue,
for $\hat{\mu}$-almost every $\hat{x}\in {\hat{K}_\infty}$ and $\tilde{\hat{\mu}}$-almost every $\hat{\Delta}\in\mathcal{\hat{K}}_\infty$.
\end{proposition}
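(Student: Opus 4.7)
The plan is to deduce the proposition as an application of the abstract disintegration comparison Lemma~\ref{lem:disintegration_abs_cont}, combined with the absolute continuity statement of Lemma~\ref{lem:abscontdisint_inlimit} already established for $\hat\mu$. I would take $Z:=\hat{Z}$ (the full $\hat\mu$-measure set introduced above the proposition, which lies inside $\hat{K}_\infty\cap\Sigma(\hat g)\cap R(\hat g)$ and avoids $\hat B$), $\upsilon:=\hat\mu|_{\hat{Z}}$, and the partition $\mathscr{K}:=\{\hat{\Delta}\cap \hat{Z}:\hat{\Delta}\in\hat{\mathcal{K}}_\infty\}$; for every $\hat{x}\in \hat{Z}$ set $\upsilon_{\hat{x}}:=\hat{\mu}_{\hat{x}}$ (the ergodic component) and take $l\equiv 1$. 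The hypothesis $\upsilon(A)=\int l(\hat{x})\,\upsilon_{\hat{x}}(A)\,d\upsilon(\hat{x})$ then coincides with the ergodic decomposition theorem for $\hat\mu$.

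The first key step is the verification of condition (a) of Lemma~\ref{lem:disintegration_abs_cont}: the map $\hat{x}\mapsto \hat\mu_{\hat{x}}(A)$ is constant on each atom $\hat{\Delta}\cap \hat{Z}$. I would use a Hopf-type argument adapted to the natural extension. If $\hat{x},\hat{y}\in\hat{\Delta}\cap\hat{Z}$ lie on the same disk $\hat{\Delta}\in\hat{\mathcal{K}}_\infty$ then Proposition~\ref{prop:hatnuishyp} (or more precisely the contractive pre-orbit property built in Proposition~\ref{lem:suppdisk}) guarantees $d(x_{-n},y_{-n})\to 0$ exponentially, so for any $\hat\phi\in C(M^g)$ the backward Birkhoff averages $\tfrac1n\sum_{j=0}^{n-1}\hat\phi(\hat g^{-j}(\hat{x}))$ and $\tfrac1n\sum_{j=0}^{n-1}\hat\phi(\hat g^{-j}(\hat{y}))$ exist and coincide, because $\hat{x},\hat{y}\in R(\hat g)$. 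Invoking the two-sided ergodic theorem at points of $R(\hat g)$, the forward Birkhoff averages also coincide, and therefore $\hat\mu_{\hat{x}}=\hat\mu_{\hat{y}}$. Extending by the monotone class theorem from continuous test functions to arbitrary Borel sets $A$ gives condition (a).

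Condition (b) is immediate from the structure of the ergodic decomposition: the set $\{\hat{y}\in \hat{Z}:\hat\mu_{\hat{y}}=\hat\mu_{\hat{x}}\}$ is Borel (as an intersection over a countable dense family of continuous observables of level sets of Birkhoff averages) and it is a $\hat{\mu}_{\hat{x}}$-conull subset, since $\hat\mu_{\hat{x}}$ is ergodic and its own generic set is saturated by the equivalence relation $\hat\mu_{\hat y}=\hat\mu_{\hat x}$. With (a), (b) and the representation $\upsilon=\int\upsilon_{\hat x}\,d\upsilon$ in place, Lemma~\ref{lem:disintegration_abs_cont} yields $\hat\mu_{\hat{x},\hat{\Delta}}=\hat\mu_{\hat{\Delta}}$ for $\hat\mu$-a.e.\ $\hat{x}$ and $\tilde{\hat\mu}_{\hat{x}}$-a.e.\ $\hat{\Delta}\in\hat{\mathcal{K}}_\infty$.

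To conclude the absolute continuity statement, I combine this identification with Lemma~\ref{lem:abscontdisint_inlimit}: $\pi_*\hat{\mu}_{\hat{\Delta}}\ll Leb_{\Delta}$ with density bounded between $\tau_0^{-1}$ and $\tau_0$, hence the same bounds transfer to $\pi_*\hat{\mu}_{\hat{x},\hat{\Delta}}$. The main technical subtlety I anticipate is the measurability and partition issues: one has to check that $\hat{\mathcal{K}}_\infty$ is (essentially) a measurable partition of a full $\hat\mu$-measure subset of $\hat K_\infty$ so that Rokhlin disintegrations are well defined, and that after passing to $\hat{Z}$ the conditional measures of $\hat\mu|_{\hat{Z}}$ agree with those of $\hat\mu$ restricted to the atoms (this is where the maximality choice of $\hat{B}$ and Lemma~\ref{prop:disjointmanifolds} are used to ensure that atoms $\hat\Delta\cap \hat Z$ carry positive conditional mass). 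Modulo this bookkeeping, which is entirely parallel to the diffeomorphism case treated in \cite{ABV00}, the argument above delivers the proposition.
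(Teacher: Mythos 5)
Your overall architecture is the same as the paper's: apply Lemma~\ref{lem:disintegration_abs_cont} with the ergodic components $\hat\mu_{\hat x}$ as the family $\upsilon_{\hat x}$, verify condition (a) by a Hopf argument over the contracting pre-orbits along $\hat\Delta$ using $R(\hat g)\cap\Sigma(\hat g)$, verify condition (b) by standard ergodic-decomposition reasoning, and then carry the absolute continuity over from Lemma~\ref{lem:abscontdisint_inlimit}. That part matches the paper.

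However, there is a genuine gap in your choice $l\equiv 1$. You claim that with $\upsilon=\hat\mu|_{\hat Z}$ the identity $\upsilon(A)=\int l(\hat x)\,\upsilon_{\hat x}(A)\,d\upsilon(\hat x)$ with $l\equiv1$ ``coincides with the ergodic decomposition theorem.'' It does not. The ergodic decomposition theorem gives $\hat\mu(A)=\int_{M^g}\hat\mu_{\hat x}(A)\,d\hat\mu(\hat x)$, where the integral ranges over all of $M^g$; if instead you integrate only over $\hat Z$, the function $\hat x\mapsto\hat\mu_{\hat x}(A)$ being constant on ergodic components, you obtain
\[
\int_{\hat Z}\hat\mu_{\hat x}(A)\,d\hat\mu(\hat x)=\int_{M^g}\hat\mu_{\hat x}(\hat Z)\,\hat\mu_{\hat x}(A)\,d\hat\mu(\hat x),
\]
which differs from $\hat\mu(A)$ by the weight $\hat\mu_{\hat x}(\hat Z)$. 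Since $\hat Z$ is not $\hat g$-invariant and $\hat\mu(\hat Z)=\hat\mu(\hat K_\infty)\ge\eta$ may be strictly less than $1$, one has $\hat\mu_{\hat x}(\hat Z)<1$ on a positive-measure set of ergodic components, so the equality with $l\equiv1$ fails (test it with $A=\hat Z$: the right-hand side is $\int\hat\mu_{\hat x}(\hat Z)^2\,d\hat\mu(\hat x)<\hat\mu(\hat Z)$ unless $\hat\mu_{\hat x}(\hat Z)\in\{0,1\}$ a.e.). The paper instead takes $l$ to be the first return time of $\hat Z$ under $\hat g^{-1}$, $l(\hat z)=\min\{r>0:\hat g^{-r}(\hat z)\in\hat Z\}$, and invokes the Kac-type identity from \cite[Lemma 6.1]{ABV00} to get precisely $\hat\mu(\hat A)=\int_{\hat Z}l(\hat z)\,\hat\mu_{\hat z}(\hat A)\,d\hat\mu(\hat z)$ for $\hat A\subset\hat Z$; that return-time weighting is what makes Lemma~\ref{lem:disintegration_abs_cont} applicable. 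Your argument needs this correction before the lemma can be invoked.
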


\begin{proof}
We apply Lemma \ref{lem:disintegration_abs_cont}
with $Z=\hat{K}_\infty$, $\upsilon=\hat{\mu}|_{\hat{K}}$, $\mathscr{K}=\mathcal{\hat{K}}_{\infty}$
and $\upsilon_{\hat{z}}=\hat{\mu}_{\hat{z}}$, for each $\hat{z}\in\hat{K}_\infty$.
Since $R(\hat{g})$ and $\Sigma(\hat{g})$
are full $\hat{\mu}$-measure sets and $\hat{\mu}(\hat{B})=0$ then
$
\hat{\mu}(\hat{Z})=\hat{\mu}(\hat{K}_\infty\cap R(\hat{g})\cap\Sigma(\hat{g}))=\hat{\mu}(\hat{K}_\infty)\geq\eta>0.
$
Given a fixed measurable subset  $\hat{A}\subset M^{g}$, the map $\hat Z \ni \hat{x}\mapsto\hat{\mu}_{\hat{x}}(\hat{A})$ 
is measurable, by the ergodic decomposition theorem. Fix $\hat{\Delta}\in\mathcal{\hat{K}}_{\infty}$ and $\hat{x,}\hat{y}\in\hat{Z}\cap\hat{\Delta}$.
Given $\hat{\phi}\in C(M^{g})$ we have that
$
\int\hat{\phi}d\hat{\mu}_{\hat{x}}=\lim_{n\to\infty}\frac{1}{n}\sum_{j=0}^{n-1}\hat{\phi}(\hat{g}^{j}(\hat{x}))=\lim_{n\to\infty}\frac{1}{n}\sum_{j=0}^{n-1}\hat{\phi}(\hat{g}^{-j}(\hat{x}))
$
and
$
\int\hat{\phi}d\hat{\mu}_{\hat{y}}=\lim_{n\to\infty}\frac{1}{n}\sum_{j=0}^{n-1}\hat{\phi}(\hat{g}^{j}(\hat{y}))=\lim_{n\to\infty}\frac{1}{n}\sum_{j=0}^{n-1}\hat{\phi}(\hat{g}^{-j}(\hat{y}))
$
because $\hat{x},\hat{y}\in R(\hat{g})\cap\Sigma(\hat{g})$.
But, since $\hat{x},\hat{y}$ belong to the same unstable set $\hat{\Delta}$ then
$\int\hat{\phi}d\hat{\mu}_{\hat{x}}=\int\hat{\phi}d\hat{\mu}_{\hat{y}}$ and,
since $\hat{\phi}\in C(M^{g})$ was taken arbitrary, $\hat{\mu}_{\hat{x}}=\hat{\mu}_{\hat{y}}$.
This proves that $\hat{x}\mapsto\hat{\mu}_{\hat{x}}(\hat{A})$ is constant on 
each $\hat{\Delta}\in\mathcal{\hat{K}}_{\infty}$.
Moreover, it is a standard in ergodic theory to show that the set $E_{\hat{z}}:=\left\{ \hat{w}:\ \hat{\mu}_{\hat{w}}=\hat{\mu}_{\hat{z}}\right\}$ is a full $\hat{\mu}_{\hat{z}}$-measure subset in $M^g$. 

We are left to show that there is a measurable function $l:Z\rightarrow\mathbb{R}_{+}$
such that $\hat{\mu}(\hat{A})=\int l(\hat{x})\cdot\hat{\mu}_{\hat{x}}(\hat{A})d\hat{\mu}(\hat{x})$
for every measurable subset $\hat{A}\subset\hat{Z}$. Denote by $\mathds{1}_{\hat{A}}$
the characteristic function in the set $\hat{A}$. Then
$
\hat{\mu}_{\hat{x}}(\hat{A})=\lim_{n\to\infty}\frac{1}{n}\sum_{j=0}^{n-1}\mathds{1}_{\hat{A}}(\hat g^j(\hat{x})),
$
which is positive if and only if $\hat{x}$ 
has positive frequency of visits to $\hat{A}\subset\hat{Z}$.
Given $\hat{z}\in\hat{Z}$ define
$
l(\hat{z}):=\min\{ r>0:\hat{g}^{-r}(\hat{z})\in\hat{Z}\} .
$
By Poincar\'e's Recurrence theorem the map $l(\cdot)$ is well
defined $\hat{\mu}$-almost everywhere in $\hat{Z}$. Then, following \cite[Lemma 6.1]{ABV00},
$
\hat{\mu}(\hat{A})=\int_{\Sigma}\hat{\mu}_{\hat{x}}(\hat{A})d\hat{\mu}(\hat{x})=\int_{\hat{Z}}l(\hat{z})\hat{\mu}_{\hat{z}}(\hat{A})d\hat{\mu}(\hat{z}).
$
So, we can apply Lemma~\ref{lem:disintegration_abs_cont} to conclude
the proof of this proposition.
\end{proof}

Consider $K=\pi(\hat{K}_{\infty})$. The following completes the proof of Theorem~\ref{thm:TEO2}.

\begin{corollary}
For $\mu$-almost every $x\in K$ the measure $\mu_{x}$ is an SRB measure for $g$. In particular,
$\mu$ is a convex combination of SRB measures for $g$.
\end{corollary}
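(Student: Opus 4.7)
The plan is to split the corollary in two steps: (a) show that for $\mu$-a.e. $x\in K$, the ergodic component $\mu_x$ is an SRB measure for $g$; and (b) upgrade this to the statement that $\mu$ is a finite convex combination of SRB measures. Part (a) will follow by projecting Proposition~\ref{prop:disintegration_of_ergodic_component} and then promoting absolute continuity along $\mathcal{K}_\infty$ to absolute continuity along any partition subordinated to unstable manifolds, exactly as at the end of the proof of Proposition~\ref{prop:existsrb}. Part (b) will use the uniform bound $b_0$ on the number of cylinders in the support of each $\mu_n$ from the proof of Lemma~\ref{lem:eta}, to cover the whole support of $\mu$ by limits of cylinders.

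For part (a), I will combine Proposition~\ref{prop:disintegration_of_ergodic_component} with Lemma~\ref{lem:abscontdisint_inlimit}: for $\hat\mu$-a.e. $\hat x\in\hat K_\infty$, the ergodic component $\hat\mu_{\hat x}$ has disintegration $\hat\mu_{\hat x,\hat\Delta}=\hat\mu_{\hat\Delta}$, whose density with respect to $\hat m_{\hat\Delta}$ is bounded above and below by $\tau_0^{\pm 1}$. Projecting by $\pi$, the ergodic component $\mu_x=\pi_*\hat\mu_{\hat x}$ has absolutely continuous conditional measures with respect to Lebesgue along the disks of $\mathcal{K}_\infty$. Since these disks have uniform radius $\delta>0$ and are open neighborhoods of the base point in the unstable manifold associated to the parametrizing pre-orbit (cf. Proposition~\ref{prop:hatnuishyp}), the partition $\hat{\mathcal{K}}_\infty$ is subordinated to unstable manifolds of $\mu_x$ in the sense of Definition~\ref{def:wupartition}. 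Given any other such partition $\hat{\mathcal{Q}}$, the refinement $\hat{\mathcal{Q}}\vee\hat{\mathcal{K}}_\infty$ is still subordinated, its atoms lie inside atoms of $\hat{\mathcal{K}}_\infty$ (hence inherit absolute continuity of the disintegration), and atoms of $\hat{\mathcal{Q}}$ are disjoint unions of atoms of the refinement; so absolute continuity descends to $\hat{\mathcal{Q}}$ and $\mu_x$ satisfies Definition~\ref{def:srbproperty}.

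For part (b), by the proof of Lemma~\ref{lem:eta} each $\mu_n$ is supported on at most $b_0$ cylinders $C_r^{g_n}(\Delta_{n,i})$, with $b_0$ independent of $n$. A diagonal Arzela--Ascoli extraction yields a subsequence along which, for every $1\leq i\leq b_0$, the disks $\Delta_{n,i}$ converge to $\Delta_\infty^{(i)}$ for $g$, the limit cylinders $C^{(i)}:=\bigcup_{y\in\Delta_\infty^{(i)}}W_r^s(y,g)$ are well defined, and the masses $\mu_n(C_r^{g_n}(\Delta_{n,i}))\to\alpha_i$ with $\sum_i\alpha_i=1$. Upper semicontinuity of mass on closed sets gives $\mu(C^{(i)})\geq\alpha_i$, whence $\mu\!\left(\bigcup_i C^{(i)}\right)=1$. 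Repeating the construction of Section~\ref{sec:stability} inside each $C^{(i)}$ produces a family $\hat{\mathcal{K}}_\infty^{(i)}$, and the analogue of Lemma~\ref{lem:abscontdisint_inlimit} gives absolute continuity of the unstable disintegration of $\mu|_{C^{(i)}}$ along $\mathcal{K}_\infty^{(i)}$; in particular $\mu(C^{(i)}\setminus K^{(i)})=0$ with $K^{(i)}:=\pi(\hat K_\infty^{(i)})$. Applying part (a) to each $K^{(i)}$ gives that $\mu_x$ is an SRB measure for $\mu$-a.e. $x$. Finally, Theorem~\ref{thm:TEO1} provides finitely many SRB measures $\mu^1,\dots,\mu^m$ for $g$, and setting $\alpha_j:=\mu(\{x:\mu_x=\mu^j\})$ the ergodic decomposition yields $\mu=\sum_{j=1}^m\alpha_j\mu^j$.

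The hardest step is part (b): from part (a) alone one controls only $\mu(K)\geq\eta$, not $\mu(K)=1$, so a priori there could exist ergodic components of $\mu$ not captured by a single cylinder. The uniform bound $b_0$ on the number of cylinders in the support of each $\mu_n$ (derived in the proof of Lemma~\ref{lem:eta}), combined with the uniform absolute continuity of Proposition~\ref{prop:uniform_abs_cont}, is what allows us to cover the whole support of $\mu$. The most delicate point within (b) is to check that each $\mu|_{C^{(i)}}$ assigns zero mass to $C^{(i)}\setminus K^{(i)}$: this uses that each $\mu_n|_{C_n^{(i)}}$ is supported on the union $K_n^{(i)}$ of unstable disks of $g_n$ crossing $C_n^{(i)}$, so that the weak-$\ast$ limit is supported on the Hausdorff accumulation $K^{(i)}$ of these families.
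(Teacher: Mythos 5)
Your part (a) follows essentially the same route as the paper's proof: project Proposition~\ref{prop:disintegration_of_ergodic_component} through $\pi_*$ to get $\mu_x=\pi_*\hat{\mu}_{\hat{x}}$, observe that the conditional measures of $\hat{\mu}_{\hat{x}}$ along $\hat{\mathcal{K}}_\infty$ coincide with those of $\hat{\mu}$, which are equivalent to Lebesgue on the disks by Lemma~\ref{lem:abscontdisint_inlimit}, and upgrade absolute continuity along $\hat{\mathcal{K}}_\infty$ to an arbitrary subordinated partition $\hat{\mathcal{Q}}$ by refining with $\hat{\mathcal{K}}_\infty$. That promotion step is the same device used at the end of Proposition~\ref{prop:existsrb}, and the paper relies on it here implicitly.

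Where you genuinely go beyond the paper is part (b), and you are right to do so. As written, the paper's proof establishes only the first sentence of the corollary---that $\mu_x$ is SRB for $\mu$-a.e.\ $x\in K_\infty$, where $\mu(K_\infty)\geq\eta>0$---and then asserts the ``in particular'' without ruling out ergodic components charging $M\setminus K_\infty$. Since $K_\infty$ is not $g$-invariant, ergodicity does not by itself force every component to see it, so this is a real gap. Your fix---covering $\supp\mu_n$ by at most $b_0$ cylinders (precisely the bound that the proof of Lemma~\ref{lem:eta} produces), taking a diagonal Hausdorff-limit subsequence to obtain limit cylinders $\overline{C^{(1)}},\dots,\overline{C^{(b_0)}}$ with $\mu\bigl(\,\bigcup_i\overline{C^{(i)}}\,\bigr)=1$, and rerunning Lemma~\ref{lem:abscontdisint_inlimit} and Proposition~\ref{prop:disintegration_of_ergodic_component} inside each---is exactly what is needed, and it uses only ingredients already in place (uniform disk size and cylinder count from Lemma~\ref{lem:eta}, uniform densities from Proposition~\ref{prop:uniform_abs_cont}). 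Two cautions in writing it out: the cylinders may overlap, so do not argue via $\sum_i\alpha_i=1$ but rather via the portmanteau bound $1=\mu_n\bigl(\,\bigcup_i C_{n,i}\,\bigr)\leq\mu_n\bigl(\,\bigcup_i V_\vep(\overline{C^{(i)}})\,\bigr)$, which after $n\to\infty$ and $\vep\to 0$ gives $\mu\bigl(\,\bigcup_i\overline{C^{(i)}}\,\bigr)=1$; and you should note (as you anticipate) that each $K_n^{(i)}$ is compact (Arzel\`a--Ascoli on the crossing disks), so that its Hausdorff accumulation lies in $K^{(i)}$ and hence $\mu(\overline{C^{(i)}}\setminus K^{(i)})=0$.
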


\begin{proof}
Observe that for each $\hat{y}\in M^{g}$ we have that $\pi_{*}\delta_{\hat{y}}=\delta_{y_{0}}$ and, from the linearity and continuity of the projection $\pi_{*}:\mathcal{M}(M^{g})\rightarrow\mathcal{M}(M)$,
we have that $\pi_{*}\hat{\mu}_{\hat{x}}=\mu_{x_{0}}$ for every $\hat{x}\in\Sigma(\hat{g})$,
where $\mu_{x_{0}}=\lim_{n\to\infty}\frac{1}{n}\sum_{j=0}^{n-1}\delta_{f^{j}(x_{0})}$.
Moreover if $\hat{B}\subset M^{g}$ is $\hat{\mu}$-full measure set
then $\pi(\hat{B})\subset M$ is a $\mu$-full measure set,
where $\mu=\pi_{*}\hat{\mu}$. 
Therefore we conclude that for $\mu$-almost every $x\in K_\infty$,
$\mu_{x}$ is such that its lift $\hat{\mu}_{x}$ has disintegration
that coincides with the disintegration of $\hat{\mu}$ along the unstable
disks of $\hat{\mathcal{K}}_{\infty}$ almost everywhere. But,
as the disintegration of $\hat{\mu}$ is absolutely continuous with respect to the Lebesgue measure 
in the unstable disks in $\hat{\mathcal{K}}_{\infty}$, we get that $\mu_x$ is an SRB measure.
\end{proof}

\section{Proof of the corollaries}\label{subsec:open}

\subsection{Proof of Corollary~\ref{cor:entropy-cont}}
Let $\cU$ be a open set of transitive $C^{1+\alpha}$ non-singular endomorphisms that satisfy (H1)-(H4)
with uniform constants and, for any $f\in \cU$  let $\mu_f$ be its unique SRB measure. By ergodicity of $\mu_f$,  Pesin's entropy formula for endomorphisms (\cite[Theorem~VII.1.1]{QXZ09}, after\cite{Liu1}), assures 
that
$$
h_{\mu_f }(f) = \int \sum_{\lambda_i(f,\mu_f)>0} \lambda_i(f,\mu_f) \dim E_i \, d\mu_f .
$$
Therefore, by Oseledets theorem,
\begin{align}\label{eqLL}
\int \log |\det Df| \, d\mu_f 
	& = \int \sum_{\lambda_i(f,\mu_f)>0} \lambda_i(f,\mu_f) \dim E_i \, d\mu_f 
	+ \int \sum_{\lambda_i(f,\mu_f)<0} \lambda_i(f,\mu_f) \dim E_i\, d\mu_f \nonumber \\
	& = h_{\mu_f }(f)
	+ \int \sum_{\lambda_i(f,\mu_f)<0} \lambda_i(f,\mu_f) \dim E_i \, d\mu_f.
\end{align}
Now, recall that $\mu_f$ is a hyperbolic measure and that all Oseledets subspaces associated to negative Lyapunov exponents are contained in the stable subbundle $E^s$. In consequence, applying the Oseledets theorem to the
continuous cocycle $Df\mid {E^s}$ we conclude that 
\begin{align*}
\int \log |\det Df| \, d\mu_f 
	= h_{\mu_f }(f)
	+ \int  \log |\det Df \mid_{E^s}| d\mu_f
\end{align*}
or, equivalently, 
\begin{align}\label{formula-ent}
h_{\mu_f }(f)= \int \log \frac{|\det Df|}{|\det Df \mid_{E^s}|} \, d\mu_f. 
\end{align}
Since the stable subbundle $E^s$ for $f$ varies continuously with the non-singular endomorphism $f$ (with the Grassmannian topology)
and the function $\cU \to C^0(M,\mathbb R)$ given by $f \mapsto \log \frac{|\det Df|}{|\det Df \mid_{E^s}|}$ is continuous, the continuity of the entropy function follows from the latter together with the continuity of the SRB measure $\cU \ni f\mapsto \mu_f$ (in the weak$^*$ topology). This proves the corollary.

\subsection{Proof of Corollary~\ref{thm:open}}\label{subsecopen}

Let $\mathcal{U}\subset End^{1+\alpha}(M)$ be the ($C^1$-open) set of non-singular endomorphisms on $M$
satisfying (A1)-(A4). The corollary is an immediate consequence of Theorems~\ref{thm:TEO1} and ~\ref{thm:TEO2} 
once we prove that assumptions (H1)-(H4) hold with uniform constants.
Given $g\in \cU$ condition (A1) implies that for every $x\in U$
$$
E^s_x(g)= \bigcap_{n\ge 0} Dg^n(x)^{-1}( C_{a}^{-}(g^n(x)))\subsetneq C_{a}^{-}(x)
$$
defines an $Dg$-invariant constracting subbundle $E^s(g)$ with contraction rate $\lambda$. 
Moreover, if  $v\in E_{x}^{s}$, $w\in Df(x) ( C_{a}(x))$ and $x\in M$ then 
$\left\Vert Dg(x)\cdot v\right\Vert \cdot\left\Vert Dg(x)^{-1}\cdot w\right\Vert 
	\leq\lambda \max_{x\in M} L(x)^{-1} \cdot\left\Vert v\right\Vert \cdot\left\Vert w\right\Vert 
	\le \sqrt{\lambda} \left\Vert v\right\Vert \cdot\left\Vert w\right\Vert 	
$ 
provided that 
\begin{equation}\label{eqL1domin}
L:=\min_{x\in M} L(x)  \ge \sqrt{\lambda}
\end{equation}
This implies (H1), (H2) and (H4) hold with the uniform constant $\tilde\lambda=\sqrt{\lambda}\in (0,1)$.

It remains to prove that assumption (H3) holds. It is enough to prove that there exists $c>0$ such that,
for every disk $D$ tangent to the cone field $C(\cdot)$,
$
\limsup_{n\to\infty}\frac{1}{n}\sum_{j=0}^{n-1}\log\|(Dg(g^{j}(x))|_{C(g^{j}(x))})^{-1}\|\leq-2c<0
$
for Lebesgue almost every $x\in D$.
Fix any disk $D$ tangent to $ C(\cdot)$. 
Condition~(A3) together with the positive $Dg$-invariance of the cone field $C$ 
implies that $\det\left|Dg(x)|_{T_{x}D}\right|>\sigma$ for every $x\in D$ and that this property holds for the 
positive iterates of the disk $D$.
Now, given $n\ge 1$ and $\gamma\in(0,1)$, consider the set 
\[
R(D,n,\gamma):=\left\{ x\in D:\ \#\left\{ 0\leq j\leq n-1:\ g^{j}(x)\in\mathcal{O}\right\} \geq\gamma n\right\} 
\]
of points 
that spends a fraction of time larger then $\gamma$ in the region $\mathcal{O}$.
\begin{lemma}
\label{lem:freq_visit} There are 
$\gamma_{0}\in(\frac{1}{2},1)$, $\vep>0$ and $K>0$ (depending only on $p$, $q$ and $\sigma$)
such that $Leb_{D}(R(D,n,\gamma_{0}))\leq Ke^{-\vep n}$
for every large $n\ge 1$.
\end{lemma}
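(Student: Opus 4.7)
My approach is a standard large-deviations style estimate that trades the volume expansion factor $\sigma$ of $g$ along disks tangent to $C(\cdot)$ against the exponential growth of the number of admissible itineraries. The key ingredients will be the hypothesis $\sigma > q$ and the Markov-type condition (A4)(a).

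First, for each itinerary $\underline{i} = (i_0, \dots, i_{n-1}) \in \{1, \dots, p+q\}^n$ I consider the cylinder
\begin{equation*}
[\underline{i}]_D := \{x \in D : g^j(x) \in V_{i_j} \text{ for every } 0 \le j \le n-1\}.
\end{equation*}
Combining the ``one connected component'' condition (A4)(a) with the fact that $g$ is a local diffeomorphism and that iterates of disks tangent to $C(\cdot)$ remain embedded submanifolds of uniformly controlled geometry (by Proposition~\ref{prop:iteratesoftgbundleholder}), an induction on $n$ shows that $g^n|_{[\underline{i}]_D}$ is a diffeomorphism onto $g^n([\underline{i}]_D) \subset V_{i_{n-1}}$. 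By (A3) and the positive invariance of the cone field, for $x \in [\underline{i}]_D$ we have $|\det(Dg^n(x)|_{T_x D})| \ge \sigma^n$, and the change of variables yields
\begin{equation*}
\sigma^n \, Leb_D([\underline{i}]_D) \le \int_{[\underline{i}]_D} \bigl|\det(Dg^n(x)|_{T_x D})\bigr| \, dLeb_D(x) = Leb_{g^n(D)}(g^n([\underline{i}]_D)) \le C_1,
\end{equation*}
where $C_1>0$ is a uniform upper bound for the Lebesgue volume, inside any $V_i$, of a disk tangent to $C(\cdot)$ (finite since $M$ is compact, the partition is finite, and curvatures are uniformly controlled). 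Hence $Leb_D([\underline{i}]_D) \le C_1 \sigma^{-n}$.

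Second, I observe that $R(D,n,\gamma)$ is covered by the union of cylinders $[\underline{i}]_D$ whose itinerary has at least $\gamma n$ entries in $\{p+1,\dots,p+q\}$, and count such itineraries:
\begin{equation*}
N(n,\gamma) := \sum_{k=\lceil \gamma n \rceil}^{n} \binom{n}{k} \, q^k \, p^{n-k}.
\end{equation*}
Stirling's entropy bound $\binom{n}{k} \le e^{n H(k/n)}$, with $H(t) := -t\log t - (1-t)\log(1-t)$, together with the estimate $q^k p^{n-k} \le \max(q^n, \, q^{\gamma n} p^{(1-\gamma) n})$ valid for $k \ge \gamma n$, gives
\begin{equation*}
N(n,\gamma) \le (n+1) \, e^{n \Phi(\gamma)}, \qquad \Phi(\gamma) := H(\gamma) + \max\bigl(\log q, \, \gamma \log q + (1-\gamma)\log p\bigr).
\end{equation*}
As $\gamma \nearrow 1$, $H(\gamma) \to 0$ and the maximum tends to $\log q$, so $\Phi(\gamma) \to \log q$. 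Since $\sigma > q$, one can fix $\gamma_0 \in (1/2,1)$ with $\Phi(\gamma_0) < \log \sigma$, and setting $\vep := \tfrac{1}{2}(\log \sigma - \Phi(\gamma_0)) > 0$ we conclude
\begin{equation*}
Leb_D(R(D,n,\gamma_0)) \le N(n,\gamma_0) \cdot C_1 \sigma^{-n} \le C_1 (n+1) e^{-2\vep n} \le K e^{-\vep n}
\end{equation*}
for a suitable $K>0$ and every sufficiently large $n$. The constants $\gamma_0$, $\vep$, $K$ depend only on $p$, $q$ and $\sigma$, as required.

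The main subtlety I expect is the injectivity claim in the first step: while (A4)(a) is precisely the ``one-connected-component'' Markov-like condition, one must carefully propagate it through iterates, arguing that at each step the restriction of the local diffeomorphism $g$ to the connected set $g^j([\underline{i}|_{j+1}]_D) \subset V_{i_j}$ is a diffeomorphism onto its image (by virtue of the image having a single connected component and the disk having uniformly controlled geometry inside the injectivity radius of $g$). Once this Markovian injectivity is established, the remaining entropy count is routine; note that the full strength of the hypothesis $\sigma > q$ is used precisely at the final inequality $\Phi(\gamma_0) < \log \sigma$.
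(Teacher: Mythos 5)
Your proof is correct and is essentially the argument the paper delegates to \cite[Lemma~A.1]{ABV00}: cylinders indexed by itineraries through $\{V_1,\dots,V_{p+q}\}$, the $\sigma^{-n}$ volume bound coming from (A3) together with (A4)(a), and an entropy count of the itineraries hitting $\{V_{p+1},\dots,V_{p+q}\}\supset\mathcal{O}$ with frequency $\ge\gamma_0$, exploiting $\sigma>q$ to pick $\gamma_0$ close to $1$ so that $\Phi(\gamma_0)<\log\sigma$. The Markov-propagation subtlety you flag --- that (A4)(a) must be pushed through iterates so each cylinder is connected with $g^n$ a diffeomorphism on it --- is exactly the point on which the paper is silent, relying on the same step in ABV00's proof.
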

\begin{proof}
The proof is entirely analogous  to \cite[Lemma A.1]{ABV00}.
\end{proof}
\noindent We are now in a position to determine $L$ in terms of the frequency of visits described in the previous
lemma. Assume that $L$ is sufficiently close to $1$ so that 
\begin{equation}\label{eq:determineL}
L^{\gamma_{0}}\cdot\lambda_{u}^{(1-\gamma_{0})}\geq e^{c}>1
\end{equation}
for some $c>0$. By this choice, 
if one partitions the iterates $g^j(x)$ of a point $x\in D$ depending on if these
belong to $\mathcal O$ or not,
and set $r(n,x):=\left\{ 0\leq j\leq n-1:g^{j}(x)\in\mathcal{O}\right\} $
and $s(n,x):=\left\{ 0,1,\dots,n-1\right\} \backslash r(n,x)$.
Now, using Lemma~\ref{lem:freq_visit}, the Borel-Cantelli lemma implies that for
$Leb_{D}$-a.e. $x$ there is $n(x)$ such that $x\notin R(D,n,\gamma_{0})$
for every $n\geq n(x)$. In other words, we have that $\#r(n,x)<\gamma_{0}n$
for Lebesgue almost every $x\in D$ and all $n\geq n(x)$. 
Therefore, by $Dg$-invariance of the cone field $C$,
\begin{align}
\frac{1}{n}\sum_{j=0}^{n-1}\log\|(Dg(g^{j}(x))|_{C(g^{j}(x))})^{-1}\| 
	& \leq\frac{1}{n}\#r(n,x)\log L^{-1}-\frac{1}{n}\#s(n,x)\log\lambda_{u} \nonumber \\
	& \leq \gamma_{0}\log L^{-1}+(1-\gamma_{0})\log\lambda_{u} \nonumber  \\
	 & =-(\log(L^{\gamma_{0}}\cdot\lambda_{u}^{(1-\gamma_{0})}))\leq-c<0 \label{uniformestim}
\end{align}
for Lebesgue almost every $x\in D$ and every large $n\ge 1$.
Since $C$ is a cone field preserved by all elements $g\in \cU$ and the constants are uniform
we conclude that all elements $g\in \cU$ satisfy (H1)-(H4) with uniform constants, as claimed.

\section{Final comments: singular endomorphisms and future directions} \label{sec:future}

Let us make some final comments concerning singular endomorphisms.
In \cite{Tsu05}, Tsujii considered the space $\mathcal{PH}^u$ of $C^r$-endomorphisms on surfaces that admit a continuous splitting
$TM=E^c\oplus E^u$ with $\dim E^c =\dim E^u=1$ and for which there exist $C,\lambda>0$ such that 
$\|Df^n(x)\mid_{E^u_x} \ge Ce^{\lambda n}$ and $\|Df^n(x)\mid_{E_x^c} \le Ce^{\lambda n} \|Df^n(x)\mid_{E^u_x}\|$
for all $x\in M$ and $n\ge 0$. The space $PH^u$ forms a $C^1$-open set of the space of 
endomorphisms and $C^r$-generic endomorphisms $f$ in $\mathcal{PH}^u$ admit a finite number of physical measures, where $r\ge 19$
(cf. \cite{Tsu05}).
In particular these results hold for the class of robust transitivity endomorphisms with persistence of critical points considered 
in \cite{Lizana} (the kernel of $Df$ at critical points is contained in the subbundle $E^c$). 

\vspace{.1cm}

In the present paper we consider a complementary situation. We consider partially hyperbolic endomorphisms (more generally attractors)   
$\mathcal{PH}^s$ that admit a continuous splitting $TM=E^s\oplus E^c$, which is dominated, such that $E^s$ is uniformly contracting and 
there exists an invariant cone field with a non-uniform expansion condition. These form an open set of partially hyperbolic non-singular 
endomorphisms. On the one hand, the existence of a (invariant) stable subbundle and an invariant cone field seems unnatural in the
presence of critical points (e.g. the endomorphism $f: \mathbb T^2 \times [0,1] \to 
\mathbb T^2 \times [0,1]$ given by $f(x,y,z)=(3x+ y, x+y, h^n(z))$, where $h(z)=4 z(1-z^2)$ does not admit such a decomposition if
is $n\ge 1$ is large). On the other hand, the lack of domination is itself a very hard obstacle even in the case of diffeomorphisms.
Indeed, the H\'enon-like maps (\cite{MoraV}) given by
$
h_{a,b}(x,y)=(1-ax^2, 0) + r(a,x,y)
$
with $\|r\|_{C^3}\le b$ (for $b$ small enough) are $C^3$-perturbations of the singular endomorphism
$h_{a,0}(x,y)=(1-ax^2, 0)$ which admits a non-strict positively invariant cone field. Since this is not robust, inspired by \cite{MoraV,Ures}
we pose the following question:

\medskip

\noindent {\bf Question~1:}
\emph{Assume that $f$ is an endomorphism with $C^1$-persistence of singularities on a surface $M$ (there exists a $C^1$-open neighborhood of $f$
formed by singular endomorphisms). 
If $f\notin \mathcal{PH}^u$ does there exist a $C^1$-perturbation such that $f$ admits a sink?  
Can $f$ be approximated by $C^1$ endomorphisms with infinitely many sinks (hence infinitely many physical measures)?
}
\medskip

In view of the previous discussion it seems only reasonable to consider families of singular endomorphisms for which 
singular behavior is isolated from the domination.  In view of \cite{Tsu05} it is not reasonable to expect an affirmative answer to 
the following question:
\medskip

\noindent {\bf Question~2:}
\emph{Assume that $A: \mathbb T^2\to \mathbb T^2$ is an Anosov endomorphism and 
take $F_0: \mathbb T^2 \times \mathbb S^1 \to \mathbb T^2 \times \mathbb S^1$ be given by $F_0=A\times Id$.
If $\mathcal S_\vep$ denotes the space of 
skew-products 
$
F(x,y)=(A(x,y),f(x,y,z))
$
such that $\|F-F_0\|_{C^2}<\vep$ and $\vep$ is small, does there exist a $C^r$-residual subset 
$\mathcal R \subset \mathcal S_\vep$ of singular partially hyperbolic endomorphisms 
such that every $G\in \mathcal R$ has a finite number of physical measures?}

\medskip

A different question concerns the existence of singularities. Piecewise smooth partial hyperbolic endomorphisms 
with singularities arise naturally e.g. as Poincar\'e maps of Lorenz attractors.  If $f\in \mathcal{PH}^u$ is a partially 
hyperbolic endomorphism with singularities whose derivative behaves like a power of the distance to the singular
region satisfying an extra slow recurrence condition (see \cite{ABV00} for precise definition)  it is most likely the ideas 
in the proof of Theorem~\ref{thm:TEO1}  can be carried out to prove the existence and finiteness of SRB measures. 
Finally, it is interesting to understand if this construction can probably made to work (maybe with some extra difficulties) 
if one removes the hypothesis of an everywhere invariant cone field and assume that the set of points 
that admit infinitely many iterates at which cone fields are in an invariant position and backward contrition
holds simultaneously has positive Lebesgue positive measure. 
 This seems to be the case of H\'enon-like maps for the parameters obtained by the parameter exclusion process in \cite{MoraV}.

Finally, it would be interesting to describe when these SRB measures are absolutely continuous with respect to
the Lebesgue measure on the ambient space. On the one hand, in our context any absolutely continuous invariant measure is 
a convex combination of SRB measures (see Corollary VII.1.1.1 of \cite{QXZ09}). 
In the context of Theorem~\ref{thm:TEO1}, it follows from ~\cite{LL} any SRB measure $\mu$ is absolutely continuous with respect
to volume if and only if the entropy production $e_\mu$ is zero or, equivalently,  the folding entropy $F(\mu)$ would satisfy
$
F(\mu)=\int \log |\det Df| \, d\mu_f,
$
in which case the folding entropy would vary continuously with the dynamics (we refer the reader to \cite{LL} for the definitions).
It follows from \cite{Shu} that this is equivalent to say that $\mu$ is absolutely continuous with respect to the stable foliation.
This motivates the following:

\medskip

\noindent {\bf Question~3:}
\emph{ In the context of Theorem~\ref{thm:TEO2}, are SRB measures of $C^2$-generic endomorphisms singular with respect 
to the volume? If so, do these partially hyperbolic endomorphisms admit inverse SRB measures (in the sense of \cite{Mih10a,MU})? 
}
\medskip

Finally, as the existence of SRB measures relies on explicit geometric constructions, we expect to obtain further statistical properties
of these hyperbolic SRB measures. In particular, it is interesting to establish the smooth dependence of
these measures (and their entropies) with respect to the endomorphism. These problems are usually referred as linear
response problems (see e.g. ~\cite{Bal14,BCV16} and references therein).

\subsection*{Acknowledgments:} 
This work is part of the first author's PhD thesis at Universidade Federal da Bahia. 
This work was finished during the visit of the second author to Fudan University, Peking University 
and Soochow University, whose research conditions, kind hospitality and fruitful discussions are deeply acknowledged.
The authors are deeply grateful to J.F. Alves, V. Ara\'ujo, I. Melbourne,V. Pinheiro 
and C. V\'asquez
for valuable comments.


\end{document}